\theoremstyle{plain}
\newtheorem{theorem}{Theorem}[section]
\newtheorem{lemma}[theorem]{Lemma}
\newtheorem{corollary}[theorem]{Corollary}
\newtheorem{proposition}[theorem]{Proposition}
\newtheorem{claim}[theorem]{Claim}
\newtheorem*{theorem:tnorm_main}{Theorem \ref{T:folded basics}}
\newtheorem*{theorem:iwip_main}{Theorem \ref{T:new iwip main}}
\newtheorem*{theorem:fried_main}{Theorem \ref{T:new entropy main}}
\newtheorem{theor}{Theorem} 
\newtheorem*{theorem:folded basics}{Theorem \ref{T:folded basics}}
\newtheorem*{theorem:new iwip main}{Theorem \ref{T:new iwip main}}
\newtheorem*{theorem:new entropy main}{Theorem \ref{T:new entropy main}}
\newtheorem*{theorem:Topological graph to graph}{Theorem \ref{T:topological graph to graph}}
\newtheorem*{proposition:h(f)}{Proposition \ref{prop:h(f)}}
\newtheorem*{theorem:clean}{Theorem \ref{T:clean}}
\theoremstyle{definition}
\newtheorem{defn}[theorem]{Definition}
\newtheorem{remark}[theorem]{Remark}
\newtheorem{conv}[theorem]{Convention}
\newtheorem{example}[theorem]{Example}
\newtheorem*{running-example}{Example~\ref{Ex:perpetuating_example} cont}
\numberwithin{equation}{section}
\newcommand{\Out}{\mathrm{Out}}
\newcommand{\Aut}{\mathrm{Aut}}
\newcommand{\vol}{\mathrm{vol}}
\newcommand{\MM}{{\mathbb M}}
\newcommand{\M}{{\mathcal M}}
\newcommand{\R}{{\mathbb R}}
\newcommand{\Z}{{\mathbb Z}}
\newcommand{\C}{\EuScript{C}}
\newcommand{\A}{{\mathcal A}}
\newcommand{\wh}{\mathrm {Wh}}
\newcommand{\Hom}{\mathrm {Hom}}
\newcommand{\f}{f} 
\newcommand{\fee}{\varphi} 
\newcommand{\fib}{\eta}    
\newcommand{\rk}{\mathrm{rk}}
\newcommand{\link}{\mathrm{Lk}}
\newcommand{\turns}{\mbox{Turns}}
\newcommand{\flow}{\psi}   
\newcommand{\Flow}{\Psi} 
\newcommand{\norm}{{\mathfrak n}} 
\newcommand{\ma}{\mathfrak m}  
\newcommand{\len}{\mbox{len}}
\newcommand{\val}{\mbox{val}}
\begin{document}

\title{Dynamics on free-by-cyclic groups}
\author{Spencer Dowdall, Ilya Kapovich, and Christopher J. Leininger}

\address{\tt Department of Mathematics, University of Illinois at
  Urbana-Champaign, 1409 West Green Street, Urbana, IL 61801, U.S.A.;  http://www.math.uiuc.edu/\~{}dowdall/} \email{\tt dowdall@illinois.edu}

\address{\tt Department of Mathematics, University of Illinois at
  Urbana-Champaign, 1409 West Green Street, Urbana, IL 61801, U.S.A.;
  http://www.math.uiuc.edu/\~{}kapovich/} \email{\tt kapovich@math.uiuc.edu}

\address{\tt Department of Mathematics, University of Illinois at
  Urbana-Champaign, 1409 West Green Street, Urbana, IL 61801, U.S.A.;  http://www.math.uiuc.edu/\~{}clein/} \email{\tt clein@math.uiuc.edu}

\begin{abstract}
Given a free-by-cyclic group $G = F_N \rtimes_\fee \Z$ determined by any outer automorphism $\fee \in \Out(F_N)$ which is represented by an expanding irreducible train-track map $f$, we construct a $K(G,1)$ $2$--complex $X$ called the {\em folded mapping torus} of $f$, and equip it with a semiflow.   We show that $X$ enjoys many similar properties to those proven by Thurston and Fried for the mapping torus of a pseudo-Anosov homeomorphism.  In particular, we construct an open, convex cone $\A \subset H^1(X;\R) = \Hom(G;\R)$ containing the homomorphism $u_0 \colon G \to \Z$ having $\ker(u_0) = F_N$, a homology class $\epsilon \in H_1(X;\R)$, and a continuous, convex, homogeneous of degree $-1$ function $\mathfrak H\colon\A \to \R$ with the following properties.  Given any primitive integral class $u \in \A$ there is a graph $\Theta_u \subset X$ such that: (1) the inclusion $\Theta_u \to X$ is $\pi_1$--injective and $\pi_1(\Theta_u) = \ker(u)$, (2)  $u(\epsilon) = \chi(\Theta_u)$, (3) $\Theta_u \subset X$ is a section of the semiflow and the first return map to $\Theta_u$ is an expanding irreducible train track map representing $\fee_u \in \Out(\ker(u))$ such that $G = \ker(u) \rtimes_{\fee_u} \Z$, (4) the logarithm of the stretch factor of $\fee_u$ is precisely $\mathfrak H(u)$, (5) if $\fee$ was further assumed to be hyperbolic and fully irreducible then for every primitive integral $u\in \A$ the automorphism $\fee_u$ of $\ker(u)$ is also hyperbolic and fully irreducible.
\end{abstract}

\subjclass[2010]{Primary 20F65, Secondary 57M, 37B, 37E}

\thanks{The first author was partially supported by the NSF postdoctoral fellowship, NSF MSPRF No. 1204814. The second author was partially supported by the NSF grants DMS-0904200 and DMS-1405146 and by the Collaboration Grant no. 279836 from the Simons Foundation. The third author was partially supported by the NSF grant DMS-1207183}

\maketitle

\tableofcontents

\section{Introduction}
\label{sec:intro}
Given a group $K$ and $\fee \in \Out(K)$ represented by the automorphism $\Phi \colon K \to K$, we denote the {\em mapping torus group} of $\fee$ as the semidirect product
\[ G = G_\fee = K \rtimes_\fee \Z = \{ w, t \mid t^{-1} w t = \Phi(w) \text{ for all } w\in K \}.\]
The group $G$ and the {\em associated homomorphism} $u_0 \colon G \to
\Z$ given by $u_0(t^n w) = n$ depend only on $\fee$, and not on
$\Phi$.  Conversely, given a {\em primitive integral} $u \in
\Hom(G,\R) = H^1(G;\R)$ (a homomorphism with $u(G) = \Z$), there is an
{\em associated splitting} $G = \ker(u) \rtimes_{\fee_u} \Z$ with
$\fee_u \in \Out(\ker(u))$ defining the splitting and with associated
homomorphism $u$. Namely, if $t_u\in G$ is such that $u(t_u)=1\in\Z$,
then $\fee_u   \in \Out(\ker(u))$ is the outer automorphism class of
the automorphism $\Phi_u$ of $\ker(u)$ defined as
$\Phi_u(g)=t_u^{-1}gt_u$, where $g\in \ker(u)$.
Of particular interest is the case when $\ker(u)$ is finitely
generated.  According to a result of Neumann \cite{NeuNormal}, as well
as more general results of Bieri--Neumann--Strebel \cite{BNS} and Levitt~\cite{Levitt87}, for finitely generated $G$ there exists an open cone $\C_G \subseteq
H^1(G;\R)$ (a subset invariant by positive scalar multiplication), which we call the \emph{f.g.~cone of $G$}, such that a primitive integral element $u\in H^1(G;\R)$ has
finitely generated kernel if and only if $u \in \C_G$.

When $G = \pi_1(M)$, where $M$ is a closed oriented $3$--manifold, Stallings
\cite{StallFib} proved that every primitive integral element $u \in \C_G$  is induced by a fibration $M \to \mathbb{S}^1$.  The fiber $S_u
\subset M$ is then a surface with $\pi_1(S_u) = \ker(u)$, and $\fee_u$ is
induced by a homeomorphism $F_u \colon S_u \to S_u$ called the {\em
  monodromy} for $u$. Suppose that for some $u_0 \in \C_G$, the monodromy $F_{u_0}$ is {\em pseudo-Anosov} and let $\C_0 \subset \C_G$ denote the component containing $u_0$.  Through a detailed analysis of $M$, Thurston \cite{ThuN,FLP} and Fried \cite{FriedS,FriedD} proved that the monodromies for all primitive integral elements in  $\C_0$ are intimately related to one another in important topological, geometric and dynamical ways. For example, the assignments $u\mapsto \chi(S_u)$ and $u\mapsto h(F_u)$ for primitive integral $u\in \C_0$, where $h(F_u)$ denotes the topological entropy of $F_u$, both extend to convex functions on all of $\C_0$.

In this paper, we provide a model for, and a similar analysis of, the situation where the $3$--manifold group is replaced by a {\em free-by-cyclic group}, $G = F_N \rtimes_\fee \Z$, where $F_N$ is a free group of rank $2 \leq N < \infty$ and $\fee \in \Out(F_N)$. Free-by-cyclic groups share many features with fundamental groups of compact 3--manifolds fibering over the circle (for example, see \cite{AR,FH99,KL12}), although the behavior in the free-by-cyclic case is generally more varied and more complicated.

Let $\fee \in \Out(F_N)$ be an arbitrary element and let $f\colon\Gamma\to\Gamma$ be a {\em tame graph-map} which is a {\em topological representative} of $\fee$; see Definitions~\ref{D:top-graph-map}, \ref{D:reg-exp-graph-map}, \ref{D:tame}, and~\ref{D:TR} (such representatives always exist and are easy to build; see Remark~\ref{R:tame}).
Let $G=G_\fee$ be the mapping torus group of $\fee$  and let $u_0 \in H^1(G_\fee;\R)$ be the associated homomorphism.  From $f$ (and a folding sequence for $f$; see \S\ref{sect:folding}) we construct $(X,\flow,\epsilon,\A)$ where
\begin{itemize}
\item $X$ is a $K(G,1)$ polyhedral $2$--complex,
\item $\flow$ is a semiflow on $X$---that is, an action of the additive semigroup $\R_+$,
\item $\epsilon \in Z_1(X;\R)$ is a cellular cycle,
\item $\A = \A_X \subset H^1(G;\R)$ is an open, convex cone containing $u_0$,
\end{itemize}
A {\em section} of $\flow$ is a subset $\Theta \subset X$ such that every flow line meets $\Theta$ infinitely often at a discrete set of times.  Consequently any section has a {\em first return map} $\Theta \to \Theta$.  With this terminology and notation, our results can be summarized as follows.

\begin{theor} \label{T:main theorem}  Let $N \geq 2$, let $\fee \in
  \Out(F_N)$ be an arbitrary element represented by a tame graph-map $f\colon\Gamma\to\Gamma$ as above, and let $G = F_N
  \rtimes_\fee \Z$.  Then there exists $(X,\flow,\epsilon,\A)$ as above such that for any primitive integral $u \in \A$, there
  exists a compact connected topological graph $\Theta_u \subset X$ with the following properties:
\begin{enumerate}
\item The inclusion $\Theta_u \subset X$ is $\pi_1$-injective and, with
  the identification $\pi_1(X)=G$ we have $\pi_1(\Theta_u)=\ker(u)$,
\item $u(\epsilon) = \chi(\Theta_u) = 1 - \rk(\ker(u))$,
\item $\Theta_u$ is a section of the semiflow $\flow$ and the first return map $f_u \colon \Theta_u \to \Theta_u$ represents $\fee_u \in \Out(\ker(u)) = \Out(\pi_1(\Theta_u))$.
\end{enumerate}

\noindent
If we further assume that $f\colon\Gamma\to \Gamma$ is an expanding irreducible train-track map then there exists a continuous, convex, homogeneous of degree $-1$ function $\mathfrak H \colon \A \to (0,\infty)$ such that for every primitive integral $u\in \A$ we have:
\begin{enumerate}
\item[(4)] $f_u \colon \Theta_u \to \Theta_u$ is an expanding irreducible train-track map,
\item[(5)] $\mathfrak H(u)=\log(\lambda(\fee_u)) = h(f_u)$ where $\lambda(\fee_u)>1$ is the ``growth rate'' or ``stretch factor'' of $\fee_u$ and $h(f_u)$ is the topological entropy of $f_u$.
\item[(6)] If in addition we assume that $\fee\in \Out(F_N)$ is hyperbolic and fully irreducible then for every primitive integral $u\in \A$ the automorphism $\fee_u$ of $\ker(u)$ is hyperbolic and fully irreducible.
\end{enumerate}
\end{theor}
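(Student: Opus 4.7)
The strategy is to build $X$ explicitly as a ``folded mapping torus'' of $f\colon\Gamma\to\Gamma$ and to read off the required data from its cellular structure together with the natural upward semiflow. First I would form the standard mapping torus $M_f$ as a $2$-complex with $2$-cells of the form $e\times[0,1]$ (one for each edge $e$ of $\Gamma$) that realises $f$ on the top and the identity on the bottom; this complex carries an evident upward semiflow and retracts onto $\Gamma$, but is not aspherical because of the identifications $f$ makes on $1$-cells. I would then fold along these identifications to produce a polyhedral $2$-complex $X$ in which each $2$-cell is a skew rectangle threaded by finitely many flow lines. The semiflow on $M_f$ descends to a semiflow $\flow$ on $X$; the vertical coordinate provides the cellular cocycle $\epsilon$, pairing with $u_0$ by construction, and one verifies directly that $\pi_1(X)=G$ and that $X$ is aspherical (the universal cover deformation retracts onto a tree because $f$ is $\pi_1$-injective on edges).

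Next I would define $\A$ as the set of classes $u\in H^1(X;\R)$ that pair strictly positively with every closed orbit of $\flow$; by Fried's theory of cross-sections this is an open convex cone containing $u_0$. For integral $u\in\A$ I would apply a Tischler-type argument to produce a map $X\to \mathbb{S}^1$ representing $u$ whose regular fibre $\Theta_u$ is transverse to $\flow$. Properties (1)--(3) are then formal consequences of $\Theta_u$ being a cross-section: cutting $X$ along $\Theta_u$ reconstructs $X$ as the mapping torus of the first return map $f_u\colon\Theta_u\to\Theta_u$, which identifies $\pi_1(\Theta_u)$ with $\ker(u)$ and the induced monodromy with $\fee_u$, while the intersection pairing of $\epsilon$ with the cycle carried by $\Theta_u$ yields $u(\epsilon)=\chi(\Theta_u)$.

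For the train-track assertions (4)--(5), I would exploit the fact that each edge of $\Theta_u$ is a transverse arc in a $2$-cell of $X$ and so inherits a combinatorial structure from the ambient folding. The first return $f_u$ can be analysed through a single ``universal'' transition matrix whose entries are Laurent polynomials in the deck group $\ker(u_0)^{\mathrm{ab}}$, playing the role of McMullen's Teichm\"uller polynomial for free-by-cyclic groups; specialising at $u$ produces the transition matrix of $f_u$, whose Perron-Frobenius eigenvalue gives $\lambda(\fee_u)$ and hence $\mathfrak H(u) := \log\lambda(\fee_u)$. Homogeneity of degree $-1$ comes from the observation that $\Theta_{ku}$ appears as a $k$-fold flow cover of $\Theta_u$, so that $f_{ku} = f_u^k$ up to conjugacy, while continuity and convexity of $\mathfrak H$ on $\A$ follow from the general convexity of the log-spectral-radius of a family of nonnegative polynomial matrices. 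The identity $h(f_u) = \log\lambda(\fee_u)$ is standard for expanding irreducible train-track maps.

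Finally, for (6), hyperbolicity of $\fee_u$ follows from Brinkmann's characterisation of hyperbolicity of $G$ in terms of $\fee$, a property of $G$ alone and hence independent of the choice of $u\in\A$. Full irreducibility is more delicate: I would argue by contradiction, using the dynamical characterisation ruling out periodic $\fee_u$-invariant free factor systems and showing that any such invariant system would descend to a $\flow$-invariant subcomplex of $X$ contradicting the irreducibility of the original $f$. The main obstacle I anticipate is precisely this last step, because invariant free factor systems for $\fee_u$ live naturally inside $\ker(u)$ and relating them back to the ambient complex $X$ requires a careful understanding of how the folding identifications interact with $\flow$-invariant subcomplexes; this is where the detailed combinatorial structure of the folded mapping torus developed in the earlier theorems (\emph{viz.} the folded basics and iwip main theorems referenced above) must be brought to bear.
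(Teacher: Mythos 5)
Your overall architecture---build a folded mapping torus $X$, equip it with an upward semiflow, define $\A$ as a cone of ``cross-section-giving'' cohomology classes, and extract $\Theta_u$ as the fiber of a perturbed fibration---is the right shape, and (1)--(3) would go through in essentially the way you describe. But there are several points where the proposal either misidentifies the objects or substitutes a different (and, as written, incomplete) argument for the crucial steps.

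First, a genuine error: you say ``the vertical coordinate provides the cellular cocycle $\epsilon$, pairing with $u_0$ by construction.'' The class $\epsilon$ is a \emph{$1$--cycle} $\epsilon\in Z_1(X;\R)$, \emph{not} a cocycle; it is the homological avatar of the Poincar\'e dual of the Euler class, and in the paper it is built out of the degree deficits of the $1$--cells, $\epsilon = \tfrac12\sum_e(2-d(e))\,e$, whose coefficients concentrate along the singular (higher-degree) $1$--cells. The vertical coordinate gives the \emph{cocycle} $z_0$ representing $u_0$, an entirely different object. Without the correct $\epsilon$ the key identity $u(\epsilon)=\chi(\Theta_u)$ has no content, and that identity does require an argument (in the paper it uses the fact that $\fib_z$ restricted to the $1$--skeleton is an orientation-preserving local diffeomorphism, so that the intersection number of $\epsilon$ with $\Theta_u$ computes the valence deficit of the vertices of $\Theta_u$).

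Second, your $\A$ (classes pairing positively with all closed orbits) is Fried's cone for the semiflow, whereas the paper's $\A$ is the cone of classes represented by cocycles positive on every oriented $1$--cell of the trapezoid structure. These are compatible in spirit, and your Tischler-style argument would give the cross-sections needed for (1)--(3); but the combinatorial positivity is what lets the paper run the subdivision/refinement machinery needed for the later parts. The paper itself remarks that Gautero's and Wang's constructions (which are close to what you are describing) ``are likely sufficient for proving analogues of Theorem~\ref{T:folded basics},'' but that it is unclear how to get (4)--(6) from them.

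Third, for (4)--(5) you propose a McMullen-style ``universal transition matrix'' with Laurent polynomial entries in $\ker(u_0)^{\mathrm{ab}}$, specialized at $u$. This is a genuinely different route than the paper's, and as stated it is a program rather than a proof: nothing in the folding construction hands you such a matrix, and the existence of a free-by-cyclic Teichm\"uller polynomial with the properties you need (irreducibility and Perron--Frobenius behavior surviving specialization across the whole cone) is precisely the sort of result that had to be established separately, not assumed. The paper instead proves that $f_u$ is \emph{expanding on all scales} (Lemma~\ref{L:1returnEAS}), invokes Theorem~\ref{T:topological graph to graph} to manufacture a linear structure on $\Theta_u$ for which $f_u$ is an expanding irreducible train-track map, and then proves continuity/concavity of $1/\mathfrak H$ via Fried's measure-theoretic argument (natural extension, variational principle, Abramov's theorem, and linearity of $\mu_*$). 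Your homogeneity claim $f_{ku}=f_u^k$ is plausible for integer $k$ but does not by itself give continuity on the open cone; the infimum-over-measures formulation is what does.

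Finally, for (6) you correctly identify Brinkmann for hyperbolicity, and you correctly flag full irreducibility as the hard step---but your proposed route (a reducible $\fee_u$ would give a $\flow$-invariant subcomplex of $X$) is not how it is done, and I do not see how to make it work: invariant free factor systems in $\ker(u)$ have no obvious reason to be carried by subcomplexes. The paper instead verifies Kapovich's clean-train-track criterion for $f_u$: it first shows weakly clean implies clean (Proposition~\ref{prop:clean}), and then checks connectivity of the Whitehead graphs of $f_u$ at every vertex of $\Theta_u$ by propagating the connectivity of the $z$--modified Whitehead graphs $\wh^z(\Gamma_t,x)$ up the flow. You would need something of this flavor to close the argument.
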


In the remainder of the introduction, we will discuss in more detail some generalizations and refinements of the various parts of this theorem (see \S\S\ref{S:intro fibration}--\ref{S:intro entropy} and Theorems \ref{T:folded basics}, \ref{T:new iwip main}, and \ref{T:new entropy main} below, which imply Theorem \ref{T:main theorem}), elaborating on the similarities and differences with the $3$--manifold setting.  Next we will describe in \S \ref{S:intro small dil} an application and the connection to work of Algom-Kfir and Rafi \cite{AR}.
In \S \ref{S:intro other construction} we compare our construction with related constructions of Gautero \cite{Gau1} and Wang \cite{Wang}.
Finally, in \S \ref{S:intro alexander} we briefly explain the relation between the class $\epsilon$ and the work of McMullen \cite{McAlex},  Dunfield \cite{Dunfield}, and Button \cite{Button} on the Alexander norm, further strengthening the analogy with the $3$--manifold setting.

\begin{remark}  In our follow-up paper \cite{DKL2}, our definition of a section of $\flow$ is more restrictive than what is given above.  It turns out that all sections we construct in this paper do satisfy the conditions of that definition, however.
\end{remark}

\subsection{Fibrations, semiflows and an Euler-like class.} \label{S:intro fibration}

In what follows, we assume that $M$ is a closed, oriented, aspherical
$3$--manifold and write $G_M = \pi_1(M)$.  We also assume that $M$
fibers over the circle $\fib_0 \colon M \to \mathbb{S}^1$ with fiber $S$ and
monodromy $F \colon S \to S$, so that $M = S \times [0,1]/(x,1) \sim
(F(x),0)$.  Write $u_0 \in H^1(M;\R)$ to denote the induced primitive
integral element $u_0 = (\fib_0)_* \colon G_M \to \Z$.

We sketch here a geometric proof of the existence of the open set $\C_{G_M}$, essentially due to Tischler \cite{Tisch}, that motivates our construction. The starting point of the analysis is the following two observations. Firstly, there is a natural suspension flow on $M$ determined by the local flow on $S\times[0,1]$, given by $(x,t)\mapsto(x,t+s)$ for $x\in S$ and $s\leq 1-t$. Secondly, via the de Rham isomorphism $H^1(M;\R) \cong H^1_{dR}(M;\R)$, one can represent $u_0$ by the closed, nowhere vanishing $1$--form $\nu_0 = \fib_0^*(vol_{\mathbb{S}^1})$, where $vol_{\mathbb{S}^1}$ is the unit volume form on $\mathbb{S}^1$. Any class $u \in H^1(M;\R)$ {\em projectively} close to $u_0$ (that is, close up to scaling by positive real number), is also represented by a closed nowhere vanishing $1$--form $\nu$ projectively close to $\nu_0$.  

This $1$--form $\nu$ provides a crucial tool for studying properties of $u$: If $u$ is a primitive integral class represented by such a $\nu$, then integrating $\nu$ defines a fibration $\fib_\nu \colon M \to \R/\Z = \mathbb{S}^1$ so that $\nu = \fib_\nu^*(vol_{\mathbb{S}^1})$ and $u = (\fib_\nu)_*\colon G_M\to \Z$. Thus every primitive integral $u\in H^1(M;\R)$ projectively close to $u_0$ is also induced by a fibration over $\mathbb{S}^1$, and hence has finitely generated kernel equal to the fundamental group $\pi_1(S_\nu)$ of the fiber $S_\nu$. Furthermore, the fiber $S_\nu$ is a section of the suspension flow on $M$, and the first return map to $S_\nu$ is precisely the monodromy $F_\nu$ of the fibration. The Euler characteristic $\chi(S_\nu)$ of any fiber $S_\nu$ may then be calculated by pairing the homology class of $S_\nu$ with the Euler class of the $2$--plane bundle $\ker(\nu)$---equivalently, one may pair $u$, the Poincar\'e dual of $S_\nu$, with the Poincar\'e dual $e_\nu$ of this Euler class. 
On the other hand, for any $\nu$ projectively close to $\nu_0$, $\ker(\nu)$ and
$\ker(\nu_0)$ are homotopic $2$--plane sub-bundles of $TM$ and so have equivalent Euler classes. Therefore, there is a single class $e = e_{\nu_0}$ such that $\chi(S_\nu)=u(e)$ for all $u$ projectively close to $u_0$; see \cite{ThuN}.\\

Now consider a free-by-cyclic group $G = G_\fee = F_N\rtimes_\fee \Z$ defined by $\fee \in \Out(F_N)$ with $2 \leq N < \infty$ and associated primitive integral $u_0\colon G\to \Z$ with $\ker(u_0)=F_N$.  We would like to obtain a similar geometric understanding of those $u\in H^1(G;\R)$ which are projectively near to $u_0$. Taking a topological representative $f\colon \Gamma\to\Gamma$ of $\fee$ (that is, a graph $\Gamma$ together with a topological graph map $f$ and an isomorphism $\ma\colon F_n\to\pi_1(\Gamma)$ for which $\ma^{-1}f_* \ma$ represents $\fee\in \Out(F_N)$; see \S\ref{subsect:mark}), one may build the corresponding mapping torus $Z_f = \Gamma\times[0,1] / (x,1)\sim(f(x),0)$. Analogous to the $3$--manifold above, $Z_f$ has fundamental group isomorphic to $G$ and admits a map to the circle $Z_f\to \mathbb{S}^1$ inducing $u_0\colon G\to \Z$. However, it is unclear how to ``perturb'' this map to the circle to obtain useful information about primitive integral elements $u \in H^1(G;\R)$ which are near $u_0$, up to scaling. 

To circumvent this problem, for any $\fee\in \Out(F_N)$, given a tame graph map $f\colon\Gamma\to\Gamma$ which is a topological representative of $\fee$ 
(which always exists and is easy to find) and a folding sequence for $f$, we introduce a new $2$--complex called the {\em
  folded mapping torus} $X_f$ for $\fee$. The complex $X_f$ is obtained as a quotient of the full
mapping torus $Z_f$ of $f$ by following the ``folding line'' from
$\Gamma$ to $\Gamma\fee$ in the Culler-Vogtmann outer space determined by the folding sequence.  See
\S\ref{sect:background} for background, \S\ref{sect:folding} for a description of the folding line, and \S\ref{sect:folded mapping torus and semiflows} for the construction of $X_f$.  

The local flow on $\Gamma \times [0,1]$ given by $(x,t) \mapsto (x,t+s)$ descends to a semiflow $\flow$ on $X_f$ (see \S\ref{S:semiflow construction}).  Moreover, $X_f$ is a $K(G,1)$--space (see Corollary \ref{C:K(G,1)}), and the homomorphism $u_0 \colon G \to \Z$ is induced by a map $\fib_0 \colon X_f \to \mathbb{S}^1$.

Although $X_f$ is not a manifold, and so there is no notion of smooth $1$--forms on $X_f$ in the classical sense, the combinatorial structure of $X_f$ nevertheless provides a way to perturb the maps $\fib_0$ and $u_0$. In \S\ref{sect:trapezoids}, we construct a {\em trapezoid cell structure} on $X_f$ (with a choice of orientation for the $1$--cells) so that the restriction of $\fib_0$ to every semiflow line and every $1$--cell is an orientation preserving local homeomorphism. We then define the \emph{positive cone} of $X_f$ to be the subset $\A=\A_{X_f}\subset H^1(X_f;\R)$ of cohomology classes represented by cellular $1$--cocycles that are positive on every positively oriented $1$--cell.  Moreover, this cell structure allows for the definition of a cellular $1$--chain $\epsilon$ on $X_f$ that serves as a ``combinatorial'' analogue of the Poincar\'e dual of the Euler class of the $2$--plane sub-bundle $\ker(\nu_0)$ of $TM$ considered above; see Remark \ref{R:Euler comparison} for more on this analogy. We summarize these basic properties of $X_f$ in the following.

\begin{proposition}
Let $\fee \in \Out(F_N)$ (where $N\ge 2$) be arbitrary,  let $f \colon \Gamma \to \Gamma$ be a tame  graph-map which is a topological representative of $\fee$, and let $X_f$ be a folded mapping torus for $f$,
with the trapezoid cell structure, and let $\flow$ be the associated semiflow. Let $u_0\in
H^1(G_\fee;\R)$ be the associated homomorphism for the splitting
$G_\fee=F_N\rtimes_\fee \Z$. Then:
\begin{itemize}
\item[a)] The 2--complex $X_f$ is a $K(G_\fee,1)$--space.
\item[b)] The set $\A_{X_f} \subset H^1(X_f;\R)$ is an open convex cone containing $u_0$.
\item[c)] The singular 1--chain $\epsilon$ is a $1$--cycle and thus defines an element also denoted $\epsilon\in H_1(X_f;\R)$, which we call the {\em $E_0$--class of $X_f$}.
\end{itemize}
\end{proposition}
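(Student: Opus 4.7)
The three claims can be addressed largely independently using the explicit combinatorial structure of $X_f$ furnished by the preceding sections.

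For part (a), my plan is to build $X_f$ as a quotient of the ordinary mapping torus $Z_f$ of $f$ via identifications coming from the folding line between $\Gamma$ and $\Gamma\fee$ in Culler--Vogtmann outer space. Since that folding line is a concatenation of Stallings folds, each of which is a homotopy equivalence of graphs, the identifications can be arranged to be generated by homotopy equivalences on the ``horizontal'' level sets of $\fib_0 \colon X_f \to \mathbb{S}^1$. Concretely, I would verify that the level sets of $\fib_0$ are graphs obtained from $\Gamma$ by a sequence of folds (so homotopy equivalent to $\Gamma$), and that the combinatorial monodromy around $\mathbb{S}^1$ realizes $\fee$. A van Kampen argument along the semiflow $\flow$ then gives $\pi_1(X_f) \cong F_N \rtimes_\fee \Z = G_\fee$. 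For asphericity, I would use the semiflow to deformation retract the universal cover $\widetilde{X_f}$ onto a single lifted fiber graph (which is a tree), showing $\widetilde{X_f}$ is contractible so $X_f$ is a $K(G_\fee,1)$.

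For part (b), I would simply unwind the definition. The cellular $1$-cocycles on $X_f$ in the trapezoid cell structure form a finite-dimensional real vector space $Z^1_{\mathrm{cell}}(X_f;\R)$, and the subset $P$ of cocycles strictly positive on every positively oriented $1$-cell is, by inspection, preserved under positive scaling and convex combinations, hence is an open convex cone in $Z^1_{\mathrm{cell}}(X_f;\R)$. The cone $\A_{X_f}$ is the image of $P$ under the open linear surjection $Z^1_{\mathrm{cell}}(X_f;\R) \to H^1(X_f;\R)$, so it inherits the structure of an open convex cone. To locate $u_0$ inside $\A_{X_f}$, I would represent $u_0$ by the cellular cocycle $\fib_0^\ast(\mathrm{vol}_{\mathbb{S}^1})$: by the defining property of the trapezoid cell structure, $\fib_0$ restricts to an orientation-preserving local homeomorphism on each positively oriented $1$-cell, so this cocycle takes a strictly positive value on every such cell, placing it in $P$ and hence $u_0$ in $\A_{X_f}$.

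For part (c), I expect the main obstacle. Since $\epsilon$ is defined cell-by-cell from the combinatorics of $X_f$ near its $0$-skeleton, the assertion $\partial\epsilon = 0$ must be checked by a local computation at each vertex $v \in X_f^{(0)}$. My approach would be to fix such a $v$, enumerate the $1$-cells at $v$ according to their semiflow orientation (semiflow-in, semiflow-out, and ``horizontal'' trapezoid sides), and then match the coefficients of $\epsilon$ against the local incidence pattern dictated by the trapezoid $2$-cells at $v$. The desired cancellation should reduce to a local identity relating the branching of $\flow$-lines at $v$ to the number of trapezoids meeting $v$ from above and below---precisely the combinatorial shadow of the Euler-class identity alluded to in Remark \ref{R:Euler comparison}. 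The delicate point will be carrying out this check uniformly at both ``non-folding'' vertices (where the local picture is a product) and ``folding'' vertices (where flowlines branch or merge); I would handle these as separate local cases and then conclude $\partial\epsilon = 0$ globally, so $\epsilon$ descends to a well-defined homology class in $H_1(X_f;\R)$.
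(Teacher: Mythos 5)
Parts (b) and (c) of your proposal track the paper closely. For (b), your observation that $\A_{X_f}$ is the image of the open convex cone of positive cocycles under the open linear surjection $Z^1_{\mathrm{cell}}\to H^1$ is a slick repackaging of the paper's argument (the paper instead perturbs a given positive cocycle by a basis of $H^1$ directly, but the content is the same); and your plan to represent $u_0$ by a cellular cocycle coming from $\fib_0$ is exactly Lemma~\ref{L:z_0 represents u_0}. For (c), your plan of a vertex-by-vertex computation of $\partial\epsilon$, with separate treatment of ``product'' and ``folding'' local pictures, is precisely the content of Proposition~\ref{P:epsilon cycle}; the paper organizes the case analysis via the local models of \S\ref{sect:local models}, so your approach would effectively have to reconstruct those.

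Part (a) is where there is a genuine gap. You propose to ``use the semiflow to deformation retract the universal cover $\widetilde{X_f}$ onto a single lifted fiber graph.'' But $\flow$ is only a \emph{semi}flow: it cannot be run backwards. In $\widetilde{X_f}$ the lift $\widetilde{\fib_0}\colon\widetilde{X_f}\to\R$ is proper, and the semiflow strictly increases this coordinate, so forward flow can retract the half-space $\widetilde{\fib_0}^{-1}\bigl((-\infty,0]\bigr)$ onto the fiber tree $\widetilde{\fib_0}^{-1}(0)$ (after a suitable reparametrization), but it gives you no way to push the other half-space $\widetilde{\fib_0}^{-1}\bigl([0,\infty)\bigr)$ back down onto that same tree. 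Absent a substitute argument there, the contractibility claim is unproved. The paper avoids this issue entirely: Proposition~\ref{P:homotopy type} shows that $\Pi\colon Z_f\to X_f$ is a homotopy equivalence by exhibiting an explicit homotopy inverse $F$ (applying the maps $g_t$ fiberwise) with $F\circ\Pi=\hat\flow_1$ and $\Pi\circ F=\flow_1$, which are homotopic to the identities through \emph{forward} flow only (so the semiflow suffices); asphericity of $X_f$ then follows because $Z_f$ is a standard mapping torus and hence a $K(G_\fee,1)$ (Corollary~\ref{C:K(G,1)}, citing Hatcher). Your van Kampen computation of $\pi_1(X_f)$ would also require knowing $\pi_1$-injectivity of a fiber into $X_f$, which the paper establishes only later (Corollary~\ref{C:perturbed semidirect product}); routing through $Z_f$ defers van Kampen to a space where it is standard. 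If you want to keep your route, the cleanest fix is still to go through $Z_f$: identify $\widetilde{X_f}$ with a quotient of $\widetilde{Z_f}\cong\widetilde\Gamma\times\R$ and argue contractibility from there, rather than by a direct semiflow retraction.
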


\begin{remark}
The construction of $X$, $\flow$, and $\A$ all depend on the choice of a representative $\f$ and a corresponding folding line for $\f$. For any triple $(X', \flow',\A')$ obtained by making other choices, $\A'\cap \A$ is an open convex cone containing $u_0$ and $X'$ is homotopy equivalent to $X$.  The $E_0$--class $\epsilon$ is independent of these choices.
\end{remark}

Working in the framework of $X_f$, we obtain the following results that precisely mirror the phenomenon described above for the $3$--manifold $M$.

\begin{theor} \label{T:folded basics}
Let $G_\fee = F_N\rtimes_\fee \Z$ be the mapping torus group of $\fee \in \Out(F_N)$ (where $N\ge 2$), let $f \colon \Gamma \to \Gamma$ be a tame graph-map which is a topological representative of $\fee$, and let $X_f$ be a folded mapping torus for $f$ with the associated semiflow $\flow$, positive cone $\A_{X_f} \subset H^1(X_f;\R)$, and $E_0$--class $\epsilon \in H_1(X_f;\R)$.  Given a positive cellular $1$--cocycle $z \in Z^1(X_f;\R)$ representing a primitive integral element $u \in \A_{X_f} \subset H^1(X_f;\R) = H^1(G_\fee;\R)$, there exists a map
$\fib_z \colon X_f \to \mathbb{S}^1$ and a fiber $\Theta_z = \fib_z^{-1}(x_0)$ for some $x_0 \in \mathbb{S}^1$ so that
\begin{enumerate}
\item $\Theta_z$ is a finite connected topological graph such that the
  inclusion $\Theta_z\subset X_f$ is $\pi_1$--injective and such that
  $\pi_1(\Theta_z)=\ker(u)\le \pi_1(X_f)=G_\fee$.
\item $u(\epsilon) = \chi(\Theta_z) = 1 - \rk(\ker(u))$,
\item $\Theta_z$ is a section of $\flow$,
\item the first return map $f_z \colon \Theta_z \to \Theta_z$ of $\flow$ is a homotopy equivalence with $f(V\Theta_z)\subset V\Theta_z$ and $(f_z)_*$ represents $\fee_u \in \Out(\ker(u))$.
\end{enumerate}
\end{theor}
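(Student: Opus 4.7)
The plan is to mimic Tischler's construction of fibrations from nowhere-vanishing closed $1$--forms in the $3$--manifold case, using the cellular cocycle $z$ and the trapezoid cell structure on $X_f$ in place of a smooth $1$--form.

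First I would build the map $\fib_z\colon X_f\to \mathbb{S}^1$ by integration. Lift $z$ to a cellular cocycle on the universal cover $\tilde X_f$; since cellular cohomology is trivial in positive degrees there, this cocycle is a coboundary $\delta \tilde F$ for some function $\tilde F$ on vertices. Extend $\tilde F$ affinely across each oriented $1$--cell using the $z$--value of that cell, and then extend across each trapezoid $2$--cell in a way compatible with the trapezoid structure (the cell is a product of an interval in the flow direction with a spatial interval, and $z$ being a cocycle means the values around the boundary sum to zero, so an affine extension exists). By construction $\tilde F$ is $G$--equivariant with respect to translation by $u\colon G\to \Z$ on $\R$, hence descends to $\fib_z\colon X_f\to \R/\Z=\mathbb{S}^1$ representing $u$ on $\pi_1$. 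Positivity of $z$ on every oriented $1$--cell guarantees that $\fib_z$ restricted to any $1$--cell, and in particular to any semiflow line, is an orientation--preserving local homeomorphism.

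Next, pick a regular value $x_0\in\mathbb{S}^1$ (avoiding the images of the finitely many vertices) and set $\Theta_z=\fib_z^{-1}(x_0)$. On each oriented $1$--cell $\Theta_z$ meets a finite discrete set, and on each trapezoid $2$--cell it meets a disjoint union of arcs because $\fib_z$ is affine on each trapezoid. This equips $\Theta_z$ with the structure of a finite topological graph. Because $\fib_z$ is strictly monotone along every flow line, $\Theta_z$ is a section of $\flow$, which proves (3); and the return map $f_z\colon\Theta_z\to\Theta_z$, defined by following $\flow$ until the next intersection with $\Theta_z$, is a homotopy equivalence since the flow provides a deformation retraction of a neighborhood of $\Theta_z$ onto $\Theta_z$. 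Part (1) then follows by the standard argument for sections of semiflows: the infinite cyclic cover $X_f^u\to X_f$ corresponding to $\ker(u)$ decomposes (via the flow and the lifts of $\Theta_z$) as a ``mapping telescope'' of copies of $\Theta_z$ glued by lifts of $f_z$, so $\Theta_z$ is connected, the inclusion into $X_f^u$ is a homotopy equivalence, and hence $\pi_1(\Theta_z)=\ker(u)$ with $\pi_1$--injective inclusion into $X_f$. The description of $f_z$ as the return map immediately gives that $(f_z)_*$ realizes the generator of the $\Z$--action on $\ker(u)$ in the splitting $G=\ker(u)\rtimes_{\fee_u}\Z$, proving (4).

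Finally, for the Euler characteristic statement in (2), I would evaluate $u(\epsilon)$ combinatorially by computing $z(\epsilon)$. The class $\epsilon$ is the $E_0$--class built from the trapezoid cell structure to serve as a combinatorial Poincar\'e dual of an Euler-type class; when one tallies the contribution of each $0$-- and $1$--cell of $\epsilon$ against how $\Theta_z$ crosses the corresponding cells of $X_f$, it is precisely the alternating count $V(\Theta_z)-E(\Theta_z)=\chi(\Theta_z)$. Combined with $\pi_1(\Theta_z)=\ker(u)$ and $\Theta_z$ being a connected graph, this gives $u(\epsilon)=\chi(\Theta_z)=1-\rk(\ker(u))$. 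The hardest part, I expect, will be Step 2--3: verifying that $\Theta_z$ is really a well-behaved finite graph (particularly near the edges of trapezoids and at the branching points of $X_f$) and that the semiflow interacts with $\Theta_z$ like an honest suspension, since $X_f$ is not a manifold and $\fib_z$ is not a literal fiber bundle. The cornerstone that makes this work is the positivity of $z$ together with the carefully-built trapezoid structure, which together force $\fib_z$ to behave locally like a projection.
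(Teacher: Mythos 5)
Your overall scheme---build $\fib_z$ from $z$ by integration, take a regular fiber, use the flow---is the right shape and matches the paper's strategy, but there is a genuine gap in the central step of extending $\fib_z$ over the $2$--cells, and it is precisely the hard part of the paper's proof.

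When you say ``$z$ being a cocycle means the values around the boundary sum to zero, so an affine extension exists,'' you are only guaranteeing that $\tilde F$ is well-defined on $\partial T$, not that any extension of it to $T$ has fibers transverse to the flow. Concretely, with the $T$--oriented boundary conventions, the lift of $\fib_z$ along $\partial T$ sends $e_-(T)$ onto $[0, z(e_-(T))]$ and $e_+(T)$ onto $[z(\ell_-(T)),\, z(\ell_-(T))+z(e_+(T))]$. If $z(e_-(T)) > z(\ell_-(T))$, these two intervals overlap, so the boundary map already identifies an interior point of the bottom skew side with an interior point of the top---and then \emph{every} continuous extension over $T$ must have a fiber containing a whole vertical (flow-direction) arc through those two points. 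Such a fiber fails to be a section of $\flow$ in $T$, so your claimed deduction ``positivity $\Rightarrow$ $\fib_z$ strictly monotone along flow lines $\Rightarrow$ $\Theta_z$ is a section'' breaks down: positivity of $z$ on $1$--cells controls $\fib_z$ along the $1$--skeleton but does not prevent this vertical-fiber pathology inside a trapezoid. The paper handles this by first passing to a trapezoidal subdivision $Y$ and a refined positive cocycle $\widehat z$ for which every trapezoid is \emph{unconstrained} (Definition \ref{D:unconstrained} and Proposition \ref{P:standard to unconstrained}); only after that is the transverse extension possible, and this subdivision-and-refinement machinery (\S\ref{S:trapezoidal_subdivisions}--\S\ref{S:unconstraint}) is a substantial chunk of the argument that your proposal omits.

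Two smaller remarks. First, the claim that $f_z$ is a homotopy equivalence ``since the flow provides a deformation retraction'' is too quick: $\flow$ is only a semiflow, $f_z$ is typically non-injective, and the paper needs the local-model analysis and the ``controlled rel boundary'' formalism (Propositions \ref{P:local control h.e.}, \ref{P:local model maps} and Lemma \ref{L:local h.e.}) to get the homotopy equivalence. Second, your mapping-telescope argument for connectedness of $\Theta_z$ is incomplete as stated: if $f_z$ permuted the components of $\Theta_z$ in a single cycle, the telescope would still be connected, so connectedness of the infinite cyclic cover alone does not force $\Theta_z$ connected. The paper instead forms the quotient $X/\!\!\sim$ by collapsing components of fibers, observes it is an $n$--fold cover of $\mathbb{S}^1$, and uses primitivity of $u$ to force $n=1$ (Corollary \ref{C:perturbed semidirect product}); you would need some version of that primitivity argument.
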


See \S\ref{S:intro other construction} and \S\ref{S:intro alexander} for discussion of related constructions and results.

\begin{remark}
The map $\fib_z$ depends on the choice of representative cocycle $z$ and the graph $\Theta_z$ further depends on the point $x_0$.  A different representative $z'$ gives rise to a homotopic map $\fib_{z'} \simeq \fib_z$.  It follows from the proof of Theorem~\ref{T:folded basics} in \S\ref{S:proof_of_theorem_B}  that for any $x_1 \in \mathbb S^1$, $\fib_{z'}^{-1}(x_1)$ can be ``flowed onto'' $\Theta_z$ producing a homotopy equivalence $\fib_{z'}^{-1}(x_1) \to \Theta_z$ homotopic to the inclusion $\fib_{z'}^{-1}(x_1) \hookrightarrow X$. 
\end{remark}

\subsection{Train tracks and semiflows} \label{S:intro train}

We return to the motivational setting of surfaces and $3$--manifolds.  Thurston \cite{ThuMCG,FLP} proved that a homeomorphism $F \colon S \to S$ is isotopic to a {\em pseudo-Anosov homeomorphism} if and only if $F_\ast \in \Out(\pi_1(S))$ is {\em atoroidal}, meaning that it has no nontrivial periodic conjugacy classes.  In fact, Thurston proved that this happens if and only if $F_\ast$ is {\em geometrically atoroidal}, meaning that there are no nontrivial periodic conjugacy classes of elements represented by {\em simple} closed curves.

Thurston's hyperbolization theorem \cite{Otal} moreover implies that the monodromy $F \colon S \to S$ of the fibration $\fib_0 \colon M \to \mathbb S^1$ is isotopic to a pseudo-Anosov homeomorphism if and only if $M$ is a hyperbolic manifold.  Consequently, $F$ is isotopic to a pseudo-Anosov homeomorphism if and only if the monodromy $F_\nu \colon S_\nu \to S_\nu$ of  {\em any} fibration $\eta_\nu$ is isotopic to a pseudo-Anosov homeomorphism.  If $F \colon S \to S$ is a pseudo-Anosov homeomorphism (not just isotopic to one), Fried \cite[Expos\'e 14]{FLP} proved a stronger statement: when the $1$--form $\nu$ is sufficiently close to $\nu_0$ (the $1$--form representing $u_0$) then the monodromy $F_\nu \colon S_\nu \to S_\nu$ of the fibration $\fib_\nu$, which is the first return map of the suspension flow of $F$, is already pseudo-Anosov (again, the isotopy is unnecessary).  Fried's proof does not appeal to the hyperbolic structure on $M$.\\

The equivalent algebraic conditions on $F_\ast$ described above have analogues for $\fee \in \Out(F_N)$.  However, these conditions on $\fee$ are not equivalent and have different implications. Firstly, we first say that $\fee$ is {\em atoroidal} if $\fee$ has no nontrivial periodic conjugacy classes in $F_N$.  Appealing to the work of Bestvina--Feighn~\cite{BF92}, Brinkmann~\cite{Brink} proved that $\fee\in \Out(F_N)$ is atoroidal if and only if it is {\em hyperbolic}, meaning that the $G = F_N \rtimes_\fee \Z$ is word-hyperbolic.  It follows that the monodromy $\fee_u$ of any other primitive integral $u \in \C_G$ is necessarily hyperbolic as well.

Simple closed curves on a surface $S$ provide a means of cutting $S$ into pieces, thus the $F_N$--analogue of a periodic conjugacy class represented by a {\em simple closed curve} on $S$ is a periodic free factor.  By analogy with the geometric atoroidal condition, one therefore says that an element $\fee \in \Out(F_N)$ is \emph{fully irreducible} or \emph{iwip} (for ``irreducible with irreducible powers'') if no positive power of $\fee$ preserves the conjugacy class of a proper free factor of $F_N$.  

The fully irreducible condition and the hyperbolic (or atoroidal) condition are independent, with neither one implying the other. Thus one is often interested in outer automorphisms of $F_N$ that are both hyperbolic and fully irreducible; such outer automorphisms provide the closest counterpart to (isotopy classes containing) pseudo-Anosov surface homeomorphisms and are also known to be generic in $\Out(F_N)$ (e.g., see Rivin \cite{Rivin08,Rivin10} and Calegari--Maher \cite{CM10}).

From a geometric/dynamical perspective, the analogue of a pseudo-Anosov homeomorphism is an {\em expanding irreducible train-track map} $f\colon \Gamma\to\Gamma$ representing $\fee \in \Out(F_N)$ (see Definitions~\ref{D:reg-exp-graph-map} and \ref{D:train-track}). Such representatives are automatically tame (see Lemma~\ref{L:tt-tame}). Train-track maps were introduced by Bestvina and Handel in \cite{BH92}, where it was also shown that an expanding irreducible train-track representative exists for every fully irreducible $\fee\in \Out(F_N)$, and more generally for every irreducible $\fee$  of infinite order.
 Our next result proves that if the folded mapping torus $X_f$ is constructed using such a $f$, then for every primitive integral $u \in \A$ represented by a positive cocycle $z$, the first return map $f_z \colon \Theta_z \to \Theta_z$ from Theorem~\ref{T:folded basics} is also an expanding irreducible train-track representative of $\fee_u\in \Out(\ker(u))$---this is analogous to Fried's result.

If $\fee$ is additionally assumed to be fully irreducible and hyperbolic, then in view of the hyperbolicity of $G\cong G_\fee$, the monodromy $\fee_u$ of any other primitive integral $u \in \C_G$ is also hyperbolic.
However, it is not at all clear whether $\fee_u$ must also be fully irreducible. In general, proving full irreducibility for free group automorphisms is a fairly hard task, see for example \cite{CP10}. Nevertheless, by using the structure of the folded mapping torus $X_f$, we prove that for any primitive integral $u \in \A$, the monodromy $\fee_u\in \Out(\ker(u))$ is indeed fully irreducible: 

\begin{theor} \label{T:new iwip main}
Let $\fee \in \Out(F_N)$ (where $N\ge 2$) be an outer automorphism represented by an expanding irreducible train track  map $f
\colon \Gamma \to \Gamma$, let
$X_f$ be a folded mapping torus, and $\flow$ the associated semiflow.
Let $u \in \A_{X_f} \subset H^1(G_\fee;\R) = H^1(X_f;\R)$ be a primitive integral element which is
represented by a positive $1$--cocycle $z \in Z^1(X_f;\R)$, and let $\Theta_z
\subset X_f$ be the topological graph from Theorem \ref{T:folded
  basics} so that the first return map $f_z \colon \Theta_z \to
\Theta_z$ is a homotopy equivalence representing
$\fee_u$. Then:
\begin{enumerate}
\item The first return map $f_z \colon \Theta_z \to\Theta_z$ is an expanding irreducible train-track map (with respect to an appropriately chosen linear graph structure on $\Theta_z$).

\item If we additionally assume that $\fee$ is hyperbolic and fully irreducible, then $\fee_u\in \Out(\ker(u))$ is also hyperbolic and fully irreducible.
\end{enumerate}
\end{theor}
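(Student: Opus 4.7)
The plan is to inherit the structural features of $f$ for the first return map $f_z$ by exploiting the semiflow $\flow$ and the trapezoid cell structure of $X_f$. For (1), $\Theta_z$ is a section of $\flow$, so $f_z(x) = \flow(x, \tau(x))$ where $\tau$ is the first return time; away from the finite set of points whose forward orbit first meets a vertex of $X_f$ or returns to a vertex of $\Theta_z$, the return time $\tau$ is locally constant and $f_z$ is an affine $\flow$-translation onto a single edge. Adjoining these finitely many hit-points to the existing vertex set of $\Theta_z$ therefore yields a graph structure making $f_z$ a topological graph-map.

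For (2), I would transport the lengths from the trapezoid cells of $X_f$ to $\Theta_z$ via the semiflow and then rescale by the Perron--Frobenius eigenvector of the transition matrix $M_{f_z}$ of $f_z$, giving the desired linear structure. Irreducibility of $M_{f_z}$ is inherited from irreducibility of $M_f$ via the mixing of $\flow$: given edges $e,e'$ of $\Theta_z$, some large iterate of $f$ covers the $\flow$-projection of $e'$ starting from that of $e$, and lifting this through the semiflow produces $n$ with $f_z^n(e) \supset e'$. The train-track (no-backtracking) property is automatic, since every iterate $f_z^n$ is a forward time translation along $\flow$ and the semiflow does not fold. Expansion then follows from the positivity of the stretch factor.

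Hyperbolicity of $\fee_u$ in (3) is an immediate application of Brinkmann's theorem \cite{Brink}: since $\fee$ is hyperbolic, $G \cong F_N \rtimes_\fee \Z \cong \ker(u) \rtimes_{\fee_u} \Z$ is word-hyperbolic, and so $\fee_u$ must be atoroidal, hence hyperbolic.

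The main obstacle is full irreducibility of $\fee_u$, where I would argue by contradiction using attracting laminations. Since $\fee$ is fully irreducible and hyperbolic, its attracting lamination $\Lambda^+(\fee)$ is filling---no proper free factor of $F_N$ carries a leaf of $\Lambda^+(\fee)$. Realizing $\Lambda^+(\fee)$ geometrically via the train-track $f$ and transporting it through $X_f$ along the semiflow yields a lamination on $\Theta_z$, which I would identify with the attracting lamination $\Lambda^+(\fee_u)$ of the irreducible train-track $f_z$ furnished by (2). A carrying of $\Lambda^+(\fee_u)$ by a proper free factor of $\ker(u)$ would then, by flowing through $X_f$ back to the original fiber $\Gamma$, produce a carrying of $\Lambda^+(\fee)$ by a proper free factor of $F_N$, contradicting the full irreducibility of $\fee$; and the standard characterization in the hyperbolic case (a unique filling attracting lamination forces full irreducibility) completes the argument. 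The delicate step throughout is making the transport of laminations through $X_f$ rigorous at the level of individual leaves and correctly translating the notion of free-factor carrying between $F_N$ and $\ker(u)$ via the folded mapping torus.
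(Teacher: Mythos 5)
Your outline for part (1) and the hyperbolicity half of part (3) track the paper's proof (local injectivity along flow segments, Brinkmann's theorem applied to the word-hyperbolic group $G \cong \ker(u)\rtimes_{\fee_u}\Z$). For part (2), your idea of ``transporting lengths from the trapezoids and rescaling by the Perron--Frobenius eigenvector'' is vaguer than what is actually needed: the reparametrized flow $\flow^z$ does not respect the horizontal metrics compatibly across trapezoids, so there is no off-the-shelf linear structure on $\Theta_z$ making $f_z$ piecewise affine. The paper constructs one via Theorem~\ref{T:topological graph to graph}, a genuine limiting construction analogous to building the invariant transverse measure of a pseudo-Anosov, which requires first establishing that $f_z$ is expanding on all scales (Lemma~\ref{L:1returnEAS}). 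Your sketch doesn't address why such a compatible linear structure exists at all.

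The serious gap is in your argument for full irreducibility. You propose to transport the attracting lamination $\Lambda^+(\fee)$ through the semiflow to $\Theta_z$, identify the result with $\Lambda^+(\fee_u)$, and then argue that carrying of $\Lambda^+(\fee_u)$ by a proper free factor of $\ker(u)$ would yield carrying of $\Lambda^+(\fee)$ by a proper free factor of $F_N$. That last step---``translating the notion of free-factor carrying between $F_N$ and $\ker(u)$''---is not a delicate technicality; it is the crux of the entire difficulty, and there is no known functoriality of free factor systems between different fibers of a free-by-cyclic group. Unlike the $3$--manifold setting where subsurfaces and their suspensions behave well, a proper free factor of $\ker(u)$ has no canonical interaction with proper free factors of $F_N$, and the semiflow gives you no bridge here. (The paper flags exactly this: ``it is not at all clear whether $\fee_u$ must also be fully irreducible. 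In general, proving full irreducibility\ldots is a fairly hard task.'') The paper circumvents the problem entirely: it replaces the global lamination-carrying criterion by Kapovich's local combinatorial criterion (Corollary~\ref{C:clean}, improved from \cite{K12}) that for a hyperbolic automorphism, full irreducibility is equivalent to some train-track representative being \emph{weakly clean}---irreducible transition matrix plus connected Whitehead graphs at all vertices. It then proves the Whitehead graphs of $f_z$ on $\Theta_z$ are connected by directly propagating ``$z$--modified Whitehead graphs'' forward along the semiflow from $\Gamma$ to $\Theta_z$ (Lemma~\ref{lem:z-modifiedWhiteheadGraphs} and Corollary~\ref{C:connectedWhiteheadGraphs}), an argument that stays at the level of turns taken by edge paths and never mentions free factors of either fiber group. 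That combinatorial substitution is what makes the theorem provable; your lamination-transport outline would need to solve the free-factor correspondence problem as a prerequisite, which it cannot.
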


We note that if in item (2) above $\fee$ is only assumed to be fully irreducible (and not hyperbolic), then $\fee_u$ need not be fully irreducible; see Remark~\ref{R:geometric-iwip}.  

Our definitions require that train-track maps are  piecewise linear on every edge
with respect to some linear structure on the graph; see
\S\S\ref{S:graph maps}--\ref{subject:tt} for precise statements.
While this property is part of the assumption on $f$, it becomes part of the conclusion for $f_z$ for any positive cocycle $z$.    The graph $\Theta_z$ does not come equipped with any natural linear structure, and thus part of the proof of Theorem \ref{T:new iwip main} includes the construction of a natural linear structure.  This structure is deduced as a consequence of a general dynamical result (which appears to be  of independent interest), Theorem~\ref{T:topological graph to graph}, about topological graph maps that are ``sufficiently mixing'' (a notion we call {\em expanding on all scales}).  

To prove that $\fee_u$ is fully irreducible, under the assumption that $\fee$ is hyperbolic and fully irreducible, we utilize a criterion of full irreducibility for hyperbolic automorphisms obtained by Kapovich in~\cite{K12}; see also Pfaff~\cite{Pf12,Pf13}.  That criterion says that if $f'\colon\Gamma'\to\Gamma'$ is a train-track map representative of a hyperbolic automorphism $\fee'$ then $\fee'$ is fully irreducible if and only if some power $A(f')^m$ of the transition matrix $A(f')$ is strictly positive and the Whitehead graphs of all vertices of $\Gamma'$ are connected. We first obtain a general result strengthening this criterion (Theorem~\ref{T:clean} below) and show that the condition $A(f')^m>0$ can be replaced by just requiring the transition matrix $A(f')$ to be irreducible. We then verify that $f_z\colon \Theta_z\to\Theta_z$ is a train-track map with an irreducible transition matrix and with connected Whitehead graphs by exploiting the properties of $f$ and of the semiflow $\flow$. This allows us to conclude that $\fee_u$ is fully irreducible. As another useful application of Theorem~\ref{T:clean}, we show in Corollary~\ref{C:hyp_iwip} that for hyperbolic elements in  $\Out(F_N)$ being irreducible is equivalent to being fully irreducible.

\subsection{Entropy and stretch factors} \label{S:intro entropy}

For a pseudo-Anosov homeomorphism $F \colon S \to S$, the {\em stretch factor} or {\em dilatation} $\lambda(F)$ records the asymptotic growth rate for lengths of curves under iteration.  That is,
\[ \log(\lambda(F)) = \lim_{n \to \infty} \frac{\log(\ell(F^n(\gamma)))}{n} \]
where $\ell(\cdot)$ is the length of the geodesic representative in any fixed metric on $S$ and $\gamma$ is any homotopically nontrivial curve on $S$.  From the work of Fathi and Shub \cite[Expos\'e 10]{FLP}, $\log(\lambda(F))$ is the topological entropy $h(F)$ of $F$.   
Returning to the $3$--manifold $M$, we continue to assume that some (hence every) monodromy $F \colon S \to S$ for a primitive integral $u_0 \in H^1(M;\R)$ is pseudo-Anosov, and we let $\C_0 \subset \C_G$ denote the component of the f.g.~cone containing $u_0$.  Fried \cite{FriedD} proved that the function which assigns to every $u \in \C_0$ the topological entropy $h(F_u)$ of the monodromy $F_u \colon S_u \to S_u$, extends to a continuous, homogeneous of degree $-1$ function
\[ \mathfrak h \colon \C_0 \to \R \]
for which the reciprocal $1/{\mathfrak h}$ is concave.  See
\cite{LO,Oertel,Mc} for other proofs and refinements of this result.\\

Any $\fee\in \Out(F_N)$ also has a \emph{stretch factor} $\lambda(\fee)\ge 1$ defined as
\[
\log\lambda(\fee)=\sup_{w\in F_N} \lim_{n\to\infty} \frac{\log ||\fee^n(w)||_A}{n},
\]
where $||w||_A$ denotes the cyclically reduced length of $w$ with respect to some fixed free basis $A$ of $F_N$. It is known that the above definition does not depend on the choice of $A$ and that the $\sup$ in the above formula is actually a $\max$.  

An element $\fee\in \Out(F_N)$ is called \emph{exponentially growing} if $\lambda(\fee)>1$. It is known that every fully irreducible $\fee$ is exponentially growing. More generally, if $\fee\in \Out(F_N)$ admits an expanding irreducible train-track representative $f\colon\Gamma\to\Gamma$, then $\fee$ is exponentially growing and $\lambda(\fee)=\lambda(f)$, where $\lambda(f)$ is the Perron-Frobenius eigenvalue of the transition matrix $A(f)$ of $f$ (see \S\ref{S:graph maps}).  We note that in this situation $\lambda(f)$ depends only on $\fee$ and not on the choice of $f$, and that it is alternatively calculated via topological entropy $h(f)$ as $h(f) = \log(\lambda(f))$ (see Proposition~\ref{prop:h(f)} below for a more general version of this statement). This discussion applies, for example, to all fully irreducible automorphisms and, more generally, to all irreducible elements of $\Out(F_N)$ of infinite order.

The final part of Theorem \ref{T:main theorem} provides the analogue of Fried's result on entropy.

\begin{theor} \label{T:new entropy main}
Let $\fee \in \Out(F_N)$ (where $N\ge 2)$ be an element represented by an expanding irreducible train track map $f \colon \Gamma \to \Gamma$. Let $X_f$ be a folded mapping torus, $\flow$ the associated semiflow, and $\A_{X_f} \subset H^1(X_\f;\R)$ the positive cone of $X_\f$.  Then there exists a continuous, homogeneous of degree $-1$ function $\mathfrak H \colon \A_{X_f} \to \R$
such that for every primitive integral $u \in \A_{X_f}$
\[ \log(\lambda(\fee_u)) = \mathfrak H(u).\]
Moreover, $1/\mathfrak H$ is concave, and hence $\mathfrak H$ is convex.
\end{theor}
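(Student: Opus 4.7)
The plan is as follows. First, I dispatch the easy equalities on the primitive integral locus: by Theorem~\ref{T:new iwip main}, for primitive integral $u \in \A_{X_f}$ the first return map $f_z\colon \Theta_z \to \Theta_z$ is an expanding irreducible train-track representative of $\fee_u$, so $\lambda(\fee_u)=\lambda(f_z)$ is the Perron--Frobenius eigenvalue of the transition matrix $A(f_z)$ and Proposition~\ref{prop:h(f)} gives $h(f_z)=\log\lambda(f_z)$. Thus $\log\lambda(\fee_u)=h(f_z)$ is automatic, and the real work reduces to constructing $\mathfrak{H}$ on the whole cone $\A_{X_f}$ and verifying its stated analytic properties.

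For the construction, I would define $\mathfrak{H}$ implicitly by a weighted Perron--Frobenius recipe. The trapezoid cell structure on $X_f$ encodes the semiflow combinatorially as a finite directed graph; I extract a finite set $E$ of flow-directed $1$-cells together with a transition family $\mathcal{T}$ whose elements are oriented flow segments that cross a single trapezoid from one $1$-cell of $E$ to another. Given $u \in \A_{X_f}$ represented by a positive cellular cocycle $z$, and a real parameter $s>0$, I form the non-negative matrix $M(u,s)$ on $E$ with entries $M_{e,e'}(u,s)=\sum_{\gamma \in \mathcal{T}(e,e')} e^{-s\,z(\gamma)}$, where $z(\gamma)>0$ is the $z$-integral along the flow segment $\gamma$. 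Replacing $z$ by a cohomologous cocycle conjugates $M(u,s)$ by a positive diagonal matrix, so its spectral radius $\rho(u,s)$ depends only on the cohomology class $u$. Since $s\mapsto \rho(u,s)$ is continuous and strictly decreasing on $(0,\infty)$, with $\rho(u,0^+)=+\infty$ and $\rho(u,s)\to 0$ as $s\to\infty$, there is a unique $\mathfrak{H}(u)>0$ with $\rho(u,\mathfrak{H}(u))=1$. For primitive integral $u$, a direct comparison shows that $M(u,\log\lambda(f_z))$ is similar via the Perron eigenvector of $A(f_z)$ to a stochastic matrix, which yields $\mathfrak{H}(u)=\log\lambda(\fee_u)$ as required.

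The properties are then immediate. Homogeneity of degree $-1$ follows from the identity $M(cu,s)=M(u,cs)$, which gives $\mathfrak{H}(cu)=c^{-1}\mathfrak{H}(u)$. Continuity of $\mathfrak{H}$ on $\A_{X_f}$ follows from continuity of the Perron eigenvalue of an irreducible non-negative matrix in its entries, combined with the implicit function theorem applied to $\rho(u,s)=1$ (using $\partial_s \rho<0$). For concavity of $1/\mathfrak{H}$, I would follow Fried's strategy via a variational characterization: the cocycle $z$ assigns a $u$-length to every $\flow$-trajectory, and $\mathfrak{H}(u)$ can be re-identified as $\sup_\mu h_\mu(\flow)/\langle u,[\mu]\rangle$, where the supremum is over $\flow$-invariant Borel probability measures $\mu$ on $X_f$ with $\langle u,[\mu]\rangle>0$ and $[\mu]\in H_1(X_f;\R)$ is the Schwartzman asymptotic cycle of $\mu$. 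Equivalently $1/\mathfrak{H}(u)=\inf_\mu \langle u,[\mu]\rangle/h_\mu(\flow)$ is an infimum of positive linear functionals in $u$, hence concave; the implied convexity of $\mathfrak{H}$ then follows from the arithmetic--harmonic mean inequality together with positivity of $\mathfrak{H}$.

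The main obstacle, in my estimation, is twofold. First, one must extract the Markov-type encoding $(E,\mathcal{T})$ intrinsically from the folded mapping torus so that it is canonical, independent of any choice of $u$, and yet, for primitive integral $u$, specializes to data compatible with the transition matrix of the train-track map $f_z$ on $\Theta_z$; this requires a careful analysis of how the folding operation used to build $X_f$ decomposes the action of $f$ into a sequence of combinatorial ``trapezoid crossings''. Second, one must verify irreducibility of $M(u,s)$ uniformly in $u\in \A_{X_f}$, both to apply Perron--Frobenius and to ensure uniqueness of the solution to $\rho(u,s)=1$; this irreducibility should follow from irreducibility of $A(f)$ propagated through the semiflow by a mixing argument of the kind already used in the proof of Theorem~\ref{T:new iwip main}, together with openness of $\A_{X_f}$ and continuous dependence of the encoding on $u$. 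The variational identity underlying the concavity argument is a continuous-time Abramov formula for the suspension semiflow $\flow$ and should be accessible by standard thermodynamic formalism once the Markov encoding is in place.
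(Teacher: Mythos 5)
Your dispatching of the equalities $\log\lambda(\fee_u)=h(f_z)$ via Theorem~\ref{T:new iwip main} and Proposition~\ref{prop:h(f)} matches the paper exactly, and your recognition that the remaining work is to build $\mathfrak H$ on all of $\A_{X_f}$ with the stated analytic properties is correct. But your main construction --- the weighted transfer matrix $M(u,s)$ with entries $\sum e^{-s\,z(\gamma)}$ over trapezoid crossings, with $\mathfrak H(u)$ defined by $\rho(u,\mathfrak H(u))=1$ --- is a genuinely different route from the paper. It is closer to McMullen's Teichm\"uller-polynomial approach (and to the ``McMullen polynomial'' for free-by-cyclic groups that the same authors develop in a sequel paper), whereas the paper here builds $\mathfrak H$ directly from Fried's variational strategy and never constructs any such transfer operator. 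The step you describe as ``a direct comparison shows that $M(u,\log\lambda(f_z))$ is similar via the Perron eigenvector of $A(f_z)$ to a stochastic matrix'' is the real content of that alternative approach and is far from direct: the matrix $M(u,s)$ is indexed by trapezoid $1$--cells of $X_f$, while $A(f_z)$ is indexed by edges of the section $\Theta_z$, and relating the two requires a nontrivial analysis of how the section decomposes into trapezoid crossings (effectively a ``cycle polynomial'' identity). As written, that identification is a gap, not a calculation.

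Your fallback for concavity --- $1/\mathfrak H(u)=\inf_\mu\langle u,[\mu]\rangle/h_\mu(\flow)$ as an infimum of linear functionals --- is essentially the paper's entire proof, so the transfer matrix is largely redundant. But there is a second gap here: Abramov's theorem, which underlies this formula, applies to \emph{flows}, and $\flow$ is only a \emph{semiflow} (the first return maps $f_z$ are not invertible). The paper addresses this by passing to the natural extension --- a flow $\Flow$ on a larger compact space $X_f'$ with a semiconjugacy onto $(X_f,\flow)$ --- checking that topological entropy of the return maps is preserved (Lemma~\ref{L:entropy preserved}), and then running the variational principle and Abramov on $X_f'$. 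Your write-up never mentions this, and without it neither the product decomposition $\mu=\bar\mu_0\times m$ nor Abramov's formula is available. Your proposed proof of linearity via Schwartzman asymptotic cycles is a legitimate alternative to the paper's argument (which instead exploits the additivity $\fib_{z+w}=\fib_z+\fib_w$ of the associated fibrations), but to be usable it must also be carried out on the natural extension; as stated, the pairing $\langle u,[\mu]\rangle$ needs a flow-invariant $\mu$ to define the asymptotic cycle.
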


For any positive cocycle $z\in Z^1(X_f;\R)$ representing a primitive integral $u\in \A$ as above, we have $h(f_z) = \log(\lambda(f_z)) = \log(\lambda(\fee_u))$ where $f_z\colon \Theta_z\to\Theta_z$ is the expanding irreducible train-track map provided by Theorem~\ref{T:new iwip main}.  The proof of Theorem~\ref{T:new entropy main} involves showing that $[z] \mapsto h(f_z)$ extends to a function $\mathfrak H$ which is continuous, homogeneous of degree $-1$, and for which $1/\mathfrak H$ is concave; this argument closely follows Fried \cite{FriedD},  with some important modifications.

The rough idea of the proof is as follows.
According to the variational principle,
$h(f_z)=\sup_\nu h_\nu(f_z)$ where $\nu$ runs over all
$f_z$--invariant Borel probability measures on $\Theta_z$ and where
$h_\nu(f_z)$ is the Kolmogorov-Sinai entropy with respect to $\nu$.
There is a natural bijection between $\flow$--invariant finite
measures on $X_f$ and $f_z$--invariant finite measures on $\Theta_z$,
which we write as $\mu \mapsto \bar \mu_0$ (with $\bar \mu$
representing the normalized probability measure).  Then, according to
a general result of Abramov, for any $\flow$--invariant probability
measure $\mu$, $h_{\bar \mu}(f_z)=h_\mu(\flow)/ \bar \mu_0(\Theta_z)$
where $h_\mu(\flow)$ is the entropy of the flow with respect to $\mu$
(which depends on $\mu$, but not on the class $u \in \A$ represented
by the cocycle $z = z_u$). Then one gets $\frac{1}{h(f_z)}=\inf_\mu
\bar{\mu}_0(\Theta_z)/h_\mu(\flow)$. A key step in Fried's
argument is to then show that for any $\mu$ the function $u\mapsto
\bar \mu_0(\Theta_{z_u})$ extends to a \emph{linear} continuous
function $\mu_\ast \colon \A \to \R$. The function $W\colon  u\mapsto
\inf_\mu \mu_\ast(u)/h_\mu(\flow)$ is now given as the infimum of a
family of linear continuous functions. Therefore $W$ is concave on
$\A$ and hence $W$ is continuous. For every primitive integral $u\in
\A$ we have $W(u)=\frac{1}{h(f_{z_u})}$, and the conclusion of
Theorem~\ref{T:new entropy main} follows.  There are several
difficulties with carrying out this line of argument that we need to
overcome. First, Fried's and Abramov's results are for flows, while
$\flow$ is only a semiflow. Thus we need to immediately pass, via the
``natural extension'' construction in dynamics, from a semiflow
$\flow$ on $X_f$ to a related flow $\flow'$ on another space
$X_f'$. We then need to make sure that the above argument gets
correctly translated to this new setting.  Another difficulty is that
the geometric ``surgery'' argument Fried uses to prove linearity of
$\mu_\ast$ does not work well for $X_\f'$. Thus we deploy a homological argument for that purpose instead of attempting to mimic the argument of Fried.

\subsection{Connections to small dilatation free group automorphisms.} \label{S:intro small dil}

As a corollary of Theorem \ref{T:new entropy main}, we see that the
function $u \mapsto \log(\lambda(\fee_u)) \chi(\ker(u))$ defined on
primitive integral classes $u$ extends to a continuous function which
is constant on rays.  Thus from any $G = G_\fee$ with $b_1(G) \ge 2$
and $\fee$ represented by an expanding irreducible train track map,
one obtains sequences $\{\fee_k \colon F_{N_k} \to
F_{N_k}\}_{k=1}^\infty$ of automorphisms $\fee_k$ represented by
expanding irreducible train track maps for which $N_k \to \infty$ and
$N_k \log(\lambda(\fee_k))$ is uniformly bounded above and below by
positive constants. Moreover, by Theorem \ref{T:new iwip main} (2), if
$\fee$ is fully irreducible and hyperbolic so are all $\fee_k$; see
Corollary \ref{C:McMullen analogue}. This corollary mirrors a construction of pseudo-Anosov homeomorphisms with the analogous property using $3$--manifolds as described by McMullen \cite[Section 10]{Mc}.  Corollary \ref{C:McMullen analogue} also complements the work of Algom-Kfir and Rafi \cite{AR} who showed that any sequence $\{\fee_k \colon F_{N_k} \to F_{N_k}\}_{k=1}^\infty$ for which $N_k \log(\lambda(\fee_k))$ is bounded must come from this construction (up to a certain surgery procedure).  This was in turn a free-by-cyclic analogue of a result of Farb--Leininger--Margalit \cite{FLM} (see also \cite{Ag}) complementing McMullen's construction.  So in some sense, Corollary \ref{C:McMullen analogue} brings this picture full circle.

\subsection{Related results and constructions} \label{S:intro other construction}

As mentioned at the start of the introduction, it already follows from general facts about the BNS-invariant and related constructions (see, for example,  \cite{BNS,Levitt87}) that under the assumptions of Theorem~\ref{T:folded basics} there exists some open cone $\C_G \subseteq H^1(G_\fee)$ containing $u_0$, such that every primitive integral $u\in \C_G$ has finitely generated kernel $\ker(u)$.  Additional general cohomological considerations 
(e.g.~see Theorem~2.6 and Remark~2.7 in \cite{GMSW}) further imply that $\ker(u)$ is a free group.  Consequently, $u$ determines another splitting of $G_\fee$ as a (f.g.~free)-by-(infinite~cyclic) group.  We do not use these general facts in the proof of Theorem~\ref{T:folded basics} and instead directly establish for every primitive integral $u\in \mathcal A$ that $\ker(u)$ is a free group of finite rank.

The folded mapping torus $X_f$ and the semiflow $\flow$ are similar to Gautero's construction of a ``dynamical $2$--complex''~\cite{Gau1,Gau2,Gau3,Gau4} (see also the discussion in \cite[Section 5]{Gau1} for connections to other related notions) and the unpublished construction of Z.~Wang~\cite{Wang} of a ``decomposed mapping torus'' and various ``folded complexes''.  
Our folded mapping torus $X_f$ turns out the be homeomorphic to the complex constructed by Brinkmann--Schleimer in \cite{BStrian}, which treats only the case when the automorphism is induced by a homeomorphism of a surface with one boundary component.
However, \cite{BStrian} does not construct a semi-flow on the complex, but rather uses it to construct an ideal triangulation of the mapping torus of the homeomorphism.

Gautero builds a $2$--complex from a topological representative $f\colon\Gamma\to\Gamma$, where $\Gamma$ is a trivalent graph, and a path in outer space from $\Gamma$ to $\Gamma \fee$ using Whitehead moves (see also the variation in \cite{Gau4} using Stalling's folds instead of Whitehead moves, which is a modification of his original construction and is more closely related to the one here). 
Wang \cite{Wang} on the other hand constructs from a sequence of Stallings folds from $\Gamma$ to $\Gamma \fee$ first an iterated mapping cylinder which he calls a {\em decomposed mapping torus}.  This space is homotopy equivalent to the mapping torus and also admits a semiflow. Then for each cohomology class $u \in H^1(G_\fee;\R)$ satisfying certain conditions he builds what he calls a {\em folded complex} $Y_{f,u}$ which is a quotient of his decomposed mapping torus. 

In both of these constructions, the main goal is that of finding cross sections in analogy with Fried's work \cite{FriedS}.  In particular, these constructions are likely sufficient for proving analogues of Theorem~\ref{T:folded basics}.  However, it is unclear how to attack the remainder of Theorem \ref{T:main theorem} and Theorems \ref{T:new iwip main} and \ref{T:new entropy main}, with these constructions.  It is worth noting that in Wang's construction, he uses sections of the folded complexes to find sections of the original mapping torus (though the latter will likely not satisfy Theorem~\ref{T:folded basics}).
In any event, we try to mention the relevant connections to Gautero's and Wang's construction wherever possible throughout the paper.

\subsection{Alexander norm} \label{S:intro alexander}

In the $3$--manifold setting, the {\em Thurston norm} is a seminorm
\[ \norm_T \colon H^1(M;\R) = H^1(G_M;\R) \to \R\]
defined by Thurston \cite{ThuN} taking integral values on primitive
integral points (see also
\cite{Gabai,MosherD1,MosherD2,OertelHom,LO,Sch}).  He then observed that the unit ball $\mathbf B_T$ is a polyhedron.  The norm of a primitive integral class is a measure of the complexity of the class, and for any class $u$ represented by a fibration $M \to \mathbb{S}^1$, the norm is precisely $\norm_T(u) = -u(e)$, where $e$ is again the Poincar\'e dual of the associated Euler class.  In fact, in this setting $\C_{G_M}$ is a union of cones on the interiors of certain top dimensional faces of the polyhedron $\mathbf B_T$. In \cite{McAlex}, McMullen constructed a related {\em Alexander norm} $\norm_A$ on $H^1(G;\R)$ for any finitely presented group $G$, which again has a polyhedral unit ball $\mathbf B_A$ (and so as with the Thurston norm, $\norm_A$ is linear on cones on faces). In the case of the $3$--manifold group $G_M$ the Thurston and Alexander norms actually agree on $\C_{G_M}$ when $\dim(H^1(M;\R))>1$; however Dunfield \cite{Dunfield} proved that in general they do not agree on all of $H^1(G_M;\R)$.

In the case of our free-by-cyclic group $G_\fee$, since $G_\fee$ has a deficiency $1$ presentation, the results of McMullen \cite[Theorem 4.1]{McAlex} and Button \cite[Theorem 3.1]{Button} together imply that when $\dim(H^1(G_\fee;\R))>1$, the Alexander norm calculates $\norm_A(u) = b_1(\ker(u))-1$ for any primitive integral $u\in \C_{G_\fee}$, where $b_1(\ker(u))=\dim(H^1(\ker(u);\R))$. As mentioned above, general cohomological considerations 
(e.g.~see Theorem~2.6 and Remark~2.7 in \cite{GMSW}) additionally imply that $\ker(u)$ is free for any primitive integral $u\in H^1(G_\fee;\R)$ with $\ker(u)$ finitely generated; thus we have $b_1(\ker(u)) = \rk(\ker(u))$ in this case. Moreover, Dunfield's work \cite{Dunfield} implies that any primitive integral $u\in \C_{G_\fee}$ must lie over the interior of a top dimensional face of $\mathbf B_A$; therefore $\norm_A$ restricts to a linear function on an open cone neighborhood of any such $u$. Putting these results together, it follows that for every primitive integral $u_0\in \C_{G_\fee}$, the assignment $u\mapsto  1-\rk(\ker(u))$ for primitive integral $u$ extends to a linear function $u\mapsto -\norm_A(u)$ on some open cone containing $u_0$ (namely the intersection of $\C_{G_\fee}$ with the cone on a top dimensional face of $\mathbf B_A$). Thus Theorem~\ref{T:folded basics}(2) gives a realization of this linear function as the geometrically defined $E_0$--class $\epsilon\in H^1(G_\fee;\R)$. Moreover, it shows that pairing any element of $\A_{X_f}$ with the $E_0$--class gives precisely the (negative of the) Alexander norm and thus strengthens the analogy with McMullen's result for $3$--manifolds.

\bigskip

\begin{remark}
The results and techniques developed in this manuscript have served as the basis for a number of advances in the study of free-by-cyclic groups and $\Out(F_N)$ since it was made publicly available in January 2013.  For example, in \cite{DKL2}, we continue our analysis of the folded mapping torus $X_f$ and develop an analogy with McMullen's Teichm\"uller polynomial \cite{Mc}.  Thus \cite{DKL2} strengthens the connection with the BNS-invariant and provides a new proof of Theorem \ref{T:new entropy main}.  However, while the results in \cite{DKL2} rely on the main constructions of this paper, most of the new results there are orthogonal to those of the current paper.  Then in \cite{DKL2.5}, we draw on the results from both of these papers to provide examples of fully irreducible automorphisms of free groups whose stretch factors illustrate a new phenomenon, complementing the results of Handel-Mosher \cite{HM-expansion}. 

In addition, concurrent with \cite{DKL2}, Algom-Kfir, Hironaka, and Rafi \cite{AKHR} construct their own analogue of McMullen's Teichm\"uller polynomial.  Their results rely on the construction of the folded mapping torus $X_f$, properties of $\A_{X_f}$, and in particular Theorem \ref{T:new iwip main}.
\end{remark}

\bigskip

\noindent
{\bf Acknowledgements.}

We are grateful to Jayadev Athreya, Jack Button, Francesco Cellarosi, Nathan
Dunfield and Jerome Los for useful mathematical discussions, to Lee Mosher for bringing the unpublished thesis of Z.~Wang to our attention, to
Peter Brinkmann and Derek Holt for help with computer experiments, and to the anonymous referees for their careful reading and for several helpful suggestions.

\section{Background} \label{sect:background}

\subsection{Graphs} 

A \emph{topological graph} $\Gamma$ is a locally finite $1$-dimensional cell complex (where locally finite means that every $0$--cell is incident to only finite many $1$--cells). The $0$--cells of $\Gamma$ are called \emph{vertices} and the set of all vertices of $\Gamma$ is denoted by $V\Gamma$.
Open $1$--cells of $\Gamma$ are called \emph{topological edges} of $\Gamma$ and the set of these is denoted $E_{\text{top}}\Gamma$. Every topological edge of $\Gamma$ is homeomorphic to an open interval and thus admits exactly two orientations. An \emph{oriented edge} $e$ of $\Gamma$ is a topological edge together with a choice of an orientation on it. We denote the set of all oriented edges of $\Gamma$ by $E\Gamma$.  If $e$ is an oriented edge of $\Gamma$, then $e^{-1}$ denotes the same topological edge with the opposite orientation. Thus ${}^{-1}\colon E\Gamma\to E\Gamma$ is a fixed-point-free involution. For an oriented edge $e\in E\Gamma$ the attaching maps for the underlying $1$--cell canonically define the \emph{initial vertex} $o(e)\in V\Gamma$ and the \emph{terminal vertex} $t(e)\in V\Gamma$. By construction, for any $e\in E\Gamma$ we have $o(e^{-1})=t(e)$ and $t(e^{-1})=o(e)$.   For $e\in E\Gamma$ we denote by $\overline{e}$ the closure of $e$ in $\Gamma$.

An \emph{orientation} on a topological graph $\Gamma$ is a partition
$E\Gamma=E_+\Gamma\sqcup E_-\Gamma$ such that for every $e\in E\Gamma$
we have $e\in E_+\Gamma$ if and only if $e^{-1}\in E_-\Gamma$. 

Given a topological graph $\Gamma$ and $v\in V\Gamma$ the \emph{link} $\link_\Gamma(v)$ consists of all $e\in E\Gamma$ with $o(e)=v$.
A \emph{turn} in $\Gamma$ is an unordered pair $e,e'$ of (not necessarily distinct) elements of $\link_\Gamma(v)$.  A turn $e,e'$ is \emph{degenerate} if $e=e'$ and \emph{nondegenerate} otherwise.   We denote the set of all turns in $\Gamma$ by $\turns(\Gamma)$.

Given a finite set of points in a topological graph $\Gamma$, one can naturally construct a topological graph $\Gamma'$ as the {\em subdivision}, in which the vertex set contains the original vertex set, together with the finite set of points.

\subsection{Linear and metric structures}\label{subsect:structures}

Given a topological graph $\Gamma$, a {\em linear atlas of charts} for $\Gamma$ is a set
\[ \Lambda=\{ (e_\alpha,j_\alpha,[a_\alpha,b_\alpha]) \}_{\alpha \in J} \]
where $J$ is some index set,  $e_\alpha \in E\Gamma$ for every $\alpha$, and $j_\alpha\colon[a_\alpha,b_\alpha] \to \Gamma$ is a characteristic map: a continuous function for which the restriction to $(a_\alpha,b_\alpha)$ is an orientation preserving homeomorphism onto the interior of $e_\alpha$ (in particular, $j_\alpha(a_\alpha) = o(e_\alpha)$ and $j_\alpha(b_\alpha) = t(e_\alpha)$).  We further assume that (1) for every $e \in E\Gamma$, there exists at least one $\alpha \in J$ so that $e = e_\alpha$; (2) for any $\alpha, \beta\in J$ such that $e_\alpha=e_{\beta}$ then then the restriction $j_\alpha^{-1} j_\beta \colon(a_\beta,b_\beta) \to (a_\alpha,b_\alpha)$ is the unique orientation preserving affine map;  and (3) for every $\alpha$ there exists $\alpha'$ so that $e_{\alpha'} = e_\alpha^{-1}$ and $j_{\alpha'} = j_\alpha \circ \sigma$, where $\sigma\colon[a_\alpha,b_\alpha] \to [a_\alpha,b_\alpha]$ is the unique affine, orientation reversing involution.  A  \emph{linear structure} on a topological graph $\Gamma$ is a linear atlas $\Lambda = \{(e_\alpha,j_\alpha,[a_\alpha,b_\alpha]) \}_{\alpha \in J}$ which is maximal with respect to inclusion.

A \emph{metric atlas} on a topological graph $\Gamma$ is an linear atlas $\mathcal L=\{(e_\alpha,j_\alpha,[a_\alpha,b_\alpha]) \}_{\alpha \in J}$ on $\Gamma$ such that  whenever $\alpha,\beta\in J$ satisfy $e_\alpha=e_\beta$ or  $e_\alpha=e_\beta^{-1}$,  then $|b_\alpha-a_\alpha|=|b_\beta-a_\beta|$.  Note that this implies that whenever $e_\alpha=e_\beta$ then  the restriction $j_\alpha^{-1} j_\beta \colon(a_\beta,b_\beta) \to (a_\alpha,b_\alpha)$ is the unique orientation preserving isometry and whenever  $e_\alpha=e_\beta^{-1}$ then  the restriction $j_\alpha^{-1} j_\beta \colon(a_\beta,b_\beta) \to (a_\alpha,b_\alpha)$ is the unique orientation reversing isometry. A \emph{metric structure} on a topological graph $\Gamma$ is a metric atlas $\mathcal L$ on $\Gamma$ which is maximal with respect to inclusion. 

A linear or metric structure on $\Gamma$ uniquely determines one on any subdivision of $\Gamma$.  Furthermore, linear and metric structures naturally lift under covering maps. 

Given a metric structure $\mathcal L = \{ (e_\alpha,j_\alpha,[a_\alpha,b_\alpha]) \}_{\alpha \in J}$, on a graph $\Gamma$, we can define $\mathcal L(e_\alpha) = b_\alpha - a_\alpha > 0$ to be the $\mathcal L$--length of $e_\alpha$.  From the definition of metric structure, this procedure uniquely defines a function we also denote $\mathcal L\colon E\Gamma \to \R_+$ satisfying $\mathcal L(e) = \mathcal L(e^{-1})$, for every $e \in E\Gamma$.  Any such function on $E\Gamma$ is called a {\em length function for $\Gamma$}. The \emph{volume} of $(\Gamma,\mathcal{L})$ is then defined to be the (possibly infinite) quantity $\vol_{\mathcal L}(\Gamma):=\tfrac{1}{2}\sum_{e\in E\Gamma}\mathcal{L}(e)$. Since $\Gamma$ is locally finite, there is a unique geodesic metric $d_{\mathcal L}$ on each component of $\Gamma$ defining the original weak topology on the $1$--complex $\Gamma$ for which the characteristic maps of that  metric structure are all local isometries.  Conversely, given a geodesic metric $d$ on the components of $\Gamma$, there is a canonical metric structure $\mathcal L$ so that $d = d_{\mathcal L}$. 

Any metric structure $\mathcal L$ on $\Gamma$ extends to a canonical linear structure $\Lambda$, meaning that $\mathcal L \subset  \Lambda$.  Also, given a linear structure $\Lambda$ on $\Gamma$ and any length function $L\colon E\Gamma\to \R_+$, there exists a unique metric structure $\mathcal L \subset \Lambda$ for which $\mathcal L(e) = L(e)$ for all $e \in E\Gamma$. That is, when a linear structure $\Lambda$ on $\Gamma$ is fixed, we can uniquely specify a metric structure $\mathcal L \subset \Lambda$ on $\Gamma$ by simply specifying the lengths of edges.  

A \emph{graph} or \emph{linear graph} is a topological graph $\Gamma$ together with a linear structure $\Lambda$ on it.  A {\em metric graph} is similarly defined as a topological graph $\Gamma$ together with a metric structure $\mathcal L$.   Via the canonical linear structure determined by a metric structure, any metric graph is also naturally a linear graph.

\subsection{Paths}

If $\Gamma$ is a topological graph, for $n\ge 1$ a \emph{combinatorial edge-path} of \emph{simplicial length $n$} in
$\Gamma$ is a sequence $\gamma=e_1e_2\dots e_n$ of oriented edges of
$\Gamma$ such that for $i=1,\dots, n-1$ we have
$t(e_i)=o(e_{i+1})$. We denote $|\gamma|=n$ and put
$o(\gamma):=o(e_1)$ and $t(\gamma):=t(e_n)$. Also, for a vertex $v$ of
$\Gamma$ we view $\gamma=v$ as a combinatorial edge-path  of
simplicial length $|\gamma|=0$ with $o(\gamma)=t(\gamma)=v$. For a combinatorial edge-path $\gamma=e_1\dots e_n$ we denote $\gamma^{-1}=e_n^{-1}\dots e_1^{-1}$, so that $\gamma^{-1}$ is a combinatorial edge-path from $t(\gamma)$ to $o(\gamma)$ with $|\gamma|=|\gamma^{-1}|=n$. For $\gamma=v$ (where $v\in V\Gamma$) we put $\gamma^{-1}=\gamma=v$.

If $\gamma=e_1\dots e_n$ is a combinatorial edge-path and $\mathbf t$ is a turn, we say that this turn is \emph{contained} in $\gamma$ if there exists $2\le i\le n$ such that $\mathbf t= \{e_{i-1}^{-1}, e_i\}$ or $\mathbf t=\{ e_i^{-1}, e_{i-1}\}$.

A \emph{topological edge-path} of \emph{simplicial length} $n$ is a continuous map $g\colon[a,b]\to \Gamma$ such that there exists a combinatorial edge-path $\gamma_g=e_1\dots e_n$ in $\Gamma$ and a subdivision $a=c_0<c_1<\dots < c_n=b$ such that for each $i=1,\dotsc,n$ we have $g(c_{i-1})=o(e_i)$, $g(c_i)=t(e_i)$, and the restriction $g|_{(c_{i-1},c_i)}$ is an orientation preserving homeomorphism onto $e_i$.  We say that $\gamma_g$ is the combinatorial edge-path \emph{associated to} $g$.  Writing $o(g) :=o(e_1)$ and $t(g):=t(e_n)$, then $g$ is a path in $\Gamma$ from $o(g)$ to $t(g)$.  For any vertex $v$ of $\Gamma$ and $[a,b] \subset \R$, we also regard the constant map $g\colon[a,b] \to \Gamma$ given by $g(s) = v$ for all $s \in [a,b]$ as a topological edge-path of \emph{simplicial length} $0$ with $o(g) = t(g) = v$.  If $\Gamma$ is a linear graph, then a topological edge-path $g\colon[a,b] \to \Gamma$ is a \emph{PL edge-path} if it additionally satisfies the following condition. For every edge $e_i$ of $\gamma_g$ and any chart $(e_\alpha,j_\alpha,[a_\alpha,b_\alpha])$ from the linear atlas with $e_\alpha=e_i$, the restriction of $g$ to $[c_{i-1},c_i]$ is equal to $j_\alpha \circ \tau_{\alpha,i}$ where $\tau_{\alpha,i}$ is the unique orientation-preserving affine homeomorphism from $[c_{i-1},c_i]$ to $[a_\alpha,b_\alpha]$.

By dropping the requirement that $g(a)$ and $g(b)$ be vertices of $\Gamma$, the above definition is straightforwardly modified to get the notion of a \emph{PL path} in $\Gamma$.

If we have a combinatorial path $\gamma$ with $|\gamma|=n$ in a topological (respectively, linear) graph $\Gamma$, there always exists a topological (respectively, PL) edge-path $g\colon[a,b]\to\Gamma$ with $\gamma_g=\gamma$. Therefore we will often suppress the distinction between combinatorial, topological, and PL edge-paths and just call them edge-paths, unless we wish to emphasize the distinction.  We call edge-paths of positive simplicial length \emph{nondegenerate} and edge-paths of simplicial length $0$ \emph{degenerate}.

A combinatorial edge-path $\gamma$ in $\Gamma$ is \emph{tight} or \emph{reduced} if $\gamma$ does not contain a subpath of the form $ee^{-1}$ where $e\in E\Gamma$ (that is, if $\gamma$ contains no degenerate turns).  A topological or PL edge-path is \emph{tight} or \emph{reduced} if the associated combinatorial edge-path is so. More generally, a PL path $g\colon[a,b]\to \Gamma$ is \emph{tight} if $g$ is locally injective. 

If $(\Gamma,\mathcal L)$ is a metric graph, then the $d_{\mathcal{L}}$--length of a PL-path $g$ in $(\Gamma,\mathcal{L})$ is denoted by $|g|_{\mathcal L}$. Similarly, the length of a combinatorial edge-path $\gamma = e_1\dots e_n$ will be denoted by $|\gamma|_{\mathcal L} = \sum_{i=1}^n \mathcal{L}(e_i)$.

\subsection{Graph maps} \label{S:graph maps}

\begin{defn}[Graph-map]\label{D:top-graph-map}
If $\Gamma$ and $\Delta$ are topological graphs, a \emph{topological graph-map} $f\colon\Gamma\to
\Delta$ is a continuous map such that $f(V\Gamma)\subseteq
V\Delta$ and such that for every $e\in E\Gamma$ $f|_e$ is a topological-edge-path
(possibly degenerate) from $f(o(e))$ to $f(t(e))$ in $\Delta$.  More precisely, this means that for any characteristic map $j\colon[a,b] \to e$, the composition $\f \circ j\colon[a,b] \to \Delta$ is a topological edge-path.  
In this situation we will often denote the combinatorial edge-path in $\Delta$ associated to $f\circ j$ by $f(e)$. 
If $\Gamma$ and $\Delta$ are linear graphs and in addition we assume that for every $e \in E\Gamma$, $\f|_e$ is a PL edge-path from $\f(o(e))$ to $\f(t(e))$, then we call $\f$ a \emph{linear graph-map} or just a \emph{graph-map}. 
\end{defn}

We note that if $\Gamma$ and $\Delta$ are equipped with linear structures, then any topological graph-map $f\colon\Gamma\to\Delta$ is homotopic rel $f^{-1}(V\Delta)\supset V\Gamma$ to a unique (linear) graph-map $f'\colon \Gamma\to \Delta$.

\begin{example}[Running example] \label{Ex:perpetuating_example}
Here we introduce an example that we will develop throughout the paper
as a means of illustrating key ideas. Let $\Gamma$ denote the graph in
Figure \ref{F:train_tracks}.  This graph has four edges, oriented as shown and labeled $\{a,b,c,d\} = E_+\Gamma$. We consider a graph-map $f\colon \Gamma\to\Gamma$ under which the edges of $\Gamma$ map to the combinatorial edge-paths $f(a)=d$, $f(b)=a$, $f(c)=b^{-1}a$, and $f(d)=ba^{-1}db^{-1}ac$. 

\begin{figure} [htb]
\labellist
\small\hair 2pt
\pinlabel $a$ [b] at 37 58
\pinlabel $b$ [b] at 37 31
\pinlabel $d$ [b] at 37 5
\pinlabel $c$ [r] at 98 28
\pinlabel $\iota$ [b] at 118 52
\pinlabel $f'$ [b] at 118 8
\pinlabel $d$ [b] at 175 58
\pinlabel $a$ [b] at 175 31
\pinlabel $b$ [l] at 145 23
\pinlabel $a$ [bl] at 153 12
\pinlabel $d$ [b] at 167 6
\pinlabel $b$ [b] at 185 5.5
\pinlabel $a$ [br] at 199 12
\pinlabel $c$ [r] at 207 23
\pinlabel $b$ [b] at 226 43.5
\pinlabel $a$ [b] at 226 17
\pinlabel $\Gamma$ at 5 54
\pinlabel $\Delta$ at 212 54
\endlabellist
\begin{center}
\includegraphics[height=2.7cm]{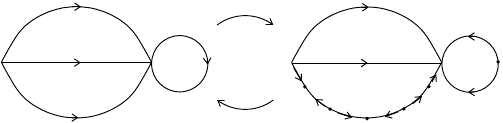} \caption{An example graph-map. Left: Original graph $\Gamma$. Right: Subdivided graph $\Delta$ with labels. Our example $\f \colon \Gamma \to \Gamma$ is the  composition of the ``identity'' $\iota\colon\Gamma\to\Delta$ with the map $f'\colon\Delta\to\Gamma$ that sends edges to edges preserving labels and orientations.}
\label{F:train_tracks}
\end{center}
\end{figure}

To describe this example precisely, give $\Gamma$ a linear structure and let $\Delta$ be the graph obtained by subdividing $\Gamma$ as pictured on the right of Figure~\ref{F:train_tracks}. 
The identity then gives a graph-map $\iota\colon\Gamma\to \Delta$. Labeling the edges of $\Delta$ as indicated, there is a unique linear graph-map $f'\colon \Delta\to \Gamma$ for which each edge of $\Delta$ maps by an affine map to the edge of the same label in $\Gamma$. Our example is defined to be the graph-map
\begin{equation*}
f := f'\circ \iota\colon \Gamma\to\Gamma.
\end{equation*}
\end{example}

\begin{defn}[Regular and expanding maps] \label{D:reg-exp-graph-map}
A topological graph-map $f\colon\Gamma\to\Delta$ is \emph{regular} if for any $e\in E\Gamma$ the combinatorial edge-path $f(e)$ is non-degenerate and tight (thus $|f(e)|>0$). 
We say that a topological graph map $f\colon\Gamma\to\Gamma$ is \emph{expanding} if for every edge $e\in E\Gamma$ we have $|f^n(e)|\to\infty$ as $n\to\infty$. 
\end{defn}

The next definition is convenient for carrying out the constructions in subsequent sections.

\begin{defn}[Combinatorial maps] \label{D:combinatorial-graph-map}
A {\em combinatorial graph map} is a regular, linear graph map $f \colon \Gamma \to \Gamma$ where $\Gamma$ is a metric graph with all edge lengths equal to $1$, and where the subdivision of any edge $e$ along $f^{-1}(V\Gamma)$ is into equal length parts.
\end{defn}

\begin{remark}\label{rem:make_combinatorial}
Given any regular topological graph map $f_0 \colon \Gamma \to \Gamma$, then it is easy to construct a metric structure on $\Gamma$ and a combinatorial graph map $f \colon \Gamma \to \Gamma$ that is homotopic to $f_0$ and for which the combinatorial edge-paths $f(e)$ and $f_0(e)$ agree for every edge $e$. In fact, under an appropriate expansion condition on $f_0$ (Definition~\ref{defn:expanding_all_scales}), one can use Corollary~\ref{cor:find_combinatorial_metric} from Appendix~\ref{App:Entropy_expansion} to arrange for $f = f_0$ (i.e., no homotopy is needed).
\end{remark}

Let  $f\colon\Gamma\to\Delta$ be a regular topological graph-map and let $v\in V\Gamma$. Define an equivalence relation $\sim_{f,v}$ on $\link_\Gamma(v)$ by saying that for $e,e'\in \link_\Gamma(v)$ we have $e\sim_{f,v} e'$ whenever the combinatorial edge-paths $f(e)$ and $f(e')$ start with the same (oriented) edge of $\Delta$.

\begin{defn}[Tame maps]\label{D:tame}
A topological graph-map $f\colon\Gamma\to\Delta$ is \emph{tame} if $f$ is regular and if for every vertex $v\in V\Gamma$ the relation $\sim_{f,v}$ partitions $\link_\Gamma(v)$ into at least two equivalence classes.
Note that if $f\colon\Gamma\to\Delta$ is tame then every vertex of $\Gamma$ has valence $\ge 2$.
\end{defn}

\begin{defn}[Taken turns and Whitehead graphs]
Let $f\colon\Gamma\to\Gamma$ be a regular graph-map.  A turn $e_1,e_2$ in $\Gamma$ is \emph{taken} by $f$ if there exist $e\in E\Gamma$ and $n\ge 1$ such that $e_1^{-1}e_2$ is a subpath of $f^n(e)$ (that is, if the turn $e_1,e_2$ is contained in $\f^n(e)$).   For any vertex $v\in V\Gamma$ we define the \emph{Whitehead graph}  of $v$,  denoted $\wh_\Gamma(f,v)$ or just $\wh(f,v)$ if $\Gamma$ is fixed, as follows.  The graph $\wh_\Gamma(f,v)$  is a simple graph whose vertex set is $\link_\Gamma(v)$. Thus the vertices of $\wh_\Gamma(f,v)$ are exactly all those $e\in E\Gamma$ such that $o(e)=v$. Two distinct elements $e_1,e_2\in \link_\Gamma(v)$ are adjacent as vertices in $\wh_\Gamma(f,v)$ whenever the turn $e_1,e_2$ is taken by $f$. (Note that since $e_1\ne e_2$, the turn $e_1,e_2$ is nondegenerate.)  
\end{defn}

\begin{remark}\label{R:WG}
For a regular graph-map $f\colon\Gamma\to\Gamma$, denote by $\turns_n(f)$ the set of all turns contained in the paths $\{f^k(e) :  e\in E\Gamma, 1\le k\le n\}$. Then  $\turns_n(f)\subseteq \turns_{n+1}(f)$ for all $n\ge 1$.  Put $\turns_\infty(f):=\cup_{n\ge 1} \turns_n(f)$. By definition, for a vertex $v$ of $\Gamma$ the Whitehead graph $\wh_\Gamma(f,v)$ records all the nondegenerate turns at $v$ that belong to $\turns_\infty(f)$. 
\end{remark}

Let $\Gamma$ be a finite topological graph with at least one edge and let $f\colon\Gamma\to\Gamma$ be a topological graph-map. Choose an orientation on $\Gamma$ and let $e_1,\dots, e_k$ be an ordering of $E_+\Gamma$.
The \emph{transition matrix} $A(f)=(a_{ij})_{i,j=1}^k$ is a $k\times k$ matrix where for $1\le i,j\le k$ the entry $a_{ij}$ is the total number of occurrences of $e_j^{\pm 1}$ in the path $f(e_i)$ (note that this definition gives the transpose of that used for the transition matrix in~\cite{BH92}).  Thus for every $n\ge 1$ we have $A(f^n)=A(f)^n$. 
Define $\lambda(f)$ to be the spectral radius of the transition matrix $A(f)$.

We say that $A(f)$ is \emph{positive}, denoted $A(f)>0$, if $a_{ij}>0$ for all $1\le i,j\le k$. We say that $A=A(f)$ is \emph{irreducible} if for every $1\le i,j\le k$ there exists $t=t(i,j) \ge 1$ such that $(A^t)_{ij}>0$. Thus if $A(f)>0$ then $A(f)$ is irreducible. Note that if $A(f)$ is irreducible then $\lambda(f)$ is exactly the Perron-Frobenius eigenvalue of $A(f)$. Furthermore, when $A(f)$ is irreducible, the property $\lambda(f) > 1$ is equivalent to $f$ being expanding, which is also equivalent to $A(f)$ not being a permutation matrix.

\begin{example}\label{Ex:whitehead_graphs}
Consider again our running example graph-map $f\colon\Gamma\to\Gamma$ defined in Example~\ref{Ex:perpetuating_example}. The four combinatorial edge-paths $f(a)=d$, $f(b)=a$, $f(c)=b^{-1}a$ and $f(d)=ba^{-1}db^{-1}ac$ are all reduced; therefore $f$ is a regular graph-map. Hence we also see that the links $\link_\Gamma(v)$ at the left and right vertices of $\Gamma$ are respectively partitioned into the equivalence classes
\[ \{a\}, \{b\},\{d\},\qquad\text{and}\qquad \{a^{-1}\}, \{b^{-1},c^{-1}\}, \{c\}, \{d^{-1}\},\]
by the relations $\sim_{f,v}$; thus $f$ is tame.
One may also calculate by hand the Whitehead graphs of $f$ for the vertices of $\Gamma$, which we have illustrated in Figure~\ref{F:whitehead}. 
Namely, in the notations of Remark~\ref{R:WG}, Figure~\ref{F:whitehead} shows all the nondegenerate turns that belong to $\turns_3(f)$ is this example. A direct check shows that  $\turns_3(f)=\turns_4(f)$ here and hence $\turns_\infty(f)=\turns_3(f)$. Therefore the graphs shown in Figure~\ref{F:whitehead} are exactly the Whitehead graphs of the two vertices of $\Gamma$. Note that both these Whitehead graphs are connected.
\begin{figure}[htb]

\begin{tikzpicture}[scale=.9]
\small
\path (0,0) coordinate (Lorig);
\path (Lorig) ++(120:1.5cm) node (a) {$a$};
\path (Lorig) ++(0:1.5cm) node (b)   {$b$};
\path (Lorig) ++(240:1.5cm) node (d) {$d$};

\path (8,0) coordinate (Rorig);
\path (Rorig) ++(90: 1.5cm) node (c) {$c$};
\path (Rorig) ++(162:1.5cm) node (A) {$a^{-1}$};
\path (Rorig) ++(234:1.5cm) node (B) {$b^{-1}$};
\path (Rorig) ++(306:1.5cm) node (D) {$d^{-1}$};
\path (Rorig) ++(18:1.5cm) node (C) {$c^{-1}$};

\tiny
\draw (a) -- node[sloped,above]{$f(d)$} 
      (b) -- node[sloped,below]{$f^3(c)$}
      (d) -- node[left]{$f(d)$}
      (a);

\draw (A) -- node[below,sloped]{$f(d)$} (B);
\draw (A) -- node[above,sloped]{$f^2(d)$}(C);
\draw (D) -- node[above,sloped]{$f^2(d)$}(A);
\draw (A) -- node[above,sloped]{$f(d)$} (c);
\draw (B) -- node[below,sloped]{$f(d)$} (D);
\draw (C) -- node[below,sloped]{$f^3(d)$} (D);
\end{tikzpicture}
\caption{The Whitehead graphs of the left and right vertices of $\Gamma$, respectively. We have labeled each adjacency by an edge-path that takes the corresponding turn in $\Gamma$.}
\label{F:whitehead}
\end{figure}
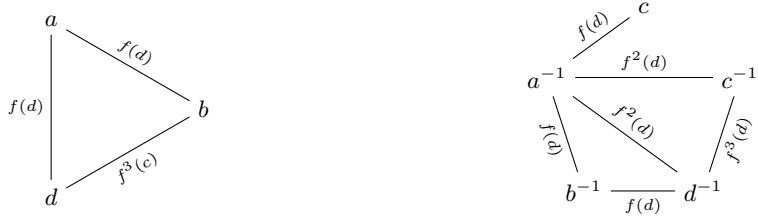

Finally, the transition matrix, Perron-Frobenius eigenvalue and corresponding unit eigenvector for this example are
\[ A(\f) = \begin{pmatrix}
0 & 0 & 0 & 1\\
1 & 0 & 0 & 0\\
1 & 1 & 0 & 0\\
2 & 2 & 1 & 1 
\end{pmatrix}, \qquad  \lambda \approx 2.4142, \qquad {\bf {v}} \approx \begin{pmatrix}
.2265 \\ .0939 \\ .1327 \\ .5469 \end{pmatrix}.\]
The characteristic polynomial of $A(f)$ is $x^4-x^3-2x^2-3x-1$. It is easy to check that $A(f)^3$ is positive, so $A(f)$ is irreducible. Since $\lambda(f) > 1$, we also see that $f$ is expanding.
\end{example}

\subsection{Entropy and expansion} \label{S:entropy_expansion_background}

The next two results are likely well known, but we could not find an explicit reference in the literature.  In Appendix \ref{App:Entropy_expansion}, we have included proofs of these for completeness, as well as some references for closely related results.

The first result tells us that under fairly mild assumptions on a topological graph map, the topological entropy can be calculated by the transition matrix.

\begin{proposition:h(f)}
Let $\Gamma$ be a finite, connected, topological graph with at least one edge and let $f\colon\Gamma\to\Gamma$ be regular topological graph-map.  Let $h(f)$ be the topological entropy of $f$. 
Then $h(f)=\log \lambda(f)$ (recall that $\lambda(f)$ is the spectral radius of $A(f)$).
\end{proposition:h(f)}

The next theorem gives a very strong rigidity result which, among other things, allows one to promote a topological graph map to a (linear) graph map, with respect to a carefully chosen linear structure, in certain situations.  To state the theorem, we first make the following

\begin{defn}\label{defn:expanding_all_scales}
A topological graph-map $\f\colon\Gamma \to \Gamma$ is {\em expanding on all scales} if for every $x \in \Gamma$, every neighborhood $U$ of $x$, and every edge $e$ of $\Gamma$, there exists a positive integer $m$ and an open interval $W \subset U$ so that $\f^m$ maps $W$ homeomorphically onto $e$.
\end{defn}

\begin{theorem:Topological graph to graph}
Suppose $\f\colon\Gamma \to \Gamma$ is a regular topological graph-map that is expanding on all scales.  Choose an orientation on $\Gamma$ and an ordering $E_+\Gamma = \{e_1,\ldots,e_n\}$. 

Then the topological graph-map $\f$ is expanding, the matrix $A(\f)$ is irreducible with the Perron-Frobenius eigenvalue  $\lambda=\lambda(f)>1$, and  there exists a unique volume-1 metric structure $\mathcal L$ on $\Gamma$ with the following properties:

\begin{enumerate}
\item $\f \colon \Gamma \to \Gamma$ is linear with respect to $\mathcal L$;
\item $\f$ is a local $\lambda$--homothety, meaning that the norm of the derivative of $\f$ at every point of $\Gamma \setminus \f^{-1} (V\Gamma)$ is $\lambda$; and
\item For every $i=1,\dots, n$ the $d_\mathcal L$--length of $e_i$ is equal to $v_i$ where ${\bf v}=(v_1,\dots, v_n)^T$ is the left Perron-Frobenius eigenvector (i.e.~for the left action of $A(\f)$ on column vectors) with $\sum_{i=1}^n v_i =1$.
\end{enumerate}
\end{theorem:Topological graph to graph}

As we show in Appendix \ref{App:Entropy_expansion}, expanding irreducible graph maps are expanding on all scales; see Lemma \ref{L:linear maps expanding on all scales}.  Consequently, as explained Corollary~\ref{cor:find_combinatorial_metric}, given any expanding, irreducible graph map $f \colon \Gamma \to \Gamma$, there is a choice of (possibly different) metric structure on $\Gamma$ making $f$ into a combinatorial graph map (Definition~\ref{D:combinatorial-graph-map}).

\subsection{Train-tracks}\label{subject:tt}
To obtain sufficient information to prove Theorems~\ref{T:new iwip main} and \ref{T:new entropy main} we will make heavy use of a certain class of graph-maps introduced in \cite{BH92} that have additional structure. This extra structure ensures that the map and all of its powers are combinatorially efficient. 

\begin{defn}[Train-track map]\
\label{D:train-track}
Let $\Gamma$ be a finite connected (linear) graph with at least one edge and without valence-1 vertices. A (linear) graph-map $f\colon\Gamma\to\Gamma$ is a \emph{train-track map} if:
\begin{enumerate}
\item The map $f$ is a homotopy equivalence (and hence $f$ is surjective).
\item For every edge $e\in E\Gamma$ and every $n\ge 1$ the map $f^n|_e$ is an immersion; that is, the edge-path $f^n(e)$ is tight and nondegenerate. (Note that this implies that for every $n\ge 1$ the map $f^n\colon \Gamma\to\Gamma$ is regular.)
\end{enumerate}
\end{defn}
The original definition of train-track map \cite{BH92} does not allow valence $2$ vertices.  The next lemma guarantees that no complications occur near such vertices.

\begin{lemma} \label{L:tt-tame}
Suppose that $\f \colon \Gamma \to \Gamma$ is a train-track map.   Then $f$ is tame.  Furthermore, if $v$ is a vertex of valence $2$ in $\Gamma$ then for every $n\ge 1$ the map $f^n$ is an immersion near $v$; that is, for every $n\ge 1$ there exists an open neighborhood $J$ of $v$ in $\Gamma$ such that $f^n|_{J}$ is injective.
\end{lemma}
\begin{proof}  By condition (2) of Definition~\ref{D:train-track}, both conclusions of the lemma will follow if we can prove that there is a power $m_0 > 0$ so that for every vertex $v \in V\Gamma$, either $\f^{m_0}(v) = v$ and $\f^{m_0}$ is injective on a neighborhood of $v$ or else there is a turn of $v$ taken by $\f^{m_0}$.  

Let $\mathcal V \subset V\Gamma$ be the set of {\em $\f$--stable} vertices: that is, those vertices which are eventually permuted by $\f$.  Let $m >0$ be such that $\f^m\vert_{\mathcal V} = id_{\mathcal V}$, and so that $\f^m(V\Gamma) = \mathcal V$.   First observe that if $v \in V\Gamma \setminus \mathcal V$, then since $\f^m$ is surjective by (1), there must be a turn of $v$ taken by $\f^m$.  It follows that any $v \in \f^m(V\Gamma \setminus \mathcal V) \subset \mathcal V$ has a turn taken by $\f^{2m}$ (the image of a turn taken by $\f^m$).  

Finally, if $v \in \mathcal V \setminus \f^m(V\Gamma \setminus \mathcal V)$, then we claim that $\f^m$ is locally injective on some neighborhood of $v$ or else there is a turn of $v$ taken by $\f^m$.  To prove this claim, suppose that $\f^m$ is not locally injective on a neighborhood of $v$.  Then the map $\link_\Gamma(v) \to \link_\Gamma(v)$ induced by $\f^m$ (defined since $\f^m(v) = v$)  would fail to be injective, and hence must also fail to be surjective, missing some $e \in \link_\Gamma(v)$.  Surjectivity of $\f^m$ implies that there must be some edge $e'$ so that $e$ is an edge of $\f^m(e')$.  A point of $\bar e'$ sent to $o(e) = v$ cannot be a vertex (by assumption on $v$), and hence must be an interior point.   But this means that $v$ has a turn that is taken by $\f^m$ as required.  Setting $m_0 = 2m$ completes the proof.
\end{proof}

An \emph{irreducible train track map} is a train-track map $f\colon\Gamma\to\Gamma$ such that the transition matrix $A(f)$ is irreducible. 
In this case, the Perron-Frobenius theory tells us that $\lambda(f)\ge 1$ is an eigenvalue of $A(f)$, that there exists a unique (up to a scalar multiple) left eigenvector $\mathbf v$ for $A(f)$ with $A(f){\mathbf v}=\lambda(f) {\mathbf v}$, and that all the coordinates of $\mathbf v$ are positive. We normalize $\mathbf v$ so that the sum of its coordinates is equal to $1$. The coordinates of $\mathbf v$ then define a length function $E\Gamma\to \R_+$ which together with the linear structure induces a metric structure $\mathcal L$ on $\Gamma$ so that $\vol_{\mathcal L}(\Gamma)=1$ and such that for every edge $e\in E\Gamma$ we have $|f(e)|_\mathcal L=\lambda(f)\mathcal L(e)=\lambda(f)|e|_\mathcal L$.  We call this $\mathcal L$ the \emph{train-track metric structure} on $\Gamma$ and the geodesic metric $d_\mathcal L$ the \emph{train track-metric} on $\Gamma$. 

\begin{remark}\label{R:tt-metric}
An irreducible train-track map $f\colon \Gamma\to\Gamma$ typically need not be a local $\lambda(f)$--homothety with respect to the train-track metric $d_\mathcal L$. 
The reason is that $\Gamma$ comes equipped with a linear structure $\Lambda$ such that for each $e\in E\Gamma$ there exists a unique chart map $j_e\colon [0, \mathcal L(e)]\to \Gamma$ in $\Lambda$ mapping $(0,\mathcal L(e))$ homeomorphically to $e$. If $f(e)=e_1\dots e_m$ then there is a unique subdivision $0=t_0<t_1<\dots < t_m=\mathcal L(e)$ such that $f\circ j_e$ maps $(t_{i-1},t_i)$ homeomorphically to $e_i$. Since the relative lengths of the subintervals $(t_{i-1},t_i)$ are specified by the linear structure, there is in general no reason to assume that we will have $\lambda(f) |t_i-t_{i-1}|=\mathcal L(e_i)$ for $i=1,\dots, m$. Thus, even though $\mathcal L(f(e))=\lambda(f)\mathcal L(e)$, the map $f$ may stretch the metric $d_\mathcal L$ on the subsegments $j_e([t_{i-1},t_i])$ of $e$ by different amounts. 
However, Corollary \ref{cor:eigenmetric} shows that one may \emph{change} the linear structure on $\Gamma$ in such a way that $f$ remains a linear graph-map and that, with respect to the new train track metric, $f$ \emph{is} locally a $\lambda(f)$--homothety on edges.
\end{remark}

If $f\colon\Gamma\to\Gamma$ is a regular topological graph-map, we have a natural \emph{derivative map} $D_f\colon E\Gamma\to E\Gamma$, where for $e\in E\Gamma$ we define $D_f(e)$ to be the initial edge of the edge-path $f(e)$. The map $D_f$ naturally extends to $D_f\colon \turns(\Gamma)\to \turns(\Gamma)$ by setting $D_f(\{e,e'\}):=\{D_f(e), D_f(e')\}$, where $e,e'$ is a turn in $\Gamma$.  A turn $\mathbf t\in \turns(\Gamma)$ is then called \emph{legal} for $f$ if $(D_f)^n(\mathbf t)$ is non-degenerate for every $n\ge 0$. 

\begin{remark}\label{R:train-track}
Consider a finite graph $\Gamma$ without valence-$1$ vertices and a homotopy equivalence regular (linear) graph-map $f\colon\Gamma\to\Gamma$. In this situation it is easy to see that $f$ is a train-track map if and only if the unique non-degenerate turn at each valence-$2$ vertex is legal and every turn contained in an edge-path of the form $f(e)$ for $e\in E\Gamma$ is legal. Moreover, it is easy to check whether a given turn is legal because the set $\turns(\Gamma)$ is finite. Therefore, it is straightforward and algorithmic to determine whether the given map $f$ is a train track-map.
\end{remark}

\begin{example}\label{Ex:train-track}
The graph-map $f$ introduced in Example~\ref{Ex:perpetuating_example}
is actually a train-track map. This fact can be easily checked by hand using the algorithm explained in Remark~\ref{R:train-track} above, and we have also verified that $f$ is a train-track map using Peter Brinkmann's software package {\rm xtrain}\footnote{Available at http://gitorious.org/xtrain}.
\end{example}

\subsection{Outer automorphisms}\label{subsect:outer}

For an integer $N\ge 2$ let $F_N=F(x_1,\dotsc, x_N)$ be the free group of rank $N$. Recall that  $\Out(F_N)$ is the group of all automorphisms of $F_N$ modulo the subgroup of inner automorphisms

An element $\fee\in \Out(F_N)$ is said to be \emph{reducible} if there exists a free product decomposition $F_N=A_1\ast \dots \ast A_k \ast C$ with $k\ge 1$ and each $A_i\ne \{1\}$ (and where it is possible that $C=\{1\}$ only in the case $k \ge 2$) such that $\fee$ permutes the conjugacy classes $[A_1],\dots, [A_k]$. 

An element $\fee\in \Out(F_N)$ is \emph{irreducible} if it is not reducible. 
Moreover, $\fee\in \Out(F_N)$ is said to be \emph{fully irreducible} or \emph{iwip} (for ``irreducible with irreducible powers'') if $\fee^n$ is irreducible for every $n\ge 1$. Thus $\fee$ is fully irreducible if and only if there do not exist $n\ge 1$ and a proper free factor $A$ of $F_N$ such that $\fee^n([A])=[A]$.  Accordingly, $\Phi\in \Aut(F_N)$ is said to be \emph{irreducible} (respectively, \emph{fully irreducible}) if the outer automorphism class  $\fee\in\Out(F_N)$ of $\Phi$ is irreducible (respectively, fully irreducible). 

Finally, $\fee\in \Out(F_N)$ (or an element of $\Aut(F_N)$ representing $\fee$) is called \emph{atoroidal} or \emph{hyperbolic} if there do not exist $n\ge 1$ and $w\in F_N, w\ne 1$, such that $\fee^n$ preserves the conjugacy class $[w]$ of $w$ in $F_N$.
A result of Brinkmann~\cite{Brink} (utilizing the Bestvina--Feighn Combination Theorem~\cite{BF92}) says that $\fee\in \Aut(F_N)$ is hyperbolic if and only if the mapping torus group $G_\fee=F_N\rtimes_\fee \mathbb Z$ of $\fee$ is word-hyperbolic.

\subsection{Topological representatives}\label{subsect:mark}

Given a finite connected graph $\Gamma$ and a homotopy equivalence topological graph-map $\f\colon\Gamma\to\Gamma$, let $v\in V\Gamma$ be a vertex and let $\beta$ be an edge-path from $v$ to $f(v)$. We denote by $\f_\ast = \f^\beta_\ast\colon \pi_1(\Gamma,v)\to \pi_1(\Gamma,v)$ the map given by $\f_\ast([\gamma])=[\beta (\f\circ \gamma) \beta^{-1}]$, where $\gamma$ is an edge-path from $v$ to $v$ in $\Gamma$.  We omit mention of $\beta$ in the notation for $f_\ast$ as we will be interested in the outer class of $f_\ast$ which is independent of the choice of path $\beta$.

\begin{defn}[Topological representative]\label{D:TR}
Let $\fee\in \Out(F_N)$, where $N\ge 2$.  A \emph{topological representative} of $\fee$ is a pair $(\ma, \f)$ where $\f\colon\Gamma\to\Gamma$ is a homotopy equivalence topological graph-map of a finite connected topological graph $\Gamma$ to itself, and where $\ma\colon F_N\to \pi_1(\Gamma,v)$ is an isomorphism such that $\ma^{-1}\circ \f_\ast\circ \ma\colon F_N\to F_N$ is an automorphism of $F_N$ whose outer automorphism class is $\fee$ (this condition is independent of the choice of edge-path $\beta$ from $v$ to $f(v)$).  The isomorphism $\ma$ is called a \emph{marking}. By convention, we will usually omit the explicit mention of marking and just say that $\f\colon\Gamma\to\Gamma$ is a topological representative of $\fee$.
\end{defn}

If \textbf{P} is a type of topological graph-map (such as ``regular'' or ``linear''), we say that a topological representative $f\colon\Gamma\to\Gamma$ \emph{is \textbf{P}} if the topological graph-map $f$ is of type \textbf{P}. Alternately we may say that $f\colon\Gamma\to\Gamma$ is a \emph{\textbf{P} representative} of $\fee$. Thus we may discuss regular representatives, tame representatives, irreducible train-track representatives, and so on. We emphasize that if $f$ is a train-track representative of $\fee$, then this automatically implies that $f$ is a regular topological representative of $\fee$ and that $f^n\colon\Gamma\to\Gamma$ is a train-track representative of $\fee^n$ for all $n\ge 1$.

Given any $\fee \in \Out(F_N)$, one can easily construct a regular topological representative for $\fee$ with $\Gamma$ being a wedge of circles. Constructing representatives with more structure is in general harder and may depend on the outer automorphism $\fee$. An important general result of Bestvina and Handel~\cite{BH92} says that for every irreducible $\fee\in \Out(F_N)$ (where $N\ge 2$) there exists an irreducible train-track topological representative of $\fee$.  The converse is, in general, not true: it is possible for an irreducible train-track map to be a topological representative of a reducible $\fee\in \Out(F_N)$. However, if $\fee\in \Out(F_N)$ is fully irreducible then every train-track representative of $\fee$ has irreducible transition matrix and thus is an irreducible train-track map. 
We discuss the relationship between fully irreducible hyperbolic outer automorphisms and the properties of their train-track representatives in greater detail in the next subsection and in Appendix \ref{sect:clean}.

\begin{example}\label{Ex:hyperbolic_auto}
We now return to the graph map $f\colon \Gamma\to\Gamma$ introduced in Example~\ref{Ex:perpetuating_example}. We have $\pi_1(\Gamma)\cong F_3$ and it is not hard to check directly that $f$ is a homotopy equivalence; thus $f$ is a topological representative of an element $\fee\in \Out(F_3)$.  Specifically, use $v_0=o(c)$ as a  base-vertex in $\Gamma$ and set $\mathfrak m \colon F_3 = F(x_1,x_2,x_3) \to \pi_1(\Gamma,v_0)$ by $\mathfrak m(x_1)=b^{-1}a$, $\mathfrak m(x_2)=a^{-1}d$ and $\mathfrak m(x_3)=c$.
Then the outer class $\fee$ is represented by $\fee = \mathfrak m^{-1} \circ f_\ast \circ \mathfrak m$ where $f_\ast \colon \pi_1(\Gamma,v_0) \to \pi_1(\Gamma,v_0)$ (where $\beta$ in the definition of $f_\ast$ is the constant path from $v_0$ to itself).  This automorphism of $F_3$ is given by $\phi(x_1)=x_2$, $\phi(x_2)=x_2^{-1}x_1^{-1}x_2x_1x_3$ and $\phi(x_3)=x_1$. We have moreover verified, with the help of Derek Holt and the computational software package {\rm kbmag}\footnote{Available at  http://www.gap-system.org/Packages/kbmag.html}, that the group $G_\fee$ is word-hyperbolic and therefore that the automorphism $\fee$ is hyperbolic. In Example~\ref{Ex:iwip} we will see that $\fee$ is also fully irreducible.
\end{example}

\subsection{Criterion for full irreducibility}

We end this section with a criterion for a hyperbolic automorphism to be fully irreducible.  Stating this criterion requires the notion of ``clean train-tracks'' from \cite{K12}:

\begin{defn}[Clean train-track]
\label{defn:clean_track}
We say that a train-track map $\f\colon \Gamma\to\Gamma$ is  \emph{clean} if the Whitehead graph $\wh(\f,v)$ is connected for each vertex $v\in V\Gamma$, and if there exists $m>0$ such that $A(\f^m)>0$. (This implies $\f$ is expanding, that $A(\f^n)=A(\f)^n$ is irreducible for all $n\ge 1$, and that $A(\f^n)>0$ for all $n\ge m$.) We say that $\f$ is \emph{weakly clean} if $\f$ is expanding, if the Whitehead graph $\wh(\f,v)$ is connected for each vertex $v\in V\Gamma$ and if $A(f)$ is irreducible.
\end{defn}

Our next theorem provides a criterion for full irreducibility in terms of the existence of weakly clean train-track representatives; this is a strengthening of the criterion proved in \cite{K12}. 

\begin{theorem:clean}
Let $N\ge 3$ and let $\fee\in \Out(F_N)$ be a hyperbolic element. Then the following conditions are equivalent:
\begin{enumerate}
\item The automorphism $\fee$ is fully irreducible.

\item If $\f\colon\Gamma\to\Gamma$ is a train-track representative of $\fee$ then $\f$ is clean.

\item There exists a clean train-track representative of $\fee$.

\item There exists a weakly clean train-track representative of $\fee$.

\end{enumerate}
\end{theorem:clean}

The proof of Theorem~\ref{T:clean} is given in Appendix \ref{sect:clean}. This result seems to be of independent interest and also has some interesting consequences (e.g.,~Corollary~\ref{C:hyp_iwip}). However, as these considerations are tangential to the main results of the paper, we have relegated the proofs and discussion to an appendix.

\begin{example}\label{Ex:iwip}
We have already seen that $\f\colon\Gamma\to\Gamma$ in
Example~\ref{Ex:perpetuating_example} is an expanding irreducible
train-track map representing an automorphism $\fee\in \Out(F_3)$. We
also know that $\fee$ is hyperbolic -- this fact was verified (see Example~\ref{Ex:hyperbolic_auto})  by checking that the group $G_\fee$ is word-hyperbolic.
We have additionally verified in Example~\ref{Ex:whitehead_graphs} that the Whitehead graphs of both vertices of $\Gamma$ with respect to $\f$ are connected. Thus $f$ is a weakly clean train-track map. Therefore, by Theorem~\ref{T:clean}, $\fee$ is fully irreducible.
\end{example}

\section{Folding paths} \label{sect:folding}

We will need a continuous version of Stallings' folding line adapted to our setting
(see~\cite{St83,KM02} for the classic version of Stallings folds). 
Our version of the folding line is more elementary and less canonical than what Francaviglia and Martino call ``fast folding line''
(see Definition~5.6 in~\cite{FM11}) and what Bestvina and Feighn call
``folding all illegal turns at speed 1'' (see the definition
immediately prior to Proposition~2.2 in \cite{BF11}). While we could have used the ``fast folding line'' when defining the ``folded mapping torus'' $X_\f$ in \S\ref{sect:folded mapping torus and semiflows} below, the more basic Stallings folds construction is sufficient for our purposes and simpler to analyze.

\begin{conv} \label{Conv:simplicial for construction}
In this section let $\fee\in \Out(F_N)$, let $\Gamma$ be a finite
connected linear graph and let $\f\colon\Gamma\to\Gamma$ be a linear
graph-map that is a tame topological
representative of $\fee$ (see Definitions~\ref{D:top-graph-map}, \ref{D:reg-exp-graph-map}, \ref{D:tame}, and \ref{D:TR}). Additionally let $\mathcal L$ be a metric structure on $\Gamma$ compatible with the given linear graph structure.

Note that the tameness assumption implies every vertex of $\Gamma$ has valence $\ge 2$ and that $\f(e)$ is a tight nondegenerate edge-path in $\Gamma$ for every edge $e\in E\Gamma$.
\end{conv}

\begin{remark}\label{R:tame}
Given any $\fee\in \Out(F_N)$ it is easy to construct a tame topological representative of $\fee$.
This can be done, for example, via replacing an arbitrary regular topological representative $f'\colon\Gamma\to\Gamma$ of $\fee$ by an ``optimal'' map $f\colon\Gamma\to\Gamma$ that is freely homotopic to $f$, as explained in~\cite{FM11}. Alternatively, given $\fee\in \Out(F_N)$, one can always find, via an explicit combinatorial construction, a tame representative $f\colon\Gamma\to\Gamma$ of $\fee$ with $\Gamma$ being a wedge of $N$ circles, as explained in~\cite{KR12}.
(Also, Lemma~\ref{L:tt-tame} implies that if $f\colon\Gamma\to\Gamma$ is an expanding irreducible train-track map, then $f$ is tame.) As indicated after Definition~\ref{D:top-graph-map}, such a topological representative may then be adjusted by a homotopy rel $V\Gamma$ to yield a linear graph-map (as required by Convention~\ref{Conv:simplicial for construction}) with respect to any given linear structure on $\Gamma$
\end{remark}

While our constructions of folding paths (\S\ref{sect:folding}) and associated folded mapping tori (\S\ref{sect:folded mapping torus and semiflows}) apply to any map $f$ as in Convention~\ref{Conv:simplicial for construction}, working in this level of generality introduces technicalities that significantly complicate the notation. To enhance readability, we therefore only describe the constructions in the following setting:

\begin{conv}\label{conv:fold path readability}
For expository purposes only, we additionally suppose that $f\colon \Gamma\to\Gamma$ is a combinatorial graph-map with respect to the given metric structure $\mathcal L$ (see Definition~\ref{D:combinatorial-graph-map}).
\end{conv}

\begin{remark}\label{R:restricting to combinatorial}
We emphasize that this choice to work with combinatorial graph-maps only serves to simplify the notation and that it is straightforward to adjust the constructions in \S\S\ref{sect:folding}--\ref{sect:folded mapping torus and semiflows} to handle the general setting of Convention~\ref{Conv:simplicial for construction}. In fact, the curious reader may find these details in \S4 and \S6 of a previous version of this paper (available at \url{http://arxiv.org/abs/1301.7739v5}) where we work in the more general situation.

Moreover, this additional requirement that $f\colon\Gamma\to\Gamma$ be a combinatorial graph-map is not a serious constraint: Remark~\ref{rem:make_combinatorial} explains that any regular topological graph-map is homotopic to a combinatorial graph-map (with respect to an appropriate metric structure), and Corollary~\ref{cor:find_combinatorial_metric} shows that oftentimes the homotopy is unnecessary.
\end{remark}

Now we form a new labeled metric graph $\Delta$ as follows. We have $\Delta=\Gamma$ as a set and as a topological space. 
The vertex set of $\Delta$ is $V\Delta:=f^{-1}(V\Gamma)\cup V\Gamma$ and the $1$--cells of $\Delta$ are the connected components of $\Delta- V\Delta$. 
Thus, combinatorially, $\Delta$ is obtained from $\Gamma$ by subdividing each edge $e$ of $\Gamma$ into $|\f(e)|$ edges.  The graph $\Delta$ inherits a natural linear graph structure from $\Gamma$ and we still have the graph-map $\f\colon \Delta\to\Gamma$ which is now an affine homeomorphism on every open $1$--cell.  We endow $\Delta$ with a metric structure compatible with the linear structure in which every edge has length $1$. 
Note that this metric structure is not the one that the subdivided graph $\Delta$ naturally inherits from $\Gamma$ (recall that Definition~\ref{D:combinatorial-graph-map} requires $\Gamma$ to be a metric graph).
 For every edge $e\in E\Delta$ and every vertex $v\in V\Delta$ we think of $\f(e)\in E\Gamma$ as the \emph{label} of $e$ and of $\f(v)\in V\Gamma$ as the \emph{label} of $v$. This labeling satisfies the property that $\f(e^{-1})=(\f(e))^{-1}$ and that $o(\f(e))=\f(o(e))$, $t(\f(e))=\f(t(e))$ for all $e\in E\Delta$. In general, we call a (linear) graph $\Psi$ with a labeling $\tau\colon V\Psi\to V\Gamma$, $\tau\colon E\Psi\to E\Gamma$ of vertices and edges, satisfying these properties, a \emph{$\Gamma$--graph}. Any $\Gamma$--graph $\Psi$ comes equipped with a canonical graph-map $\tau\colon\Psi\to \Gamma$, given by the labeling, which is an affine homeomorphism on open edges, and a metric compatible with the linear structure giving each edge length $1$.  

Since $\tau\colon\Psi\to \Gamma$ sends an edge to an edge, $\tau$ is a regular graph-map. A $\Gamma$--graph $\Psi$ is \emph{folded} if whenever $e,e'\in E\Psi$ are such that $o(e)=o(e')$ and $\tau(e)=\tau(e')$ then $e=e'$. It is easy to see that for a $\Gamma$--graph $\Psi$ the map $\tau\colon\Psi\to\Gamma$ is locally injective if and only if $\Psi$ is folded. If a $\Gamma$--graph $\Psi$ is not folded and $e\ne e'$ are elements of $E\Psi$ such that $o(e)=o(e')$ and $\tau(e)=\tau(e')$, then identifying $e$ and $e'$ into a single edge with label $\tau(e)=\tau(e')$ produces a new $\Gamma$--graph $\Psi'$. We say that $\Psi'$ is obtained from $\Psi$ by a \emph{combinatorial Stallings fold}.  

We say that a $\Gamma$--graph $\Psi$ is \emph{tame} if the  labeling graph-map $\tau\colon\Psi\to\Gamma$ is tame.
Note that for a $\Gamma$--graph $\Psi$ and a vertex $v$ of $\Psi$, the number of equivalence classes in the partition of $\link_\Gamma(v)$ by the relation $\sim_{\tau,v}$ is equal to  the number of distinct labels of edges of $\Psi$ with origin $v$. Thus a $\Gamma$--graph $\Psi$ is tame  if and only if for every vertex $v$ of $\Psi$ there exist some two edges $e,e'\in E\Psi$ with $o(e)=o(e')=v$ and $\tau(e)\ne \tau(e')$. Hence if $\Psi$ is a tame $\Gamma$--graph and  $\Psi'$ is obtained from $\Psi$ by a combinatorial Stallings fold then $\Psi'$ is again tame.

We now return to the construction of a folding path.
According to \cite{St83}, we can obtain $\Gamma$ from $\Delta$ by a finite sequence of combinatorial Stallings folds 
\begin{equation}\label{E:foldings}
\Delta=\Delta_0, \Delta_1,\dots, \Delta_m=\Gamma 
\end{equation}
where $m=\#E_{top}\Delta-\#E_{top}\Gamma$ and where each $\Gamma$--graph $\Delta_{i+1}$ is obtained from $\Delta_{i}$ by a single combinatorial Stallings fold identifying the edges $e_i,e_i'$ of $\Delta_i$.  We endow each $\Delta_i$ with its natural $\Gamma$--graph metric and linear structure as described above.

Note that since by assumption, the map $f\colon\Gamma\to \Gamma$ is tame, the $\Gamma$--graph $\Delta=\Delta_0$ is tame. Since Stallings folds preserve tameness, it follows that each $\Delta_i$ is tame. In particular, this means that every vertex of $\Delta_i$ has valence $\ge 2$.

We now define a $1$--parameter family of metric graphs $\{\Gamma_t\}_{t \in [0,1]}$ as follows.  For each $j = 0,\ldots,m$, set $\Gamma_{j/m} = \Delta_j$.  For any $j = 0,\ldots,m-1$ and arbitrary $t \in [j/m,(j+1)/m]$, let $\Gamma_t$ be obtained from $\Delta_j = \Gamma_{j/m}$ by isometrically identifying initial segments of $e_j$ and $e_j'$ of length $tm-j \in [0,1]$.  We may endow $\Gamma_t$ with a metric and associated linear structure so that the quotient map $\Gamma_{j/m} \to \Gamma_t$ is a local isometry on each edge, and we do so.

Note that for each $j = 0,\ldots,m-1$, the two definitions of $\Gamma_{(j+1)/m}$ as $\Delta_{j+1}$ and as a quotient of $\Gamma_{j/m} = \Delta_j$ agree by definition of combinatorial Stallings fold.  Furthermore, we note that $\Gamma_0$ is obtained from $\Gamma$ by subdividing and changing the metric, but as topological spaces, $\Gamma_0 = \Gamma = \Gamma_1$.

\begin{remark}\label{R:deg1}
We have noted above that each $\Delta_i$ is tame and thus has no valence-1 vertices. Therefore for every $0\le t\le 1$ the graph $\Gamma_t$ has no valence-1 vertices either, since the new vertices that appear in the middle of performing a single Stallings fold have valence-$3$.  
\end{remark}

Each $\Gamma_t$ comes equipped with two natural homotopy equivalence graph maps
\[ r_{t,0} \colon \Gamma_0 \to \Gamma_t \mbox{ and } r_{1,t} \colon \Gamma_t \to \Gamma_1 \]
which are locally isometric on edges and such that
\begin{equation} \label{Eqn:folding composition gives f}
f = r_{1,t} \circ r_{t,0} = r_{1,0} \colon \Gamma_0 \to \Gamma_1,
\end{equation} where we have used the identification of $\Gamma_0 = \Gamma = \Gamma_1$ as topological spaces to make sense of this equality.
In fact, for any $0 \le t \le t' \le 1$, there is a canonical homotopy equivalence map $r_{t',t} \colon \Gamma_t \to \Gamma_{t'}$ which is a local isometry on edges and so that for all $0 \le t \le t' \le t'' \le 1$ we have
\begin{equation} \label{Eqn:semiflow for folding maps}
r_{t'',t'} \circ r_{t',t} = r_{t'',t}.
\end{equation}

We refer to the family $\{\Gamma_t\}_{t\in [0,1]}$, together with the maps $\{r_{t',t}\}_{0 \le t \le t' \le 1}$ defined above, as the \emph{topological folding line} corresponding to the topological representative $f\colon\Gamma\to \Gamma$ of $\fee$.  We write $\{ (\Gamma_t, r_{t',t})\}_{ \{0 \leq t \leq t' \leq1 \} }$ to denote the topological folding line.

This folding line also depends on the choice of a sequence of combinatorial Stallings folds~(\ref{E:foldings}) taking $\Delta$ to $\Gamma$, but this dependence will not affect the proofs of our main results (though the objects themselves do depend on these choices, for notational simplicity we have suppressed this dependence from the notation in the following).

\begin{example}\label{Ex:Stallings_folds}
Let $f\colon\Gamma\to\Gamma$  be the graph-map from Example~\ref{Ex:perpetuating_example}.  Figure~\ref{F:folds} illustrates the first two combinatorial Stallings folds in the folding sequence taking the $\Gamma$--graph $\Delta$ to $\Gamma$ for the graph map $f\colon\Gamma\to\Gamma$.

\begin{figure} [htb]
\begin{center}
\includegraphics[height=7cm]{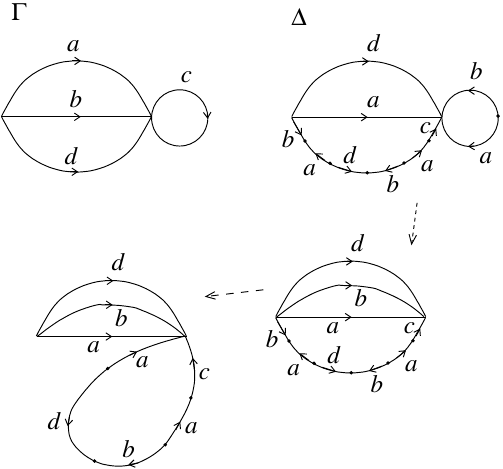} \caption{Two combinatorial Stallings folds used to construct a topological folding line.}
\label{F:folds}
\end{center}
	\end{figure}
\end{example}

\begin{remark}
Technically, in the standard Outer space terminology, the ``folding path'' $\Gamma_t$ is a family of ``marked metric graphs'' which connects the ``marked metric graph'' $(\ma, \Gamma_0)$ (where $\ma\colon F_N\to \pi_1(\Gamma_0)$ is the marking given by the fact that $\f$ is a topological representative of $\fee$ and that $\Gamma_0$ is a subdivision of $\Gamma$) to the ``marked metric graph'' $(\f_\ast\circ \ma, \Gamma)$. However,  the markings on the graphs $\Gamma_t$ are not important for the specific applications used in this paper and therefore we did not record them in our definition.

For the purposes of constructing the ``folded mapping torus'' of $\f$ in \S\ref{sect:folded mapping torus and semiflows}  below, we will mainly forget about the metric structures $\mathcal L_t$ on the graphs $\Gamma_t$ and only look at the underlying $1$--complexes $\Gamma_t$ and the topological maps $\{r_{t',t}\}$ provided by the above construction. The metric structures on $\Gamma_t$ will occasionally be useful, and, in particular, they will make it  simpler to describe the cell structure on the ``folded mapping torus'' of $\f$ defined in the next section. 
\end{remark}

\section{The folded mapping torus and semiflows.} \label{sect:folded mapping torus and semiflows}

\begin{conv}\label{conv:nice}
Let $\fee \in \Out(F_N)$ and set $G = G_{\fee}$.  Fix a linear graph-map $\f\colon\Gamma \to \Gamma$ that is a tame topological representative $\fee$, as in Convention~\ref{Conv:simplicial for construction}, and a topological folding line $\{ \Gamma_t,r_{t',t})\}_{ \{0 \leq t \leq t' \leq1 \} }$.  We also continue to identify $\Gamma_0 = \Gamma = \Gamma_1$ as topological spaces (though $\Gamma_0$ is a subdivision of $\Gamma = \Gamma_1$ as a topological graph). For expository purposes (see Remark~\ref{R:restricting to combinatorial}), we continue to assume that $\f$ is a combinatorial graph-map as in Convention~\ref{conv:fold path readability}.
\end{conv}

In this section, given the above data,  we describe a cell complex we call the {\em folded mapping torus} of $\f$, denoted $X_\f$, which is a particular $K(G,1)$ built from the data of the folding line.  We also construct a semiflow on $X_\f$ that encodes the folding line, and prove a few basic properties of $X_\f$.  We end by describing the cell structure in a little more detail and defining the $E_0$--class $\epsilon \in H_1(X_\f;\Z)$ to be used in Theorem \ref{T:folded basics}.

\begin{remark}
The construction of another $K(G,1)$ $2$--complex from $\fee \in \Out(F_N)$---Gautero's ``dynamical $2$--complex''---and a semiflow is carried out by Gautero in \cite[Proposition 3.9, Proposition 4.3]{Gau1} using a sequence of Whitehead moves in place of the folding path;  see also the modification in \cite{Gau4}.  Similarly, Wang's construction \cite{Wang} of the decomposed mapping torus is similar to our construction via ``building blocks" as explained in the proof of Proposition \ref{P:X' cell structure} below. 
\end{remark}

\subsection{The folded mapping torus.} \label{sect: folded mapping torus}

Starting with the folding line $\{(\Gamma_t,r_{t',t})\}_{ \{0 \leq t \leq t' \leq1 \} }$, we define an equivalence relation on $\Gamma \times [0,1]$ by declaring $(x,t)$ to be equivalent to $(x',t')$ if and only if $t = t'$ and $r_{t,0}(x) = r_{t,0}(x')$.  We denote the associated quotient space by
\[ \Pi' \colon \Gamma \times [0,1] \to X'.\]
By construction, we can identify $\Pi'(\Gamma \times \{t \})$ with $r_{t,0}(\Gamma_0) = \Gamma_t$ as {\em sets} so that
\begin{equation} \label{Eqn:Pi' property}
\Pi'(x,t) = r_{t,0}(x),
\end{equation} and we do so.  In fact, this identification is much more than simply a set-wise identification.

\begin{proposition} \label{P:X' cell structure}
The space $X'$ is homeomorphic to a cell-complex.  The identification of the subspace $\Pi'(\Gamma \times \{t\})$ with $\Gamma_t$ is a homeomorphism, where $\Pi'(\Gamma \times \{ t\})$ is given the subspace topology.
\end{proposition}

\begin{proof}
Recall that for each $j = 0,\ldots,m-1$, $\Gamma_{(j+1)/m} = \Delta_{j+1}$ is obtained from $\Gamma_{j/m} = \Delta_j$ by a single combinatorial Stallings fold identifying the edges $e_j,e_j'$ from $\Gamma_{j/m}$.
We will use this to reconstruct $X'$ as a cell complex obtained by gluing together very simple cell complexes.

For each $j = 0,\ldots,m-1$, begin with the product cell complex $\Gamma_{j/m} \times [j/m,(j+1)/m]$.  The (closed) $2$--cells are all thus ``rectangles'' obtained as products of (closed) $1$--cells in $\Gamma_{j/m}$ with the interval $[j/m,(j+1)/m]$.  Now we subdivide the rectangles $R_j = \bar e_j \times [j/m,(j+1)/m]$ and  $R_j' = \bar e_j' \times [j/m,(j+1)/m]$  each into two triangles.  More precisely, taking (orientation preserving) locally isometric characteristic maps for these $2$--cells
\[ \rho_j \colon [0,1] \times [j/m,(j+1)/m] \to R_j = \bar e_j \times [j/m,(j+1)/m] \]
and
\[ \rho_j' \colon [0,1] \times [j/m,(j+1)/m] \to R_j' = \bar e_j' \times [j/m,(j+1)/m], \]
we subdivide $[0,1] \times [j/m,(j+1)/m]$ into two triangles along the diagonal from the point $(0,j/m)$ to $(1,(j+1)/m)$.  Denote the upper triangle by $\tau_+$ and the lower triangle by $\tau_-$.  Now subdivide $\Gamma_{j/m} \times [j/m,(j+1)/m]$ by subdividing each of $\bar e_j \times [j/m,(j+1)/m]$ and  $\bar e_j' \times [j/m,(j+1)/m]$ into the triangles $\rho_j(\tau_+) \cup \rho_j(\tau_-)$ and $\rho_j'(\tau_+) \cup \rho_j'(\tau_-)$, respectively; see Figure \ref{F:subdividing rectangles figure}.

\begin{center}
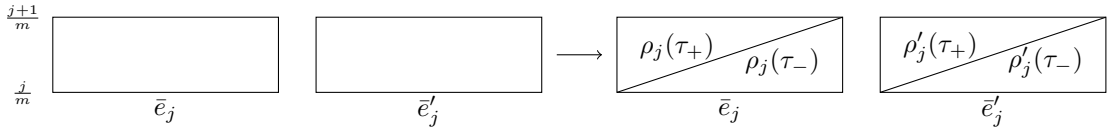
\begin{figure}[htb]
\begin{tikzpicture}
\draw
(0,0) rectangle (3,1)
(3.5,0) rectangle (6.5,1)
(7.5,0) rectangle (10.5,1)
(11,0) rectangle (14,1); 
\node at (1.5,-.25) {$\bar e_j$};
\node at (5,-.25) {$\bar e_j'$};
\node at (9,-.25) {$\bar e_j$};
\node at (12.5,-.25) {$\bar e_j'$};
\node at (-.4,0) {\tiny $\frac j m$};
\node at (-.4,1) {\tiny $\frac{j+1} m$};
\node at (8.3,.6) {$\rho_j(\tau_+)$};
\node at (9.7,.4) {$\rho_j(\tau_-)$};
\node at (11.8,.6) {$\rho_j'(\tau_+)$};
\node at (13.2,.4) {$\rho_j'(\tau_-)$};
\draw (7.5,0) -- (10.5,1);
\draw (11,0) -- (14,1);
\draw [->] (6.7,.5) -- (7.3,.5);
\end{tikzpicture}
\caption{On the left we have the rectangles $R_j,R_j'$ in $\Gamma_{j/m} \times [j/m,(j+1)/m]$ that are to be subdivided, and the right shows their subdivision.}
\label{F:subdividing rectangles figure}
\end{figure}
\end{center}

Now we let $\Pi_j' \colon \Gamma_{j/m} \times [j/m,(j+1)/m] \to X_j'$ be the quotient space obtained by identifying $\rho_j(\tau_+)$ with $\rho_j'(\tau_+)$ via the restriction to $\rho_j(\tau_+)$ of $\rho_j' \circ \rho_j^{-1}$;  see Figure \ref{F:local_folded}.

\begin{figure} [htb]
\begin{center}
\includegraphics[height=4.5cm]{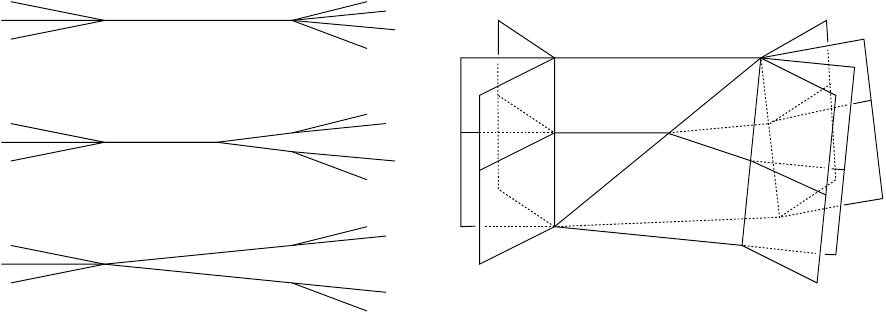} \caption{A local picture of $X_j'$ near the identified triangles.  On the left we have pictures of $\Gamma_{j/m}$, $\Gamma_{(2j+1)/2m}$, and $\Gamma_{(j+1)/m}$ on the bottom, middle, and top, respectively, near the folded edges.  On the right we have the corresponding part of $X_j'$.}
\label{F:local_folded}
\end{center}
\end{figure}

One minor complication is that even restricted to $\tau_+$, the map $\rho_j$ may not be invertible.  This can only occur if $e_j$ is a loop edge.  Since $\Gamma_{j/m} \to \Gamma_{(j+1)/m}$ is a homotopy equivalence, it follows that both $e_j$ and $e_j'$ cannot both be loop edges (if they were, then $e_j' e_j^{-1}$ is a non-null homotopic loop in $\Gamma_{j/m}$ sent to a null homotopic loop in $\Gamma_{(j+1)/m}$).  So, without loss of generality, we assume that $e_j$ is not a loop edge and hence we may invert $\rho_j$ as needed; see Figure \ref{F:local_folded2}.

\begin{figure} [htb]
\begin{center}
\includegraphics[height=3.5cm]{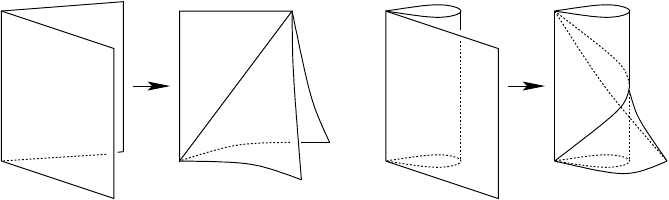} \caption{Two possibilities for $\Pi'_j$ near $R_j\cup R_j'$ depending on whether $e_j'$ is a loop (right) or not a loop (left).}
\label{F:local_folded2}
\end{center}
\end{figure}

We see that by construction, $X_j'$ has a natural cell structure so that $\Pi_j'$ is a cellular map.  Furthermore, we note that for all $t \in [j/m,(j+1)/m]$, the restriction of $\Pi_j'$ to $\Gamma_{j/m} \times \{ t \}$ isometrically identifies the initial segments of $e_j \times \{t \}$ and $e_j' \times \{t \}$ of length $tm-j$.  It is now easy to see that we may identify $\Pi_j'(\Gamma_{j/m} \times \{ t\})$ with $\Gamma_t$, for all $t \in [j/m,(j+1)/m]$, {\em as topological spaces}.  
Using this identification we construct a quotient space $X''$ from the disjoint union $X_0' \sqcup \ldots \sqcup X_{m-1}'$ by gluing the ``top'' $\Gamma_{(j+1)/m} \subset X_j'$ to the ``bottom'' $\Gamma_{(j+1)/m} \subset X_{j+1}'$, for each $j = 0,\ldots,m-1$.  The space $X''$ has a natural cell structure so that each $X_j'$ includes into $X''$ by a cellular embedding.  

The maps
\[ \xymatrix{ 
 {\Gamma_0 \times [j/m,(j+1)/m]} \ar[rr]^(.47){r_{j/m,0} \times id} & &  {\Gamma_{j/m} \times [j/m,(j+1)/m]} \ar[rr]^(.7){\Pi_j'} & & X_j' }\]
piece together to induce a map $\Pi''\colon \Gamma \times [0,1] \to X''$.  For each $j = 0,\ldots,m-1$ and $j/m \leq t < (j+1)/m$, the restriction of $\Pi''$ to each $\Gamma \times \{t \}$ identifies $(x,t)$ to $(x',t)$ if and only if $r_{t,j/m}(r_{j/m,0}(x)) = r_{t,j/m}(r_{j/m,0}(x'))$.  By (\ref{Eqn:semiflow for folding maps}), $r_{t,j/m} \circ r_{j/m,0} = r_{t,0}$, and it follows that the fibers of $\Pi'$ and $\Pi''$ are the same.  Since $\Gamma \times [0,1]$ and $X''$ are compact Hausdorff spaces, $X''$ is homeomorphic to the quotient space $X'$ (and identifying $X' = X''$, we have $\Pi' = \Pi''$).
Furthermore, the homeomorphic identification of $\Pi'(\Gamma \times \{t \}) = \Pi''(\Gamma \times \{t \})$ with $\Gamma_t$ proves the second statement. 
\end{proof}

The spaces $X_j'$ in the construction of $X'' = X'$ are useful in analyzing the local pictures of $X'$, and we will refer to these as {\em building blocks}.

\begin{defn}[Folded mapping torus] \label{D:folded mapping torus}
The {\em folded mapping torus} $X = X_\f$ is the quotient
\[ X' \to X_\f \]
obtained by gluing $\Gamma_1 \subset X'$ to $\Gamma_0 \subset X'$ using the identifications $\Gamma_0 = \Gamma = \Gamma_1$.
Subdividing $\Gamma_1$, the gluing becomes cellular, and we can induce on $X_\f$ a cell structure from the one constructed in the proof of Proposition \ref{P:X' cell structure} for which the $2$--cells are either {\em rectangles} or {\em triangles}.  We call this the  {\em initial cell structure} on $X_\f$ (a more useful cell structure will be defined in \S\ref{sect:trapezoids}).
\end{defn}

The folded mapping torus $X_\f$ is closely related to the mapping torus of $\f$, which is the space
\[ Z_\f = \Gamma \times [0,1]/\sim\]
where $(x,1) \sim (\f(x),0)$.
\begin{proposition} \label{P:Pi' descends}
The map $\Pi'\colon \Gamma\times[0,1]\to X'$ descends to a continuous map $\Pi \colon Z_\f \to X_\f$.
\end{proposition}
\begin{proof}
Since $(x,1)$ is identified with $(\f(x),0)$ in $Z_\f$ (and these are the only identifications), we need only verify that $\Pi'(x,1)$ is identified with $\Pi'(\f(x),0)$ in $X_\f$.  Using the identification $\Gamma_0 = \Gamma = \Gamma_1$ together with (\ref{Eqn:folding composition gives f}) and (\ref{Eqn:Pi' property}) we have
\[ \Pi'(x,1) = r_{1,0}(x) = \f(x) \in \Gamma_1 = \Gamma \]
and
\[ \Pi'(\f(x),0) = r_{0,0}(\f(x)) = \f(x) \in \Gamma_0 = \Gamma.\]
Since $\f(x)$ in $\Gamma_1$ is identified with $\f(x)$ in $\Gamma_0$, we are done.
\end{proof}

In fact, it is not difficult to prove that $\Pi$ is a homotopy equivalence, but this will follow naturally from the construction of certain semiflows in the next section; see Theorem \ref{T:homotopy type}.

\subsection{Semiflows} \label{S:semiflow construction}

Here we construct and analyze (local) semiflows---that is, (local) actions of the additive semigroup $\R_+$ of positive real numbers---on the spaces constructed in \S\ref{sect: folded mapping torus}.
First, let $\hat \flow'$ be the local flow on $\Gamma \times [0,1]$ defined by $\hat \flow'_s(x,t) = (x,t+s)$, wherever this makes sense.
\begin{proposition}
For all $0 \leq s,t \leq s+t \leq 1$ we have $\Pi'(\hat \flow'_s(x,t)) = r_{s+t,t}(\Pi'(x,t))$.
\end{proposition}
\begin{proof}  We simply appeal to (\ref{Eqn:folding composition gives f}) and (\ref{Eqn:Pi' property}) then calculate:
\[ \Pi'(\hat \flow'_s(x,t)) = \Pi'(x,t+s) = r_{t+s,0}(x) = r_{t+s,t}(r_{t,0}(x)) = r_{t+s,t}(\Pi'(x,t)).\qedhere\]
\end{proof}
From this it follows that the maps $r_{t',t}$ define a {\em local semiflow} $\flow'$ on $X'$ given by $\flow'_s(\Pi'(x,t)) = r_{t+s,t}(\Pi'(x,t))$, wherever this is defined.   Furthermore, $\Pi'$ semi-conjugates $\hat \flow'$ to $\flow'$:
\[ \Pi' \circ \hat \flow_s = \flow'_s \circ \Pi'.\]

We note that flowlines of $\hat \flow'$ only terminate at $\Gamma \times \{1 \}$, and so $\hat \flow'$ can be extended to a semiflow $\hat \flow$ on $Z_\f$ (it is only guaranteed to be a semiflow since $\f$ is not in a homeomorphism in general).  In fact, $\hat \flow$ is given by 
\[ \hat \flow_s(x,t) = \left(\f^{[s+t]}(x),\; s+t-[s+t]\right) \]
for all $(x,t) \in \Gamma \times [0,1) \subset Z_\f$, where $[s+t]$ is the greatest integer less than or equal to $s+t$. Since the surjection $\Pi'$ semi-conjugates $\hat \flow'$ to $\flow'$, it follows that $\Pi$ semi-conjugates $\hat \flow$ to a semiflow $\flow$ on $X_\f$.  On the graph $\Gamma = \Gamma_0 \subset X' \to X_\f$, this semiflow is given by
\[ \flow_s(x) = r_{s-[s],0} \circ f^{[s]}(x) \in \Gamma_{s-[s]}.\]
In particular, $\flow_1$ restricted to $\Gamma_0$ is precisely the map $\f$.  Consequently, we call $\flow$ the {\em suspension semiflow on $X_\f$.}

As a first application, we now prove

\begin{theorem} \label{T:homotopy type}
The map $\Pi \colon  Z_\f \to X_\f$ is a homotopy equivalence.
\end{theorem}
\begin{proof}
Construct a map $F' \colon X' \to \Gamma \times [0,1]$ defined by $F'(\Pi'(x,t)) = (r_{1,t}(\Pi'(x,t)),t)$.  By (\ref{Eqn:folding composition gives f}) and (\ref{Eqn:Pi' property}) we have
\[ F'(\Pi'(x,t)) = F'(r_{t,0}(x)) = (r_{1,t}(r_{t,0}(x)),t)) = (\f(x),t).\]
It follows that $F'$ descends to a map $F \colon X_\f \to Z_\f$.  Moreover, this equation implies $F \circ \Pi = \hat \flow_1$.

On the other hand, for $t \in [0,1)$ and $x \in \Gamma$, we have
\[ \Pi(F(\Pi(x,t))) = \Pi(\hat \flow_1(x,t)) = \flow_1(\Pi(x,t)).\]
Consequently, $\Pi \circ F = \flow_1$.  Therefore, $F$ is a homotopy inverse to $\Pi$ with the semiflows $\hat \flow$ and $\flow$ providing the homotopies $F \circ \Pi \simeq id_{Z_\f}$ and $\Pi \circ F \simeq id_{X_\f}$, respectively.
\end{proof}

Recall that $G = G_\fee = \langle F_N, r \mid r^{-1} w r = \Phi(w), w\in F_N
\rangle$, where $\Phi\in \Aut(F_N)$ is a representative of the outer automorphism class $\fee$.   The projection $\Gamma \times [0,1] \to [0,1]$ descends to
a surjection $\fib_0'\colon Z_\f \to \mathbb{S}^1 = \R/\Z$.  Similarly, the projection $X' \to [0,1]$, given by $\Pi'(x,t) \mapsto t$, descends to a map
\[ \fib_0\colon X_\f \to \mathbb{S}^1 = \R/\Z.\]
For every $t \in \R/\Z$, the fiber $\fib_0^{-1}(t)$ is precisely $\Gamma_t$ (where we take a representative $t \in [0,1]$ for the point of $\mathbb{S}^1 = \R/\Z$). 

An application of van Kampen's Theorem implies that the isomorphism $F_N \to \pi_1(\Gamma)$ from the marking extends to an isomorphism $\Psi\colon G \to \pi_1(Z_\f)$ for which $(\fib_0')_*(\Psi(r)) = 1 \in \Z = \pi_1(\mathbb{S}^1)$.  Thus, $u_0\colon  G \to \Z$ defined by $u_0(r^n w) = n$ for all $w \in F_N$ satisfies $u_0 = (\fib_0')_* \circ \Psi$.  

\begin{conv} \label{conv:G identified}
For the remainder of this paper, we use the isomorphism
\[ \Pi_* \circ \Psi\colon G \to \pi_1(X_\f)\]
 to identify $G$ with $\pi_1(X_\f)$.  With this convention, and the fact that $\fib_0' = \fib_0 \circ \Pi$ we see that $(\fib_0)_* = u_0$. 
 \end{conv}
 
Since $Z_\f$ is a $K(G,1)$ for $G=G_\fee$ (see e.g.~\cite[Theorem 1.B 11]{Hatcher}), as a consequence of Theorem~\ref{T:homotopy type} we have:
\begin{corollary} \label{C:K(G,1)}
The folded mapping torus $X_\f$ is a $K(G,1)$ space.
\end{corollary}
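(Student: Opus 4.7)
The plan is to deduce this immediately from two facts already established in the excerpt: (a) the ordinary mapping torus $Z_\f$ is a $K(G_\fee,1)$, and (b) the map $\Pi\colon Z_\f \to X_\f$ is a homotopy equivalence by Proposition~\ref{P:homotopy type}. Since being a $K(G,1)$ is a homotopy invariant of CW complexes, combining (a) and (b) gives $X_\f$ is a $K(G_\fee,1)$, which with the identification from Convention~\ref{conv:G identified} is exactly the statement.

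For (a), the key input is that $\Gamma$ is a finite connected graph and hence $K(F_N,1)$: its universal cover is a tree, so all higher homotopy groups vanish. Since $\f\colon \Gamma\to\Gamma$ is a homotopy equivalence of an aspherical CW complex, the mapping torus $Z_\f = \Gamma\times[0,1]/(x,1)\sim(\f(x),0)$ is also aspherical. This is the classical fact referenced via Hatcher \cite[Theorem 1.B.11]{Hatcher}: one checks directly that the universal cover of $Z_\f$ is contractible, for instance by exhibiting it as an infinite union (glued along copies of the universal cover of $\Gamma$) of mapping cylinders of lifts of $\f$, each of which deformation retracts. Moreover, van Kampen's theorem applied to the mapping torus, together with the marking identifying $F_N=\pi_1(\Gamma)$, shows that $\pi_1(Z_\f)$ is the HNN-extension $F_N\rtimes_\fee \Z = G_\fee$, with the stable letter mapping under $(\fib_0')_*$ to $1\in\Z=\pi_1(\mathbb{S}^1)$; this is exactly the identification recorded just before the corollary.

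For the passage from $Z_\f$ to $X_\f$, I invoke Proposition~\ref{P:homotopy type}, which produces an explicit homotopy equivalence $\Pi\colon Z_\f \to X_\f$ by using the semiflow to show that both compositions of $\Pi$ with the ``opening up'' map $F\colon X_\f\to Z_\f$ are homotopic to the identity via $\hat\flow_1\simeq\hat\flow_0$ and $\flow_1\simeq\flow_0$. A homotopy equivalence preserves all homotopy groups, so from $\pi_n(Z_\f)=0$ for $n\geq 2$ and $\pi_1(Z_\f)\cong G_\fee$ we deduce $\pi_n(X_\f)=0$ for $n\geq 2$ and $\pi_1(X_\f)\cong G_\fee$. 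Combined with the fact that $X_\f$ is built as a finite CW (in fact polyhedral $2$--) complex, this gives the $K(G_\fee,1)$ conclusion. There is no real obstacle here: the entire content was already done in Propositions~\ref{P:semiflow exists} and \ref{P:homotopy type}; the corollary is a two-line formal consequence, and the only care needed is to confirm that the isomorphism $\pi_1(X_\f)\cong G_\fee$ coming out of the homotopy equivalence agrees with the one fixed in Convention~\ref{conv:G identified}, which is immediate from $\fib_0'=\fib_0\circ\Pi$.
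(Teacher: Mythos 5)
Your proof is correct and follows exactly the same route as the paper: it cites Hatcher \cite[Theorem 1.B.11]{Hatcher} for $Z_\f$ being a $K(G,1)$ and then applies Proposition~\ref{P:homotopy type} to transfer this to $X_\f$ via the homotopy equivalence $\Pi$. The extra detail you supply on why $Z_\f$ is aspherical and on the compatibility of identifications with Convention~\ref{conv:G identified} is accurate but not needed beyond what the paper already records.
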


\begin{example}\label{Ex:X_f}

Returning to our running example $\f\colon \Gamma \to \Gamma$, Figure \ref{F:folded_mapping_torus_ex} shows the folded mapping torus $X_\f$ in this case.  We have represented it as four large rectangles with a subdivided cell structure, and with identifications as indicated.  See the caption of the figure for detailed description of the identifications.

\begin{center}
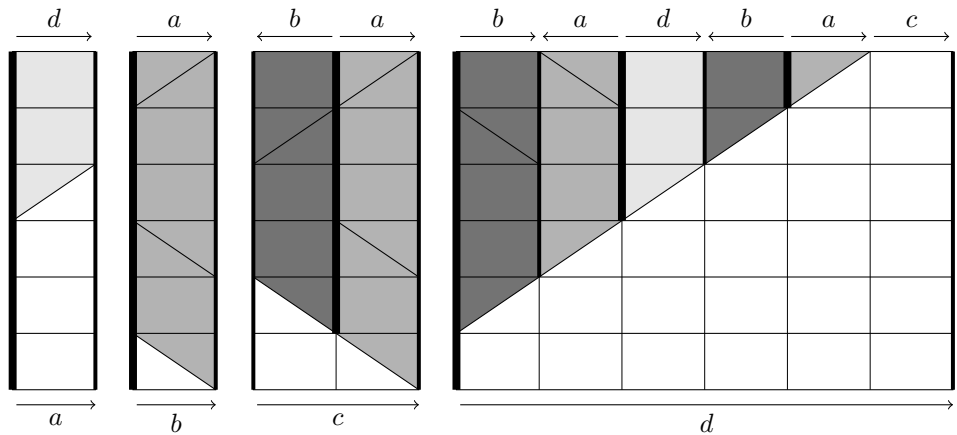
\begin{figure}[htb]
\begin{tikzpicture}
%

\pgfmathsetmacro\hstep{1.1}   
\pgfmathsetmacro\vstep{.75} 
\pgfmathsetmacro\hsp{.5}    
\pgfmathsetmacro\arsp{.05}  
\pgfmathsetmacro\varr{.2}    

\pgfmathsetmacro\vzero{0}                  
\pgfmathsetmacro\vone{\vzero   + 1*\vstep} 
\pgfmathsetmacro\vtwo{\vzero   + 2*\vstep} 
\pgfmathsetmacro\vthree{\vzero + 3*\vstep} 
\pgfmathsetmacro\vfour{\vzero  + 4*\vstep} 
\pgfmathsetmacro\vfive{\vzero  + 5*\vstep} 
\pgfmathsetmacro\vsix{\vzero   + 6*\vstep} 

\pgfmathsetmacro\azero{0}                  
\pgfmathsetmacro\aone{\azero + \hstep}     
\pgfmathsetmacro\bzero{\aone + \hsp}       
\pgfmathsetmacro\bone{\bzero + \hstep}     
\pgfmathsetmacro\czero{\bone + \hsp}       
\pgfmathsetmacro\cone{\czero + \hstep}     
\pgfmathsetmacro\ctwo{\czero + 2*\hstep}   
\pgfmathsetmacro\dzero{\ctwo  + \hsp}      
\pgfmathsetmacro\done{\dzero  + 1*\hstep}  
\pgfmathsetmacro\dtwo{\dzero  + 2*\hstep}  
\pgfmathsetmacro\dthree{\dzero+ 3*\hstep}  
\pgfmathsetmacro\dfour{\dzero + 4*\hstep}  
\pgfmathsetmacro\dfive{\dzero + 5*\hstep}  
\pgfmathsetmacro\dsix{\dzero  + 6*\hstep}  
 
\fill[black!30!white] 
(\bone,\vzero) -- (\bone,\vsix) -- (\bzero, \vsix) -- (\bzero,\vone)
(\ctwo,\vzero) -- (\ctwo,\vsix) -- (\cone, \vsix) -- (\cone,\vone)
(\done,\vtwo) -- (\done,\vsix) -- (\dtwo, \vsix) -- (\dtwo,\vthree)
(\dfour,\vfive) -- (\dfour,\vsix) -- (\dfive, \vsix);

\fill[black!55!white]
(\czero,\vtwo) -- (\czero,\vsix) -- (\cone,\vsix) -- (\cone,\vone)
(\done,\vtwo) -- (\done,\vsix) -- (\dzero,\vsix) -- (\dzero,\vone)
(\dthree,\vfour) -- (\dthree,\vsix) -- (\dfour,\vsix) -- (\dfour,\vfive);

\fill[black!10!white]
(\azero,\vthree) -- (\azero,\vsix) -- (\aone,\vsix) -- (\aone,\vfour)
(\dtwo,\vthree) -- (\dtwo,\vsix) -- (\dthree,\vsix) -- (\dthree,\vfour);

\draw (\azero,\vzero) rectangle (\aone,\vsix)
(\bzero,\vzero) rectangle (\bone,\vsix)
(\czero,\vzero) rectangle (\ctwo,\vsix)
(\dzero,\vzero) rectangle (\dsix,\vsix);

\draw [line width = 3]
(\azero,\vzero) -- (\azero,\vsix)
(\bzero,\vzero) -- (\bzero,\vsix)
(\dzero,\vzero) -- (\dzero,\vsix);
\draw [line width = 3] 
(\cone,\vone) -- (\cone,\vsix)
(\dtwo,\vthree) -- (\dtwo,\vsix)
(\dfour,\vfive) -- (\dfour,\vsix);
\draw [line width = 1.5]
(\aone,\vzero) -- (\aone,\vsix)
(\bone,\vzero) -- (\bone,\vsix)
(\czero,\vzero) -- (\czero,\vsix)
(\ctwo,\vzero) -- (\ctwo,\vsix)
(\dsix,\vzero) -- (\dsix,\vsix);
\draw [line width = 1.5]
(\done,\vtwo) -- (\done,\vsix)
(\dthree,\vfour) -- (\dthree,\vsix);
\draw [ultra thin] 
(\cone,\vzero) -- (\cone,\vone)
(\done,\vzero) -- (\done,\vtwo)
(\dtwo,\vzero) -- (\dtwo,\vthree)
(\dthree,\vzero) -- (\dthree,\vfour)
(\dfour,\vzero) -- (\dfour,\vfive)
(\dfive,\vzero) -- (\dfive,\vsix);

\draw (\azero,\vone)--(\aone,\vone) (\bzero,\vone)--(\bone,\vone) (\czero,\vone)--(\ctwo,\vone) (\dzero,\vone)--(\dsix,\vone)
(\azero,\vtwo)--(\aone,\vtwo) (\bzero,\vtwo)--(\bone,\vtwo) (\czero,\vtwo)--(\ctwo,\vtwo) (\dzero,\vtwo)--(\dsix,\vtwo)
(\azero,\vthree)--(\aone,\vthree) (\bzero,\vthree)--(\bone,\vthree) (\czero,\vthree)--(\ctwo,\vthree) (\dzero,\vthree)--(\dsix,\vthree)
(\azero,\vfour)--(\aone,\vfour) (\bzero,\vfour)--(\bone,\vfour) (\czero,\vfour)--(\ctwo,\vfour) (\dzero,\vfour)--(\dsix,\vfour)
(\azero,\vfive)--(\aone,\vfive) (\bzero,\vfive)--(\bone,\vfive) (\czero,\vfive)--(\ctwo,\vfive) (\dzero,\vfive)--(\dsix,\vfive);

\draw [->] (\azero+\arsp,\vzero -\varr)--node[below]{$a$}(\aone, \vzero -\varr); 
\draw [->] (\bzero+\arsp,\vzero -\varr)--node[below]{$b$}(\bone, \vzero -\varr); 
\draw [->] (\czero+\arsp,\vzero -\varr)--node[below]{$c$}(\ctwo, \vzero -\varr); 
\draw [->] (\dzero+\arsp,\vzero -\varr)--node[below]{$d$}(\dsix, \vzero -\varr); 
\draw [->] (\azero +\arsp,\vsix + \varr) -- node[above]{$d$} (\aone  -\arsp,\vsix + \varr);
\draw [->] (\bzero +\arsp,\vsix + \varr) -- node[above]{$a$} (\bone  -\arsp,\vsix + \varr);
\draw [<-] (\czero +\arsp,\vsix + \varr) -- node[above]{$b$} (\cone  -\arsp,\vsix + \varr);
\draw [->] (\cone  +\arsp,\vsix + \varr)  -- node[above]{$a$}(\ctwo  -\arsp,\vsix + \varr);
\draw [->] (\dzero +\arsp,\vsix + \varr) -- node[above]{$b$} (\done  -\arsp,\vsix + \varr);
\draw [<-] (\done  +\arsp,\vsix + \varr)  -- node[above]{$a$}(\dtwo  -\arsp,\vsix + \varr);
\draw [->] (\dtwo  +\arsp,\vsix + \varr)  -- node[above]{$d$}(\dthree-\arsp,\vsix + \varr);
\draw [<-] (\dthree+\arsp,\vsix + \varr) -- node[above]{$b$} (\dfour -\arsp,\vsix + \varr);
\draw [->] (\dfour +\arsp,\vsix + \varr) -- node[above]{$a$} (\dfive -\arsp,\vsix + \varr);
\draw [->] (\dfive +\arsp,\vsix + \varr) -- node[above]{$c$} (\dsix  -\arsp,\vsix + \varr);

\draw (\azero, \vthree) -- (\aone, \vfour); 
\draw (\bone, \vzero) -- (\bzero, \vone); 
\draw (\bone, \vtwo) -- (\bzero, \vthree); 
\draw (\bzero, \vfive) -- (\bone, \vsix); 
\draw (\ctwo, \vzero) -- (\czero, \vtwo); 
\draw (\ctwo, \vtwo) -- (\cone, \vthree); 
\draw (\czero, \vfour) -- (\ctwo, \vsix); 
\draw (\dzero,\vone) -- (\dfive,\vsix); 
\draw (\done,\vfour) -- (\dzero,\vfive); 
\draw (\dtwo,\vfive) -- (\done,\vsix); 
\end{tikzpicture}

\caption{The folded mapping torus $X_\f$ for the example $\f\colon \Gamma \to \Gamma$.  The identifications on the sides of the large rectangles are indicated by thick and medium-thick lines, and these are by isometries.  This identification gives the product $\Gamma \times [0,1]$.  The tops of the rectangles are identified to the bottoms as indicated by the labeled arrows, and these identifications are isometries for $a$ and $b$, and local homotheties by $2$ for $c$ and by $6$ for $d$.
The result of these identifications (together with the side identifications) gives the mapping torus $Z_\f$.  Finally, the $2$--cells are identified as indicated by the shading and the patterns of $2$--cells (white cells are not identified to any others) to produce $X_\f$.  We have also indicated which $1$--cells in the interiors of the rectangles are identified with those coming from the boundaries of the rectangles via thick and medium-thick lines.   We note that performing the side identifications and the $2$--cell identifications only (which can be done by isometries) gives the complex $X'$.}
\label{F:folded_mapping_torus_ex}
\end{figure}
\end{center}
\end{example}

\subsection{Horizontal, vertical, and edge degree}

\begin{defn} \label{D:degree}
We intuitively define the {\em degree} $d(e)$ of a $1$--cell $e$ of the initial cell structure on $X_\f$ to be
the number of $2$--cells attached to $e$, counted with multiplicity.
More precisely, we note that the attaching maps for the $2$--cells are
locally injective, and for every $1$--cell $e$, we pick a point $x$ in the interior of $e$, and $d(e)$ is the total number of points in the preimage, over all
attaching maps (local injectivity makes this definition independent of $x$).    Equivalently, $d(e)$ is the minimum number of components of $U-e$, over all arbitrarily small neighborhoods $U$ of $x$.   For example, any point in an edge of degree $2$ has a neighborhood homeomorphic to $\R^2$.
\end{defn}

\begin{defn}  The {\em horizontal foliation} of $X_\f$ is the decomposition of $X_\f$ into fibers $\Gamma_t$, for $t \in \mathbb{S}^1$.  We refer to any subset of a single fiber $\Gamma_t$ as being horizontal.  We refer to any arc of a flow line as a {\em vertical} arc.  More generally, any set of points that all flow forward to a common point will be called vertical.  
\end{defn}

The complement of the closure of the $1$--cells of degree different than $2$ is a topological surface.  We note that the restriction of the horizontal foliation to this surface is a foliation in the traditional sense.  Likewise, the restriction of the semiflow to this surface is a local flow.

The $1$--cells fall into three types: vertical, horizontal and skew.  
\begin{defn}
The {\em vertical} $1$--cells are $1$--cells contained in semiflow lines, while the {\em horizontal} $1$--cells are contained in fibers of $\fib_0$.   The skew $1$--cells are the $m$ remaining $1$--cells which occur as the (nonhorizontal/nonvertical) side of some triangle. 
\end{defn}

Note that the horizontal $1$--cells all have degree $2$ and the skew $1$--cells all have degree $3$.  On the other hand, the vertical $1$--cells have more general degrees.

\begin{lemma} \label{L:val=deg}
If $e$ is a vertical or skew $1$--cell and $v \in V\Gamma_t$ is the vertex defined by $v  = \Gamma_t \cap e$, then $d(e) = \val(v)$, where $\val(v)$ is the valence of $v$ in $\Gamma_t$.  In particular, $d(e) \geq 2$ for every vertical $1$--cell.
\end{lemma}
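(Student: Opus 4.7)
My plan is to compute $d(e)$ by analyzing the local structure of $X_\f$ near an interior point $x$ of $e$ and counting the $2$--cells containing small half-neighborhoods of $x$. Since $e$ lies entirely within a single building block $X_i'$, the analysis reduces to tracking how the $2$--cells of $\Gamma_{t_i}\times[t_i,t_{i+1}]$ descend under the quotient map $\Pi_i$ of Definition~\ref{D:building blocks}.

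Suppose first that $e$ is a vertical $1$--cell, so $e = \Pi_i(\{v\}\times[t_i,t_{i+1}])$ for some $v\in V\Gamma_{t_i}$. I would choose $x$ in the interior of $e$ at time $t^*\in(t_i,t_{i+1})$. The suspension semiflow $\flow$ from Proposition~\ref{P:semiflow exists} provides a local homeomorphism from a neighborhood of $x$ in $X_\f$ onto a product $N\times(t^*-\varepsilon,t^*+\varepsilon)$, where $N$ is a small neighborhood of $v$ in $\Gamma_{t^*}$. Since $N$ is a star with $\val(v)$ edge-germs at $v$, this product is a ``book'' whose spine is $e$ and whose $\val(v)$ pages are indexed by these edge-germs; each page is contained in a distinct $2$--cell of $X_\f$, namely the rectangle or partial-rectangle swept out by that edge-germ under $\flow$. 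Hence $d(e) = \val(v)$.

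Next suppose that $e$ is a skew $1$--cell, so $e$ is the image under $\Pi_i$ of the diagonal of $R_i$ (with the two preimages under $\rho_i$ and $\rho_i'$ identified). For any $t^*\in(t_i,t_{i+1})$, the intersection $v = \Gamma_{t^*}\cap e$ is the endpoint of the currently identified initial segment of $e_i$ and $e_i'$: a vertex of $\Gamma_{t^*}$ with exactly three edge-germs---one running back along the shared segment and two running forward along the unidentified remainders of $e_i$ and $e_i'$---so $\val(v) = 3$. Direct inspection of the cell decomposition of $X_i'$ (compare Figure~\ref{F:local_folded}) shows that exactly three $2$--cells meet the interior of $e$: the identified triangle $\Pi_i(\rho_i(B_i)) = \Pi_i(\rho_i'(B_i))$ lying above the diagonal, together with the two distinct triangles $\Pi_i(\rho_i(B_i'))$ and $\Pi_i(\rho_i'(B_i'))$ lying below. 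Thus $d(e) = 3 = \val(v)$.

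The final assertion $d(e)\geq 2$ for every vertical $1$--cell follows immediately, since the tameness of $\f$ (Convention~\ref{conv:nice}) propagates through combinatorial Stallings folds and, by Remark~\ref{R:deg1}, forces every vertex of every $\Gamma_t$ to have valence at least $2$. The only subtlety in the argument lies in verifying the product-neighborhood description when $v$ is one of the vertices $o(e_i)=o(e_i')$, $t(e_i)$, or $t(e_i')$ participating in the fold (and in particular in the case where $e_i'$ is a loop): in each such subcase one checks directly, by enumerating which sides of the triangles $B_i,B_i'$ map to $e$ under $\rho_i$ and $\rho_i'$, that the number of $2$--cell sheets meeting $e$ equals $\val_{\Gamma_{t^*}}(v)$, where the latter already reflects the partial fold at time $t^*>t_i$.
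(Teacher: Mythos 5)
Your proof is correct and follows essentially the same approach as the paper: for vertical $1$--cells both proofs exploit the product structure of a small neighborhood (the paper phrases this via a basis of neighborhoods $W\times(t-\epsilon,t+\epsilon)$, you via the local flow from Proposition~\ref{P:semiflow exists}), and for skew $1$--cells both note that $v$ is the "new" vertex of the fold with $\val(v)=3=d(e)$, with the lower bound coming from Remark~\ref{R:deg1}. Your explicit acknowledgement of the subtlety at the vertices $o(e_i)=o(e_i')$, $t(e_i)$, $t(e_i')$ (where the fold already alters $\val_{\Gamma_{t^*}}$ and the triangles $\rho_i(B_i)$, $\rho_i'(B_i)$ merge into one $2$--cell touching $e$) is a nice addition that the paper glosses over.
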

\begin{proof}
First suppose $e$ is a vertical $1$--cell, and observe that $t \not \in \{ j/m \}_{j=0}^m$ because $e = \Pi(v_0 \times (j/m,(j+1)/m))$ for some vertex $v_0 \in V\Gamma_{j/m}$ for some $j$, and $v = \Gamma_t \cap e \neq \emptyset$.  The fact that $d(e) = \val(v)$ follows from the fact that one has a basis of neighborhoods of $v$ in $X$ of the form  $U = W \times (t-\epsilon,t+\epsilon)$ for small $\epsilon > 0$ and $W$ a neighborhood of $v$ in $\Gamma_t$.  The minimal number of components of $W-v$ is equal to the minimal number of components of $U-e$ over all sufficiently small neighborhoods $W$ of $v$ in $\Gamma_t$, and this is equal to $\val(v)$, so $d(e) \leq \val(v)$.  Any sufficiently small neighborhood about $v$ in $X$ is contained in some basis open set $U$, and from here one easily deduces $d(e) = \val(v)$.

Since the intersection of a skew $1$--cell $e$ with $\Gamma_t$ corresponds to the ``new'' vertex of a fold, and this vertex always has $\val(v) = 3$ (and as we've already noted, $d(e) = 3$), the first statement for $e$ follows. 

For the second statement, we note that our folding lines never produce graphs with valence $1$ vertices, by Remark~\ref{R:deg1}.
\end{proof}

\subsection{The trapezoid structure.} \label{sect:trapezoids}

To carry out the constructions in \S\ref{sect:allowable perturbations}, it is desirable to modify the cell structure so that there are no horizontal $1$--cells while maintaining a simple structure for the $2$--cells.  Simply erasing the horizontal $1$--cells is insufficient, since the complement of the (closure of the) vertical and skew $1$--cells need not be a union of disks (although it is a surface).  We therefore proceed as follows.

For every edge $e$ of $\Gamma_0 \subset X_\f$, we consider the image under the semiflow for time up to $1$:
\[ R_e = \bigcup_{0 \leq s \leq 1} \flow_s(\bar e).\]
We call this set the {\em strip above $e$}.  The complex $X$ is a union of the strips above edges $e$ of $\Gamma_0$.  Some strips contain a skew edge and others do not.  The example in Figure \ref{F:folded_mapping_torus_ex} has $10$ strips, $9$ of which contain a skew edge, and one does not.

For each strip that does not contain a skew $1$--cell, we subdivide the first rectangle of the initial cell structure
\[ \bar e \times [0,1/m] = \bigcup_{0 \leq s \leq 1/m} \flow_s(\bar e)\]
into two triangles by adding one of the diagonals (it does not matter which diagonal we add).   The added diagonal becomes a $1$--cell which we also call a skew $1$--cell.  The new skew $1$--cells are distinguished from the previous ones by their degree: the new skew $1$--cells all have degree $2$, while the old ones have degree $3$.

For any skew $1$--cell $e$, consider the set of points $T_e \subset X - e$ that flow into $e$ before meeting any other skew $1$--cell.   That is $y \in T_e$ if and only if there exists $r > 0$ so that $\flow_r(y) \in e$, and for all $0 \leq s < r$, $\flow_s(y)$ is not contained in any skew $1$--cell.  We now describe $T_e$ in more detail.

Suppose first that $d(e) = 2$.  Below $e$ is a unique triangle $2$--cell, and call it $U_1$.  The triangle $U_1$ lies below $e$ and above a horizontal $1$--cell $\delta_1$.   The $1$--cell $\delta_1$ is contained in the top arc of a $2$--cell $\sigma_2$ (it may be a proper subarc of the top of this $2$--cell only if $\delta_1 \subset \Gamma_0 = \Gamma_1$).  Let $U_2$ denote the subset of $\sigma_2$ directly below $\delta_1$ (that is, the subset of this $2$--cell that flows into $\delta_1$).   If $\sigma_2$ is a triangle, then the bottom of $U_2$ is an arc of a skew $1$--cell, and hence $T_e = U_1 \cup \delta_1 \cup U_2$.  If $\sigma_2$ is a rectangle, then $U_2$ lies above an arc $\delta_2$ of a horizontal $1$--cell $e_2$, and $\delta_2$ is a subarc of a $2$--cell $\sigma_3$.  Let $U_3$ denote the subset of $\sigma_3$ below $\delta_2$.  If $\sigma_3$ is a triangle $2$--cell, then
\[ T_e = U_1 \cup \delta_1 \cup U_2 \cup \delta_2 \cup U_3.\]
Otherwise, $\sigma_3$ is a rectangle, and we repeat again.  In this way, we eventually write $T_e$ as
\[ T_e = U_1 \cup \delta_1 \cup U_2 \cup \delta_2 \cup \ldots \cup \delta_k \cup U_k\]
for some $k \geq 1$.
See Figure \ref{F:trapezoids}.  This procedure terminates because every strip contains a skew $1$--cell, hence a triangle (which explains why the new skew $1$--cells were added).

If $d(e) = 3$, then below $e$ there are exactly two triangle $2$--cells.  The above description can be carried out for each of the two triangles thus producing two trapezoids $T_e = T_e' \sqcup T_e''$, each of which can be described as
\[ U_1 \cup \delta_1 \cup U_2 \cup \delta_2  \cup \ldots \cup \delta_k \cup U_k\]
for some $k \geq 1$. 

\begin{defn}
A {\em trapezoid} for $X$ is a disk $T_e$ when $d(e) = 2$ or one of $T_e'$ or $T_e''$ when $d(e) = 3$.
\end{defn}

\begin{figure} [htb]
\begin{center}
\includegraphics[height=4cm]{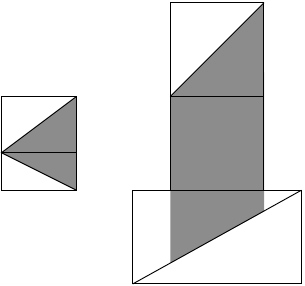} \caption{Two possible trapezoids.}
\label{F:trapezoids}
\end{center}
\end{figure}

Let $\mathcal T$ denote the set of all trapezoids.  The closure of any $T \in \mathcal T$ is a union
\[ \bar T = T \cup e_+(T) \cup \ell_+(T) \cup \ell_-(T) \cup e_-(T) \]
where $e_+(T) = \bar e$ and $e$ is the initial skew $1$--cell defining the trapezoid, $e_-(T)$ is the arc of a skew $1$--cell below $T$, and $\ell_\pm(T)$ are the arcs of flow lines bounding $T$ on either side (one of which may degenerate to a single point).  Although $T$ is an open disk, $\bar T$ need not be a closed disk (for example, it can happen that $e_-(T) \subset e_+(T)$; see Figure \ref{F:trapezoid_example}).

Because every flow line eventually intersects some skew $1$--cell (again because of the addition of the new skew $1$--cells), it follows that $X$ is the union
\[ X = \bigcup_{T \in \mathcal T} \bar T.\]
Since $T \cap T' = \emptyset$ for all $T \neq T' \in \mathcal T$ (since the first skew $1$--cell a point meets under $\psi$ is unique), we can define a new cell structure for which $\mathcal T$ is the set of $2$--cells.

\begin{defn}[Trapezoid cell structure] \label{def:trapezoidcell} The {\em trapezoid cell structure} is the cell structure for which $\mathcal T$ is the set of $2$--cells, and the vertices are precisely the union of the (three or) four corners of $\bar T$ over all $T \in \mathcal T$.  We give the $1$--skeleton $X_\f^{(1)}$ a metric structure so that the restriction of $\fib_0$ to each edge is a local isometry to $\mathbb{S}^1$, and an orientation by insisting that these maps be orientation preserving.
\end{defn}

\begin{conv}
In all of what follows, we will assume that {\em the} cell structure on $X$ is the trapezoid cell structure, unless otherwise stated.  We refer to the initial cell structure {\em with the added diagonals} as the {\em old cell structure}.    We also refer to the $1$--cells of the trapezoid cell structure as either vertical or skew.
\end{conv}

\begin{remark}  \label{R:old v trap}
We note that every skew $1$--cell for the trapezoid cell structure is contained in a skew $1$--cell of the old cell structure, but that this is not true for the vertical $1$--cells.  However, any vertical $1$--cell of the trapezoid structure eventually flows into a vertical $1$--cell of the old cell structure.  Since the union of the vertical $1$--cells of the old structure is invariant under $\flow$ (because $\f(V\Gamma) \subset V\Gamma$), so is the union of the vertical $1$--cells of the trapezoid structure.
\end{remark}

\begin{example}\label{Ex:trapezoid_structure}
The trapezoid cell structure for our running example $\f\colon \Gamma \to \Gamma$ is shown in Figure \ref{F:trapezoid_example}.  There are fourteen $0$--cells, twenty-seven $1$--cells and thirteen $2$--cells.

\begin{center}
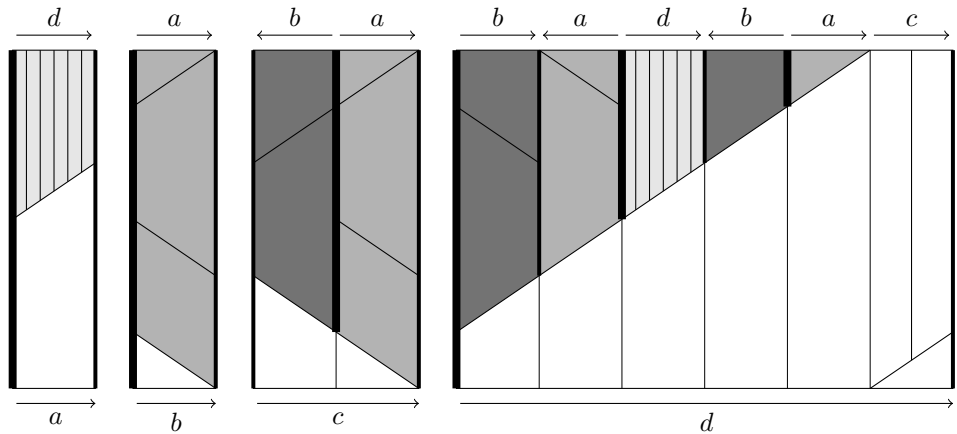
\begin{figure}[htb]
\begin{tikzpicture}
%

\pgfmathsetmacro\hstep{1.1}   
\pgfmathsetmacro\vstep{.75} 
\pgfmathsetmacro\hsp{.5}    
\pgfmathsetmacro\arsp{.05}  
\pgfmathsetmacro\varr{.2}    
\pgfmathsetmacro\subc{1.0/2.0} 
\pgfmathsetmacro\subd{1.0/6.0} 

\pgfmathsetmacro\vzero{0}                  
\pgfmathsetmacro\vone{\vzero   + 1*\vstep} 
\pgfmathsetmacro\vtwo{\vzero   + 2*\vstep} 
\pgfmathsetmacro\vthree{\vzero + 3*\vstep} 
\pgfmathsetmacro\vfour{\vzero  + 4*\vstep} 
\pgfmathsetmacro\vfive{\vzero  + 5*\vstep} 
\pgfmathsetmacro\vsix{\vzero   + 6*\vstep} 

\pgfmathsetmacro\azero{0}                  
\pgfmathsetmacro\aone{\azero + \hstep}     
\pgfmathsetmacro\bzero{\aone + \hsp}       
\pgfmathsetmacro\bone{\bzero + \hstep}     
\pgfmathsetmacro\czero{\bone + \hsp}       
\pgfmathsetmacro\cone{\czero + \hstep}     
\pgfmathsetmacro\ctwo{\czero + 2*\hstep}   
\pgfmathsetmacro\dzero{\ctwo  + \hsp}      
\pgfmathsetmacro\done{\dzero  + 1*\hstep}  
\pgfmathsetmacro\dtwo{\dzero  + 2*\hstep}  
\pgfmathsetmacro\dthree{\dzero+ 3*\hstep}  
\pgfmathsetmacro\dfour{\dzero + 4*\hstep}  
\pgfmathsetmacro\dfive{\dzero + 5*\hstep}  
\pgfmathsetmacro\dsix{\dzero  + 6*\hstep}  
 
\fill[black!30!white] 
(\bone,\vzero) -- (\bone,\vsix) -- (\bzero, \vsix) -- (\bzero,\vone)
(\ctwo,\vzero) -- (\ctwo,\vsix) -- (\cone, \vsix) -- (\cone,\vone)
(\done,\vtwo) -- (\done,\vsix) -- (\dtwo, \vsix) -- (\dtwo,\vthree)
(\dfour,\vfive) -- (\dfour,\vsix) -- (\dfive, \vsix);

\fill[black!55!white]
(\czero,\vtwo) -- (\czero,\vsix) -- (\cone,\vsix) -- (\cone,\vone)
(\done,\vtwo) -- (\done,\vsix) -- (\dzero,\vsix) -- (\dzero,\vone)
(\dthree,\vfour) -- (\dthree,\vsix) -- (\dfour,\vsix) -- (\dfour,\vfive);

\fill[black!10!white]
(\azero,\vthree) -- (\azero,\vsix) -- (\aone,\vsix) -- (\aone,\vfour)
(\dtwo,\vthree) -- (\dtwo,\vsix) -- (\dthree,\vsix) -- (\dthree,\vfour);

\draw (\azero,\vzero) rectangle (\aone,\vsix)
(\bzero,\vzero) rectangle (\bone,\vsix)
(\czero,\vzero) rectangle (\ctwo,\vsix)
(\dzero,\vzero) rectangle (\dsix,\vsix);

\draw [line width = 3]
(\azero,\vzero) -- (\azero,\vsix)
(\bzero,\vzero) -- (\bzero,\vsix)
(\dzero,\vzero) -- (\dzero,\vsix);
\draw [line width = 3] 
(\cone,\vone) -- (\cone,\vsix)
(\dtwo,\vthree) -- (\dtwo,\vsix)
(\dfour,\vfive) -- (\dfour,\vsix);
\draw [line width = 1.5]
(\aone,\vzero) -- (\aone,\vsix)
(\bone,\vzero) -- (\bone,\vsix)
(\czero,\vzero) -- (\czero,\vsix)
(\ctwo,\vzero) -- (\ctwo,\vsix)
(\dsix,\vzero) -- (\dsix,\vsix);
\draw [line width = 1.5]
(\done,\vtwo) -- (\done,\vsix)
(\dthree,\vfour) -- (\dthree,\vsix);
\draw [ultra thin] 
(\cone,\vzero) -- (\cone,\vone)
(\done,\vzero) -- (\done,\vtwo)
(\dtwo,\vzero) -- (\dtwo,\vthree)
(\dthree,\vzero) -- (\dthree,\vfour)
(\dfour,\vzero) -- (\dfour,\vfive)
(\dfive,\vzero) -- (\dfive,\vsix);

\draw 
(\azero+1*\subd*\hstep,\vthree+1*\subd*\vstep)--(\azero+1*\subd*\hstep,\vsix)
(\azero+2*\subd*\hstep,\vthree+2*\subd*\vstep)--(\azero+2*\subd*\hstep,\vsix)
(\azero+3*\subd*\hstep,\vthree+3*\subd*\vstep)--(\azero+3*\subd*\hstep,\vsix)
(\azero+4*\subd*\hstep,\vthree+4*\subd*\vstep)--(\azero+4*\subd*\hstep,\vsix)
(\azero+5*\subd*\hstep,\vthree+5*\subd*\vstep)--(\azero+5*\subd*\hstep,\vsix);

\draw 
(\dtwo+1*\subd*\hstep,\vthree+1*\subd*\vstep)--(\dtwo+1*\subd*\hstep,\vsix)
(\dtwo+2*\subd*\hstep,\vthree+2*\subd*\vstep)--(\dtwo+2*\subd*\hstep,\vsix)
(\dtwo+3*\subd*\hstep,\vthree+3*\subd*\vstep)--(\dtwo+3*\subd*\hstep,\vsix)
(\dtwo+4*\subd*\hstep,\vthree+4*\subd*\vstep)--(\dtwo+4*\subd*\hstep,\vsix)
(\dtwo+5*\subd*\hstep,\vthree+5*\subd*\vstep)--(\dtwo+5*\subd*\hstep,\vsix);
\draw
(\dfive+\subc*\hstep,\vzero+\subc*\vstep)--(\dfive+\subc*\hstep,\vsix);


\draw [->] (\azero+\arsp,\vzero -\varr)--node[below]{$a$}(\aone, \vzero -\varr); 
\draw [->] (\bzero+\arsp,\vzero -\varr)--node[below]{$b$}(\bone, \vzero -\varr); 
\draw [->] (\czero+\arsp,\vzero -\varr)--node[below]{$c$}(\ctwo, \vzero -\varr); 
\draw [->] (\dzero+\arsp,\vzero -\varr)--node[below]{$d$}(\dsix, \vzero -\varr); 
\draw [->] (\azero +\arsp,\vsix + \varr) -- node[above]{$d$} (\aone  -\arsp,\vsix + \varr);
\draw [->] (\bzero +\arsp,\vsix + \varr) -- node[above]{$a$} (\bone  -\arsp,\vsix + \varr);
\draw [<-] (\czero +\arsp,\vsix + \varr) -- node[above]{$b$} (\cone  -\arsp,\vsix + \varr);
\draw [->] (\cone  +\arsp,\vsix + \varr)  -- node[above]{$a$}(\ctwo  -\arsp,\vsix + \varr);
\draw [->] (\dzero +\arsp,\vsix + \varr) -- node[above]{$b$} (\done  -\arsp,\vsix + \varr);
\draw [<-] (\done  +\arsp,\vsix + \varr)  -- node[above]{$a$}(\dtwo  -\arsp,\vsix + \varr);
\draw [->] (\dtwo  +\arsp,\vsix + \varr)  -- node[above]{$d$}(\dthree-\arsp,\vsix + \varr);
\draw [<-] (\dthree+\arsp,\vsix + \varr) -- node[above]{$b$} (\dfour -\arsp,\vsix + \varr);
\draw [->] (\dfour +\arsp,\vsix + \varr) -- node[above]{$a$} (\dfive -\arsp,\vsix + \varr);
\draw [->] (\dfive +\arsp,\vsix + \varr) -- node[above]{$c$} (\dsix  -\arsp,\vsix + \varr);

\draw (\azero, \vthree) -- (\aone, \vfour); 
\draw (\bone, \vzero) -- (\bzero, \vone); 
\draw (\bone, \vtwo) -- (\bzero, \vthree); 
\draw (\bzero, \vfive) -- (\bone, \vsix); 
\draw (\ctwo, \vzero) -- (\czero, \vtwo); 
\draw (\ctwo, \vtwo) -- (\cone, \vthree); 
\draw (\czero, \vfour) -- (\ctwo, \vsix); 
\draw (\dzero,\vone) -- (\dfive,\vsix); 
\draw (\done,\vfour) -- (\dzero,\vfive); 
\draw (\dtwo,\vfive) -- (\done,\vsix); 

\draw (\dfive,\vzero)--(\dsix,\vone);
\end{tikzpicture}
\caption{The trapezoid cell structure for the folded mapping torus $X_\f$ for the example $\f\colon \Gamma \to \Gamma$.}
\label{F:trapezoid_example}
\end{figure}
\end{center}

\end{example}

\subsection{Trapezoidal subdivisions}
\label{S:trapezoidal_subdivisions}

In \S\ref{sect:cocycles fib and E_0} we will construct `perturbed' fibrations $\eta_z\colon X_\f\to\mathbb{S}^1$ associated to certain $1$--cocycles $z$ on $X_\f$.
To aid in the construction of $\eta_z$, which will be carried out explicitly in \S\ref{sect:allowable perturbations}, it will be helpful to use a subdivided cell structure on $X_f$ that is tailored to the cocycle $z$. Here we describe the general type of cell structures that we will use.

For a trapezoidal $2$--cell $T$ of $X_\f$, recall that $\ell_-(T)$, $\ell_+(T)$, $e_-(T)$, and $e_+(T)$ denote the left, right, bottom and top arcs of $T$, respectively. As each of these arcs is in fact a union of $1$--cells of $X_\f$, we may regard them as cellular $1$--chains in $X_\f$. The trapezoid $T$ induces a \emph{$T$--orientation} on these arcs as follows: By definition, $\ell_\pm(T)$ and $e_-(T)$ are given the positive orientation (as specified in Definition~\ref{def:trapezoidcell}) so that the $1$--cells comprising $\ell_\pm(T)$ and $e_-(T)$ must appear with positive coefficient in those $1$--chains. The two sides $\ell_\pm(T)$ of $T$ are then distinguished by the convention that $e_-(T)$ is oriented from $\ell_-(T)$  to $\ell_+(T)$. Lastly the $T$--orientation on the top arc $e_+(T)$ is defined so that it is also oriented from $\ell_-(T)$ to $\ell_+(T)$.  Alternatively, using $\flow$ to flow the bottom to the top defines a map $h_T \colon e_-(T) \to e_+(T)$ which is a homeomorphism on the interior of $e_-(T)$; the $T$--orientation on $e_+(T)$ is defined such that $h_T$ is orientation preserving.  As the $T$--orientation on $e_+(T)$ may not agree with the positive orientation on $X_\f^{(1)}$, we see that the $1$--cells comprising the $1$--chain $e_+(T)$ may appear with positive or negative coefficient in this $1$--chain.  Since $e_+(T)$ is a sum of $1$--cells all contained in a single skew $1$--cell of the old cell structure, it follows that all coefficients of $e_+(T)$ have the {\em same} sign, and we define the {\em sign of $T$} to be $\zeta(T) \in \{ \pm 1\}$ such that the cellular $1$--chain $\zeta(T)e_+(T)$ has all positive coefficients. 
With these definitions, the boundary of $T$ is the cellular $1$--chain
\begin{equation}
\label{eq:trapezoid_boundary}
\partial T = e_-(T) + \ell_+(T) - e_+(T) - \ell_-(T).
\end{equation}

Note that $X_\f$ may contain some trapezoids $T$ for which one of the sides $\ell_\pm(T)$ degenerates to a single point (see the left trapezoid of Figure~\ref{F:trapezoids}). Such trapezoids are said to be \emph{degenerate}.

\begin{remark}
\label{rem:degenerate_trapezoids}
If $T$ is a degenerate trapezoid, our naming conventions ensure that $\ell_+(T)$ is the degenerate side of $T$; that is, $\ell_+(T)$ is a point whereas $\ell_-(T)$ is a nondegenerate arc.  Moreover, in this case the positive orientation on the skew $1$--cells comprising $e_+(T)$ disagrees with the $T$--orientation on the arc $e_+(T)$. Thus we necessarily have $\zeta(T) = -1$ for every degenerate trapezoid.
\end{remark}

Recall that the cell structure on $X_\f$ is constructed so that every $1$--cell is either vertical or skew. The sides $\ell_\pm(T)$ of each trapezoid $T$ consist of vertical $1$--cells, and the top $e_+(T)$ consists of (possibly several) skew $1$--cells.  On the other hand, the bottom $e_-(T)$ of each trapezoid consists of a \emph{single} skew $1$--cell. Conversely, every skew $1$--cell of $X_\f$ is equal to $e_-(T)$ for a unique trapezoid $T$. The subdivisions of $X_\f$ that we construct in \S\ref{S:subdivision} below will again be \emph{trapezoidal cell structures}, meaning that they have exactly this structure. Furthermore, each $2$--cell $T$ of the subdivision will be a trapezoid that is contained in a unique $2$--cell $T'$ of $X_\f$. In fact $T$ will be a subtrapezoid of $T'$ having top and bottom arcs, again denoted $e_\pm(T)$, that are subarcs of $e_\pm(T')$ and with side arcs, again denoted $\ell_\pm(T)$, being arcs of flowlines running all the way from the bottom to the top of $T'$.  Such a subdivision will be called a {\em trapezoidal subdivision}. As with $X_\f$, the $1$--cells are all skew or vertical and hence inherit orientations making the restriction of the fibration $\fib_0$ to each $1$--cell orientation preserving. We use the same conventions for the $T$--orientation on the sides of a trapezoid $T$ of the subdivision and define the sign $\zeta(T)$ just as for trapezoids of $X_\f$. With these definitions we have $\zeta(T) = \zeta(T')$, where $T'$ is the unique $2$--cell of $X_\f$ containing $T$.

\subsection{The semiflow on trapezoids}

As described above, each trapezoid is built from a union of subrectangles of rectangles, two subtriangles of triangles, and the horizontal arcs in between:
\[T = U_1 \cup \delta_1 \cup U_2 \cup \delta_2 \ldots \cup \delta_k \cup U_k.\]
As $T$ is a disk, $\flow$ actually restricts to a local flow on $T$.
This local flow has a very simple description, see Figure \ref{F:trapezoids2}.
\begin{proposition} \label{P:trapezoid simple flow}
For every trapezoid $T \in \mathcal T$ there are real numbers $m_1,m_2,t_0 \in \R$ defining a (possibly degenerate) Euclidean trapezoid 
\[ \hat T = \{ (r,t) \in \R^2 \mid 0 \leq r \leq 1, \, m_1 r \leq t \leq m_2 r + t_0 \}\]
and a continuous map
\[ \sigma_T\colon \hat T \to \bar T \]
such that $\sigma_T$ sends horizontal lines to horizontal arcs, and for all $(r,t) \in \hat T$ and $0 \leq s \leq m_2 r + t_0 - t$ we have
\[ \sigma_T(r,t+s) = \flow_s(\sigma_T(r,t)).\]
Moreover, $\sigma_T$ restricted to the interior of $\hat T$ is a homeomorphism onto $T$.
\end{proposition}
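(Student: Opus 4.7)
I would prove this by using the Euclidean structure of the old $2$--cells to directly build $\hat T$ and $\sigma_T$ piece by piece from the given decomposition $T = U_1 \cup \delta_1 \cup \cdots \cup \delta_{k-1} \cup U_k$. First recall that each $U_i$ is a subset of an old $2$--cell $\sigma_i$, and each $\sigma_i$ admits a characteristic map from a Euclidean rectangle or triangle under which the semiflow $\flow$ pulls back to vertical translation $(s,t)\mapsto (s,t+s')$ (coming from the product structure on $\Gamma_{t_j}\times[t_j,t_{j+1}]$ in the appropriate building block). In this Euclidean model of $\sigma_i$, the two sides of $U_i$, being arcs of the flow lines $\ell_\pm(T)\cap \sigma_i$, are vertical line segments, while its top and bottom are either horizontal segments (namely the intermediate $\delta_{i-1},\delta_i$) or linear non-horizontal segments (when $i\in\{1,k\}$ and the boundary of $U_i$ includes part of $e_+(T)$ or $e_-(T)$). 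Thus each $U_i$ is a (possibly degenerate) Euclidean trapezoid with vertical sides in its own model.

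Next I would normalize widths: for each $i$, apply an affine reparameterization $s\mapsto r=r_i(s)$ of the horizontal coordinate carrying the two vertical sides of $U_i$'s model to $r=0$ and $r=1$, leaving the vertical $t$--coordinate unchanged; call the resulting region $\hat U_i\subset[0,1]\times\R$. Since $\delta_i$ separates $U_i$ from $U_{i+1}$ and after normalization both models give the same affine parameterization $r\in[0,1]$ of the flow lines crossing $\delta_i$, the regions $\hat U_1,\ldots,\hat U_k$ fit together at the common horizontal segments $\{t= \text{height of } \delta_i\}$ to give a single Euclidean region $\hat T=\hat U_1\cup\cdots\cup\hat U_k$. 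The stacking then shows that $\hat T$ has the form $\{(r,t):0\le r\le 1,\;m_1 r\le t\le m_2 r+t_0\}$, where $t=m_1 r$ comes from the linear parameterization of $e_-(T)$ in the bottom model, $t=m_2 r+t_0$ comes from $e_+(T)$ in the top model, and the intermediate horizontal cuts lie between them. I would then define $\sigma_T\colon\hat T\to\bar T$ piecewise as the appropriate characteristic map of $\sigma_i$ precomposed with the affine rescaling $s\mapsto r$ on $\hat U_i$, which is unambiguous on the overlaps since the two definitions agree along each $\delta_i$.

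Finally I would verify the three required properties. Continuity holds by continuity of each piece together with agreement on the $\delta_i$ overlaps. The semi-conjugacy identity $\sigma_T(r,t+s)=\flow_s(\sigma_T(r,t))$ holds within each piece because the affine rescaling does not change $t$ and $\flow$ is vertical translation in each Euclidean model; it persists across each $\delta_i$ because $\flow$ is continuous on $X_\f$ and the common $r$--coordinate identifies matching flow lines. Horizontal lines $\{t=c\}$ in $\hat T$ map into the fiber $\Gamma_c$, hence to horizontal arcs. Finally, on the open interior, each piece of $\sigma_T$ is a homeomorphism onto its image (being an affine rescaling followed by a restricted characteristic map), and these glue along the $\delta_i$ to a homeomorphism from the interior of $\hat T$ onto $T$. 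The main delicate point is the bookkeeping in degenerate cases --- when one side $\ell_\pm(T)$ collapses to a single point, forcing $m_1\cdot 0=m_2\cdot 0+t_0$ and turning $\hat T$ into a triangle, or when $\bar T$ exhibits boundary identifications such as $e_-(T)\subset e_+(T)$ --- but these create no obstruction, since the statement only requires $\sigma_T$ to be a homeomorphism on the open interior, where no such identifications occur.
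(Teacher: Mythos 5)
Your proof is correct and follows essentially the same strategy as the paper's one-line argument, which observes that $T$ is a union of images under $\hat\Pi\colon\Gamma\times[0,1]\to X_\f$ of Euclidean rectangles and triangles on which $\flow$ pulls back to vertical translation, and asserts that these can be ``pieced together.'' Your version usefully fills in the details the paper leaves implicit: the affine width-normalization on each $U_i$, the compatibility of the two induced affine parameterizations along each horizontal cut $\delta_i$ (which, across the $\Gamma_0=\Gamma_1$ gluing, relies on that identification being affine edgewise), and the treatment of the degenerate/boundary-identified cases.
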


\begin{figure} [htb]
\begin{center}
\includegraphics[height=5cm]{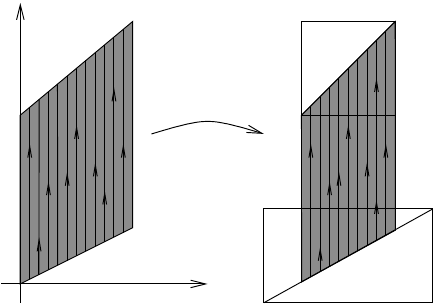} \caption{The local flow preserving map $\sigma_T\colon \hat T \to \bar T$.}
\label{F:trapezoids2}
\end{center}
\end{figure}

\begin{proof}
Since $\Pi'$ semi-conjugates $\hat \flow'$ to $\flow'$, and since $\flow'$ generates $\flow$, it follows that the map $\hat \Pi\colon \Gamma \times [0,1] \to X$ obtained by composing $\Pi'\colon \Gamma \times [0,1] \to X'$ with the quotient map $X' \to X$ satisfies
\[ (x,t+s) \mapsto \flow_s(\hat \Pi(x,t)). \]
Since $T$ is a union of images of product rectangles and triangles in $\Gamma \times [0,1]$ under $\hat \Pi$, it is straight forward to piece together the local flows $(x,t) \mapsto (x,t+s)$ on these and construct the required $\hat T$ and map $\sigma_T$.
\end{proof}

\begin{conv}
We use the maps $\sigma_T$ from Proposition \ref{P:trapezoid simple flow} to give the interior of every trapezoid a smooth structure.   Note that the local flow $\flow|_T$ on any trapezoid $T$ is thus smooth, as is $\fib_0|_T$.
\end{conv}

\section{Local models for the semiflow} \label{sect:local models}

The simplest behavior of $\flow$ is in the interior of a trapezoid where Proposition \ref{P:trapezoid simple flow} gives a completely transparent picture of what $\flow$ looks like, namely it is a local flow on a trapezoid by translation parallel to the direction of the vertical sides.   Our next goal is to complete this description by giving ``local models'' for the semiflow near any point of the $1$--skeleton of $X_\f$.

\begin{remark}
The discussion in this section should be compared with \cite[Section 2.3]{Gau1}.  In \cite{Wang}, similar issues are addressed using work of Bestvina and Brady \cite{BB97}.
\end{remark}

\begin{defn}[Local model]
\label{D:local model}
A {\em local model} for $\flow$ is a subset of $X_\f$ of the form
\[ \M(K,s_0) = \bigcup_{0 \leq s \leq s_0} \flow_s(K)\]
where $s_0 > 0$ and $K$ is a closed contractible neighborhood of a point of $\Gamma_t$, for some $t$.
We call $K$ the {\em bottom} of $\M$ and $\flow_{s_0}(K)$ the {\em top} and we call $\M$ minus the top and bottom the {\em flow-interior} of $\M$.
\end{defn}

About every point of $X_\f$ we will construct a local model $\M = \M(K,s_0)$ and study the maps 
\[ \flow_s\colon \flow_t(K) \to \flow_{t+s}(K) \]
induced by the flow on the horizontal pieces $\flow_t(K)$ and $\flow_{t+s}(K)$, for $0\le t < t+s \leq s_0$. As these maps will enjoy several useful properties, it is convenient to introduce the following terminology.

\begin{defn}[Control rel boundary]
Given two finite graphs $Q_0$ and $Q_1$ which are non-degenerate (i.e.~have no zero-dimensional components), let $\partial Q_0$ and $\partial Q_1$ denote the set of valence $1$ vertices.  We will say that a continuous map $\Upsilon \colon Q_0 \to Q_1$ is {\em controlled rel boundary} if the following holds:
\begin{enumerate}
\item For every $x \in Q_1$, $\Upsilon^{-1}(x)$ is finite;
\item $\Upsilon$ restricts to a bijection from $\partial Q_0$ to $\partial Q_1$; and
\item $\Upsilon \colon (Q_0,\partial Q_0) \to (Q_1,\partial Q_1)$ is a relative homotopy equivalence.  That is, there exists a continuous map $\Upsilon_0 \colon (Q_1,\partial Q_1) \to (Q_0,\partial Q_0)$ so that 
\[ \Upsilon_0 \circ \Upsilon\colon Q_0 \to Q_0 \mbox{ and } \Upsilon \circ \Upsilon_0\colon Q_1 \to Q_1 \]
are homotopic rel $\partial Q_0$ and $\partial Q_1$, respectively, to the identity.
\end{enumerate}
Note that $\Upsilon_0|_{\partial Q_1} = (\Upsilon|_{\partial Q_0})^{-1}$.
\end{defn}

This notion gives rise to a useful local definition for a map between graphs (cf.~\cite[Definition 2.11]{Gau1}).
\begin{defn}[local control]
If $Q_0$ and $Q_1$ are graphs, then a map $\Upsilon\colon Q_0 \to Q_1$ is {\em locally controlled} if there exists subdivisions $Q_0'$ of $Q_0$ and $Q_1'$ of $Q_1$ with the following properties.
\begin{enumerate}
\item $Q_0'$ and $Q_1'$ are each a finite union of non-degenerate, connected subgraphs:
\[ Q_0' = Q_0^1 \cup \ldots \cup Q_0^r \, , \, Q_1' = Q_1^1 \cup \ldots \cup Q_1^r;\]
\item For each $i = 0,1$ and every $j \neq j'$, $Q_i^j \cap Q_i^{j'} \subset \partial Q_i^j \cup \partial Q_i^{j'}$; and 
\item For each $j = 1,\ldots,r$, $\Upsilon(Q_0^j) = Q_1^j$ and the restriction $\Upsilon|_{Q_0^j}\colon Q_0^j \to Q_1^j$ is controlled rel boundary.
\end{enumerate}
\end{defn}

\begin{proposition} \label{P:local control h.e.}
If $Q_0,Q_1$ are finite graphs and $\Upsilon \colon Q_0 \to Q_1$ is locally controlled, then $\Upsilon$ is a homotopy equivalence and for every $x \in Q_1$, $\Upsilon^{-1}(x)$ is finite.
\end{proposition}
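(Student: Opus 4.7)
The finiteness of fibers is immediate from the defining conditions: given $x \in Q_1$, only finitely many of the pieces $Q_1^j$ contain $x$, and for each such $j$ the set $(\Upsilon|_{Q_0^j})^{-1}(x)$ is finite by the first defining property of controlled rel boundary. Hence $\Upsilon^{-1}(x) = \bigcup_j (\Upsilon|_{Q_0^j})^{-1}(x)$ is a finite union of finite sets, and so is finite.

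For the homotopy equivalence statement, my plan is to induct on the number $r$ of pieces in the decomposition, applying the gluing lemma for homotopy equivalences of CW pairs at each step. The base case $r=1$ is immediate: the definition of controlled rel boundary includes that $\Upsilon\colon (Q_0,\partial Q_0) \to (Q_1,\partial Q_1)$ is a relative homotopy equivalence, which in particular yields an absolute homotopy equivalence. For the inductive step, split $Q_0 = Q_0'' \cup Q_0^r$ with $Q_0'' := \bigcup_{j<r} Q_0^j$, and similarly $Q_1 = Q_1'' \cup Q_1^r$. The restriction $\Upsilon|_{Q_0''}$ is locally controlled with $r-1$ pieces, so by induction it is a homotopy equivalence; the restriction $\Upsilon|_{Q_0^r}$ is controlled rel boundary, hence also a homotopy equivalence. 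Because the intersections $Q_i'' \cap Q_i^r$ are finite discrete subsets of graphs, the inclusions into the two pieces are cofibrations, realizing each $Q_i$ as a genuine homotopy pushout. The gluing lemma then reduces the problem to verifying that $\Upsilon\colon Q_0'' \cap Q_0^r \to Q_1'' \cap Q_1^r$ is itself a homotopy equivalence---equivalently (since both sides are finite discrete) a bijection.

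The main technical obstacle is thus establishing this bijection on intersections. My plan is to exploit the boundary conditions $\Upsilon^{-1}(\partial Q_1^j) = \partial Q_0^j$ and $\Upsilon(\partial Q_0^j) = \partial Q_1^j$ from the controlled rel boundary property: together with the relative homotopy equivalence on pairs, these force $\Upsilon|_{\partial Q_0^j}\colon \partial Q_0^j \to \partial Q_1^j$ to be a bijection for every $j$. A careful case analysis of intersection points, invoking the constraint $Q_i^j \cap Q_i^{j'} \subseteq \partial Q_i^j \cup \partial Q_i^{j'}$ together with these piecewise boundary bijections, should give the desired global bijection. If an outright bijection is not immediate for the given decomposition, I would first refine by subdividing the pieces (both at troublesome intersection points in $Q_0$ and their images in $Q_1$); such a refinement preserves the locally controlled property and produces a sufficiently fine decomposition in which every intersection point lies in the boundary of every piece containing it, at which point the bijection follows directly from the piecewise boundary bijections.
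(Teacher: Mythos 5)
Your finiteness argument is correct and clean. For the homotopy equivalence, your route---induction on the number of pieces together with the gluing lemma for homotopy equivalences over a cofibration---is a genuine alternative to the paper's approach. The paper does not induct: it directly assembles a single homotopy inverse $\Upsilon_0\colon Q_1\to Q_0$ by observing that the relative condition forces each local inverse $\Upsilon_0^j$ to equal $(\Upsilon|_{\partial Q_0^j})^{-1}$ on $\partial Q_1^j$, so the $\Upsilon_0^j$ agree on overlaps and glue to a global map, and similarly for the homotopies. Your route substitutes the general-purpose gluing lemma for this explicit assembly; both buy the same thing, though the paper's construction is a bit more hands-on and avoids invoking the gluing lemma as a black box.

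The problem is that you leave the crucial step---that $\Upsilon$ restricts to a bijection $Q_0''\cap Q_0^r\to Q_1''\cap Q_1^r$---as a ``plan,'' and your fallback (subdividing the pieces so every intersection point becomes a valence-$1$ vertex of every piece containing it) is not convincing: splitting a piece $Q_1^{j'}$ at a high-valence point would change its homotopy type, and it is not clear how to split the corresponding $Q_0^{j'}$ so that each fragment remains controlled rel boundary. In fact no refinement is needed; the bijection follows directly. Given $x\in Q_1^j\cap Q_1^{j'}$, condition (2) puts $x$ in $\partial Q_1^j$ or $\partial Q_1^{j'}$; say $x\in\partial Q_1^j$. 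Then $\Upsilon^{-1}(x)\subseteq\Upsilon^{-1}(\partial Q_1^j)=\partial Q_0^j$, and since $\Upsilon|_{\partial Q_0^j}$ is a bijection onto $\partial Q_1^j$, the fiber $\Upsilon^{-1}(x)=\{y\}$ is a global singleton. Now use $\Upsilon(Q_0^{j'})=Q_1^{j'}$: since $x\in Q_1^{j'}$, some point of $Q_0^{j'}$ maps to $x$, and that point can only be $y$; hence $y\in Q_0^j\cap Q_0^{j'}$. This gives surjectivity of $\Upsilon\colon Q_0''\cap Q_0^r\to Q_1''\cap Q_1^r$, and injectivity is immediate from the fact just shown that each $x$ in the target has a singleton global fiber. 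With this filled in, your inductive gluing-lemma argument goes through.
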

\begin{proof}
Since $\Upsilon|_{Q_0^j}$ is locally controlled rel boundary for all $j$, it follows that
\[ \Upsilon \left( \bigcup_{j=1}^r \partial Q_0^j \right) = \bigcup_{j=1}^r \partial Q_1^j\]
and the restriction of $\Upsilon$ to $\cup \partial Q_0^j$ is a bijection onto $\cup \partial Q_1^j$ (since this is true for the restriction $\Upsilon|_{\partial Q_0^j}$ for each $j$). By definition of controlled rel boundary, there exist $r$ continuous maps
\[ \Upsilon_0^j \colon Q_1^j \to Q_0^j \]
such that $\Upsilon_0^j \circ \Upsilon|_{Q_0^j} \colon Q_0^j \to Q_0^j$ and $\Upsilon|_{Q_0^j} \circ \Upsilon_0^j\colon Q_1^j\to Q_1^j$ are homotopic relative to $\partial Q_0^j$ and $\partial Q_1^j$, respectively, to the identity.  Since
\[ \Upsilon_0^j|_{\partial Q_1^j}  = (\Upsilon|_{\partial Q_0^j})^{-1} \]
it follows that $\Upsilon_0^j$ and $\Upsilon_0^{j'}$ agree on $Q_1^j \cap Q_1^{j'}$ for all $j, j'$.   Therefore, the $\Upsilon_0^j$ are the restrictions of a continuous map
\[ \Upsilon_0\colon Q_1 \to Q_0.\]
The various homotopies to the identities glue together to define homotopies of $\Upsilon_0 \circ \Upsilon \colon Q_0 \to Q_0$ and $\Upsilon \circ \Upsilon_0\colon Q_1 \to Q_1$ to the respective identities.  Thus, $\Upsilon$ is a homotopy equivalence.

For any $x \in Q_1$, we have $x \in Q_1^j$ for some $j$ and hence $\Upsilon^{-1}(x) = \Upsilon|_{Q_0^j}^{-1}(x)$ is finite.
\end{proof}

\begin{proposition} \label{P:local model maps}
For every open set $U\subset X_\f$ and every point $x\in U$, there exists a local model neighborhood $\M(K,s_0)\subset U$ containing $x$ in its interior such that
\[ \flow_s\colon \flow_t(K) \to \flow_{t+s}(K)\]
is controlled rel boundary for all $0 \leq s, t \leq s+t \leq s_0$.
\end{proposition}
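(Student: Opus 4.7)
The plan is to construct $\M(K, s_0)$ by pulling back a carefully chosen contractible neighborhood of $x$ in its horizontal fiber $\Gamma_t$ to an earlier time. Let $t = \fib_0(x)$. Picking a small $\epsilon > 0$ and setting $t_0 = t - \epsilon$ (working modulo $1$ and absorbing a power of $f$ via Corollary~\ref{C:semiflow folding} should $t_0$ cross $0$), I will fix a small closed contractible neighborhood $\bar N \subset \Gamma_t$ of $x$ and define
\[ K := \flow_\epsilon^{-1}(\bar N) \cap \Gamma_{t_0}, \qquad s_0 := 2\epsilon, \qquad \M := \M(K, s_0). \]
The combinatorial shape of $\bar N$ will depend on where $x$ lies: an arc in the interior of a trapezoid, a star centered at $x$ on a vertical $1$--cell, a tripod (one arm in the identified direction, two along the prongs) on the interior of a skew $1$--cell, or a more general star at a $0$--cell.

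The arm lengths of $\bar N$ must be chosen so that (i) $\M \subset U$, and (ii) along each direction of the folds encountered in $[t_0, t]$, $\bar N$ extends past the image in $\Gamma_t$ of the time-$t_0$ fold vertex. The second requirement ensures that the two pre-image prong-arcs of $\bar N$ connect through the original fold vertex in $\Gamma_{t_0}$ rather than producing disjoint components. For $\epsilon$ small enough that no two distinct folds interfere in $[t_0, t]$, such a choice yields $K$ as a finite contractible tree with $\flow_\epsilon(K) = \bar N$, and since $x \in \bar N$ is reached at time $\epsilon = s_0/2$ it lies in the flow-interior of $\M$.

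To verify that $\flow_s \colon \flow_t(K) \to \flow_{t+s}(K)$ is controlled rel boundary for all $0 \le t \le s+t \le s_0$, I will first show that every $\flow_r(K) \subset \Gamma_{t_0 + r}$, for $r \in [0, s_0]$, is a finite contractible tree. The key observation is that a Stallings fold only identifies initial segments of two edges sharing a common origin and therefore cannot turn a tree into a graph with cycles; together with the extension requirement on $\bar N$ (which prevents $K$ from splintering into disconnected pieces), this gives the tree property. Consequently $\flow_s$ is a map between two contractible trees, and since the target is a tree two continuous maps agreeing on a subset are homotopic rel that subset, yielding condition~(3) of the definition of controlled rel boundary. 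Condition~(1) (finite preimages) holds because $\flow_s$ on a fiber is a graph map by Corollary~\ref{C:semiflow folding}. For condition~(2), I will use that $\bar N$ is chosen small enough (relative to $L \cdot s_0$) so that the valence-$1$ vertices of $K$ stay strictly off any fold path for all time $\le s_0$; hence no two tips of $\flow_t(K)$ get identified under $\flow_s$, and the required boundary bijections follow.

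The main obstacle is constructing $\bar N$ around $0$--cells of $X_\f$, where several trapezoids and skew $1$--cells meet, or when more than one fold occurs in $[t_0, t+\epsilon]$. There $\bar N$ must be a star whose arms simultaneously extend past every relevant fold vertex in every local direction while remaining short enough that no tip of $K$ migrates onto a fold path within time $s_0$. Since $x$ meets only finitely many cells and the semiflow on each trapezoid is transparent by Proposition~\ref{P:trapezoid simple flow}, these constraints can be met simultaneously by first choosing $\epsilon$ small enough to separate all nearby fold events and then tuning the arm lengths of $\bar N$ to satisfy all local extension conditions at once.
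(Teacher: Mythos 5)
Your construction of the local model matches the paper's: your $\bar N$ is the paper's $K$ in the fiber through $x$, and your $K = \flow_\epsilon^{-1}(\bar N)$ with $s_0 = 2\epsilon$ is the paper's $K' = K_{t''-\delta}$ with $s_0 = 2\delta$. Where you diverge is in verifying controlled-rel-boundary. The paper works directly: after choosing $\delta < D/\zeta$ where $D$ is the distance from $\partial K$ to the vertex set, each $r_{s',s}|_{K_s}$ is either a homeomorphism (when the fold vertex misses $K$) or an identification of two short initial segments wholly interior to $K$ --- both visibly controlled rel boundary. You instead show every $\flow_r(K)$ is a contractible tree and extract the relative homotopy equivalence from contractibility; that abstract step is fine, but your justification of the tree property has a gap.

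The claim that ``a Stallings fold cannot turn a tree into a graph with cycles'' is false in general: if a tree contains points $p,q$ on the two folded edges at equal distance from the fold vertex $v$, joined by a path in the tree that avoids $v$, then the fold identifies $p$ with $q$ and creates a cycle. What prevents this in your setting is exactly the quantitative bound the paper isolates: the tips of $\flow_r(K)$ must stay at distance greater than $\zeta s_0$ from all vertices, so that the fold's identification region (a ``V'' of radius at most $\zeta s_0$) is either disjoint from $\flow_r(K)$ or contained in its interior with the full ``V'' present; in the latter case the fold collapses a ``V'' whose two branches already meet at $v$ inside the tree, and no cycle can form. You do impose the bound ``valence-$1$ vertices stay strictly off any fold path,'' but you deploy it only for the boundary bijection in condition~(2); it is equally essential to the tree-preservation step, and once you invoke it there explicitly the argument goes through.
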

\begin{proof}
Recall that  $0 < 1/m < \dotsb < (m-1)/m < 1$ are the times at which the Stallings folds $\Delta_j = \Gamma_{j/m}$ occur during the folding process. Let $t'\in [0,1)$ be such that $x\in \Gamma_{t'}$ and $t' \in [(j-1)/m,j/m)$ for some $j = 1,\dotsc,m$. Similarly choose $t''\in (0,1]$ so that $x\in \Gamma_{t''}$ and $t''\in ((i-1)/m,i/m]$ for some $i=1,\dotsc,m$. Thus $t'=t''$ unless $x\in \Gamma_0=\Gamma =\Gamma_1$, in which case we have $t'=0$ and $t''=1$. The proposition essentially follows from the fact that for small $s > 0$ the maps $\flow_s\colon\Gamma_{t'}\to \Gamma_{t'+s}$ and $\flow_s\colon\Gamma_{t''-s}\to\Gamma_{t''}$ are just the folding maps $r_{t'+s,t'}$ and $r_{t'',t''-s}$, respectively.

Note that while $\Gamma_{t'}=\Gamma_{t''}$ as sets and topological spaces, the graph structure on $\Gamma_{t'}$ may be a combinatorial subdivision of that in $\Gamma_{t''}$ (when $t'=0$ and $t''=1$). In any case, we may choose a closed contractible neighborhood $K$ of $x$ in $\Gamma_{t'}=\Gamma_{t''}$ so that $\partial K$ is disjoint from $V\Gamma_{t'}$ and $V\Gamma_{t''}$ and $K \subset U$.  We let $D > 0$ be any number less than the minimal distance from $\partial K$ to $V\Gamma_{t'}\cup V\Gamma_{t''}$ in either of the metrics on $\Gamma_{t'}$ and $\Gamma_{t''}$.   Fix any $0 < \delta < \min\{\tfrac{D}{m},\vert j/m-t' \vert\,, \vert t'' - (i-1)/m \vert \}$.  In particular
\[ [t',t'+\delta] \subset [(j-1)/m,j/m) \mbox{ and } [t''-\delta,t''] \subset ((i-1)/m,i/m].\]

For any $t' \leq s \leq t' +\delta$.  The map $r_{s,t'}$ identifies initial segments of a pair of edges $e_1,e_2 \subset \Gamma_{t'}$ of length $(s-t')m$ and is a homeomorphism in the complement of these segments.  Set $v = o(e_1)=o(e_2)$ of $\Gamma_{t'}$ and $K_s = r_{s,t'}(K)$. 
For any $t' \leq s \leq s' \leq t'+\delta$ we claim that $r_{s',s}\vert_{K_s} \colon K_s \to K_{s'}$ is controlled rel boundary.  To see this, first note that $r_{s',t'} = r_{s',s} \circ r_{s,t'}$ is a homeomorphism outside the $(s'-t')m \leq m \delta < D$ neighborhood of $v$.  Since the distance between $v$ and $\partial K$ is less than $D$ in $\Gamma_{t'}$, if $v\notin K$, then $K$ is outside the $D$--neighborhood of $v$ and $r_{s',t'}\vert_{K}$ is a homeomorphism.  Consequently, $r_{s',s}|_{K_s}$ is also a homeomorphism.  Otherwise the $D$ neighborhood of $v$ is entirely contained in $K$.  In this case, $r_{s',s}\vert_{K_s}$ simply identifies initial segments of length $(s'-s)m$ of a pair of edges, and is nevertheless controlled rel boundary.

Now suppose $t'' - \delta \leq s \leq t''$ and write $K_s = (r_{t'',s})^{-1}(K) \subset\Gamma_s$.  The map $r_{s,t''-\delta}$ identifies initial segments of length $(s-t''+\delta)m \leq m \delta < D$ of a pair of edges $e_1,e_2 \subset \Gamma_{t''-\delta}$ and is a homeomorphism outside the $m \delta < D$ neighborhood of $v = o(e_1) = o(e_2)$.  For each $i =1,2$, let $\gamma_i \subset e_i$ be the initial segment of length $m \delta$, and let $u_i \in \gamma_i$ be the endpoint different than $v$.  Then $r_{t'',t''-\delta}(\gamma_1) = r_{t'',t''-\delta}(\gamma_2)$ is a path in $\Gamma_{t''}$ of length $m \delta < D$ and $u = r_{t'',t''-\delta}(u_1) = r_{t'',t''-\delta}(u_2) \in V\Gamma_{t''}$.  Since the distance from $u$ to $\partial K$ in $\Gamma_{t''}$ is at most $D$, either $r_{t'',t''-\delta}(\gamma_1) = r_{t'',t''-\delta}(\gamma_2) \subset K$ or $r_{t'',t''-\delta}(\gamma_1) = r_{t'',t''-\delta}(\gamma_2)$ is disjoint from $K$.  In the former case, $\gamma_1 \cup \gamma_2 \subset K_{t''-\delta}$ and $r_{t'',t''-\delta}\vert_{K_{t''-\delta}}$ just folds $\gamma_1$ and $\gamma_2$ together and is homeomorphism outside these segments.  In the latter case, $r_{t'',t''-\delta}\vert_{K_{t''-\delta}}$ is a homeomorphism onto $K$.  In either case, for any $t''-\delta \leq s \leq s' \leq t''$ we may argue as in the situation of $t' \leq s \leq s' \leq t'+\delta$ replacing $K$ with $K_{t''-\delta}$ to prove that  the map $r_{s',s}\vert_{K_s}$ is controlled rel boundary.

Therefore, by choosing $\delta$ sufficiently small and setting $s_0 = 2\delta$ and $K' = K_{t''-\delta}$, we see that $\M(K',s_0)$ is a local model contained in $U$ satisfying the conclusion of the proposition.
\end{proof}

\subsection{Examples of local models}\label{sect:example local}
Local models will play an important role in the sequel. In an effort to familiarize the reader with these tools, and to clarify the proof of Proposition~\ref{P:local model maps}, in this subsection we will describe and illustrate some specific types of local models. In all figures throughout this subsection, we have indicated $1$--cells of $X_\f$ with thick lines. A single arrow indicates a vertical $1$--cell and double arrow indicates a skew $1$--cell. 

The simplest type of local models are homeomorphic to surfaces, in which case $K\cong[-1,1]$ and $K\times[0,s_0]\cong\M(K,s_0)$ via a homeomorphism $(r,t)\mapsto \flow_t(r)$. We call these \emph{surface local models}. These exist for any point in the interior of a trapezoid (for example, by Proposition~\ref{P:trapezoid simple flow}), but they also exist near $1$--cells with degree $2$, and $0$--cells for which all adjacent $1$--cells are degree $2$. In this case, note that the semiflow restricts to a local flow, and the maps
\[ \flow_s\colon \flow_t(K) \to \flow_{t+s}(K)\]
are all homeomorphisms (in particular, they are controlled rel boundary).

However, local models near the $1$--cells $e$ with $d(e) > 2$ are necessarily more complicated. It is thus useful to distinguish these, and we do so with the following
\begin{defn} \label{def:E_0}
Let $E_0$ denote the set of $1$--cells $e$ with $d(e) > 2$.  Let $\mathcal E_0 \subset X_\f^{(1)}$ denote the subgraph of the $1$--skeleton with edge set $E_0$ and vertex set $\{ o(e),t(e) \mid e \in E_0\}$.  We give $\mathcal E_0$ the restricted metric structure and orientation from that of $X_\f^{(1)}$; see Definition \ref{def:trapezoidcell}.
\end{defn}

Let us next consider the local model about a point $x$ in a $1$--cell $e$ with $d(e) > 2$. When $e$ is a vertical $1$--cell there is a local model for which the semiflow restricts to a local flow. Indeed, if $x = \Gamma_{t}\cap e$, then for nearby $t'<t<t''$ with $t''-t'$ sufficiently small, we will have that $\Gamma_s\cap e$ is a vertex of $\Gamma_s$ of degree $d(e)$ for all $s\in [t',t'']$.  Subdividing each edge of $\Gamma_{t'}$ adjacent to $x'=\Gamma_{t'}\cap e$ by adding a point very close to $x'$, and letting $K$ be the star of $x'$, we see that the flow defines a homeomorphism $K\times[t',t'']\to\M(K,t''-t')$ via $(y,s)\mapsto \flow_{s-t'}(y)$. See Figure~\ref{F:localedge}.

\begin{figure} [htb]
\begin{center}
\includegraphics[width=12cm]{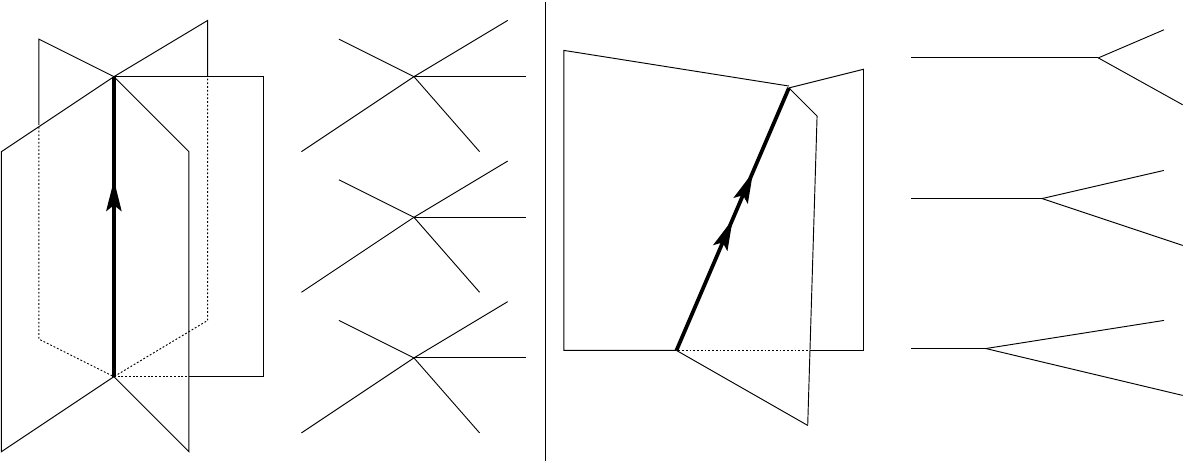} \caption{A local model for a vertical $1$--cell $e$ with $d(e) = 5$ (left) and a skew $1$--cell (right).  In both cases, $\flow_s\colon \flow_t(K) \to \flow_{t+s}(K)$ is controlled rel boundary, but only on the left is it a homeomorphism, and these are indicated to the right of each local model.}
\label{F:localedge}
\end{center}
\end{figure}

When $e$ is a skew $1$--cell with $d(e) = 3$ there is a local model neighborhood as shown at the right of Figure \ref{F:localedge}.  Observe that any such $x$ is contained in $\Gamma_t$ for some $i/m < t < (i+1)/m$ and some building block $X_i'$ from the proof of Proposition \ref{P:X' cell structure}.   In particular, the local models are obtained from the explicit construction of the building blocks in the proof of Proposition \ref{P:X' cell structure}.
Here we note that the maps $\flow_s\colon \flow_t(K) \to \flow_{t+s}(K)$ are not homeomorphisms, although they are controlled rel boundary.

Finally we consider local models near $0$--cells, for which there is a variety of possible behavior. To simplify the discussion, let us assume $v$ is a $0$--cell that does not lie in the interior of an old skew $1$--cell and does not have a local model homeomorphic to a surface. For such a point, we will actually illustrate two local models, one above $v$ and one below $v$. By combining these, one obtains what we call a \emph{stacked local model}.

\begin{figure} [htb]
\begin{center}
\includegraphics[width=12cm]{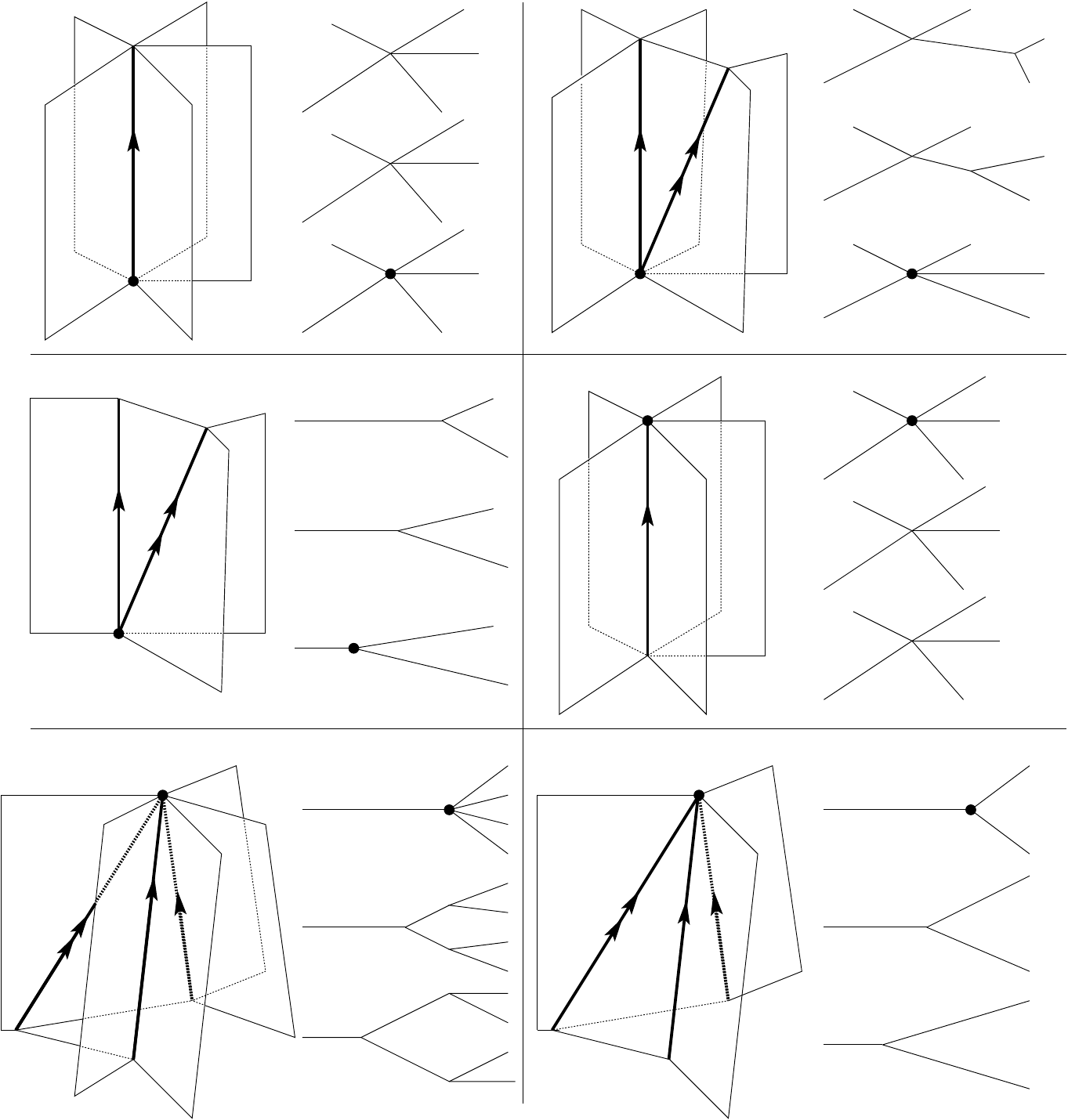} \caption{{Pictures of local models above the $0$--cell {(\bf (a)}--{\bf (c)}) and below the $0$--cell ({\bf (d)}--{\bf (e)}).   The maps $\flow_s\colon \flow_t(K) \to \flow_{t+s}(K)$ are indicated on the right of each local model.}}
\label{F:sinksource}
\end{center}
	\setlength{\unitlength}{1cm}
	\begin{picture}(0,0)(0,0)
	\put(-6,14.6){\bf{(a)}}
	\put(0,14.6){\bf{(b)}}
	\put(-6,10.3){\bf{(c)}}
	\put(0,10.3){\bf{(d)}}
	\put(-6,6){\bf{(e)}}
	\put(0,6){\bf{(f)}}
	\end{picture}
\end{figure}

Under our assumptions, $v$ is a $0$--cell of the old cell structure, and is thus a vertex of valence at least $3$ in $\Gamma_{i/m}$ for some  $i = 0,\ldots, m-1$ (if $v$ had valence $2$, it would have a surface local model neighborhood). We construct a local model above $v$ by subdividing each edge of $\Gamma_{i/m}$ adjacent to $v$ and taking $K$ to be the closed star of $v$. Taking $s_0 \in (0,1/m)$, we obtain a local model $\M(K,s_0)$ for $v$ that is in fact contained in the image of the building block $X'_i$. 

There are essentially two possibilities for $\M(K,s_0)$. If $v = o(e) = o(e')$ for two distinct edges $e,e'$ which are folded together in $\Gamma_{(i+1)/m}$, then this folding produces a skew $1$--cell $e_1$ with $o(e_1)= v$ and $d(e_1) = 3$, and a vertical $1$--cell $e_2$ with $o(e_2) = v$ with $d(e_2) = \val(v) - 1$, where $\val(v)$ is the valence of $v$ in $\Gamma_{i/m}$.  We have illustrated $\M(K,s_0)$ in Figure \ref{F:sinksource} {\bf (b)} (for $\val(v) = 5$) and {\bf (c)} (for $\val(v) = 3$).  If $v$ is not the initial vertex of a pair of edges folded together in $\Gamma_{(i+1)/m}$, then (for $s_0$ sufficiently small), $\M(K,s_0)$ is a product.  See Figure \ref{F:sinksource} {\bf (a)} (for $\val(v) = 5$).

A local model below $v$ is described similarly.  If $v \in \Gamma_0$, then $v \in \Gamma_1$ as well, so we may assume that $v \in \Gamma_{i/m}$ with $i = 1,\ldots, m$ is a vertex with $\val(v) \geq 3$.  We again take the star $K'$ of $v$ in $\Gamma_{i/m}$ (after subdividing), and set $K = (\flow_{s_0})^{-1}(K')$ for a small $s_0\in (0,1/m)$. Then 
\[ \M(K,s_0) = \{ (\flow_s)^{-1}(K') \mid 0 \leq s \leq s_0 \}\]
is a local model below $v$. Again there are essentially two possibilities, which are illustrated in Figure~\ref{F:sinksource} {\bf (d)--(f)} (for $\val(v) = 5$ or $3$). Note that in all cases, the local models have the property that the flow maps $\flow_s\colon \flow_t(K) \to \flow_{t+s}(K)$ are controlled rel boundary.

\subsection{The $E_0$--class.} \label{sect:epsilon defined} 

Recall that the $1$--skeleton $X_\f^{(1)}$ is oriented; see Definition \ref{def:trapezoidcell}.  Define a cellular $1$--chain $\epsilon \in C_1(X_\f;\R)$ by
\[ \epsilon = \frac{1}{2}\sum_e [2-d(e)] e, \]
where the sum is over all $1$--cells $e$ in $X_\f^{(1)}$, although the
coefficients are nonzero only on edges $e \in E_0$. As an application of the local models, we prove:
\begin{proposition} \label{P:epsilon cycle}
The chain $\epsilon$ is a cycle $\epsilon \in Z_1(X_\f;\R)$.
\end{proposition}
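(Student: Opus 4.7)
The plan is to prove $\partial \epsilon = 0$ by verifying, at each $0$-cell $v$ of $X_\f$, the identity
\[
\sum_{e\colon t(e)=v}(d(e)-2) \;=\; \sum_{e\colon o(e)=v}(d(e)-2),
\]
which is equivalent to $(\partial\epsilon)(v) = 0$. I will do this by a local Euler-characteristic argument using local models. For each $0$-cell $v$ I would invoke Proposition~\ref{P:local model maps} to obtain a small local model $\M(K, s_0)$ containing $v$ in its flow-interior. Writing $K^- := K \subset \Gamma_{t^-}$ and $K^+ := \flow_{s_0}(K) \subset \Gamma_{t^+}$, and taking $\M$ small enough, each incoming 1-cell $e$ at $v$ meets $K^-$ in a single "interior vertex" $v_e^-$---the unique preimage of $v$ under the flow along $e$---and likewise each outgoing 1-cell corresponds to a unique interior vertex of $K^+$. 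By Lemma~\ref{L:val=deg} (extended to the trapezoid cell structure via Remark~\ref{R:old v trap}), the valence of such an interior vertex in $K^\pm$ agrees with the degree $d(e)$ of the corresponding 1-cell.

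The crucial input is the controlled-rel-boundary property of $\Upsilon := \flow_{s_0}\colon K^- \to K^+$: by Proposition~\ref{P:local control h.e.}, $\Upsilon$ is a homotopy equivalence, giving $\chi(K^-) = \chi(K^+)$; and by definition of controlled rel boundary, it restricts to a bijection on leaves $\partial K^- \to \partial K^+$, giving $|\partial K^-| = |\partial K^+|$. Letting $I^\pm \subset V(K^\pm)$ denote the interior (non-leaf) vertices, so $|I^-| = |\mathrm{In}(v)|$ and $|I^+| = |\mathrm{Out}(v)|$, the elementary identities $\sum_{w \in V(K^\pm)} \val_{K^\pm}(w) = 2|E(K^\pm)|$ and $\chi(K^\pm) = |V(K^\pm)| - |E(K^\pm)| = |I^\pm| + |\partial K^\pm| - |E(K^\pm)|$ combine to give
\[
\sum_{w \in I^\pm} \val_{K^\pm}(w) \;-\; 2|I^\pm| \;=\; |\partial K^\pm| - 2\chi(K^\pm).
\]
Since the right-hand side is the same for $+$ and $-$, and since under the correspondence above $\sum_{w \in I^-}\val_{K^-}(w) = \sum_{e \in \mathrm{In}(v)} d(e)$ and similarly for $K^+$ and $\mathrm{Out}(v)$, the desired local identity follows.

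The main obstacle will be verifying in all cases that the local model can actually be arranged with the claimed correspondence---interior vertices of $K^\pm$ matching bijectively with 1-cells at $v$ with valence equal to degree. This hinges on a case analysis based on the type of $0$-cell $v$, corresponding to the various local-model pictures of \S\ref{sect:example local} and Figure~\ref{F:sinksource} (e.g., $v$ at a Stallings-fold initiation vertex, at a fold termination, or on $\Gamma_0 = \Gamma_1$). In each case one checks directly that, owing to the Stallings-fold structure of the folding line, $K^\pm$ takes the form of a star-with-stem subgraph of the relevant fiber, whose interior vertices realize the required valence-equals-degree correspondence.
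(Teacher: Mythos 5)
Your proof is correct and uses the same local-model framework as the paper, but justifies the key computational step by an explicit Euler characteristic identity rather than by direct inspection. The paper compares the bottom $K$ and top $\flow_{s_0}(K)$ of the stacked local model with the middle slice $\flow_{s'}(K)$ through $v$ (whose only non-leaf vertex is $v$ itself), asserting $\sum_j(2-\val(v_j^\pm))=2-\val(v)$ ``by inspecting the local models.'' You instead compare $K^-$ and $K^+$ directly, using the elementary identity $\sum_{w\in I}(\val(w)-2)=|\partial|-2\chi$ for finite graphs together with the fact that the controlled-rel-boundary map $\flow_{s_0}\colon K^-\to K^+$ preserves both $\chi$ (it is a homotopy equivalence) and $|\partial|$ (it is a bijection on leaves). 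This makes the invariance fully explicit where the paper leaves it to a picture, and it bypasses the middle slice altogether. Both approaches rely on the same valence-equals-degree correspondence via Lemma~\ref{L:val=deg}, and both defer a careful case analysis. One observation that shrinks the case analysis you flag as ``the main obstacle'': since $\val(w)-2$ and $d(e)-2$ both vanish at valence/degree $2$, you do not actually need a bijection between all incoming/outgoing $1$-cells and all interior vertices of $K^\pm$. It suffices that the $1$-cells of $E_0$ at $v$ correspond to valence-$\neq 2$ interior vertices of $K^\pm$ with valence equal to degree---which is exactly what Lemma~\ref{L:val=deg} provides---since any remaining valence-$2$ vertices of $K^\pm$ or degree-$2$ one-cells at $v$ contribute zero to both sides and may be ignored.
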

\begin{proof}
Fix a $0$--cell $v \in V\mathcal E_0$.  We need to show that the value
of $\partial \epsilon$ on $v$ is zero.  To prove this fact, choose a local model neighborhood $\M(K,s_0)$ around $v$, so that $v \in \flow_{s'}(K)$ for some $0 < s' < s_0$.  Note that because $v \in V\mathcal E_0$, this is not a surface local model, and hence it is a stacked local model which we write as $\M = \M^+\cup\M^-$ for  local models $\M^-$ and $\M^+$ below and above $v$, respectively.

Now let $e_1^-,\ldots,e_r^-$ (respectively, $e_1^+,\ldots,e_{r'}^+$) denote $1$--cells in $E_0$ with $t(e_j^-) = v$ (respectively, $o(e_j^+) = v$).  Every $e_j^{\pm}$ nontrivially intersects the  local model $\M^{\pm}$, as shown in Figure \ref{F:sinksource}.
If we let $v_j^- = e_j^- \cap K = \flow_0(K)$ and $v_j^+ = e_j^+ \cap \flow_{s_0}(K)$ then for all $j$ we have
\[ \val(v_j^\pm) = d(e_j^\pm)\]
where $\val(v_j^-)$ denotes the valence of $v_j^-$ in $K$, and $\val(v_j^+)$ denotes the valence of $v_j^+$ in $\flow_{s_0}(K)$.
Moreover, by inspecting the local models, we can see that
\[ \sum_{j=1}^r 2- \val(v_j^-) = 2- \val(v) = \sum_{j=1}^{r'} 2 - \val(v_j^+) \]
where $\val(v)$ denotes the valence of $v$ in $\flow_s(K)$.  Combining these two equations we obtain
\[ \sum_{j=1}^r 2- d(e_j^-) = 2- \val(v) = \sum_{j=1}^{r'} 2 - d(e_j^+). \]
Finally, we observe that the coefficient of $v$ in $\partial \epsilon$ is equal to one half of the difference of the two sums here, and hence is zero.
This completes the proof of the proposition.
\end{proof}

\begin{defn}\label{D:E_0-class}
We refer to the homology class of $\epsilon$ as the {\em $E_0$--class}.  As an abuse of notation, we also denote the homology class by $\epsilon \in H_1(X_\f;\R)$.
\end{defn}

\begin{remark} \label{R:Euler comparison}
We now explain the strong geometric analogy between our construction of the $E_0$--class and the Poincar\'e dual of the Euler class in the $3$--manifold setting.

Suppose that $\eta_0 \colon M \to \mathbb{S}^1$ is a closed, oriented $3$--manifold that fibers over $\mathbb{S}^1$ inducing some primitive integral element $(\eta_0)_*= u_0$ with pseudo-Anosov monodromy $F \colon S \to S$.   Let $T\mathfrak F \to M$ denote the tangent plane bundle of the foliation $\mathfrak F$ by fibers (which is also the kernel of the $1$--form $\eta_0^*(vol_{\mathbb{S}^1})$).  Taking any section $\sigma \colon M \to T\mathfrak F$ transverse to the zero section $Z \subset T\mathfrak F$, the Poincar\'e dual $e$ of the Euler class of $T\mathfrak F$ is represented by the oriented $1$--submanifold $\sigma^{-1}(Z)$; see \cite[\S12]{BT}.

Now suppose that the stable foliation $\mathcal F$ is orientable, all singularities are $4$--prong, and that the orientation is preserved by $F$.  Suspending $\mathcal F$ defines a $2$--dimensional singular foliation that intersects each fiber in the stable foliation for the first return map to that fiber.  Because $\mathcal F$ is orientable (and the orientation is preserved by $F$), we can choose a section $\sigma$ of $T\mathfrak F$ that is also tangent to $\mathcal F$ at every nonsingular point, and is zero precisely at the singular points.  Moreover, since the singularities are $4$--prong (i.e.~index $-1$), this can be done so that $\sigma$ is transverse to $Z$.  Therefore, $e$ is represented by the suspension of the singular points of $\mathcal F$.

This conclusion actually holds without any assumption on orientability of $\mathcal F$ or on the number of prongs at the singularity, with the appropriate modifications.  Namely, if $\gamma_1,\ldots,\gamma_n$ are all the closed orbits of singular points under the suspension flow, and if the number of prongs of a singular point $p_i$ contained in $\gamma_i$ is denoted $d(\gamma_i)$, then $e$ is represented by the oriented $1$--manifold
\[ \frac{1}{2} \sum_{i=1}^n [ 2- d(\gamma_i) ] \gamma_i.\]
Here, $\frac{1}{2}(2-d(\gamma_i))$ is just the index of the singularity (see e.g.~\cite[Expos\'e 5]{FLP}).   The $E_0$--class is the natural analogue in which the closed orbits are replaced by the $1$--cells of the cell structure, and the number of prongs is replaced by degree (as the notation suggests).
\end{remark}

\section{Cocycles, fibrations, and the $E_0$--class} \label{sect:cocycles fib and E_0} 

Here we begin a more detailed analysis of the map $\fib_0\colon X_\f \to \mathbb{S}^1$.  As described in \S\ref{sect: folded mapping torus} $\fib_0$ induces the integral element $u_0 = (\fib_0)_* \in H^1(X_\f ; \mathbb R) \cong H^1(G ; \mathbb R) \cong \Hom(G,\mathbb R)$;  see Theorem \ref{T:homotopy type} and Convention \ref{conv:G identified}.   In this section we explain how to construct the neighborhood $\A \subset H^1(X_\f;\R)$ for which the integral elements in this neighborhood are similarly induced by maps $\fib\colon X_\f \to \mathbb{S}^1$.  We also begin to explain how much of the structure of $\fib_0$ is also present in $\fib$.  We complete this section with the proof of Theorem \ref{T:folded basics}.

\subsection{Cellular cocycle representatives}

The first task is to provide an explicit cellular cocycle $z_0 \in Z^1(X_\f;\Z)$ representing $u_0$.  This is done as follows.  Recall that unless otherwise stated, the cell structure is always the trapezoid cell structure.

The restriction of $\fib_0$ to any cell $\sigma \subset X_\f$ can be lifted to the universal cover:
\[ \xymatrix{ 
 & \mathbb R \ar[d] \\
\sigma \ar[ur]^{\widetilde \fib_0|_\sigma} \ar[r]_{\fib_0|_\sigma} & \mathbb{S}^1} \]
Moreover, this lift is unique up to composing with a covering transformation $x \mapsto x + n$, $n \in \mathbb Z$.

For any $1$--cell $e$ the difference in the values at the end points
of the lift $\widetilde \fib_0|_e$ is independent of the lift, and we
denote this difference by $z_0(e)$.  This assignment defines a cellular cochain $z_0 \in C^1(X_\f; \mathbb R)$.

\begin{lemma} \label{L:z_0 represents u_0}
The cochain $z_0$ is a cocycle representing the class $u_0$.
\end{lemma}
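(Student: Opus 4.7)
The plan is to verify the two assertions separately: first that $\delta z_0 = 0$, and second that the cohomology class $[z_0]$ coincides with $u_0$. Both will be essentially a translation between cellular cochains and the pullback of the canonical generator of $H^1(\mathbb{S}^1;\Z)$ under $\fib_0$.

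For the cocycle condition, I would fix a 2--cell $T \in \mathcal T$. By Proposition~\ref{P:trapezoid simple flow}, $\bar T$ is the continuous image of a closed Euclidean trapezoid $\hat T$ and is in particular contractible, so the restriction $\fib_0|_{\bar T}$ lifts to a continuous map $\widetilde{\fib_0}\colon \bar T \to \R$, unique up to integer translation. For each 1--cell $e$ of $\partial T$, the restriction of this lift to $e$ is itself a lift of $\fib_0|_e$, and hence the signed difference of its values at $t(e)$ and $o(e)$ equals $z_0(e)$ by definition. The signed sum of these differences around the oriented boundary of $T$ then measures the net change of $\widetilde{\fib_0}$ along a loop in $\bar T$, which is necessarily zero. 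Thus $(\delta z_0)(T)=0$, proving $z_0 \in Z^1(X_\f;\R)$.

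For the cohomology class, represent an arbitrary class in $H_1(X_\f;\Z)$ by a cellular 1--cycle $\gamma=\sum_j n_j e_j$ together with a based loop $\ell\colon \mathbb{S}^1\to X_\f$ whose cellular trace is $\gamma$. By Convention~\ref{conv:G identified} the pairing $\langle u_0,[\ell]\rangle$ equals the degree of $\fib_0\circ \ell\colon \mathbb{S}^1 \to \mathbb{S}^1$, which is the net signed change of any continuous lift of $\fib_0\circ\ell$ to $\R$. Breaking $\ell$ into its 1--cell pieces and telescoping the endpoint differences of the lift on each piece yields exactly $\sum_j n_j z_0(e_j)=\langle z_0,\gamma\rangle$. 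Hence $\langle[z_0],\cdot\rangle$ and $u_0$ agree on $H_1(X_\f;\Z)$, so $[z_0]=u_0$. I do not anticipate a substantive obstacle; the only care required is consistent bookkeeping of orientations between the fixed orientation of 1--cells from Definition~\ref{def:trapezoidcell}, the induced boundary orientations of 2--cells, and the parametrization of $\ell$.
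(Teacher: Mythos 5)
Your second paragraph (the pairing computation on cellular cycles) is correct and is essentially the same telescoping argument the paper uses. The gap is in the first paragraph. The claim that $\bar T$, being the continuous image of the contractible Euclidean trapezoid $\hat T$, is ``in particular contractible'' is false: continuous images of contractible spaces need not be contractible (the circle is the continuous image of $[0,1]$). And here the objection is not merely academic --- the paper explicitly warns, right after defining trapezoids, that $\bar T$ need not be a closed disk, e.g.\ it can happen that $e_-(T)\subset e_+(T)$. In that situation a side $\ell_\pm(T)$ closes up into a loop wrapping once around the $\mathbb{S}^1$--direction, $\fib_0$ has degree one on it, and $\fib_0|_{\bar T}$ simply does not lift to $\R$. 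So as written, the first step of your cocycle argument fails.

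The fix is small and there are two natural routes. (a) Lift $\fib_0\circ\sigma_T\colon \hat T\to\mathbb{S}^1$ rather than $\fib_0|_{\bar T}$, where $\sigma_T$ is the characteristic map from Proposition~\ref{P:trapezoid simple flow}: since $\hat T$ is genuinely simply connected, the lift exists, and the cellular coboundary $(\delta z_0)(T)=z_0(\partial T)$ is by definition computed via the attaching map $\sigma_T|_{\partial\hat T}$, so your telescoping sum around $\partial\hat T$ goes through unchanged. (b) Alternatively, do what the paper does: first prove $z_0(\gamma)=u_0(\gamma)$ for every cellular loop $\gamma$ (this is your second paragraph), then observe that the boundary of any $2$--cell is such a loop that is nullhomotopic in $X_\f$, so $z_0(\partial T)=u_0(\partial T)=0$. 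This derives the cocycle condition as a corollary of the pairing identity rather than establishing it independently, and it entirely sidesteps the issue of whether $\bar T$ is a disk.
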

\begin{proof}
For any based loop $\gamma\colon [0,1] \to X_\f$ constructed as a concatenation of oriented $1$--cells $e_1^{\delta_1} e_2^{\delta_2} \cdots e_k^{\delta_k}$, where $\delta_j \in \{ \pm 1\}$, we obtain lifts $\widetilde \fib_0|_{e_j}$ to $\mathbb R$ by restricting a lift $\widetilde{\fib_0 \circ \gamma}\colon [0,1] \to \mathbb R$ of $\fib_0 \circ \gamma$.  Then
\begin{equation} \label{E:cocycle = hom} \begin{array}{rcl} u_0(\gamma) & = & (\fib_0)_*(\gamma) = \widetilde{\fib_0 \circ \gamma}(1) - \widetilde{\fib_0 \circ \gamma}(0) \\
 & = & \displaystyle{ \sum_{j=1}^k \delta_j z_0(e_j)}  = \displaystyle{ z_0 ( \sum_{j=1}^k \delta_j e_j )}\\
 & = & z_0(\gamma). \end{array}
 \end{equation}
In particular, the value of $z_0$ around the boundary of any $2$--cell is the value of $u_0$ around the cell which is zero.  Thus $z_0$ is a cocycle, and it agrees with $u_0$ on every loop.
\end{proof}

\subsection{Positive cocycles} \label{sect:positive cocycles}

\begin{defn}
Call a cocycle $z \in Z^1(X_\f; \R)$ {\em positive} if 
\[ z(e) > 0 \]
for every $1$--cell $e$.
\end{defn}

\begin{remark}
This definition is analogous to the notion of {\em positivity} in Gautero's dynamical $2$--complex (see \cite[Definition 2.1]{Gau1}), and will serve a similar purpose.  See also Wang \cite{Wang}.
\end{remark}

Since the restriction of $\fib_0$ to any $1$--cell $e$ is an orientation preserving diffeomorphism, the discussion in the previous section together with Lemma \ref{L:z_0 represents u_0} shows that $z_0$ is a positive cocycle.
\begin{proposition} \label{P:A open and convex}
The set of cohomology classes represented by positive $1$--cocycles is an open, convex cone $\A \subset H^1(X_\f; \mathbb R)$.
\end{proposition}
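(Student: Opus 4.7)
The plan is to observe that positivity is an open condition on cocycles in a natural finite-dimensional ambient vector space, and then to transfer openness, convexity, and the cone property to cohomology via the quotient map. Since $X_\f$ is a finite cell complex (built from finitely many building blocks, each with a finite cell structure, then refined to the trapezoid structure), the cellular cochain group $C^1(X_\f;\R)$ is a finite-dimensional real vector space, naturally isomorphic to $\R^n$ via evaluation on the $n$ oriented $1$--cells. Under this identification, the set $P$ of positive cocycles is precisely the intersection of the linear subspace $Z^1(X_\f;\R) \subset C^1(X_\f;\R)$ with the open positive orthant, and $P$ is clearly closed under positive scalar multiplication and under convex combinations. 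Hence $P$ is an open convex cone in $Z^1(X_\f;\R)$.

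By definition, $\A = q(P)$, where $q \colon Z^1(X_\f;\R) \to H^1(X_\f;\R)$ is the canonical quotient. Since $q$ is linear, the image of a convex cone under $q$ is a convex cone, so $\A$ is automatically a convex cone. The only remaining point --- which I take to be the main (though still minor) technical step --- is openness of $\A$. For this I would invoke the standard fact that a surjective continuous linear map between finite-dimensional real vector spaces is an open map: explicitly, one may choose a linear complement $V$ of $B^1(X_\f;\R)$ inside $Z^1(X_\f;\R)$, which expresses $q$ as the open projection $Z^1(X_\f;\R) \to V$ followed by the linear isomorphism $V \xrightarrow{\cong} H^1(X_\f;\R)$. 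Openness of $\A = q(P)$ then follows from the openness of $P$, completing the argument.
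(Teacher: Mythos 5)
Your proof is correct and follows essentially the same approach as the paper: both observe that positivity cuts out an open convex cone in $Z^1(X_\f;\R)$ and then push this forward to $H^1(X_\f;\R)$ along the quotient. The paper writes out the openness step concretely (picking cocycle representatives $\zeta_1,\dots,\zeta_b$ of a basis of $H^1$ and exhibiting an explicit neighborhood of $[z]$ in $\A$), whereas you package the same content as the abstract fact that a surjective linear map of finite-dimensional spaces is open; this is a stylistic, not mathematical, difference.
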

\begin{proof}  Choose a basis for $H^1(X_\f; \R)$ represented by cocycles $\zeta_1,\ldots,\zeta_b$.  Suppose $z$ is positive, so that $z(e) > 0$ for every $e \in E_0$.  Then one can find $s > 0$ so that for all $s_1,\ldots,s_b$ with $|s_j| < s$ for every $j$, we also have
\[ z(e) + \sum_{j=1}^b s_j \zeta_j(e) > 0 \]
for every $1$--cell $e$.   The set
\[ \{ [t(z + s_j \zeta_j)] \in H^1(X_\f;\R) \mid |s_j| < s, \, t > 0\} \]
is a neighborhood of $[z] \in H^1(X_\f; \R)$, consisting entirely of cohomology classes represented by positive cocycles.  Therefore, $\A$ is open.

Suppose $z_1,z_2$ are positive and $s,t > 0$.  For any $1$--cell $e$, we have $z_1(e) > 0$ and $z_2(e) > 0$ and thus
\[ (t z_1 + s z_2)(e) = tz_1(e) + s z_2(e) > 0.\]
Therefore, $\A$ is a convex cone.
\end{proof}

\begin{defn}[Positive cone]
We denote by $\A_{X_\f}$ the set of all $u\in H^1(G_\fee;\R)=H^1(X_\f; \R)$ such that $u$ can be represented by a positive 1--cocycle. We call $\A_{X_\f}$ the \emph{positive cone} corresponding to $X_\f$.
\end{defn}

For each positive $1$--cocycle $z$ representing an integral class in $u\in\A$, we now seek to construct a `perturbed' fibration $\eta_z\colon X_\f\to \mathbb{S}^1$ that, among other things, induces the homomorphism $u\colon G\to \Z$ on the level of fundamental group. Our construction will make use of the general structure of trapezoidal subdivisions of $X_\f$ introduced in \S\ref{S:trapezoidal_subdivisions}, and we refer the reader to that section for our conventions regarding $T$--orientations and the boundary arcs $\ell_\pm(T)$ and $e_\pm(T)$ of a trapezoid $T$.

\subsection{The subdivision procedure}
\label{S:subdivision}

Consider a trapezoidal subdivision $Y$ of $X_\f$ (see \S\ref{S:trapezoidal_subdivisions}). Suppose that $Y$ contains a trapezoid $T_0$ whose top arc $e_+(T_0)$ consists of a single skew $1$--cell. This $1$--cell is then necessarily the bottom of another trapezoid $T_1$ which may again have a single $1$--cell along its top arc $e_+(T_1)$. Continuing in this manner we may be able to find a sequence of trapezoids $T_0,T_1,\dotsc$ whose top arcs each consist of a single skew $1$--cell $\zeta(T_i)e_+(T_i) =  e_-(T_{i+1})$. A maximal such sequence (which is completely determined by the initial trapezoid $T_0$) must either terminate in a trapezoid whose top arc contains multiple $1$--cells, or, since there are only finitely many such trapezoids in $Y$, must eventually become periodic. In the latter case, the trapezoids $T_i,T_{i+1},\dotsc,T_{i+k} = T_i$ comprising the periodic part of the sequence glue together to give an annulus or m\"obius strip in $Y$ which is by construction invariant under the flow $\flow$. Let us call such an object an \emph{invariant band}.

\begin{lemma}
\label{L:invariant band}
Every invariant band $B$ of a trapezoidal subdivision $Y$ of $X_\f$ contains a closed orbit $\mathcal{O}$ of $\flow$ in its interior that intersects each trapezoid of the band in a single vertical arc.
\end{lemma}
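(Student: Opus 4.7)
The plan is to study the first-return map of $\flow$ to the bottom skew edge $e_-(T_0)$ of one of the trapezoids $T_0,T_1,\dotsc,T_{k-1}$ making up the band $B$ (with $T_k=T_0$), and to locate its fixed points in the interior. First, parameterize each $T_i$ via the map $\sigma_{T_i}\colon \hat T_i \to \bar T_i$ from Proposition~\ref{P:trapezoid simple flow}, so that $\hat T_i = \{(r,t) : 0 \le r \le 1,\ m_1^i r \le t \le m_2^i r + t_0^i\}$ and $\flow$ acts as translation in the $t$-variable. Choose the parameterization so that the $r$-increasing direction on $e_\pm(T_i)$ agrees with the $T_i$-orientation from \S\ref{S:trapezoidal_subdivisions}, with $r=0$ at $\ell_-(T_i)$ and $r=1$ at $\ell_+(T_i)$. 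With this setup, the flow from $e_-(T_i)$ to $e_+(T_i)$ inside $T_i$ is the identity in the $r$-coordinate, and the orbit segment through a point of parameter $r_0 \in (0,1)$ on $e_-(T_i)$ is the single vertical arc $\{\sigma_{T_i}(r_0,t) : m_1^i r_0 \le t \le m_2^i r_0 + t_0^i\}$ in $T_i$.

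Next, analyze the identifications between successive trapezoids. By definition of the sign, the $T_i$-orientation on $e_+(T_i)$ agrees with the positive orientation on $X_\f^{(1)}$ precisely when $\zeta(T_i) = +1$, while the $T_{i+1}$-orientation on $e_-(T_{i+1})$ always agrees with the positive orientation. Hence the change-of-coordinate map $\theta_i\colon e_+(T_i) \to e_-(T_{i+1})$ induced by the identification $\zeta(T_i)e_+(T_i) = e_-(T_{i+1})$ reads $r\mapsto r$ when $\zeta(T_i)=+1$ and $r\mapsto 1-r$ when $\zeta(T_i)=-1$. Composing, the first-return map $\tau = \theta_{k-1} \circ \cdots \circ \theta_0$ of $\flow$ to $e_-(T_0)$ within $B$ is the identity when the number of indices with $\zeta(T_i)=-1$ is even and is the involution $r\mapsto 1-r$ when that number is odd. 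This same parity distinguishes the topology of $B$: the band is an annulus exactly when $\prod_i \zeta(T_i)=+1$ and a M\"obius strip when $\prod_i \zeta(T_i)=-1$. Thus $\tau = \mathrm{id}$ in the annulus case and $\tau(r)=1-r$ in the M\"obius case.

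To finish, observe that a point of $e_-(T_0)$ with parameter $r \in (0,1)$ lies in the interior of $B$, since $r\in\{0,1\}$ parameterize the boundary flow-lines $\ell_\mp(T_0) \subset \partial B$. In the annulus case every such point is fixed by $\tau$, and we may take $\mathcal O$ to be the $\flow$-orbit through any interior point of $e_-(T_0)$; in the M\"obius case $r=1/2$ is the unique fixed point of $\tau$ in $(0,1)$, and we let $\mathcal O$ be the corresponding closed orbit. Because $\flow$ preserves the $r$-coordinate inside each $T_i$ (and because $\tau$ has period one on $\mathcal O$, not a proper multiple), in either case the orbit $\mathcal O$ meets each $T_i$ in exactly one vertical arc, as required. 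The main bookkeeping effort is tracking how the $T_i$-orientations compare to the positive orientation on $X_\f^{(1)}$ via the signs $\zeta(T_i)$; once this is in place, the proof reduces to a direct composition computation.
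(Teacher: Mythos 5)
Your proof breaks down at the step where you assert that each change-of-coordinate map $\theta_i\colon e_+(T_i)\to e_-(T_{i+1})$ ``reads $r\mapsto r$ when $\zeta(T_i)=+1$ and $r\mapsto 1-r$ when $\zeta(T_i)=-1$.'' What the orientation bookkeeping actually gives you is only that $\theta_i$ is orientation-preserving exactly when $\zeta(T_i)=+1$; it does not give you that $\theta_i$ is \emph{affine} in the $r$--coordinate. The flow-box parameterization of Proposition~\ref{P:trapezoid simple flow} is only defined up to pre-composition by an orientation-preserving self-homeomorphism of $[0,1]$ in the $r$--variable, and the $r$--parameterization of $e_+(T_i)$ inherited by flowing up $\hat T_i$ need not match the $r$--parameterization of $e_-(T_{i+1})$ coming from $\hat T_{i+1}$. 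So after composing, all you know about $\tau=\theta_{k-1}\circ\cdots\circ\theta_0$ is that it is an orientation-preserving or orientation-reversing self-homeomorphism of $[0,1]$. In the orientation-reversing case a fixed point comes for free from the intermediate value theorem, but in the orientation-preserving case there is no a priori reason for $\tau$ to have a fixed point in $(0,1)$ --- an increasing homeomorphism like $r\mapsto r^2$ fixes only the endpoints. Your argument therefore does not close in the annulus case.

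The paper avoids this by choosing a different cross-section on which the return map has visible affine structure. Since the band $B$ is $\flow$--invariant and every flowline meets $\Gamma$, some trapezoid $T_0$ meets $\Gamma$ in an arc $c$ contained in a single edge $e$ of $\Gamma$, and the first return map of $\flow\vert_B$ to $c$ is $f^m\vert_c$ for some $m\geq1$. Because $f^m(c)=c$ lies in the interior of $e$ and hence misses $(f^m)^{-1}(V\Gamma)$, the restriction $f^m\vert_c$ is a \emph{single affine piece} of the PL map $f^m\vert_e$; an affine self-homeomorphism of a closed interval must fix a point. This is the ingredient you are missing: some identification of your cross-section with an interval under which the return map is affine (not merely monotone). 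If you want to salvage your flow-box approach you would need to transport the linear structure on the edge $e\subset\Gamma$ through the flow to get an affine parameterization of $e_-(T_0)$, but at that point you are essentially repeating the paper's argument with an extra conjugation layered on top.
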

\begin{proof}
Let us write the invariant band as $B = T_0 \cup \dotsb \cup T_k$, where $T_k = T_0$. Since $B$ is invariant under $\flow$ and every flowline of $X_\f$ intersects the graph $\Gamma\subset X_\f$ infinitely often, it must be the case that some trapezoid of $B$, say $T_0$, intersects $\Gamma$ nontrivially. Moreover, since $T_0$ is a subtrapezoid of a unique trapezoid $T'$ of $X_\f$ and since $\Gamma$ is by construction disjoint from the interior of $e_\pm(T')$ and can intersect $T'$ in at most one connected component (see \S\ref{sect:trapezoids}), we see that $\Gamma \cap T_0$ must consist of a single arc $c$ running from $\ell_-(T_0)$ to $\ell_+(T_0)$. As the intersection of $\Gamma$ with any trapezoid of $X_\f$ is either empty or contained in a single edge of $\Gamma$, we furthermore see that $c \subset \Gamma$ is a subarc of a single edge $e$ of $\Gamma$.

Let $f'\colon c\to c$ denote the first return map of the restricted flow $\flow\vert_B$ to $c\subset B$. Since the first return map of $\flow$ to $\Gamma$ is the original map $f$, it follows that $f' = f^m\vert_c$ for some $m\geq 1$. Using the linear structure on $\Gamma$ to identify $c$ with a closed interval $[0,1]$, we now see that $f'$ is an affine homeomorphism from $c$ to itself. Therefore, $f'$ necessarily has a fixed point $x$ in the interior of $c$. Taking the flowline through $x$ then gives a closed orbit of $\flow$ contained in the interior of $B$ that intersects each trapezoid $T_i$ in a single vertical arc.
\end{proof}

\begin{remark}
\label{R:band_subdivision_unnecessary}
When $f\colon \Gamma\to\Gamma$ is an expanding irreducible train track map, Lemma~\ref{L:linear maps expanding on all scales} shows that $f$ is expanding on all scales and so there cannot exist a subarc $c\subset \Gamma$ of an edge of $\Gamma$ and a power $m\geq 1$ for which $f^m\vert_c$ is a self-homeomorphism. Therefore the proof of Lemma~\ref{L:invariant band} implies that a trapezoidal subdivision of $X_\f$ cannot contain any invariant bands when $f\colon \Gamma\to\Gamma$ is an expanding irreducible train track map.
\end{remark}

We now define the subdivision procedure that will be used in \S\ref{sect:allowable perturbations} below.

\begin{defn}[Standard subdivision]
\label{D:standard subdivision}
Let $Y$ be a trapezoidal subdivision of $X_\f$. The \emph{standard subdivision} $\widehat{Y}$ of $Y$ is constructed as follows. Firstly, for each invariant band $B=T_0 \cup \dotsb \cup T_k$ (with $T_k = T_0$) of $Y$, Lemma~\ref{L:invariant band} provides a closed orbit $\mathcal{O}$ of $\flow$ in the interior of $B$. The top and bottom arcs of each trapezoid $T_i$ consist of single skew $1$--cells of $Y$, and the maps $h_{T_i}\colon e_-(T_i)\to e_+(T_i)$ give us homeomorphisms $e_-(T_i)\to e_-(T_{i+1})$ between these $1$--cells.
We subdivide each of these $1$--cell into two $1$--cells by adding a vertex at the point $v_i = \mathcal{O}\cap e_-(T_i)$ for $0 \leq i < k$. 
We furthermore subdivide each trapezoid $T_i$ of $B$ into two trapezoids by adding a vertical $1$--cell $\beta_i =\mathcal{O}\cap T_i$ from $v_i$ to $v_{i+1}$ (notice that these $1$--cells are by construction arcs of flowlines). Let $Y^*$ denote the auxiliary trapezoidal subdivision of $Y$ obtained by simultaneously performing this procedure to each invariant band of $Y$. In light of Remark~\ref{R:band_subdivision_unnecessary}, we see that this has no effect (that is $Y^* = Y$) in the case that $f\colon \Gamma\to\Gamma$ is an expanding irreducible train track map. 

We now further subdivide $Y^*$ as follows. First, the vertical $1$--cells are not subdivided.  Each skew $1$--cell of $Y^*$ is $e_-(T)$ for some trapezoid $T$ of $Y^*$, and we may write the top arc of $T$ as a concatenation of skew $1$--cells of $Y^*$.  Let $v_0,v_1,\ldots,v_k$ be the vertices of $e_+(T)$, traversed in that order along $e_+(T)$ (with the $T$--orientation), so that $v_0$ is the top endpoint of $\ell_-(T)$ and $v_k$ is the top endpoint of $\ell_+(T)$. 
We let $u_i = h_T^{-1}(v_i)$, where $h_T \colon e_-(T) \to e_+(T)$ is the map defined by $\flow$, and we subdivide $e_-(T)$ at the points $u_1,\ldots,u_{k-1}$ yielding new $1$--cells $\alpha_i = (u_{i-1},u_i)$ for $1 \leq i \leq k$ (so if $k = 1$, $e_-(T)$ is not subdivided). Lastly, we subdivide the trapezoid $T$ of $Y^*$ into $k$ trapezoids $T_1,\dots,T_k$ by adding vertical $1$--cells $\beta_i$ (i.e., arcs of flowlines) connecting $u_i$ to $v_i$ for each $1\le i < k$; see Figure~\ref{F:cocyle_refinement}. It is straightforward to check that simultaneously applying these subdivisions (of $e_-(T)$ and $T$) for each trapezoid $T$ of $Y^*$ yields a trapezoidal cell structure $\widehat{Y}$ that subdivides $Y$. 
\end{defn}

If $Y'$ is a subdivision of a trapezoidal cell structure $Y$, we say that a $1$--cell  $\sigma$ of $Y$ \emph{becomes subdivided} in $Y'$ if there exists a $1$--cell $\sigma'$ of $Y'$ such that $\sigma'\subsetneq \sigma$. Equivalently, this is the case if and only if $\sigma$ is not itself a $1$--cell of $Y'$. The same terminology applies to $2$--cells of $Y$, and we note that some $2$--cells of $Y$ may not become subdivided in the standard subdivision $\widehat{Y}$ (namely those trapezoids $T$ that are not contained in an invariant band and for which $e_+(T)$ consists of a single skew $1$--cell).

\begin{lemma}
\label{L:eventual_subdivision}
Let $Y_0$ be a trapezoidal subdivision of $X_\f$, and let $Y_0,Y_1,\dotsc$ be the sequence of standard subdivisions defined recursively by $Y_{n+1} = \widehat{Y_n}$. Then every skew $1$--cell of $Y_n$ eventually becomes subdivided in some further subdivision $Y_m$ with $m > n$.
\end{lemma}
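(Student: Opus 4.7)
The plan is to proceed by contradiction, assuming some skew $1$--cell $\sigma = \sigma_0$ of $Y_n$ is never subdivided in any $Y_m$ with $m > n$; by relabeling we may take $n = 0$. The key observation concerns the two-stage structure $Y \to Y^* \to \widehat{Y}$ of the standard subdivision: the bottom $e_-(T)$ of a trapezoid $T$ of $Y$ survives undivided into $\widehat{Y}$ if and only if $T$ lies in no invariant band of $Y$ (so $T$ is untouched in the $Y \to Y^*$ step) and $e_+(T)$ remains a single skew $1$--cell in $Y^*$ (so $T$ is untouched in the $Y^* \to \widehat{Y}$ step); under these two conditions, $T$ itself also survives as a trapezoid of $\widehat{Y}$.

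Applying this observation iteratively, since $\sigma_0$ is never subdivided, in every $Y_m$ the trapezoid $T^{(m)}$ with $e_-(T^{(m)}) = \sigma_0$ satisfies these two conditions. In particular, $T^{(m)} = T_0$ for all $m$, where $T_0$ is the trapezoid of $Y_0$ above $\sigma_0$; and the single-cell top requirement at every stage forces $\sigma_1 := e_+(T_0)$, viewed as a skew $1$--cell of $Y_0$, to be likewise never subdivided. Applying the same argument with $\sigma_1$ in place of $\sigma_0$ produces a trapezoid $T_1$ of $Y_0$ with $e_-(T_1) = \sigma_1$ (up to orientation) that lies in no invariant band of $Y_0$ and whose top $\sigma_2 := e_+(T_1)$ is also never subdivided. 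Iterating yields infinite sequences $T_0, T_1, T_2, \ldots$ of trapezoids and $\sigma_0, \sigma_1, \sigma_2, \ldots$ of skew cells of $Y_0$ such that the top of $T_i$ equals the bottom of $T_{i+1}$ and no $T_i$ lies in any invariant band of $Y_0$.

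Since $Y_0$ has only finitely many trapezoids, the pigeonhole principle yields indices $0 \leq i < j$ with $T_i = T_j$. The subsequence $T_i, T_{i+1}, \ldots, T_{j-1}$ then matches the paper's definition of an invariant band of $Y_0$, because each of these trapezoids has single-cell top equal to the bottom of the next and $T_j = T_i$ closes the cycle. This contradicts the fact that $T_i$ lies in no invariant band of $Y_0$, completing the proof.

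The main obstacle is the subtle propagation of the ``never subdivided'' hypothesis from $\sigma_0$ to $\sigma_1$. Specifically, ensuring that $\sigma_1$ is never subdivided requires using both halves of the key observation: not only must $T_0$ avoid invariant bands of each $Y_m$, but the trapezoid lying immediately above $T_0$ in each $Y_m$ must also avoid invariant bands of $Y_m$ --- otherwise the band-subdivision in the $Y_m \to Y_m^*$ step would split $\sigma_1$, creating an internal vertex on $T_0$'s top arc in $Y_m^*$ and forcing $\sigma_0$ to be subdivided in $Y_{m+1}$. Once this two-sided constraint is properly tracked through the iteration, the inductive construction of the chain and the finite pigeonhole argument that closes it into an invariant band of $Y_0$ are straightforward.
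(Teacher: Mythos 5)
Your proof is correct and takes essentially the same approach as the paper's: build the chain of trapezoids $T_0, T_1, T_2,\ldots$ in the original complex $Y_0$ with single-cell, never-subdivided tops, use finiteness to force a cycle, and recognize the cycle as an invariant band, contradicting the fact that each $T_i$ avoids every invariant band (equivalently, contradicting the fact that the band-subdivision step of $Y_0\to (Y_0)^*$ would split each $e_-(T_i)$). Your explicit unpacking of the two-stage condition governing when a skew $1$--cell survives one round of standard subdivision --- avoiding bands \emph{and} having a single-cell top in $Y^*$ rather than just in $Y$ --- is a useful clarification of what the paper packages as ``by nature of the standard subdivision procedure,'' but the underlying argument is the same.
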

\begin{proof}
Suppose that a skew $1$--cell $\sigma$ of $Y_n$ remains unsubdivided in $Y_m$ for all $m > n$. Then by nature of the standard subdivision procedure, $\sigma$ must equal $e_-(T_0)$ for a trapezoid $T_0$ of $Y_n$ whose top arc $\zeta(T_0)e_+(T_0)$ is a \emph{single} skew $1$--cell of $Y_n$ that also remains unsubdivided in $Y_m$ for all $m> n$. In particular, we must have $\zeta(T_0)e_+(T_0) = e_-(T_1)$ for another trapezoid satisfying the same property. Continuing in this way, we construct an infinite sequence of trapezoids $T_0,T_1,T_2,\ldots$ whose top arcs each consist of a single skew $1$--cell $\zeta(T_i)e_+(T_i) = e_-(T_{i+1})$ that remains unsubdivided in $Y_m$ for all $m> n$. Such an infinite sequence must eventually become periodic yielding an invariant band in $Y_n$. By construction, every skew $1$--cell in this invariant band subdivided in $(Y_n)^*$ and therefore also in $Y_{n+1}$. But this contradicts the fact that each skew $1$--cell $e_-(T_i)$ remains unsubdivided in $Y_m$ for all $m > n$.
\end{proof}

\subsection{Cocycle refinement}
\label{S:refinement}

If $Y$ is any trapezoidal cell structure on $X_\f$ and $Y'$ is a further trapezoidal subdivision of $Y$, then any $1$--cell $\sigma$ of $Y$ may be viewed as a cellular $1$--chain of $Y'$.  With this in mind, we say that a cocycle $z' \in Z^1(Y';\R)$ is a \emph{refinement} of $z \in Z^1(Y;\R)$ if $z'(\sigma) = z(\sigma)$ for every $1$--cell $\sigma$ of $Y$.  

The standard subdivision procedure from Definition~\ref{D:standard
  subdivision} has an associated method of refinement for positive
$1$--cocycles. Before describing this refinement, we first introduce some notation.
\begin{defn}[Skew ratio]
\label{D:skew_ratio}
Let $Y$ be a trapezoidal subdivision of $X_\f$ and let $z\in Z^1(Y;\R)$ be a positive cocycle on $Y$. For a trapezoid $T$ of $Y$ whose top arc is a $1$--chain $e_+(T) = \zeta(T)(\sigma_1+\dotsb + \sigma_k)$ we define the \emph{skew ratios} of $T$ to be the numbers
\[ 0 < \rho_i(T) = \frac{z(\sigma_i)}{\zeta(T)z(e_+(T))} \leq 1.\]
As $z(\sigma_1) + \dotsb + z(\sigma_k) = \zeta(T)z(e_+(T))$, we observe that $\rho_1(T) +  \dotsb + \rho_k(T) = 1$. The \emph{maximal ratio} for $T$ is then defined to be $\rho(T) = \max_{1\leq i \leq k} \rho_i(T)$, with the convention that $\rho(T) = 0$ in the case that $k=1$. We thus have $0 \le \rho(T) < 1$ for every trapezoid $T$ of $Y$. Finally, we set
\[\rho(Y) = \max_T \rho(T) < 1,\]
where the maximum is over all trapezoids $T$ of $Y$. To emphasize the dependence on the cocycle $z$, these quantities will sometimes be notated more elaborately as $\rho_i(T,z)$, $\rho(T,z)$, and $\rho(Y,z)$.
\end{defn}

\begin{defn}[Standard refinement]
\label{D:trap ratio and standard refinement}
Let $z\in Z^1(Y;\R)$ be a positive cocycle on a trapezoidal subdivision $Y$ of $X_\f$. The \emph{standard refinement} of $z$ is the positive cocycle $\widehat{z}\in Z^1(\widehat{Y};\R)$ defined as follows: First consider a trapezoid $T$ contained in an invariant band of $Y$. Then in the auxiliary subdivision $Y^*$, the bottom arc $e_-(T)$ is subdivided into two skew $1$--cells $\sigma_1,\sigma_2$ and the trapezoid $T$ is subdivided into two trapezoids by the addition of a new vertical $1$--cell $\beta$. Correspondingly, we define an auxiliary refinement $z^*\in Z^1(Y^*;\R)$ of $z$ by declaring
\begin{equation}
\label{eq:auxiliary_refinement}
z^*(\sigma_1) = z^*(\sigma_2) = \frac{z(e_-(T))}{2}\qquad\text{and}\qquad z^*(\beta) = \frac{z(\ell_-(T)) + z(\ell_+(T))}{2}
\end{equation}
for the new $1$--cells arising in any such trapezoid and leaving $z$ unchanged on the remaining $1$--cells of $Y$. It is straightforward to check that $z^*$ then defines a positive refinement of $z$.

Next consider any trapezoid $T$ of $Y^*$ with top arc a $1$--chain $e_+(T) = \zeta(T)(\sigma_1+\dotsb + \sigma_k)$ in $Y^*$. As in Definition~\ref{D:standard subdivision} we let $\alpha_1,\ldots,\alpha_k$ be the new skew $1$--cells for the subdivision of $e_-(T)$ in $\widehat{Y}$, and $\beta_1,\ldots,\beta_{k-1}$ the new vertical $1$--cells of $\widehat{Y}$ inside $T$. These are indexed so that $\beta_i$ connects the terminal vertex of $\alpha_i$ to the terminal vertex of $h_T(\alpha_i) = \zeta(T)\sigma_i$. We then construct the refinement $\widehat{z}$ of $z^*$ by leaving the value unchanged on the vertical $1$--cells of $\ell_\pm(T)$ (which are again vertical $1$--cells of $\widehat{Y}$), and for each $1 \leq i \leq k$ declaring
\begin{equation}
\label{eq:standard_refinement_skew}
\widehat{z}(\alpha_i) = \rho_i(T) z^*(e_-(T))
\end{equation}
and
\begin{equation}
\label{eq:standard_refinement_vert}
\widehat{z}(\beta_i) = z^*(\ell_+(T))(\rho_1(T) + \dotsb + \rho_i(T)) + z^*(\ell_-(T))(\rho_{i+1}(T) + \dotsb + \rho_k(T)).
\end{equation}
It is straightforward to see that this procedure defines a positive cocycle that refines $z^*$ and thus also $z$. See Figure~\ref{F:cocyle_refinement} for an illustration of this refinement procedure.
\end{defn}

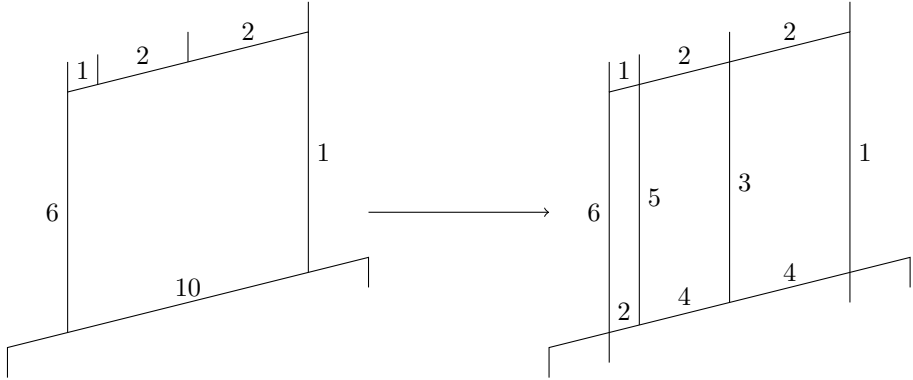
\begin{figure}[htb]
\begin{center}
\begin{tikzpicture}[scale=.8]
\draw (4,1)--(4,5)--(0,4)--(0,0);
\draw (-1,-.75)--(-1,-.25)--(5,1.25)--(5,.75);
\draw (0,4)--(0,4.5);
\draw (.5,4.125)--(.5,4.625);
\draw (2,4.5)--(2,5);
\draw (4,5)--(4,5.5);
\node at (-.25,2) {$6$};
\node at (2,.75) {$10$};
\node at (.25,4.35) {$1$};
\node at (1.25,4.6) {$2$};
\node at (3,5) {$2$};
\node at (4.25,3) {$1$};
\draw[->] (5,2)--(8,2);
\draw (13,.5)--(13,5)--(9,4)--(9,-.5);
\draw (8,-.75)--(8,-.25)--(14,1.25)--(14,.75);
\draw (9,4)--(9,4.5);
\draw (9.5,.125)--(9.5,4.625);
\draw (11,.5)--(11,5);
\draw (13,5)--(13,5.5);
\node at (8.75,2) {$6$};
\node at (9.25,4.35) {$1$};
\node at (10.25,4.6) {$2$};
\node at (12,5) {$2$};
\node at (9.25,.35) {$2$};
\node at (10.25,.6) {$4$};
\node at (12,1) {$4$};
\node at (13.25,3) {$1$};
\node at (9.75,2.25) {$5$};
\node at (11.25,2.5) {$3$};
\end{tikzpicture}
\caption{A local picture of the subdivision and cocycle refinement procedure for a trapezoid $T$ when $e_+(T)$ consists of more than one skew $1$--cell.  The numbers along the $1$--cells on the left represent the values of some positive cocycle $z$ and the numbers along the $1$--cells on the right represent the values of the refinement $\widehat z$.   In this example, we have assumed for simplicity that the trapezoids above $T$ are not subdivided.  In general the values along the top will also be ``distributed'' among the $1$--cells in the respective subdivisions.}
\label{F:cocyle_refinement}
\end{center}
\end{figure}

Notice that skew ratios are essentially preserved under the standard refinement procedure:

\begin{lemma} \label{L:skew ratio persistence}
Let $z\in Z^1(Y;\R)$ be a positive cocycle on a trapezoidal subdivision $Y$ of $X_\f$. Then for the standard subdivision $\widehat{Y}$ and corresponding refinement $\widehat{z}\in Z^1(\widehat{Y};\R)$ we have
\[\rho(\widehat{Y},\widehat{z}) \leq \max\{\rho(Y,z),1/2\}.\]
\end{lemma}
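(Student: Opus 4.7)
The approach is to factor the standard subdivision into its two stages $Y \to Y^* \to \widehat{Y}$ and establish the two inequalities: (a) $\rho(Y^*, z^*) \leq \max\{\rho(Y,z),\, 1/2\}$; and (b) $\rho(\widehat{Y}, \widehat{z}) \leq \rho(Y^*, z^*)$. The lemma then follows immediately.

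For (a), fix a trapezoid $T^*$ of $Y^*$. If $T^*$ sits inside an invariant band of $Y$, then by construction its top arc $e_+(T^*)$ consists of a single skew $1$-cell of $Y^*$ (every trapezoid in a band has a single-cell top, and this property is preserved by the band-handling in $Y^*$), so $\rho(T^*, z^*) = 0$ by the convention in Definition~\ref{D:skew_ratio}. Otherwise $T^*$ equals some trapezoid $T$ of $Y$ not in any band, and each top piece $\sigma_i$ of $T$ either remains a single $1$-cell in $Y^*$, or (when $\sigma_i = e_-(T_0)$ for some band trapezoid $T_0$) is split into two pieces $\sigma_i', \sigma_i''$ with $z^*(\sigma_i') = z^*(\sigma_i'') = z(\sigma_i)/2$ by \eqref{eq:auxiliary_refinement}. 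Noting that $\zeta(T^*)\,z^*(e_+(T^*)) = \zeta(T)\,z(e_+(T))$, a direct computation shows that the new skew ratios of $T^*$ in $Y^*$ equal $\rho_i(T,z)$ on unsplit pieces and $\tfrac{1}{2}\rho_i(T,z)$ on each half of a split piece. When $T$ has $k \geq 2$ top pieces we thus obtain $\rho(T^*, z^*) \leq \rho(T,z) \leq \rho(Y,z)$; and when $k = 1$ (so $\rho(T,z) = 0$ by convention), if the single top piece gets split then its two halves both have ratio $1/2$, yielding $\rho(T^*, z^*) = 1/2$. In all cases $\rho(T^*, z^*) \leq \max\{\rho(Y,z),\, 1/2\}$, establishing (a).

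For (b), any trapezoid $T^{**}$ of $\widehat{Y}$ is, by Definition~\ref{D:standard subdivision}, a subtrapezoid of a unique $T^*$ of $Y^*$; let $\sigma_i$ be the top piece of $T^*$ corresponding to $T^{**}$, and let $T'_i$ be the unique trapezoid of $Y^*$ with $e_-(T'_i) = \sigma_i$. The subdivision of $\sigma_i$ into pieces $\alpha'_1, \ldots, \alpha'_m$ in $\widehat{Y}$ is induced by the top arc structure of $T'_i$, and \eqref{eq:standard_refinement_skew} applied to $T'_i$ gives $\widehat{z}(\alpha'_j) = \rho_j(T'_i, z^*)\, z^*(\sigma_i)$. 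Summing over $j$ and using that the $\rho_j(T'_i, z^*)$ sum to $1$ yields $\zeta(T^{**})\,\widehat{z}(e_+(T^{**})) = z^*(\sigma_i)$. The skew ratios of $T^{**}$ in $\widehat{Y}$ therefore satisfy
\[
\rho_j(T^{**}, \widehat{z}) \;=\; \frac{\widehat{z}(\alpha'_j)}{\zeta(T^{**})\,\widehat{z}(e_+(T^{**}))} \;=\; \rho_j(T'_i, z^*),
\]
so $\rho(T^{**}, \widehat{z}) = \rho(T'_i, z^*) \leq \rho(Y^*, z^*)$, establishing (b).

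The main conceptual point, and the subtlety worth flagging, is the observation used in step (b) that the skew ratios of $T^{**}$ in $\widehat{Y}$ are governed not by the parent trapezoid $T^*$ but by the trapezoid $T'_i$ of $Y^*$ sitting immediately \emph{above} the relevant top-piece $\sigma_i$; once this asymmetry is noticed, the calculation is direct. The factor of $1/2$ in the bound arises only in step (a), from the auxiliary half-half splitting used to resolve invariant bands.
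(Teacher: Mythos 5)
Your proof is correct and follows essentially the same route as the paper's: both arguments pass through the intermediate subdivision $Y^*$, establish that $\rho(Y^*,z^*)\leq\max\{\rho(Y,z),1/2\}$, and then observe that the skew ratios of a trapezoid of $\widehat{Y}$ are inherited from the trapezoid of $Y^*$ that lies \emph{above} its top arc, so that $\rho(\widehat{Y},\widehat{z})\leq\rho(Y^*,z^*)$. Your presentation is slightly cleaner in isolating inequality (b) as a standalone claim (the paper just chains the estimates), but the substance and the key observation you flag at the end are the same.
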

\begin{proof}
First consider a trapezoid $T^*$ of $Y^*$. If $T^*$ is a new trapezoid that was created while constructing the subdivision $Y^*$, then its top arc $e_+(T^*)$ necessarily consists of a single skew $1$--cell of $Y^*$ and so we have have $\rho(T^*,z^*) = 0$ in this case. Otherwise $T^* = T_0$ for some trapezoid $T_0$ of $Y$ and we have $\rho(T^*,z^*) = \rho(T_0,z)$ except in the case that the $1$--chain $e_+(T_0) = \zeta(T_0)(\sigma_1 + \dotsb \sigma_k)$ in $Y$ contains a $1$--cell $\sigma_i$ that becomes subdivided into two $1$--cells $\sigma_i'$ and $\sigma_i''$ in $Y^*$. In that case by \eqref{eq:auxiliary_refinement} and Definition~\ref{D:skew_ratio} we have
\[z^*(\sigma_i') = z^*(\sigma_i'') = \frac{z(\sigma_i)}{2} =  \frac{1}{2}\rho_i(T_0,z)\zeta(T_0)z(e_+(T_0)) = \frac{1}{2}\rho_i(T_0,z)\zeta(T^*)z^*(e_+(T^*)).\]
If $k = 1$, so that $\rho_i(T_0,z) =1$, then $\rho(T^*,z^*) = 1/2$,
and otherwise it follows that $\rho(T^*,z^*) \leq \rho(T_0,z)$. Whence we have $\rho(T^*,z^*) \leq \max\{\rho(Y,z),1/2\}$ for every trapezoid $T^*$ of $Y^*$.

Next suppose that $T$ is a trapezoid of $\widehat{Y}$ and that  we have $e_+(T) = \zeta(T)(\alpha_1 + \dotsb + \alpha_k)$ with $k \geq 2$. Notice that, by construction, the top arc $e_+(T)$ of $T$ consists of a \emph{single} $1$--cell $\alpha$ of $Y^*$. Let $T^*$ be the trapezoid of $Y^*$ with $e_-(T^*) = \alpha$.  Since the $1$--cell $\alpha$ evidently becomes subdivided in $\widehat{Y}$, by definition of the standard subdivision we must have $e_+(T^*) = \zeta(T^*)(\sigma_1 + \dotsb + \sigma_k)$, as a $1$--chain in $Y^*$, where $h_{T^*}$ sends $\alpha_i$ to $\sigma_i$.  From \eqref{eq:standard_refinement_skew} we have
\[ \widehat{z}(\alpha_i) = \rho_i(T^*,z^*) z^*(e_-(T^*)).\]
By the refinement property, we also have $z^*(e_-(T^*)) = \widehat{z}(\alpha) = \zeta(T)\widehat{z}(e_+(T))$. Therefore
\[ \rho_i(T,\widehat{z}) = \frac{\widehat{z}(\alpha_i)}{\zeta(T)\widehat{z}(e_+(T))}  = \rho_i(T^*,z^*) \leq \rho(T^*,z^*) \leq \max\{\rho(Y,z),1/2\}\]
as required.
\end{proof}

Lemma~\ref{L:skew ratio persistence}, Lemma~\ref{L:eventual_subdivision} and the definition of standard refinement now give the following:

\begin{lemma}
\label{L:skews_tend_to_zero}
Let $Y_0$ be a trapezoidal subdivision of $X_\f$ and $z_0\in Z^1(Y_0;\R)$ a positive cocycle. Consider the sequence of standard subdivisions $Y_0,Y_1,\dotsc$ with corresponding cocycle refinements $z_0,z_1,\dotsc$ defined recursively by $Y_{n+1} = \widehat{Y_n}$ and $z_{n+1} = \widehat{z_n}$. Then the numbers
\[S_n =\max\{z_n(\sigma) \mid \text{ $\sigma$ is a skew $1$--cell of $Y_n$}\}\]
tend to zero as $n\to \infty$.
\end{lemma}
\begin{proof}
Let $\delta = \max\{\rho(Y_0,z_0),1/2\} < 1$. Applying Lemma~\ref{L:skew ratio persistence} inductively, we see that $\rho(T,z_n)\leq \delta$ for all trapezoids $T$ of $Y_n$ and for all $n\geq 0$. According to \eqref{eq:auxiliary_refinement} and \eqref{eq:standard_refinement_skew}, whenever a skew $1$--cell $\sigma = e_-(T)$ of $Y_n$ becomes subdivided in $Y_{n+1}$, each resulting subcell $\sigma'\subsetneq \sigma$ satisfies
\[z_{n+1}(\sigma') \leq \max\{\rho(T),1/2\}z_n(e_-(T)) \leq \delta z_n(\sigma).\]
By Lemma~\ref{L:eventual_subdivision}, for each $n$ there exists $m > n$ so that every $1$--cell of $Y_n$ has become subdivided in $Y_m$. The above inequality then implies $S_m \leq \delta S_n$ and the result follows.
\end{proof}

\subsection{An unconstrained refinement}
\label{S:unconstraint}

Our construction of the perturbed fibration $\eta_z \colon X_\f \to \mathbb{S}^1$ associated to a primitive integral element $[z] \in \A$ will rely on finding a subdivision of $X_\f$ and a positive refinement of $z$ in which every trapezoid enjoys the following property.
\begin{defn}[Unconstrained trapezoids]
\label{D:unconstrained}
Let $Y$ be a trapezoidal subdivision of $X_\f$ and let $z\in Z^1(Y;\R)$ be a $1$--cocycle.  A trapezoid $T$ of $Y$ is said to be \emph{unconstrained by $z$} if
\[\max\big\{0,\,\,z(e_-(T))\big\} \leq \min\big\{z(\ell_-(T)),\,\, z(\ell_-(T)) + z(e_+(T))\big\}.\]
\end{defn}

We note that when $z$ is positive, the above definition reduces to $z(e_-(T))\leq z(\ell_-(T))$ since the cocycle condition $z(\partial T) = 0$ (see \eqref{eq:trapezoid_boundary}) immediately yields
\[ 0 <  z(e_-(T)) < z(\ell_+(T)) + z(e_-(T))  = z(\ell_-(T)) + z(e_+(T)) .\]
Therefore, in this case a trapezoid is automatically unconstrained whenever $z(e_+(T)) < 0$. In particular, we see that trapezoids with $\zeta(T) = -1$ are unconstrained by {\em every} positive $1$--cocycle.

We now employ the constructions from \S\S\ref{S:subdivision}--\ref{S:refinement} to find such a refinement. For the proof, we recall from \S\ref{S:trapezoidal_subdivisions} that $X_\f$ may contain degenerate trapezoids and that the sign of every degenerate trapezoid $T$ is $\zeta(T) = -1$ (Remark~\ref{rem:degenerate_trapezoids}).

\begin{proposition} \label{P:standard to unconstrained}
Let $Y_0$ denote the trapezoidal cell structure $X_\f$ and let $z_0 = z\in Z^1(X_\f;\R)$ be a positive cocycle. Consider the sequence $Y_0,Y_1,\dotsc$ of standard subdivisions defined recursively by $Y_{n+1} = \widehat{Y_n}$ with corresponding cocycle refinements $z_{n+1} = \widehat{z_n}$. Then there exist some $n \geq 0$ so that every trapezoid $T$ of $Y_n$ is unconstrained by $z_n \in Z^1(Y_n;\R)$.
\end{proposition}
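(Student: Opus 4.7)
The plan is to locate a positive lower bound on $z_n(\ell_-(T))$ that is uniform in $n$ and in the trapezoid $T$, and then invoke Lemma~\ref{L:skews_tend_to_zero} to make $z_n(e_-(T))$ smaller than this bound for some large $n$. Since each $z_n$ remains positive, the condition of Definition~\ref{D:unconstrained} reduces (as already noted in the text) to the single inequality $z_n(e_-(T)) \leq z_n(\ell_-(T))$, so this strategy will suffice.

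The key numerical quantity is $m := \min\{z(\beta) \mid \beta \text{ is a vertical $1$--cell of } X_\f\}$, which is strictly positive since $X_\f$ has only finitely many $1$--cells and $z = z_0$ is positive on all of them. I would first prove by induction on $n$ that every vertical $1$--cell $\beta'$ of $Y_n$ satisfies $z_n(\beta') \geq m$. The base case is immediate. For the inductive step, Definition~\ref{D:standard subdivision} tells us that vertical $1$--cells are never subdivided, so old vertical $1$--cells persist into $Y_{n+1}$ with their values unchanged; the only new vertical $1$--cells are (i) those $\beta$ added in the invariant-band step $Y_n \to (Y_n)^*$ and (ii) those $\beta_i$ added in the horizontal step $(Y_n)^* \to Y_{n+1}$. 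In case (i), equation~\eqref{eq:auxiliary_refinement} assigns $(z_n)^*(\beta) = \tfrac{1}{2}\bigl(z_n(\ell_-(T)) + z_n(\ell_+(T))\bigr)$, an average of two vertical-$1$--cell values of $Y_n$. In case (ii), equation~\eqref{eq:standard_refinement_vert} expresses $z_{n+1}(\beta_i)$ as an explicit convex combination of the two vertical-$1$--cell values $(z_n)^*(\ell_-(T))$ and $(z_n)^*(\ell_+(T))$ (with weights summing to $\rho_1(T)+\cdots+\rho_k(T) = 1$). In both cases the new value is a convex combination of two numbers each already $\geq m$ by the inductive hypothesis, so it too is $\geq m$.

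Next I would observe that in every $Y_n$, each trapezoid $T$ has $\ell_\pm(T)$ consisting of a \emph{single} vertical $1$--cell. This holds for $Y_0 = X_\f$ by construction of the trapezoidal cell structure, and it is preserved under the standard subdivision: every side-arc of a sub-trapezoid produced in $Y_n \to (Y_n)^* \to Y_{n+1}$ is either an entire (unchanged) side of its parent trapezoid or one of the newly added vertical $1$--cells $\beta$, $\beta_i$, each of which spans the full height of the ambient trapezoid and is therefore a single $1$--cell. Combining this with the preceding inductive claim gives $z_n(\ell_-(T)) \geq m$ for every trapezoid $T$ of every $Y_n$.

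To finish, note that by construction of trapezoidal subdivisions, the bottom $e_-(T)$ of every trapezoid $T$ of $Y_n$ is a single skew $1$--cell, so $z_n(e_-(T)) \leq S_n$. By Lemma~\ref{L:skews_tend_to_zero}, $S_n \to 0$, so we may choose $n$ with $S_n < m$; for this $n$, every trapezoid of $Y_n$ satisfies $z_n(e_-(T)) < m \leq z_n(\ell_-(T))$ and is therefore unconstrained by $z_n$. The only subtle step is the induction on vertical-cell values---in particular tracking what happens in the two-step passage $Y_n \to (Y_n)^* \to Y_{n+1}$ and checking that the refinement formulas really do express new values as convex combinations of parent side-values---but once the bookkeeping is in order the result follows easily.
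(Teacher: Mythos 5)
Your argument is close to the paper's and the core strategy (bound vertical values from below by induction, drive skew values to zero via Lemma~\ref{L:skews_tend_to_zero}) is exactly right, but there is a genuine gap: you have overlooked the \emph{degenerate} trapezoids of $X_\f$. Recall (from just above Proposition~\ref{P:standard to unconstrained}) that $X_\f$ may contain trapezoids $T$ for which one of the sides $\ell_\pm(T)$ degenerates to a single point. For such a $T$, your claim ``each trapezoid $T$ has $\ell_\pm(T)$ consisting of a single vertical $1$--cell'' is false: the degenerate side is a point, the corresponding cochain value is $0$, and the inequality $z_n(e_-(T)) \leq z_n(\ell_-(T))$ cannot be achieved for a positive cocycle if the degenerate side happens to be $\ell_-(T)$. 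This also breaks your induction on vertical $1$--cells. If $\ell_-(T)$ is a point, formula~\eqref{eq:standard_refinement_vert} gives $\widehat{z}(\beta_i) = z^*(\ell_+(T))\,(\rho_1 + \dotsb + \rho_i)$ with $\rho_1 + \dotsb + \rho_i < 1$ for $i < k$, so the new vertical value is a convex combination of $z^*(\ell_+(T)) \geq m$ and $z^*(\ell_-(T)) = 0$ and can easily fall below $m$. Sub-trapezoids of degenerate trapezoids inherit a degenerate side, so this problem persists at all levels of the subdivision tower.

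The paper deals with this by a separate, prior observation: every degenerate trapezoid $T'$ of $X_\f$ has $\zeta(T') = -1$, and any trapezoid $T$ of any $Y_n$ contained in such a $T'$ also has $\zeta(T) = -1$; and (as noted right after Definition~\ref{D:unconstrained}) a trapezoid with $\zeta(T)=-1$ is automatically unconstrained by \emph{every} positive $1$--cocycle, because then $z(e_+(T)) < 0$ and the cocycle relation forces the defining inequality. In other words, the reduction of the unconstrained condition to the single inequality $z(e_-(T)) \leq z(\ell_-(T))$ is only the relevant test when $\zeta(T) = 1$; when $\zeta(T)=-1$ there is nothing to check. The paper then runs essentially your induction, but with the inductive claim explicitly restricted to vertical $1$--cells \emph{not contained in the interior of any degenerate trapezoid}, and concludes only for trapezoids outside the degenerate ones. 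You should incorporate both pieces: first dispatch the degenerate trapezoids (and their sub-trapezoids) via the $\zeta = -1$ observation, and then run your lower-bound induction on the complementary collection of vertical $1$--cells, noting that the convex-combination step in the inductive passage only ever involves side-values $\geq m$ precisely because you have excluded the degenerate trapezoids.
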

\begin{proof}
Let $D$ denote the set of degenerate trapezoids of $X_\f$. Any $T' \in D$ has $\zeta(T') = -1$, and furthermore any trapezoid $T$ of a subdivision $Y_n$ which is contained in some $T' \in D$ also has $\zeta(T) = -1$ (see Remark~\ref{rem:degenerate_trapezoids} and the last paragraph of \S\ref{S:trapezoidal_subdivisions}). Therefore, any such $T$ is unconstrained by $z_n$ as the calculation following Definition \ref{D:unconstrained} demonstrates.  So we need only consider trapezoids $T$ not contained in any trapezoid in $D$.

Let $M>0$ be the minimum of $z(\beta)$ over all vertical $1$--cells of $X_\f$. We claim that for $n\ge 0$ every vertical $1$--cell $\beta$ of $Y_n$ that is not contained in the interior of some $T'\in D$ also has $z_n(\beta)\geq M$. This is true for $n = 0$ by definition of $M$; arguing by induction we assume it is true for some $n \geq 0$ and prove it for $n+1$. The condition holds for all vertical $1$--cells of $Y_{n+1}$ that are also vertical $1$--cells of $Y_n$ by induction hypothesis. Thus it suffices to consider a new vertical $1$--cell of $Y_{n+1}$ that is not contained in any trapezoid in $D$ and is not a $1$--cell of $Y_n$. By construction, such a $1$--cell can only arise as a vertical arc $\beta_i$ connecting $e_-(T)$ to $e_+(T)$ for some trapezoid $T$ of $Y_n$ that is not contained in any $T'\in D$. We then have $z_n(\ell_\pm(T))\geq M$ since the $1$--cells comprising these chains are not contained in the interior of any $T'\in D$. As $\rho_1(T) +  \dotsb + \rho_k(T) = 1$, equations \eqref{eq:auxiliary_refinement} and \eqref{eq:standard_refinement_vert} show that $z_{n+1}(\beta_i)$ is a convex combination of $z_n(\ell_+(T)) \geq M$ and $z_n(\ell_-(T)) \geq M$, and hence $z_{n+1}(\beta_i) \geq M$, as required.

By Lemma~\ref{L:skews_tend_to_zero} we may now choose $n$ so that every skew $1$--cell $\sigma$ of $Y_n$ satisfies $z_n(\sigma) < M$. For any trapezoid $T$ of $Y_n$ that is not contained in a trapezoid of $D$, we then have $z_n(\ell_\pm(T))\geq M$ by the preceding paragraph. Therefore
\[ z_n(e_-(T)) < M \leq z_n(\ell_-(T)),\]
showing that $T$ is unconstrained by $z_n$ and completing the proof.
\end{proof}

\subsection{Perturbed fibrations} \label{sect:allowable perturbations}

We now utilize the trapezoidal subdivisions and cocycle refinements developed in \S\S\ref{S:trapezoidal_subdivisions}--\ref{S:unconstraint} to construct perturbed fibrations $\eta_z\colon X_\f\to \mathbb{S}^1$ for positive $1$--cocycles representing primitive integral classes in $\A$. The proof will also utilize the local model neighborhoods introduced in \S\ref{sect:local models}. For any local model $\M \subset X_\f$, we denote a lift to the universal covering $\pi\colon \R \to \mathbb{S}^1$ of the restriction of $\fib_0$ to $\M$ by
\[ \fib_0^\M\colon \M \to \R\]
and write $I_\M = \fib_0^\M(\M)$.

\begin{lemma} \label{L:constructing f_z}
Let $z \in Z^1(X_\f; \R)$ be a positive cocycle representing an integral class $u \in \A$. Then there exists a map $\fib_z\colon X_\f \to \mathbb{S}^1$ and a finite covering $\MM$ of $X_\f^{(1)}$ by local model neighborhoods with the following properties.
\begin{enumerate}
\item $(\fib_z)_*  =  u \colon \pi_1(X_\f) \to \pi_1(\mathbb{S}^1) = \Z$ in $H^1(X_\f;\R)$.
\item For each local model $\M \in \MM$, there is an orientation preserving injective local diffeomorphism $h_\M\colon I_\M \to \R$ so that the restriction $\fib_z|_\M$ satisfies
\[ \fib_z|_\M = \pi \circ h_\M \circ \fib_0^\M.\]
\item The restriction of $\fib_z$ to any semiflow line is a local diffeomorphism and the restriction to the interior of each trapezoid is a submersion.
\item For some $x_0 \in \mathbb{S}^1$, $\fib_z^{-1}(x_0) \cap X_\f^{(1)} \subset (X_\f^{(1)} \setminus X_\f^{(0)})$ consists of a finite set of points which flow into periodic orbits of $\flow$.  Moreover, if  $\f$ is an expanding irreducible train track map, then all points of  $\fib_z^{-1}(x_0) \cap X_\f^{(1)}$ eventually flow into vertical $1$--cells of $X_\f$.
\end{enumerate}
\end{lemma}
Part (2) of the lemma basically says that in each local model $\M$, the map $\fib_z$ factors through the restriction to $\M$ of $\fib_0$ (but to make this correct, we need to use $\fib_0^\M$ instead of the restriction of $\fib_0$ to $\M$ exactly).  As a consequence of (2), we see that the fibers of $\fib_z$ are horizontal in $\M$.
\begin{proof}
The map $\fib_z\colon X_\f \to \mathbb{S}^1$ will be defined so that for any $1$--cell $e$, if we lift the restriction to the universal cover $\widetilde \fib_z|_e\colon  e \to \mathbb R$, then the difference in the values at the endpoints (which is independent of the lift) is exactly $z(e)$.  A calculation as in Equation \eqref{E:cocycle = hom} shows that any such map will satisfy conclusion (1) of the lemma. The plan is to first construct a map on the $1$--skeleton $\fib_z'\colon X_\f^{(1)} \to \mathbb{S}^1$ satisfying this property, so that the restriction to each $1$--cell $e$ is a local diffeomorphism onto its image, and so that (4) is satisfied. Assuming that every trapezoid of $X_\f$ is unconstrained by $z$, we may then extend $\fib_z'$ over the $2$--cells so that (3) is satisfied. Finally, we modify $\fib_z'$ in a neighborhood of $X_f^{(1)}$ and construct the covering $\MM$ so that (2) is satisfied.

By Proposition~\ref{P:standard to unconstrained}, after passing to a trapezoidal subdivision $Y$ of $X_\f$ and a corresponding positive refinement $z'$ of $z$, we may assume that every trapezoid of $Y$ is unconstrained by $z'$. 
Suppose we can prove the lemma for the cell structure $Y$ and corresponding refinement $z'$.  
The first assertion of (4) will then follow from the fact that $ X_\f^{(1)} \subset Y^{(1)}$. Furthermore, in the case that $f$ is an expanding irreducible train track map, all vertical $1$--cells of $Y$ eventually flow into the vertical $1$--cells of $X_\f$ (see Definition~\ref{D:standard subdivision}), so the second assertion of (4) will follow as well. Since any finite cover of the $1$--skeleton of $Y$ also covers the $1$--skeleton of $X_\f$, part (2) will follow, and (3) will follow similarly by using local models near the new vertical $1$--cells. Finally, (1) does not make reference to the cell structure and so will follow automatically.
Thus it suffices to prove the lemma for $Y$ and $z'$; by an abuse of notation we therefore continue to refer to this subdivision and refined cocycle simply as $Y = X_\f$ and $z' = z$, respectively.

Choose a maximal tree $Q \subset X_\f^{(1)}$ and fix some vertex $v_0 \in Q$ to serve as a basepoint.  Define a map $\hat \fib_z\colon Q \to \R$ so that $\hat \fib_z(v_0) = 0$ and so that the restriction to every $1$--cell $e$ in $Q$ is an injective local diffeomorphism and so that
\[ \hat \fib_z(t(e)) - \hat \fib_z(o(e)) = z(e). \]
Because $z$ represents an integral cohomology class, we see that for any $1$--cell $e \in E_0$ which is not in $Q$, although this equation can fail, we still have
\[ \hat \fib_z(t(e)) - \hat \fib_z(o(e)) - z(e) \in \Z. \]
Therefore, composing $\hat \fib_z$ with $\pi$, we obtain a map $\fib_z'\colon Q \to \mathbb{S}^1$ which can we can extend to all of $X_\f^{(1)}$ with the property that for any $1$--cell $e$,  any lift $\widetilde \fib_z'|_e\colon  e \to \mathbb R$ is an injective local diffeomorphism and
\[ \widetilde \fib_z'|_e(t(e)) - \widetilde \fib_z'|_e(o(e)) = z(e). \]

We next modify $\fib'_z\colon X_\f^{(1)}\to \mathbb{S}^1$ by a homotopy rel $X_\f^{(0)}$, maintaining the property that $\fib'_z$ is a local diffeomorphism on each edge, so that on a neighborhood $U$ of $X_\f^{(0)}$ our map $\fib'_z$ factors through the restriction of $\fib_0$ to $U$. More precisely, we arrange that each connected component $U_0$ of $U$ contains a single vertex of $X_\f^{(0)}$ and that for any lift $\widetilde \fib_0\colon U_0\to \R$ of the restriction of $\fib_0$ to $U_0$ there exists a local diffeomorphism $h\colon \R\to \R$ so that
\[\fib'_z\vert_{U_0\cap X_\f^{(1)}} =  \pi \circ h \circ \widetilde{\fib}_0\vert_{U_0\cap X_\f^{(1)}}.\]
This modification ensures that two points of $U_0\cap X_\f^{(1)}$ are identified by $\fib'_z$ if and only if they are also identified by $\widetilde \fib_0$.

After shrinking $U$ if necessary, we may then choose a point $x_0\in \mathbb{S}^1 \setminus \fib'_z(U)$. Note that when $\f \colon \Gamma \to \Gamma$ is an expanding irreducible train track map, the set $\cup_{k \geq 1} f^{-k}(V\Gamma)$ is dense in $\Gamma$ (see the proof of Theorem \ref{T:topological graph to graph}).  Therefore, the set of points in $X_\f$ that eventually flow into vertical $1$--cells by $\flow$ is dense and these eventually flow into periodic orbits of $\flow$ (see Remark~\ref{R:old v trap}). More generally, if $\mathcal P \subset \Gamma$ is the set of periodic points for $\f$, then $\cup_{k \geq 1} \f^{-k}(\mathcal P)$ is also dense (as can be seen from the linear structure of $\f$), and hence the set of points of $X_\f$ that eventually flow into periodic orbits is always dense. Therefore, we may modify $\fib'_z\colon X_\f^{(1)}\to \mathbb{S}^1$ by a homotopy rel $U\supset X_\f^{(0)}$ so that $(\fib'_z)^{-1}(x_0)$ (which is a finite subset of $X_\f^{(1)}\setminus U$) consists of points that eventually flow into periodic orbits of $\flow$ (and that furthermore flow into vertical $1$--cells of $X_\f$ in the case that $\f$ is an expanding irreducible train track map). The extension $\fib_z\colon X_\f\to \mathbb{S}^1$ of $\fib'_z$ that we construct below will agree with $\fib'_z$ on $X_\f^{(1)}$ and will therefore satisfy condition (4).

We then extend $\fib'_z$ over the $2$--cells of $X_\f$ as follows. Consider a trapezoid $T$ of $X_\f$, and let $v\in X_\f^{(0)}$ denote the common initial vertex of $e_-(T)$ and $\ell_-(T)$ (recall that $e_-(T)$ is oriented from $\ell_-(T)$ to $\ell_+(T)$). Lifting $\fib'_z\vert_{\partial T}$ to the universal cover gives us a map $\widetilde \fib'_z\vert_{\partial T} \colon \partial T \to \R$ which, after composing with a translation, we may assume sends $v$ to $0$. By construction, $\widetilde \fib'_z\vert_{\partial T}$ then maps $e_-(T)$ and $e_+(T)$ locally diffeomorphically onto the intervals
\[ [0,z(e_-(T))] \qquad\text{ and }\qquad [z(\ell_-(T)), z(\ell_-(T)) + z(e_+(T))],\]
respectively (in the case $\zeta(T) = -1$, note that the endpoints of this latter interval have been written out of order). Since $T$ is unconstrained by $z$, these intervals have disjoint interiors. Therefore the map $\widetilde \fib'_z\vert_{\partial T}$ does not identify any interior point of $e_-(T)$ with any interior point of $e_+(T)$. Rather, $\widetilde \fib'_z\vert_{\partial T}$ identifies each point of $e_-(T)$ with a point of $\ell_-(T)$, and each point of $e_+(T)$ with a point of either $\ell_+(T)$ (in the case $\zeta(T) = 1$) or $\ell_-(T)$ (in the case $\zeta(T) = -1$). It follows that we may extend $\pi\circ\widetilde \fib'_z\vert_{\partial T}$ to a map $\fib'_z\colon \bar{T}\to \mathbb{S}^1$ whose fibers are transverse to the flow. Explicitly, identifying $\bar T$ with the Euclidean trapezoid $\hat{T}$ via the map $\sigma_T\colon \hat{T}\to \bar{T}$ provided by Proposition~\ref{P:trapezoid simple flow}, we extend $\widetilde\fib'_z\vert_{\partial T}$ over $\bar T$ so that the fibers are straight (Euclidean) lines in $\hat{T}$ and define $\fib'_z$ on $\bar T$ as $\pi \circ \widetilde \fib'_z\vert_{\bar T}$ . The unconstrained condition then ensures that each fiber has nonvertical slope and is therefore transverse to the foliation of $\hat T$ by (vertical) flowlines. Extending $\fib'_z$ over every trapezoidal $2$--cell of $X_\f$ in this manner, we obtain the map $\fib'_z\colon X_\f\to \mathbb{S}^1$.

Notice that the fibers of $\fib'_z$ are by construction transverse to $X_\f^{(1)}$. We may therefore modify $\fib'_z$ by a homotopy rel the $1$--skeleton to obtain a map $\fib_z\colon X_\f\to \mathbb{S}^1$ whose fibers remain transverse to the flow and are furthermore horizontal in a neighborhood $W$ of $X_\f^{(1)}$. This map satisfies conclusions (1) and (3) of the lemma.

Finally to prove part (2), we construct the cover $\MM$ of $X_\f^{(1)}$ by local models. For each $0$--cell $v$ of $X_\f$ we use Proposition~\ref{P:local model maps} to find a local model neighborhood $\M_v$ of $v$ contained in $U\cap W$. The local models $\M_v$ and $\M_{v'}$ are then disjoint for distinct $0$--cells (since each component of $U$ contains a single $0$--cell). We next cover each edge by finitely many $1$--cell local models that are each contained in $W$ and let $\MM$ denote the resulting cover.  Since each $\M\in \MM$ is contained in $W$, we see that the fibers of $\fib_z\vert_\M$ are necessarily horizontal. This implies that two points of $\M$ are identified by a lift $\widetilde \fib_z\vert_{\M}$ if and only if they are also identified by $\fib_0^\M$. Therefore the maps $\widetilde \fib_z\vert_\M$ and $(\fib_0^\M)^{-1}$ may be composed to yield a well defined map
\[ h_\M = \widetilde \fib_z|_\M \circ (\fib_0^\M)^{-1}\colon  I_{\M} \to \R.\]
This map $h_\M$ is a local diffeomorphism (as can be seen by restricting to a flowline in $\M$) and thus satisfies conclusion (2) of the lemma.
\end{proof}

\begin{remark}
Alternatively, one could work with {\em continuous $1$--forms} on $X$ as described in \cite{FGS10}.  We do not need any of the technical machinery developed in that setting here, and our situation requires other special considerations, so we decided to avoid adding another layer of terminology.
\end{remark}

\begin{defn}[Associated fibrations and the graph $\Theta_z$]\label{D:Theta_z}
Let  $u\in H^1(X_\f;\R)$ be an integral class represented by a positive $1$--cocycle $z\in Z^1(X_\f;\R)$. A map $\fib_z\colon X_f\to \mathbb{S}^1$ satisfying the conclusions of Lemma~\ref{L:constructing f_z} will be called an {\em associated fibration} for $z$.  
For any point $x \in \mathbb{S}^1$, let $\Theta_{x,z} = \fib_z^{-1}(x)$ be the full preimage of $x$.  By the second and third condition of Lemma~\ref{L:constructing f_z}, the fiber $\Theta_{x,z}$ is a finite topological graph with vertex set equal to the intersection with $X_\f^{(1)}$. The {\em associated graph $\Theta_z$} for this fibration is defined by choosing a point $x_0 \in \mathbb{S}^1$ as in part (4) and letting $\Theta_z := \Theta_{x_0,z}$ as a topological space; as topological graph we equip $\Theta_z$ with the vertex set
\[ V\Theta_z =  \left(  \bigcup_{s \geq 0} \flow_s(V\Theta_{x_0,z}) \right) \cap \Theta_{x_0,z}.\]
 According to part (4), $V \Theta_{x_0,z}$ flows into periodic orbits of $\flow$, and hence $\Theta_z$ is indeed a finite subdivision of $\Theta_{x_0,z}$.  Note that by construction, every edge of $\Theta_z$ is contained in a trapezoidal $2$--cell of $X_\f$ (since this is already true for $\Theta_{x_0,z}$).
\end{defn}

Let $u,z$, $\eta_z$ and $\Theta_z$ be as in Definition~\ref{D:Theta_z} above.
To understand the graph $\Theta_z$,  we need to study some general properties of the graphs $\Theta_{x,z}$ (where $x\in \mathbb{S}^1$ is arbitrary) and how they transform under $\flow$. 
By the third condition of Lemma~\ref{L:constructing f_z}, we can reparameterize the semiflow $\flow_s$ so that the composition  $\R_+ \to \mathbb{S}^1$ of any semiflow line with the projection to the circle has unit speed.  We denote this reparameterization by $\flow^z_s$.  
With this reparameterization we have
\[ \flow^z_s(\Theta_{x,z}) = \Theta_{x+s,z}\]
where $s \in \R$ and $x+s \in \mathbb{S}^1 = \R/\Z$.  By Lemma \ref{L:constructing f_z} (2), given a local model $\M = \M(K,s_0) \in \MM$, there exists $s_0^z$ so that 
\[ \M = \bigcup_{0 \leq s \leq s_0^z} \flow_s^z(K).\]
Furthermore, there is a homeomorphism $h_{\M,z}\colon [0,s_0] \to [0,s_0^z]$ so that
\[ \flow_s(K) = \flow_{h_{\M,z}(s)}^z(K)\quad\text{for all } s\in[0,s_0]\]
and consequently Proposition~\ref{P:local model maps} is valid for $\flow^z$ in place of $\flow$.

\begin{lemma} \label{L:local h.e.}
For any $x \in \mathbb{S}^1$ there exists an $s_x > 0$ such that for all $-\frac{s_x}{2} \leq s \leq s' \leq \frac{s_x}{2}$ the map
\[ \flow_{s'-s}^z \colon \Theta_{x+s,z} \to \Theta_{x+s',z}\]
is a homotopy equivalence.
\end{lemma}
\begin{proof}
For any $x' \in \mathbb{S}^1$ and $0 < s' < 1$ let
\[ N(x',s') = \bigcup_{0 \leq s \leq s'} \flow_s^z(\Theta_{x',z}) = \fib_z^{-1}([x',x'+s']).\]

Now cover the intersections of $\Theta_{x,z}$ with $X_\f^{(1)}$ by the flow-interiors of local models.  By replacing the local models $\M \in \MM$ provided by Lemma~\ref{L:constructing f_z} with smaller local models if necessary, we can assume that $\Theta_{x,z}$ does not intersect the top or bottom of any $\M$ and that for all $\M \neq \M' \in \MM$, we have
\[ \Theta_{x,z} \cap \M \cap \M' = \emptyset.\]
Then we can choose $0 < s_x < 1$ so that for any local model $\M = \M(K,s_0)$ there exist $0 < s_1^\M < s_2^\M < \ldots < s_r^\M < s_0^z$ (with $r = r(\M)$) with
\[ N(x - \tfrac{s_x}2,s_x) \cap \M = \bigcup_{j=1}^r \left( \bigcup_{0 \leq s \leq s_x} \flow_{s_j^\M + s}^z(K) \right). \]
Furthermore, by choosing $s_x$ sufficiently small we can also guarantee that
\[ N(x-\tfrac{s_x}2,s_x) \cap \M \cap \M' = \emptyset \]
for all $\M \neq \M' \in \MM$.

Applying Proposition \ref{P:local model maps} it follows that for all $-\tfrac{s_x}2 \leq s \leq s' \leq \tfrac{s_x}2$ and all $\M \in \MM$, we have
\[ \flow_{s'-s}^z \colon \Theta_{x+s,z} \cap \M \to \Theta_{x + s',z} \cap \M \]
is controlled rel boundary.  Any component of the complement
\[ U \subset N(x - \tfrac{s_x}2,s_x) \setminus \bigcup_{\M \in \MM} \M \]
is entirely contained in some trapezoid $T$.  In $T$ the semiflow is a flow, and so the restriction
\[ \flow_{s'-s}^z \colon \Theta_{x+s,z} \cap U \to \Theta_{x+s',z} \cap U \]
is a homeomorphism.

It follows that $\flow_{s'-s}^z \colon \Theta_{x+s,z} \to \Theta_{x+s',z}$ is locally controlled, and so by Proposition \ref{P:local control h.e.} it is a homotopy equivalence.
\end{proof}

\begin{corollary} \label{C:all h.e.}
For any $x \in \mathbb{S}^1$ and $s \in \R$ we have that
\[ \flow^z_s\colon \Theta_{x,z} \to \Theta_{x+s,z} \]
is a homotopy equivalence.  In particular, the first return map
\[ \f_z = \flow^z_1 \colon \Theta_z \to \Theta_z\]
is a homotopy equivalence.  Furthermore, $\f_z(V \Theta_z) \subset V \Theta_z$.
\end{corollary}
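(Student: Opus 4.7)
The plan is to bootstrap Lemma~\ref{L:local h.e.} to a global statement via a standard compactness argument on $\mathbb{S}^1$, and then observe that $V\Theta_z$ was defined precisely so as to be forward-invariant under $\flow$.

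First I would cover the circle by the intervals produced by Lemma~\ref{L:local h.e.}: for each $x \in \mathbb{S}^1$, the lemma provides $x' \in \mathbb{S}^1$ and $s_x > 0$ with $x = x' + s_x/2$ such that $\flow^z_{s'-s}\colon \Theta_{x'+s,z} \to \Theta_{x'+s',z}$ is a homotopy equivalence whenever $0 \leq s \leq s' \leq s_x$. Since $x$ lies at the midpoint of $[x',x'+s_x]$, the open intervals $\{(x - s_x/2,\, x + s_x/2) : x \in \mathbb{S}^1\}$ cover $\mathbb{S}^1$. By compactness I would extract a finite subcover and choose a Lebesgue number $\delta > 0$ for it, so that every closed subinterval of $\mathbb{S}^1$ of length at most $\delta$ lies in some $[x - s_x/2,\, x + s_x/2]$ from the cover.

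For $s \ge 0$, I would then partition $[0,s]$ as $0 = s_0 < s_1 < \dots < s_k = s$ with each $s_i - s_{i-1} \leq \delta$. The image in $\mathbb{S}^1$ of each $[x + s_{i-1},\, x + s_i]$ is contained in one of the intervals of the finite cover, so Lemma~\ref{L:local h.e.} guarantees that $\flow^z_{s_i - s_{i-1}}\colon \Theta_{x+s_{i-1},z} \to \Theta_{x+s_i,z}$ is a homotopy equivalence. The composition of these maps equals $\flow^z_s\colon \Theta_{x,z} \to \Theta_{x+s,z}$, hence is itself a homotopy equivalence. For $s < 0$, I would use the identification $\Theta_{x+s,z} = \Theta_{x+s+n,z}$ arising from $\mathbb{S}^1 = \R/\Z$ and reinterpret $\flow^z_s$ as $\flow^z_{s+n}$ for $n \in \Z$ chosen large enough that $s + n \geq 0$, reducing to the previous case.

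The special case $x = x_0$, $s = 1$ yields that $f_z = \flow^z_1\colon \Theta_z \to \Theta_z$ is a homotopy equivalence. For the vertex invariance $f_z(V\Theta_z) \subset V\Theta_z$, recall from Definition~\ref{D:Theta_z} that
\[ V\Theta_z = \Theta_{x_0,z} \cap \bigcup_{s \geq 0} \flow_s(V\Theta_{x_0,z}).\]
Since $\flow^z$ is merely a reparametrization of $\flow$, the orbits (and hence this union) coincide for the two, so if $v = \flow^z_t(w)$ with $t \geq 0$ and $w \in V\Theta_{x_0,z}$, then $f_z(v) = \flow^z_{t+1}(w)$ also lies in this union, while obviously $f_z(v) \in \Theta_{x_0+1,z} = \Theta_{x_0,z} = \Theta_z$. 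There is no real obstacle in this corollary; the substantive work was absorbed into Lemma~\ref{L:local h.e.} via the local-model analysis, and what remains is only compactness together with unwinding the definition of $V\Theta_z$.
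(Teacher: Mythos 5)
Your proof is correct and takes essentially the same approach as the paper: both decompose $\flow^z_s$ into a finite composition of maps that Lemma~\ref{L:local h.e.} certifies are homotopy equivalences, and both read off the vertex invariance directly from the definition of $V\Theta_z$. The paper's version is terser (it simply asserts the decomposition exists), whereas you spell out the compactness/Lebesgue-number argument that underwrites it and carefully treat the $s<0$ case; this is a helpful elaboration, not a different method.
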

\begin{proof}
We can write $\flow^z_s = \flow_{s_1}^z \circ\flow_{s_2}^z \circ \ldots \circ \flow_{s_k}^z$, where each restriction $\flow_{s_j}^z\colon \Theta_{x_{j-1},z} \to \Theta_{x_j,z}$, for $j = 1,\ldots,k$, is a homotopy equivalence given by Lemma \ref{L:local h.e.} for some $x_0,\ldots,x_k$ with $x_0 = x$ and $x_k = x+s$.   This proves the first (and hence also the second) statement.
The last statement follows from the definition of $V \Theta_z$ (see Definition \ref{D:Theta_z}
) since $\f_z$ is the first return map of $\flow$.
\end{proof}

As a consequence we have the following Corollary (cf.~\cite[Theorem 3.1]{Gau1} and \cite[Lemma 4.1.3]{Wang}).

\begin{corollary} \label{C:perturbed semidirect product}
For any primitive integral element $u \in \A$ represented by a positive $1$--cocycle $z$ with associated fibration $\fib_z$, the fiber $\Theta_z$ is connected and the induced map $\pi_1(\Theta_z) \to \pi_1(X_\f) = G$ is an isomorphism onto the kernel $\ker(u)$ of $u = (\fib_z)_*\colon G \to \mathbb Z$.
\end{corollary}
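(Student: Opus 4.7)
The plan is to pass to the infinite cyclic cover $p\colon \widetilde X_\f \to X_\f$ corresponding to $\ker(u)\leq G$ (with deck group $\Z$), and use the semiflow there to produce a deformation retraction onto a lift of $\Theta_z$. Since $(\fib_z)_* = u$ vanishes on $\pi_1(\widetilde X_\f)=\ker(u)$, the map $\fib_z$ lifts to a continuous equivariant map $\tilde\fib_z\colon \widetilde X_\f \to \R$; and since $\flow^z$ is homotopic through the semiflow to the identity, it lifts uniquely to a semiflow $\tilde\flow^z$ on $\widetilde X_\f$ with $\tilde\flow^z_0=\mathrm{id}$. By the unit-speed property of $\flow^z$, we then have $\tilde\fib_z(\tilde\flow^z_s(x)) = \tilde\fib_z(x) + s$ for all $s\geq 0$. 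Fix a lift $\tilde x_0\in \R$ of $x_0\in \mathbb{S}^1$ and set $\tilde\Theta_z = \tilde\fib_z^{-1}(\tilde x_0)$.

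First I would note that the deck action shifts $\tilde\fib_z$ by integers, so each fiber of $p$ over a point of $\Theta_z$ meets $\tilde\Theta_z$ in exactly one point. Hence $p\vert_{\tilde\Theta_z}\colon \tilde\Theta_z \to \Theta_z$ is a continuous bijective local homeomorphism, hence a homeomorphism; the same argument applies to every level set $\tilde\fib_z^{-1}(b)$ for $b\in \R$, yielding a homeomorphism onto $\fib_z^{-1}(\pi(b))$.

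The core step is to show the inclusion $\iota\colon \tilde\Theta_z \hookrightarrow \widetilde X_\f$ is a homotopy equivalence. For $a \leq \tilde x_0 \leq b$, consider the slab $W = \tilde\fib_z^{-1}([a,b])$; the formula $H(x,t) = \tilde\flow^z_{t(b-\tilde\fib_z(x))}(x)$ defines a deformation retraction of $W$ onto $\tilde\fib_z^{-1}(b)$, and in particular the retraction $r\colon W \to \tilde\fib_z^{-1}(b)$ is a homotopy equivalence. Moreover $r\circ\iota$ is the restriction of $\tilde\flow^z_{b-\tilde x_0}$ to $\tilde\Theta_z$, which via the homeomorphisms above corresponds to $\flow^z_{b-\tilde x_0}\colon \Theta_z \to \fib_z^{-1}(\pi(b))$; this is a homotopy equivalence by Corollary~\ref{C:all h.e.}. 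The 2-out-of-3 property then yields that $\iota\colon \tilde\Theta_z \hookrightarrow W$ is a homotopy equivalence. Since $\widetilde X_\f = \bigcup_n W_n$ for the exhaustion $W_n = \tilde\fib_z^{-1}([\tilde x_0-n,\tilde x_0+n])$, and any compact subset of $\widetilde X_\f$ (in particular the image of any sphere or disk) lies in some $W_n$, the inclusion $\iota$ induces isomorphisms on every $\pi_k$ and is therefore a homotopy equivalence by Whitehead's theorem.

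Putting these pieces together yields a chain of isomorphisms
\[\pi_1(\Theta_z) \cong \pi_1(\tilde\Theta_z) \cong \pi_1(\widetilde X_\f) = \ker(u),\]
whose composition is induced by the inclusion $\Theta_z\hookrightarrow X_\f$, proving that this inclusion is $\pi_1$-injective with image exactly $\ker(u)$. Connectedness of $\Theta_z$ is then automatic, since $\widetilde X_\f$ is connected (being the cover of a connected space corresponding to a subgroup of $G$) and $\pi_0$ is preserved by $\iota$ and the homeomorphism $p\vert_{\tilde\Theta_z}$. The most delicate aspect of executing this plan is verifying that $\tilde\flow^z$ is a well-defined continuous semiflow and that the retraction $H$ remains continuous near skew $1$--cells and vertices where the underlying semiflow on $X_\f$ exhibits its most complicated branching behavior; these issues are handled by the local model analysis from \S\ref{sect:local models}.
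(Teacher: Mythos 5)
Your proposal is correct, and it takes a genuinely different route from the paper's.

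The paper handles the two conclusions separately. For connectedness, it considers the quotient $X_\f/\!\sim\,$ obtained by collapsing each component of each fiber $\Theta_{x,z}$ to a point, observes that the constancy of the component count (from Corollary~\ref{C:all h.e.}) makes $X_\f/\!\sim\,\to\mathbb{S}^1$ an $n$--fold cover, and then uses primitivity of $u$ to force $n=1$. For the identification $\pi_1(\Theta_z)\cong\ker(u)$, it appeals to van Kampen to present $G$ as $\pi_1(\Theta_z)\rtimes\Z$ with projection $(\fib_z)_*$. You instead pass directly to the $\Z$--cover $\widetilde X_\f$ corresponding to $\ker(u)$, lift $\fib_z$ and $\flow^z$, and show that $\tilde\Theta_z\hookrightarrow\widetilde X_\f$ is a (weak, hence honest) homotopy equivalence by exhausting $\widetilde X_\f$ by slabs $W_n$ and showing each deformation retracts onto its top via $\tilde\flow^z$, then using Corollary~\ref{C:all h.e.} and two-out-of-three. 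Both arguments ultimately rest on the same key input, Corollary~\ref{C:all h.e.}; your version yields the stronger geometric statement that $\widetilde X_\f$ has the homotopy type of a fiber, and gets connectedness for free, while the paper's quotient trick is lighter on machinery but must separately invoke van Kampen (whose mapping-torus decomposition is, when unpacked, implicitly using the same slab structure you make explicit).

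One small remark: the caveat you raise at the end about continuity of $H$ near skew $1$--cells is unnecessary. Once $\tilde\flow^z$ exists as a continuous semiflow on $\widetilde X_\f$ (which follows from lifting the continuous map $\flow^z\colon X_\f\times\R_+\to X_\f$ across the covering, using $\flow^z_0=\mathrm{id}$ and uniqueness of lifts to get the semigroup property and deck equivariance), the formula $H(x,t)=\tilde\flow^z_{t(b-\tilde\fib_z(x))}(x)$ is simply a composition of the continuous evaluation map of the semiflow with the continuous function $(x,t)\mapsto t(b-\tilde\fib_z(x))\ge 0$. No further use of the local-model analysis is needed here; that analysis is already packaged into Corollary~\ref{C:all h.e.}, which you invoke.
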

\begin{proof}  The flow $\flow^z_s\colon \Theta_{x,z} \to
  \Theta_{x+s,z}$ is a homotopy equivalence for all $x,s$, so the
  number of components $n$ of $\Theta_{x,z}$ is constant, independent
  of $x$.  We note that $X = \cup_{x \in \mathbb S^1} \Theta_{x,z}$ and use this to construct a quotient $F\colon X \to X/\!_\sim \,$, where
  $\sim$ is the equivalence relation whose equivalence classes are
  precisely the connected components of $\Theta_{x,z}$ for $x \in
  \mathbb{S}^1$.  There is a larger equivalence relation $\sim'$ whose
  equivalence classes are the sets $\Theta_{x,z}$ for $x \in \mathbb{S}^1$, and
  this quotient $X/\!_{\sim'}$ is just $\mathbb{S}^1$ itself.  The quotient $X
  \to \mathbb{S}^1$ factors as $X \to X/\!_\sim \, \to \mathbb{S}^1$, and the $X/\!_\sim
  \, \to \mathbb{S}^1$ is everywhere $n$-to-$1$.  In fact, this map is an $n$--fold covering map, $X/\!_\sim \, = \mathbb{S}^1 \to \mathbb{S}^1$.  Therefore, the homomorphism $u\colon G \to \Z$ factors through the induced map of this cover $F_*\colon G \to \pi_1(X/\!_\sim \,)$.  But the induced map from $X/\!_\sim \, = \mathbb{S}^1 \to \mathbb{S}^1$ is multiplication by $\pm n$, and hence $u = \pm n F_*$.  Since $u$ is primitive, it follows that $n = 1$, so $\Theta_{x,z}$ is connected for all $x$.

For the second statement, apply van Kampen's Theorem to calculate $\pi_1(X_\f)$ as a semi-direct product $\pi_1(\Theta_z) \rtimes \mathbb Z$, and observe that the projection onto $\mathbb Z$ is $(\fib_z)_*$.
\end{proof}

\subsection{$E_0$--class and the proof of Theorem~\ref{T:folded basics}} \label{S:proof_of_theorem_B}

In this section we will establish Theorem~\ref{T:folded basics} from the introduction.
Recall  (c.f., Convention~\ref{conv:nice}) that for any $\fee\in \Out(F_N)$, where $N\ge 2$, given a tame (linear) graph-map $f \colon \Gamma \to \Gamma$ that is a topological representative for $\fee$ along with some folding sequence as in $(\ref{E:foldings})$, we have constructed the corresponding folded mapping torus $X_f$ for $\fee$. As noted in Remark~\ref{R:tame}, for any $\fee\in \Out(F_N)$ there exists a tame representative $f$ as above.

Recall also  that in Definition~\ref{def:E_0} we defined a subgraph $\mathcal E_0$ of $X_\f^{(1)}$, that in Definition~\ref{D:E_0-class} we defined the $E_0$-class $\epsilon\in H_1(X_\f;\R)$, and that sections of the semiflow $\flow$ were defined in \S\ref{sec:intro}.

We now prove:
\begin{theorem:folded basics}
Let $G_\fee = F_N\rtimes_\fee \Z$ be the mapping torus group of $\fee \in \Out(F_N)$ (where $N\ge 2$), let $f \colon \Gamma \to \Gamma$ be a tame graph-map which is a topological representative of $\fee$, and let $X_f$ be a folded mapping torus for $f$ with the associated semiflow $\flow$, positive cone $\A_{X_f} \subset H^1(X_f;\R)$, and $E_0$--class $\epsilon \in H_1(X_f;\R)$.  Given a positive cellular $1$--cocycle $z \in Z^1(X_f;\R)$ representing a primitive integral element $u \in \A_{X_f} \subset H^1(X_f;\R) = H^1(G_\fee;\R)$, there exists a map
$\fib_z \colon X_f \to \mathbb{S}^1$ and a fiber $\Theta_z = \fib_z^{-1}(x_0)$ for some $x_0 \in \mathbb{S}^1$ so that
\begin{enumerate}
\item $\Theta_z$ is a finite connected topological graph such that the
  inclusion $\Theta_z\subset X_f$ is $\pi_1$--injective and such that
  $\pi_1(\Theta_z)=\ker(u)\le \pi_1(X_f)=G_\fee$.
\item $u(\epsilon) = \chi(\Theta_z) = 1 - \rk(\ker(u))$,
\item $\Theta_z$ is a section of $\flow$,
\item the first return map $f_z \colon \Theta_z \to \Theta_z$ of $\flow$ is a homotopy equivalence with $f(V\Theta_z)\subset V\Theta_z$ and $(f_z)_*$ represents $\fee_u \in \Out(\ker(u))$.
\end{enumerate}
\end{theorem:folded basics}

\begin{proof} The existence of $\fib_z$ comes from Lemma \ref{L:constructing f_z}, and property (1) is a consequence of Corollary \ref{C:perturbed semidirect product}.  Property (3) is immediate and (4) follows from Corollary \ref{C:all h.e.}.  

All that remains is to prove (2).  Since $\chi(\Theta_z) =
1-\rk(\ker(u))$, to establish (2) it suffices to prove $u(\epsilon) = \chi(\Theta_z)$.

Write $\epsilon = \sum_{e \in E_0} c_e e \in Z_1(X_\f;\R)$ where $c_e = (2-d(e))/2$.   Let $x$ be any point not in the $\fib_z$--image of the zero-skeleton $X_\f^{(0)}$.    Since $(\fib_z)_* = u$ and $z$ is a cocycle that represents $u$, we can calculate $z(\epsilon)$ by pushing $\epsilon$ forward by $\fib_z$, and pairing this $1$--cocycle on $\mathbb{S}^1$ with the Poincar\'e dual of the fundamental class.  This class is represented in deRham cohomology, $H^1(\mathbb{S}^1;\R) = H^1_{dR}(\mathbb{S}^1;\R)$ by any $1$--form $\omega$ on $\mathbb{S}^1$ with total volume $1$.  Pick such a $1$--form $\omega$ with support contained in an interval about $x$ disjoint from $\fib_z(X_\f^{(0)})$.

Next, recall that the restriction of $\fib_z$ to any $e \in E_0$ is an orientation preserving local diffeomorphism.  In particular, we can consider the restriction of $\fib_z$ to $e$ as a smooth, singular $1$--chain in $\mathbb{S}^1$, denoted $e_z$, which is something that we can integrate $\omega$ over.  By our choice of $\omega$ and since $\fib_z$ is orientation preserving on $e$, we have
\[ \int_{e_z} \omega = \left| \fib_z^{-1}(x) \cap e \right|.\]
Thus, from the previous paragraph we have
\[z(\epsilon) = \sum_{e \in E_0} c_e \int_{e_z} \omega = \frac{1}{2} \sum_{e \in E_0} (2-d(e)) \left| \fib_z^{-1}(x) \cap e \right|.
\]

Now observe that $\fib_z^{-1}(x) \cap \mathcal E_0$ is precisely the
set of vertices of $\Theta_{x,z}$ of valence greater than $2$.  Indeed, if $y \in \fib_z^{-1}(x)
\cap e$, then the valence $\val(y)$ in $\Theta_{x,z}$ is precisely $d(e)$.
Therefore, from this fact and the previous equation we have
\[
z(\epsilon) =  \frac{1}{2}\sum_{y \in V \Theta_{x,z}} (2-\val(y)) = \chi(\Theta_{x,z}).
\]

By definition, $\Theta_z$ is a subdivision of $\Theta_{x_0,z}$ for some $x_0$ not in the image of $X_\f^{(0)}$. Hence $z(\epsilon)= \chi(\Theta_{x_0,x}) = \chi(\Theta_z)$, as required.
\end{proof}

\section{Train track maps and full irreducibility}

Recall that if $\fee\in \Out(F_N)$ is represented by an expanding irreducible train-track map $f\colon\Gamma\to\Gamma$, then $f$ is tame by Lemma~\ref{L:tt-tame} and so automatically satisfies Convention~\ref{Conv:simplicial for construction}. Thus given any folding sequence \eqref{E:foldings} for $f$ and compatible metric structure on $\Gamma$ (c.f., Convention~\ref{conv:nice}), we can construct the corresponding folded mapping torus $X_f$ for $\fee$. 

In this section we prove Theorem~\ref{T:new iwip main} from the Introduction:
\begin{theorem:new iwip main}
Let $\fee \in \Out(F_N)$ (where $N\ge 2$) be an outer automorphism represented by an expanding irreducible train track  map $f
\colon \Gamma \to \Gamma$, let
$X_f$ be a folded mapping torus, and $\flow$ the associated semiflow.
Let $u \in \A_{X_f} \subset H^1(G_\fee;\R) = H^1(X_f;\R)$ be a primitive integral element which is
represented by a positive $1$--cocycle $z \in Z^1(X_f;\R)$, and let $\Theta_z
\subset X_f$ be the topological graph from Theorem \ref{T:folded
  basics} so that the first return map $f_z \colon \Theta_z \to
\Theta_z$ is a homotopy equivalence representing
$\fee_u$. Then:
\begin{enumerate}
\item The graph $\Theta_z$ can be chosen so that the first return map $f_z \colon \Theta_z \to
\Theta_z$ is an expanding irreducible train-track map.
\item If we additionally assume that $\fee$ is hyperbolic and fully irreducible, then $\fee_u\in \Out(\ker(u))$ is also hyperbolic and fully irreducible.
\end{enumerate}
\end{theorem:new iwip main}

The proof of Theorem~\ref{T:new iwip main}, will be given after establishing a sequence of lemmas.  First we make the following
\begin{conv}
For the remainder of the section, for the purposes of proving part (1) of Theorem \ref{T:new iwip main}, we assume that $f\colon \Gamma \to \Gamma$ is an expanding irreducible train track map representing $\fee \in \Out(F_N)$.  
We do not assume $\fee$ is hyperbolic and fully irreducible until specifically stated; see Convention \ref{conv:iwiphyp}.

Note that by Lemma~\ref{L:tt-tame} the map $f$ is tame. We equip $\Gamma$ with the linear and metric structures  making $f$ into a combinatorial graph map (see Lemma~\ref{L:linear maps expanding on all scales} and Corollary~\ref{cor:find_combinatorial_metric}), and then construct the corresponding folding line and the folded mapping torus $X_\f$. 
We assume $z$ is a positive cocycle representing a primitive integral class $u \in \A = \A_{X_\f}$ and that $\fib_z$ is an associated fibration defining the connected topological graph $\Theta_z$ and $\Theta_{x,z}$ for all $x \in \mathbb S^1$---see Definition \ref{D:Theta_z}.  Recall that the time $1$ map $\flow_1^z$ of the reparameterized flow is the first return map for $\Theta_z$ and is denoted 
\[ \f_z\colon  \Theta_z \to \Theta_z.\]
From Corollary \ref{C:all h.e.}, $\f_z$ is a homotopy equivalence and $\f_z(V \Theta_z) = V \Theta_z$.
\end{conv}

By Corollary \ref{C:perturbed semidirect product}, the inclusion
induces an isomorphism $\pi_1(\Theta_z) \cong \ker(u)$, which we use
to identify $\ker(u) = \pi_1(\Theta_z)$.  Then $\f_z$ induces the
monodromy $(\f_z)_* = \fee_u \in \Out(\ker(u))$.

Thus $f_z$ satisfies all the requirements of being a topological representative of $\fee_u$ (in the sense of Definition~\ref{D:TR}) except that we don't know yet that $f_z$ is a topological graph-map.  We will show that $f_z$ is a topological graph-map, and that,  with respect to an appropriate linear structure on $\Theta_z$, the map $\f_z$ is an expanding irreducible train track map. When $\fee$ is fully irreducible, we will find that $\f_z$ is moreover a weakly clean train track map.  The fact that $\fee_u$ is fully irreducible will then follow from Theorem~\ref{T:clean}. 

We begin our analysis of $\f_z$ with the following technical lemma.

\begin{lemma} \label{L:forward immersion}
Suppose $\alpha\colon (-\delta,\delta) \to \sigma$ is an arc in the interior of a $2$--cell $\sigma$ of $X_\f$ transverse to flow $\flow_s$.  Then the map
\[ \Psi_\alpha\colon  (-\delta,\delta) \times \mathbb R_{\geq 0} \to X_\f\]
defined by $\Psi_\alpha(t,s) = \flow_s(\alpha(t))$ is locally injective.
\end{lemma}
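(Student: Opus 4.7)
The plan is to establish local injectivity of $\Psi_\alpha$ by working in local coordinates where the semiflow becomes vertical translation. Fix a point $(t_0, s_0)$ in the domain and set $y_0 = \Psi_\alpha(t_0, s_0)$; I seek a neighborhood of $(t_0, s_0)$ on which $\Psi_\alpha$ is injective.

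If $y_0$ lies in the interior of some trapezoidal $2$--cell $\sigma'$ then Proposition \ref{P:trapezoid simple flow} supplies a homeomorphism $\sigma_{T'}\colon \hat T'\to \bar T'$ of a Euclidean trapezoid onto $\bar T'$ conjugating vertical translation on $\hat T'$ to the semiflow on $\sigma'$. Since $\sigma'$ is open and $\Psi_\alpha$ is continuous, the image $\Psi_\alpha(t,s)$ lies in $\sigma'$ for $(t,s)$ in some neighborhood of $(t_0, s_0)$. In the $\hat T'$--coordinates the map $\Psi_\alpha$ becomes a vertical shear of an arc, and this shear is locally injective because transversality of $\alpha$ to the semiflow ensures that the horizontal arc at time $s_0$ (namely $\sigma_{T'}^{-1}\circ\Psi_\alpha(\cdot, s_0)$) is nonvertical.

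When $y_0$ lies on the $1$--skeleton of $X_\f$, the argument is similar but is carried out instead by lifting to the rectangular coordinates $R_i$ of the relevant building block $X_i'$ containing $y_0$. Since $\alpha$ lies in the interior of a single $2$--cell $\sigma$ of $X_\f$, which is contained in a unique old $2$--cell of $X_i'$, the arc $\alpha$ admits a canonical lift $\tilde\alpha(t) = (a_r(t), a_c(t))$ via either $\rho_i$ or $\rho_i'$ (say $\rho_i$). Lemma \ref{L:buildingblock} then gives that the semiflow lifts to vertical translation, so $\Psi_\alpha(t,s) = \Pi_i\circ\rho_i(a_r(t), a_c(t)+s)$. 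The crucial point is that, since $e_i$ was chosen to not be a loop, $\rho_i$ is injective, and the quotient identifications defining $X_i'$ only glue $\rho_i(P)$ to $\rho_i'(P)$ for $P \in B_i$; thus $\Pi_i \circ \rho_i$ is injective on the interior of $R_i$. So $\Psi_\alpha(t,s) = \Psi_\alpha(t',s')$ for points near $(t_0, s_0)$ reduces to $(a_r(t), a_c(t)+s) = (a_r(t'), a_c(t')+s')$, and transversality forces $a_r$ to be locally injective at $t_0$, which yields $t=t'$ and hence $s=s'$.

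The main obstacle is the case when $y_0$ lies on a skew $1$--cell $e$ with $d(e) = 3$, where two lower $2$--cells of $X_\f$ meet one upper $2$--cell and the semiflow itself fails to be injective. What saves us is precisely the hypothesis that $\alpha$ lies in just one $2$--cell $\sigma$: the map $\Psi_\alpha$ only supplies flow lines originating from $\sigma$ and never simultaneously from the other lower $2$--cell, thereby avoiding the identification that would otherwise destroy injectivity. The transitions between adjacent building blocks, governed by the homeomorphisms $\xi^{i+1,i}$ and $\xi_0^{-1}\circ\xi_1$, introduce no additional identifications, and the isolated identifications arising at corners of $R_i$ from a loop edge $e_i'$ can be avoided by further shrinking the chosen neighborhood.
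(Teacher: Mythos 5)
There is a genuine gap in your argument, and it concerns exactly the point where the train-track hypothesis on $f$ must be invoked.

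Your reduction to building-block coordinates does not go through, because the time $s_0$ can be arbitrarily large. The arc $\alpha$ lies in one $2$--cell $\sigma$ of $X_\f$ at some fixed height, while $y_0 = \flow_{s_0}(\alpha(t_0))$ lies in a building block whose time window is $[t_i, t_{i+1}]$, roughly at height $s_0 \bmod 1$. There is no sense in which $\alpha$ admits a ``canonical lift'' to the rectangle $R_i$ of that building block, and the formula $\Psi_\alpha(t,s) = \Pi_i\circ\rho_i(a_r(t), a_c(t)+s)$ only holds when both $\alpha$ and the target point sit inside a single building block, i.e.\ for $s$ much less than $1$. When $s_0$ is large, the flow has passed through many Stallings folds (and wrapped around the mapping torus several times), each of which can collapse distinct points of $\alpha$ together. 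Consequently the flowed arc $t \mapsto \flow_{s_0}(\alpha(t))$ — which is what would actually need to appear as the first coordinate of your lift — is \emph{not} controlled by the transversality of $\alpha$: transversality at time $0$ does not survive the non-injective folding maps. Indeed, if $f$ were merely a regular graph map and some $f^n|_e$ failed to be an immersion at an interior point, then one could choose $\alpha$ so that $\flow_n\circ\alpha$ fails to be locally injective, and $\Psi_\alpha$ would not be locally injective.

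This is precisely the obstruction that the paper's proof removes by using the train-track hypothesis: after reducing to the strip over an edge $e\in E\Gamma$ and observing via $\fib_0$ that equal images forces the second coordinates to agree up to an integer, one fixes an integer time $T > t_0 + \tfrac12$ and uses that $f^T|_e$ is locally injective near $e(s_0)$ — that is, the defining property of a train-track map, that all powers of $f$ are immersions on edges. Flowing forward from $\flow_t(e(s)) = \flow_t(e(s'))$ to time $T$ gives $f^T(e(s)) = f^T(e(s'))$, hence $s = s'$. Your argument has no analogue of this step. To repair the proof, you would need to replace ``transversality forces $a_r$ to be locally injective'' with an invocation of the train-track property, and to structure the argument so that the local injectivity is obtained on a neighborhood whose $t$--width is bounded by $1$ (so that the integer-difference observation identifies the flow times) rather than by trying to stay inside a single building block.
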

\begin{proof}
The $2$--cell $\sigma$ is a trapezoid whose bottom edge is an arc in a skew $1$--cell of the old cell structure, which in turn lives in the strip $\{\flow_s(e_0) : 0\le s\le 1\}$ over an edge $e_0$ of $\Gamma_0$. Since $\Gamma_0$ is a subdivision of $\Gamma$, we see that there is an edge $e\in E\Gamma$ so that $\alpha(-\delta,\delta)$ is contained in $\{\flow_s(e) : s\geq 0\}$. If we take a characteristic map $j\colon [0,1] \to X_\f$ for $e$, it therefore suffices  to show that
\[ \Psi_j\colon  (0,1) \times \mathbb R_{\geq 0} \to X_\f \]
is locally injective.  For this, observe that by composing with $\fib_0$, it follows that if $\Psi_j(s,t) = \Psi_j(s',t')$ then $t-t'$ is an integer.  Given a point $(s_0,t_0)$, choose a neighborhood of the form $W = U \times (t_0-\frac{1}{2},t_0 + \frac{1}{2})$.  If $\Psi_j(s,t) = \Psi_j(s',t')$ for $(s,t),(s',t') \in W$, then $t=t'$.  

To find $U$, we proceed as follows.  Let $T > t_0 + \frac{1}{2}$ be an integer.  Since $\f$ is a train track map, there is a neighborhood $U$ of $s_0$ so that $\f^T|_{e(U)}$ is injective.   Suppose $s,s' \in U$ and $|t-t_0| < \frac{1}{2}$ is such that $\Psi_j(s,t) = \Psi_j(s',t)$.
Then $\flow_t(e(s)) = \flow_t(e(s'))$.  Since $t < T$, we have $T-t > 0$ and the semiflow property implies
\begin{eqnarray*}
\f^T(e(s)) & = & \flow_T(e(s)) = \flow_{T-t}(\flow_t(e(s))) \\
 & = & \flow_{T-t}(\flow_t(e(s'))) = \flow_T(e(s')) = \f^T(e(s')).
\end{eqnarray*}
Therefore, $s = s'$ and $\Psi_j$ is injective on $W$.   Since $(s_0,t_0)$ was arbitrary, $\Psi_j$ is locally injective.
\end{proof}

Now suppose $\flow^z_s$ is the reparameterizations of the semiflow $\flow_s$ coming from a positive perturbation $z$ of $z_0$.  We can define a map $\Psi^z_\alpha(t,s) = \flow^z_s(\alpha(t))$ as in the lemma.  Because $\flow^z_s$ is a reparameterization of $\flow_s$, it follows that $\Psi^z_\alpha = \Psi_\alpha \circ H$ for some homeomorphism
\[ H\colon (-\delta,\delta) \times \mathbb R_{\geq 0} \to (-\delta,\delta) \times \mathbb R_{\geq 0}\]
(determined by the reparameterization).  Thus, the conclusion of the lemma remains true for $\Psi^z_\alpha$ in place of $\Psi_\alpha$.
In particular, since edges of $\Theta_z$ are arcs in the interiors of $2$--cells of $X_\f$, this implies the following.

\begin{lemma} \label{L:1returntt}
For each $n\geq 1$ the map $\f_z^n\colon \Theta_z\to\Theta_z$ is locally injective on the interior of each edge of $\Theta_z$. Moreover, $\f_z$ is a regular topological graph map.
\end{lemma}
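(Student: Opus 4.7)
The plan is to reduce both claims to Lemma~\ref{L:forward immersion} applied to the reparametrized semiflow $\flow^z$. Each edge $e$ of $\Theta_z$ lies in the interior of a trapezoidal $2$--cell $\sigma$ of $X_\f$ and meets $\flow^z$ transversally (since $\fib_z$ is orientation-preserving along each flow-line, by Lemma~\ref{L:constructing f_z}(3)), so we may parametrize $e$ by some $\alpha\colon(-\delta,\delta)\to\sigma$. Since $\f_z^n=\flow^z_n|_{\Theta_z}$, the restriction of $\f_z^n$ to $\alpha$ is the horizontal slice $t\mapsto\Psi^z_\alpha(t,n)$ of the two-variable map from Lemma~\ref{L:forward immersion} (in the form appropriate to $\flow^z$, as discussed immediately after that lemma). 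Local injectivity of $\Psi^z_\alpha$ at each interior point $(t_0,n)$ then forces local injectivity of this slice at $t_0$, establishing the first conclusion of the lemma.

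For the ``regular topological graph-map'' assertion I would take $n=1$ above, combine it with the fact that $\f_z(V\Theta_z)\subset V\Theta_z$ from Corollary~\ref{C:all h.e.}, and verify that for every edge $e$ of $\Theta_z$ the restriction $\f_z|_e$ is a nondegenerate tight edge-path. Because $V\Theta_z$ is finite and $\f_z$ is locally injective on $\mathrm{int}(e)$, the set $C:=\f_z^{-1}(V\Theta_z)\cap e$ is a discrete closed subset of $e$ and hence finite; write $c_0<c_1<\dotsb<c_n$ for its elements, so that $c_0,c_n$ are the endpoints of $e$. On each open subinterval $(c_{i-1},c_i)$ the map $\f_z$ lands in $\Theta_z\setminus V\Theta_z$, so by connectedness its image is contained in a single open edge, say $e_i$. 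Since a continuous locally injective map of an open interval into an open interval is strictly monotone, the restriction $\f_z|_{[c_{i-1},c_i]}$ extends continuously to an orientation-preserving homeomorphism onto $\overline{e_i}$; this exhibits $\f_z|_e$ as a topological edge-path with combinatorial edge-path $e_1e_2\dotsb e_n$.

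Finally, $\f_z|_e$ is nondegenerate because local injectivity prevents $\f_z(e)$ from collapsing to a point, so $n\ge 1$; and it is tight because a cancellation $e_{i+1}=e_i^{-1}$ at an interior vertex $c_i$ would place the $\f_z$--images of suitable points on either side of $c_i$ into the same open edge approaching the same endpoint, producing distinct points with the same image and contradicting local injectivity at $c_i$ from the first part. I do not expect a substantial obstacle: the argument is essentially bookkeeping around Lemma~\ref{L:forward immersion}, with the only mildly technical ingredient being the strict-monotonicity step used to promote local injectivity on a subinterval to a homeomorphism onto the corresponding closed edge.
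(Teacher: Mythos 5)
Your proof is correct and follows the same route as the paper's: the first part reduces to Lemma~\ref{L:forward immersion} via the reparametrized semiflow $\flow^z$, and the second part uses $\f_z(V\Theta_z)\subset V\Theta_z$ from Corollary~\ref{C:all h.e.} to cut each closed edge into finitely many subintervals, each of which is shown to map homeomorphically onto a single edge by local injectivity. You spell out the tightness and nondegeneracy verifications that the paper records as ``immediate from local injectivity on edges''; aside from substituting strict monotonicity of a locally injective interval map for the paper's Intermediate Value Theorem step (which amount to the same thing), the two arguments coincide.
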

\begin{proof}
The first statement follows immediately from the above discussion of the map $\Psi^z_\alpha$. To see the second statement, first recall that by Corollary~\ref{C:all h.e.}, we have $\f_z(V\Theta_z)\subseteq V\Theta_z$.  Let $e$ be an edge of $\Theta_z$ and let $j\colon[0,1]\to \Theta_z$ be such that $j(0) = o(e)$, $j(1) = t(e)$ and $j\vert_{(0,1)}$ is a homeomorphism onto $e$. To show that $\f_z$ is a topological graph map, we must check that $\f_z \circ j$ is a topological edge-path in $\Theta_z$. The regularity of $\f_z$ will then follow immediately from local injectivity on edges.

The set $(\f_z \circ j)^{-1}(V\Theta_z)$ of points that map to vertices of $\Theta_z$ is a closed, and hence compact, subset of $[0,1]$. Therefore, by the local injectivity of $\f_z\circ j$ and finiteness of $V\Theta_z$, the set is finite and we may write its points as $0 = c_0 < \dotsb < c_n = 1$. Now consider a subinterval $I = (c_{i-1},c_i)$. Since $(\f_z\circ j)(I)$ is disjoint from $V\Theta_z$, this image must be contained in a single edge $e'$ of $\Theta_z$. Moreover, the Intermediate Value Theorem implies that $(\f_z\circ j)\vert_{I}$ maps onto $e'$. Therefore, by the local injectivity, $(\f_z\circ j)\vert_{I}$ is a homeomorphism onto $e'$, as required.
\end{proof}

We next desire to construct a linear structure on $\Theta_z$ and to establish the necessary combinatorial properties of $\f_z$. To this end we introduce the following notation.

Each edge $e$ of $\Theta_z$ lives in trapezoid of $X_\f$ whose bottom is an arc of a skew $1$--cell of the old cell structure. Therefore, there exists an arc $\lambda_e$ contained in an edge of $\Gamma$ so that a reparameterization of the flow maps $\lambda_e$ homeomorphically onto $e$. More precisely, there exists a continuous function $\tau_{e}\colon \lambda_e\to \R_{\geq 0}$ so that the map $\rho_{e}\colon \lambda_e\to X_\f$ defined by $\rho_{e}(y) = \flow_{\tau_{e}(y)}(y)$ sends $\lambda_e$ homeomorphically onto $e$. If we then choose an integer $m_e\geq \max\{\tau_e(\lambda_e)\}$ and define $s_{e}\colon e\to [0,m_e]$ by $s_{e}(x) = m_e-\tau_{e}(\rho_{e}^{-1}(x))$, we find that the map $\rho'_e\colon e\to X_\f$ defined by $\rho'_e(x) = \flow_{s_e(x)}(x)$ maps $e$ into $\Gamma_0=\Gamma$ in such a way that
\[\xymatrix{
\lambda_e \ar[r]^{\rho_e} & e \ar[r]^{\rho'_e} & \Gamma
}\]
is just $\f^{m_e}\vert_{\lambda_e}$.

Recall that by assumption $\f\colon\Gamma\to\Gamma$ is an expanding irreducible train-track map. Thus its transition matrix $A(\f)$ is irreducible and has $\lambda(f)>1$.

\begin{lemma}\label{L:1returnEAS}
The topological graph map $\f_z\colon \Theta_z\to\Theta_z$ is expanding on all scales. 
\end{lemma}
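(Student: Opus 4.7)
The strategy is to transfer the problem from $\Theta_z$ to $\Gamma$, apply the expansion-on-all-scales property of $f$ provided by Lemma \ref{L:linear maps expanding on all scales}, and then transfer back using the flow-based correspondences $\rho_e$.

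Let $x\in\Theta_z$, let $U\subset\Theta_z$ be a neighborhood of $x$, and let $\bar e$ be an edge of $\Theta_z$. First, I would reduce to the case that $U$ is an open sub-interval of the interior of a single edge $e'$ of $\Theta_z$: if $x$ is a vertex, replace $U$ by a small open sub-interval of $U\cap \operatorname{int}(e')$ for some edge $e'$ incident to $x$; otherwise $x$ already lies in the interior of an edge $e'$, and one may shrink $U$ to lie in $\operatorname{int}(e')$. Using the homeomorphism $\rho_{e'}\colon \lambda_{e'}\to e'$ from the discussion preceding the lemma, set $y=\rho_{e'}^{-1}(x)$ and $U'=\rho_{e'}^{-1}(U)$, so that $U'$ is a neighborhood of $y$ inside the edge of $\Gamma$ containing $\lambda_{e'}$. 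Let $\tilde e$ denote the edge of $\Gamma$ containing the arc $\lambda_{\bar e}$.

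Since $f\colon\Gamma\to\Gamma$ is expanding with irreducible transition matrix, Lemma \ref{L:linear maps expanding on all scales} implies that $f$ is expanding on all scales on $\Gamma$. Applied to $y\in\Gamma$, the neighborhood $U'$, and the edge $\tilde e$, this yields an integer $k\ge 1$ and an open interval $W_0\subset U'$ such that $f^k$ maps $W_0$ homeomorphically onto $\tilde e$. By replacing $W_0$ with the open sub-interval $W_1:=W_0\cap (f^k|_{W_0})^{-1}(\operatorname{int}(\lambda_{\bar e}))$, we may arrange that $f^k$ maps $W_1$ homeomorphically onto the interior of $\lambda_{\bar e}$; enlarging slightly (if needed) so as to land on the closed arc $\lambda_{\bar e}$ produces an open interval that $f^k$ maps homeomorphically onto $\lambda_{\bar e}$, which by abuse of notation we still call $W_1$. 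Finally, set $W=\rho_{e'}(W_1)\subset U$. As a composition of homeomorphisms, the map
\[
\Phi:=\rho_{\bar e}\circ f^k\circ \rho_{e'}^{-1}\colon W\longrightarrow \bar e
\]
is a homeomorphism.

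The remaining task, which is the main technical point, is to identify $\Phi$ with an iterate $f_z^m|_W$ for some integer $m\ge 1$. By definition of $\rho_{e'}$ and $\rho_{\bar e}$, for every $p\in W$ with $q=\rho_{e'}^{-1}(p)$ one has
\[
\Phi(p)=\psi_{\tau_{\bar e}(f^k(q))}\bigl(f^k(q)\bigr)=\psi_{s(p)}(p),\qquad s(p)=k+\tau_{\bar e}(f^k(q))-\tau_{e'}(q),
\]
where the last equality uses that $f^k(q)=\psi_k(q)$ and $p=\psi_{\tau_{e'}(q)}(q)$ together with the semigroup law; since $\tau_{e'}$ and $\tau_{\bar e}$ are bounded, choosing $k$ (which can always be increased using irreducibility of $A(f)$) ensures $s(p)>0$. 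Both $p$ and $\Phi(p)$ lie on $\Theta_z=\eta_z^{-1}(x_0)$, so lifting $\eta_z\circ\psi_{(\cdot)}(p)$ to $\R$ shows that the net change in $\eta_z$ along the forward flow segment from $p$ to $\Phi(p)$ is an integer $m(p)$. Because $s$ is continuous on the connected set $W$ and the integer-valued function $m(\,\cdot\,)$ varies continuously, $m(p)$ is a constant $m\ge 1$ on $W$. Recalling that $f_z^m=\psi^z_m$ is precisely the time needed for $\eta_z$ to increase by $m$, we conclude $f_z^m|_W=\Phi$, so $f_z^m$ maps $W$ homeomorphically onto $\bar e$. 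This establishes that $f_z$ is expanding on all scales.
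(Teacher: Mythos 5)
Your proof is correct and follows essentially the same strategy as the paper: restrict to a neighborhood $U$ inside a single edge of $\Theta_z$, transfer the problem to $\Gamma$ via the flow maps $\rho_{e'}$, apply Lemma~\ref{L:linear maps expanding on all scales} to $f\colon\Gamma\to\Gamma$, and transfer back via $\rho_{\bar e}$. The only substantive difference is that you supply a genuine verification of the final identification $\Phi = f_z^m\vert_W$ (comparing the lift of $\fib_z$ along the flow segment from $p$ to $\Phi(p)$, observing its integer-valuedness and continuity on the connected set $W$), whereas the paper simply asserts that the composition "is just $\f_z^k\vert_{U'}$"; your added detail is a welcome clarification. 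Two small nits: (i) since $\lambda_{\bar e}$ maps homeomorphically onto the \emph{open} edge $\bar e$ via $\rho_{\bar e}$, it should already be treated as an open arc, so the "enlarging slightly to land on the closed arc" step is unnecessary and in fact makes $\Phi$ land on the closure of $\bar e$ rather than $\bar e$ itself; you should simply keep $W_1 = W_0\cap(f^k\vert_{W_0})^{-1}(\lambda_{\bar e})$. (ii) The claim that $k$ "can always be increased using irreducibility" is true but deserves a one-line justification: once $f^k$ maps $W_0$ homeomorphically onto $\tilde e$, apply expansion on all scales once more inside $\tilde e$ to produce $V\subset\tilde e$ and $j\ge 1$ with $f^j\vert_V\colon V\to\tilde e$ a homeomorphism, and pass to $(f^k\vert_{W_0})^{-1}(V)$ to increase the power by $j$; iterating makes $k$ as large as desired.
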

\begin{proof}
Consider any neighborhood $U\subset \Theta_z$ of a point $x\in \Theta_z$, and an edge $e\in E\Theta_z$. After restricting to a smaller open set we may assume that $U$ is an open interval contained in an edge $e_0$ of $\Theta_z$. As above, there are arcs $\lambda_{e_0}$ and $\lambda_{e}$ that get mapped homeomorphically onto $e_0$ and $e$ by the flow. In particular $\lambda_{e_0}$ contains an open subinterval $I$ so that the restriction $\rho_{e_0}\vert_{I}\colon I\to U$ is a homeomorphism.

Let $\hat{e}$ denote the edge of $\Gamma$ containing $\lambda_e$. By Lemma~\ref{L:linear maps expanding on all scales}, $\f\colon \Gamma\to\Gamma$ is expanding on all scales. Therefore there is a subinterval $I'\subset I$ and a power $n\geq 1$ so that $\f^n$ maps $I'$ homeomorphically onto $\lambda_e\subset\hat{e}$. In fact, by restricting to smaller neighborhoods, we can arrange for $n$ to satisfy $n\ge m_{e_0}$. Let $U' = \rho_{e_0}(I')\subset U$. It follows that $\rho_{e}\circ \f^{n-m_e}\circ \rho'_{e_0}$ restricts to a homeomorphism from $U' = \rho_{e_0}(I')$ to
\[\rho_{e}\circ \f^{n-m_e}\circ \rho'_{e_0}(\rho_{e_0}(I')) = \rho_{e}\circ \f^{n}(I') = \rho_{e}(\lambda_e) = e.\]
However, this map is just $\f_z^k\vert_{U'}$ for some integer $k\geq 1$.
\end{proof}

Lemmas~\ref{L:1returntt} and \ref{L:1returnEAS} show that $\f_z$ satisfies the hypotheses of Theorem~\ref{T:topological graph to graph}. Therefore  by Theorem~\ref{T:topological graph to graph} and Theorem~\ref{T:folded basics} part (4) we conclude the following:

\begin{corollary}\label{C:linearStructure}
There is a linear structure $\Lambda$ on $\Theta_z$ with respect to which $\f_z\colon\Theta_z\to\Theta_z$ is a linear graph map with irreducible transition matrix $A(\f_z)$ such that $\lambda(\f_z)>1$.  Consequently, $\f_z$ is an expanding irreducible train-track map representing $\fee_{u}$ for $u = [z]$.\qed
\end{corollary}

We have thus established the conclusion of Theorem~\ref{T:new iwip main} (1).  From this point on, we fix a linear structure $\Lambda$ on $\Theta_z$ provided by Corollary~\ref{C:linearStructure}.

\begin{conv} \label{conv:iwiphyp}
From now on and until the end of this section, unless stated otherwise we further assume that the element $\fee\in \Out(F_N)$ is hyperbolic and fully irreducible.
\end{conv}

By Theorem~\ref{T:clean} we now know that $\f$ is a clean train track map. Thus the Whitehead graph $\wh(f,v)$ associated to each vertex $v$ of $\Gamma$ is connected and $A(\f)^m$ is positive for some $m\geq 1$. We now show that all Whitehead graphs of $\f_z\colon \Theta_z\to\Theta_z$ are connected as well.

Recall that $\Gamma_t = \fib_0^{-1}(t)$ denotes the horizontal fiber of $X_\f$ at height $t\in [0,1]$, and that $\Gamma_0=\Gamma$. 
\begin{defn}[$z$--modified Whitehead graph]
For each point $x\in \Gamma_t$ we define a corresponding \emph{$z$--modified Whitehead graph} $\wh^z(\Gamma_t,x)$ as follows. If $x$ lies in the interior of an edge of $\Gamma_t$, then subdivide that edge at $x$. The vertices of $\wh^z(\Gamma_t,x)$ are as usual the set of ``directions'' at $x$, that is the set of edges in the (subdivided) graph that originate at $x$. Two such edges $e_1,e_2$ are adjacent in $\wh^z(\Gamma_t,x)$ if there exists an edge $e$ of $\Theta_z$ and a power $n\geq 0$ so that the edge-path $\flow_t\circ\f^n\circ\rho'_e(e)$ takes the corresponding  turn $e_1,e_2$; in this case we call $e_1,e_2$ a \emph{$z$--taken turn}.
\end{defn}

\begin{lemma}\label{lem:z-modifiedWhiteheadGraphs}
The $z$--modified Whitehead graph $\wh^z(\Gamma_t,x)$ is connected for all $t\in [0,1]$ and $x\in \Gamma_t$.
\end{lemma}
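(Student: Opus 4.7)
The strategy is to first establish the lemma for $t = 0$ using the hypothesis that $\f$ is clean, and then to push the resulting connectivity forward through the folding maps $r_{t,0}\colon \Gamma_0 \to \Gamma_t$.

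For $t = 0$, the main observation is that for any vertex $v \in V\Gamma$ the $z$-modified Whitehead graph $\wh^z(\Gamma_0, v)$ coincides with the ordinary Whitehead graph $\wh(\f, v)$. One inclusion is immediate: each arc $\lambda_e$ is contained in an edge $\hat{e}$ of $\Gamma$, so any turn in $\f^k(\lambda_e)$ is automatically a turn in $\f^k(\hat{e})$ and is therefore $\f$-taken. For the reverse inclusion, given an $\f$-taken turn at $v$ appearing in $\f^n(\hat{e})$, I would invoke Lemma~\ref{L:linear maps expanding on all scales}---which applies since $\f$ is expanding with irreducible transition matrix---to find, for any edge $e \in E\Theta_z$, an open subinterval $W \subset \lambda_e$ and an integer $j \geq 1$ such that $\f^j\vert_W$ is a homeomorphism onto $\hat{e}$; then $\f^{j+n}(\lambda_e) \supset \f^n(\hat{e})$ takes the given turn, making it $z$-taken. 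Since $\f$ is clean by Corollary~\ref{C:clean} (using that $\fee$ is hyperbolic and fully irreducible), $\wh(\f, v)$ is connected for every $v \in V\Gamma$. For points $x$ in the interior of edges of $\Gamma_0$, the graph $\wh^z(\Gamma_0, x)$ has only two vertices, and is connected as soon as some path $\f^k(\lambda_e)$ passes through $x$, which follows from the same expansion-on-all-scales argument.

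For general $t \in (0, 1]$ and $x \in \Gamma_t$, I would analyze the preimage $r_{t,0}^{-1}(x) \subset \Gamma_0$. Since $r_{t,0}$ is a composition of partial Stallings folds, the preimage $r_{t,0}^{-1}(x)$ consists of finitely many points, and at each such preimage $y$ the graph $\wh^z(\Gamma_0, y)$ is connected by the base case. The key point is that every $z$-taken turn at $y \in \Gamma_0$---that is, a turn in some $\f^k(\lambda_e)$---descends under $r_{t,0}$ to a $z$-taken turn at $x$ in the image path $r_{t,0}(\f^k(\lambda_e)) \subset \Gamma_t$. Consequently the image of each $\wh^z(\Gamma_0, y)$ under the induced map on directions is a connected subgraph of $\wh^z(\Gamma_t, x)$, and the union over $y \in r_{t,0}^{-1}(x)$ gives a spanning subgraph of $\wh^z(\Gamma_t, x)$.

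The main obstacle is ensuring that the connected subgraphs coming from different preimages $y_1, y_2 \in r_{t,0}^{-1}(x)$ link together so that their union remains connected. The critical case is when $x$ is a folding-front vertex of $\Gamma_t$ with exactly two preimages, both interior points on two edges being folded together. Here $x$ has three directions in $\Gamma_t$: two ``forward'' directions (the remainders of the two partially folded edges) and one ``backward merged'' direction (the common image of the ``backward'' directions at both $y_1$ and $y_2$). To resolve this, I would observe that each $\wh^z(\Gamma_0, y_i)$ consists of a single edge joining the two local directions at $y_i$ (by the base case), and the push-forward of this edge connects the corresponding forward direction at $x$ to the backward merged direction. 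Since both preimages contribute such a connection, all three directions at $x$ lie in one connected component of $\wh^z(\Gamma_t, x)$, as required. In the remaining (simpler) cases $x$ has a single preimage $y \in V\Gamma_0$---possibly with some directions at $y$ identified at $x$ due to folding---so $\wh^z(\Gamma_t, x)$ is a quotient of the connected graph $\wh^z(\Gamma_0, y)$ and hence is itself connected.
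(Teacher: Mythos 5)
Your base case at $t = 0$ is essentially correct and matches the paper's argument: at a vertex $v$ the $z$-modified Whitehead graph coincides with $\wh(\f,v)$, with the nontrivial inclusion coming from expansion on all scales (Lemma~\ref{L:linear maps expanding on all scales}) together with irreducibility of $A(\f)$, so that some $\f^k(\lambda_e)$ eventually covers any prescribed edge of $\Gamma$; connectedness then follows from cleanness of $\f$. At an interior point the graph is a single edge, again because $\f^k(\lambda_e)$ eventually passes through it.

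The step to $t>0$ has a genuine gap. Your dichotomy --- ``$x$ has exactly two preimages, both interior'' versus ``$x$ has a single preimage that is a vertex of $\Gamma_0$'' --- does not cover the general situation. The preimage $P = (\flow_t)^{-1}(x) = r_{t,0}^{-1}(x)\subset\Gamma_0$ may contain arbitrarily many points, possibly of mixed type. Since $r_{t,0}$ is a composition of Stallings folds, each at most $2$-to-$1$, a point can acquire many preimages: for instance several edges of the $\Gamma$-graph $\Delta=\Gamma_0$ issuing from a common vertex and carrying the same label have terminal vertices that are all eventually identified. When $\lvert P\rvert\ge 3$ your argument does not show why the various connected pushed-forward subgraphs of $\wh^z(\Gamma_t,x)$ link up; your ``critical case'' computation handles only a single fold identifying two interior points, and the subsequent sentence (``in the remaining cases $x$ has a single preimage'') is simply not true.

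What the paper supplies, and what is absent from your proposal, is the trivalent tree $T=\{\flow_s(p): 0\le s\le t,\ p\in P\}$ with leaves $P$ and root $x$, together with an induction on combinatorial distance from $P$ within $T$. Along the interior of each edge of $T$ the flow induces a quotient of a $z$-modified Whitehead graph one step lower (so connectivity persists), and at each branch point of $T$ exactly two branches merge, wedging the two incoming connected Whitehead graphs along the two directions being folded together. Your branch-point analysis is precisely this wedge step, but stated in isolation; the induction is what lets one replace ``connected at $\Gamma_0$'' with ``connected one level down in $T$'' and thereby absorb an arbitrary number of preimages through finitely many $2$-to-$1$ merges. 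Without that scaffolding the argument does not go through in general.
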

\begin{proof}
We first claim that $\wh^z(\Gamma_0,x)$ is connected for each $x\in \Gamma_0$. 
Indeed, for every edge $e$ of $\Theta_z$ there exists an $n\ge 1$ so that the path $f^n\circ \rho'_e(e)$ crosses every edge of $\Gamma_0=\Gamma$. This follows from the positivity of $A(f)^n$ for large $n$ together with the facts that $f$ is expanding on all scales and $\rho'_e(e)$ contains an open neighborhood in $\Gamma$. This shows that $\wh^z(\Gamma_0,x)$ is connected whenever $x$ lies in the interior of an edge of $\Gamma_0$. On the other hand, when $x$ is a vertex of $\Gamma_0$ it is easy to see that $\wh^z(\Gamma_0,x) =\wh(\f,x)$, which we already know is connected. Indeed, since $\f^n\circ\rho'_e(e)$ crosses every edge of $\Gamma_0$, whenever $k\geq 1$ and $c\in E\Gamma_0$ are such that $\f^k(c)$ takes a turn at $x$, then $\f^{n+k}\circ\rho'_e(e)$ takes the same turn at $x$.

Now let $t > 0$ and let $x$ be any point of $\Gamma_t$. Let $P = (\flow_t)^{-1}(x) \subset \Gamma_0$ be the preimage.  Notice that $\flow_t$ maps each $z$--taken turn at $p\in P$ to a $z$--taken turn at $x$; that is, $\flow_t$ induces a map of $z$--modified Whitehead graphs $\flow_t^*\colon\wh^z(\Gamma_0,p)\to\wh^z(\Gamma_t,x)$ for each $p\in P$. Indeed, each edge $e_1$ originating at $p$ is mapped to an edge $\flow_t(e_1)$ originating at $x$; and if $e\subset \Theta_z$ is an edge such that $\f^n\circ\rho'_e(e)$ takes a turn $e_1,e_2$ at $p\in P$, then $\flow_t\circ\f^n\circ\rho'_e(e)$ takes the turn $\f_t(e_1),\f_t(e_2)$ at $x$. 

Let $T\subset X_\f$ denote the union of the flow lines $\flow_{[0,t]}(p)$ for the points $p\in P$ (i.e., $T$ is the set $\{\flow_s(p)\mid s\in [0,t], p\in P\}$). Then $T$ is an oriented trivalent tree with $\lvert P\rvert$ leaves on the bottom and one point $x$ at the top. Each point $y\in T$ lives in some $\Gamma_s$ and has a corresponding $z$--modified Whitehead graph $\wh^z(\Gamma_s,y)$. The fact that $\wh^z(\Gamma_t,x)$ is connected will follow from the following claim.

\textbf{Claim:} $\wh^z(\Gamma_s,w)$ is connected for each $w\in T$. 

We have already demonstrated this for the $\lvert P\rvert$ points at the base of the tree. If $w\in \Gamma_s$ lies in the interior of an edge of $T$, and $y\in \Gamma_r$, with $r<s$, lies in the same edge, then the flow $\flow_{s-r}$ induces a quotient map $\wh^z(\Gamma_r,y)\to\wh^z(\Gamma_s,w)$ (the fact that $y$ and $w$ lie in the same edge of $T$ implies that $y$ does not get identified with any other points in the interval $[r,s]$). In particular, $\wh^z(\Gamma_s,w)$ is connected provided $\wh^z(\Gamma_r,y)$ is.

Now suppose that $2$ upward-oriented edges $b_1,b_2$ of $T$ meet at a point $w\in \Gamma_s$. Restricting to a sufficiently small time interval $[r,s)$, we may assume that there is a constant $z$--modified Whitehead graph $G_i$ along each $b_i$. By induction on the combinatorial distance to $P$ within $T$ we may suppose that each $G_i$ is connected. Let $e_1,e_2\in E\Gamma_r$ denote the two edges of $\Gamma_r$ whose initial segments are folded together by $r_{s,r}\colon \Gamma_r\to\Gamma_s$. The closures of these edges necessarily contain the points $y_1,y_2\in \Gamma_r\cap T$ for which $\flow_{s-r}(y_1)=\flow_{s-r}(y_2) = w$. The initial segment $\alpha_i$ of $e_i$ (oriented from $y$ to $o(e_i)$) then corresponds to a vertex in $G_i=\wh^z(\Gamma_r,y_i)$. Since $\alpha_1$ and $\alpha_2$ are folded together by $\flow_{s-r}$, we see that the induced map $\flow^*_{s-r}$ on $z$--modified Whitehead graphs identifies these two vertices. Therefore $\wh^z(\Gamma_s,w)$ is the wedge of the connected graphs $G_1$ and $G_1$ and in particular is connected.
\end{proof}

\begin{corollary}\label{C:connectedWhiteheadGraphs}
The Whitehead graphs of all vertices of $\Theta_z$ with respect to $\f_z\colon\Theta_z\to\Theta_z$ are connected.
\end{corollary}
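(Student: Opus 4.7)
The plan is to transport the connectedness from Lemma~\ref{lem:z-modifiedWhiteheadGraphs} by showing that, for each vertex $v$ of $\Theta_z$, a $z$-modified Whitehead graph $\wh^z(\Gamma_t, v)$ embeds as a spanning subgraph of $\wh(\f_z,v)$; connectedness of the latter then follows from connectedness of the former.

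First I would identify the vertex sets of the two Whitehead graphs. Given $v\in V\Theta_z$, set $t = \fib_0(v)\in \mathbb{S}^1$, so that $v\in\Gamma_t$. By Lemma~\ref{L:constructing f_z}(2), inside a local model containing $v$ the fibration $\fib_z$ factors through $\fib_0$ via local homeomorphisms, so that the fibers of $\fib_z$ and $\fib_0$ agree as subsets of the local model. In particular, $\Theta_z$ coincides with $\Gamma_t$ in a small neighborhood of $v$, and the directions at $v$ in $\Theta_z$ (the vertex set of $\wh(\f_z, v)$) are canonically identified with the directions at $v$ in $\Gamma_t$ subdivided at $v$ (the vertex set of $\wh^z(\Gamma_t, v)$).

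Next I would show that every $z$-taken turn at $v$ is also $\f_z$-taken. Suppose $e_1, e_2$ is $z$-taken, witnessed by $e\in E\Theta_z$ and $n\geq 0$ such that $\flow_t\circ\f^n\circ\rho'_e(e)$ traverses the turn at $v$, and pick $y\in e$ with $\flow_t\circ\f^n\circ\rho'_e(y) = v$. Since $\rho'_e$, $\f^n = \flow_n|_\Gamma$, and $\flow_t$ are all (variable time) flow maps, one computes that $\flow_t\circ\f^n\circ\rho'_e(y') = \flow_{T(y')}(y')$ for $y'\in e$ near $y$, where $T(y') = t+n+s_e(y')$. Because $v\in \Theta_z$, there is an integer $k\geq 1$ (possibly after enlarging $n$ to ensure $k\geq 1$) with $\flow^z_k(y) = v$. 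For $y'\in e$ close to $y$, continuity places $\flow^z_k(y')$ in $\Theta_z$ near $v$, hence in $\Gamma_t$ near $v$; by transversality of $\flow$ to $\Gamma_t$, each flow line of $y'$ meets a small neighborhood of $v$ in $\Gamma_t$ in a unique point, forcing $\flow^z_k(y') = \flow_{T(y')}(y')$. Therefore $\f_z^k(e) = \flow^z_k(e)$ traverses the same turn $e_1, e_2$ at $v$, so this turn is $\f_z$-taken.

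Combining these two observations realizes $\wh^z(\Gamma_t, v)$ as a spanning subgraph of $\wh(\f_z, v)$, and the connectedness of $\wh(\f_z, v)$ then follows immediately from Lemma~\ref{lem:z-modifiedWhiteheadGraphs}. I expect the main obstacle to be justifying the second step rigorously: establishing the equality $\flow^z_k(y') = \flow_{T(y')}(y')$ requires combining the local agreement $\Theta_z = \Gamma_t$ near $v$ (from Lemma~\ref{L:constructing f_z}(2)) with transversality of the flow to $\Gamma_t$, to ensure that the reparameterized flow $\flow^z$ and the original flow $\flow$ deposit nearby points of $e$ at the same location in $\Gamma_t$ near $v$. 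Once this local coincidence is established, the matching of $z$-taken and $\f_z$-taken turns at $v$ is automatic.
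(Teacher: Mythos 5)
Your proof is correct in spirit but takes a somewhat different route from the paper's, and it quietly skirts around a point that the paper handles by a case split. Let me compare, and then flag what you would need to tighten.

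The paper's proof distinguishes two cases. When $v$ is not on a skew $1$--cell, it picks a time $t$ strictly \emph{less} than $\fib_0(v)$ and a small neighborhood $U$ in which $\flow$ maps $\Gamma_t\cap U$ homeomorphically onto $\Theta_z\cap U$, with $v$ having a unique preimage $x$; the flow then induces a graph isomorphism $\wh^z(\Gamma_t,x)\to\wh_{\Theta_z}(f_z,v)$. When $v$ lies on a skew $1$--cell, $\Gamma_t\cap U$ (again for slightly earlier $t$) consists of two disjoint arcs each containing a preimage $x_i$, and the flow gives a surjective quotient from $\wh^z(\Gamma_t,x_1)\sqcup\wh^z(\Gamma_t,x_2)$ onto $\wh_{\Theta_z}(f_z,v)$ that identifies one vertex from each side (the direction along the already-folded edge). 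You instead take $t=\fib_0(v)$ exactly, use Lemma~\ref{L:constructing f_z}(2) to see that $\Theta_z$ agrees with $\Gamma_t$ locally inside a local model containing $v$, and then argue that $\wh^z(\Gamma_t,v)$ sits inside $\wh(\f_z,v)$ as a spanning subgraph. This avoids the case split because the wedge that the paper builds in Case~2 is exactly what Lemma~\ref{lem:z-modifiedWhiteheadGraphs} already supplies for the trivalent fold vertex $v$ of $\Gamma_{\fib_0(v)}$ in its inductive step. So your approach trades the paper's two-case geometric argument for a somewhat heavier reliance on Lemma~\ref{lem:z-modifiedWhiteheadGraphs} itself; the payoff is a unified treatment, and you also only need the one-directional inclusion (spanning subgraph) rather than an isomorphism.

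Two places need tightening. First, the identification ``$\Theta_z$ coincides with $\Gamma_t$ in a small neighborhood of $v$'' via Lemma~\ref{L:constructing f_z}(2) is valid only when $v\in X_\f^{(1)}$, since the covering $\MM$ of local models covers the $1$--skeleton; for the valence--$2$ vertices of $\Theta_z$ that lie in the interiors of trapezoids (the subdivision points added in Definition~\ref{D:Theta_z}), the arcs $\Theta_z$ and $\Gamma_t$ through $v$ genuinely differ, and your displayed equality $\flow^z_k(y')=\flow_{T(y')}(y')$ is literally false there. The conclusion is still easy for those vertices — any two transverse arcs through an interior point of a trapezoid have canonically matched germs of directions, and the unique nondegenerate turn is taken by both — but you should either treat these separately or appeal to tameness of $\f_z$ (Lemma~\ref{L:tt-tame} via Lemma~\ref{L:tt}), which already forces the unique turn at any degree--$2$ vertex to be taken. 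Second, the parenthetical ``possibly after enlarging $n$ to ensure $k\geq 1$'' is not just a matter of increasing $n$ in the given witness: if $t=n=s_e(y)=0$ then $y=v$, and the witness $\rho'_e(e)$ has zero total flow time. The honest fix is to observe that the connectivity established in Lemma~\ref{lem:z-modifiedWhiteheadGraphs} is produced (and, for $t>0$, propagated by the flow) from witnesses with $n\ge 1$ — e.g.\ by the $A(\f)^n>0$ argument for interior points and the $\wh^z(\Gamma_0,x)=\wh(\f,x)$ identification for vertices of $\Gamma_0$ — so that the spanning connected subgraph you transport already consists of turns with $T(y)>0$.
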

\begin{proof}
Recall that $V\Theta_z$ is defined to be the set of intersection points of $\Theta_z$ with $X_\f^{(1)}$, 
together with some additional degree-2 vertices obtained by subdividing at points of intersection of the flow lines through this first set of points---see Definition \ref{D:Theta_z}.

Let $v$ be a vertex of $\Theta_z$. Let us first suppose that $v$ is not contained in a skew $1$--cell of $X_\f$.  One may then find a neighborhood $U\subset X_\f$ of $v$ and a time $t$ such that the flow $\flow$ maps $\Gamma_t\cap U$ homeomorphically onto $\Theta_z\cap U$. Let $x$ be the point of $\Gamma_t\cap U$ that flows onto $v$. From the definition of $z$--modified Whitehead graphs, we see that the flow induces a graph isomorphism $\wh^z(\Gamma_t,x)\to \wh_{\Theta_z}(f_z,v)$. Therefore $\wh_{\Theta_z}(\f_z,v)$ is connected by Lemma~\ref{lem:z-modifiedWhiteheadGraphs}.

Now suppose that $v$ lies on a skew $1$--cell. In this case we can find a neighborhood $U\subset X_\f$ of $v$ so that $U\cap \Theta_z$ is a tree with one vertex $v$ and $3$ edges, and a time $t$ so that $U\cap \Gamma_t$ consist of $2$ disjoint edges. Moreover, each component of $U\cap \Gamma_t$ contains a point $x_i$ that flows onto $v$. By Lemma~\ref{lem:z-modifiedWhiteheadGraphs}, we know that each graph $\wh^z(\Gamma_t,x_i)$ is connected (in this case just a single edge with two vertices). The flow now induces a quotient map
\[\wh^z(\Gamma_t,x_1) \sqcup \wh^z(\Gamma_t,x_2)\to \wh_{\Theta_z}(\f_z,v)\]
that takes a vertex from each $\wh^z(\Gamma_t,x_i)$ and identifies these to a single vertex in $\wh_{\Theta_z}(\f_z,v)$. Therefore $\wh_{\Theta_z}(\f_z,v)$ is the wedge of $2$ line segments and is in particular connected.
\end{proof}

\begin{corollary}\label{C:wc}
The map $f_z\colon \Theta_z\to\Theta_z$ is a weakly clean train-track map.
\end{corollary}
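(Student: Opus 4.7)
The plan is to simply assemble the pieces that have already been established in the preceding lemmas and corollaries. Recall that a train-track map $f'\colon \Gamma'\to\Gamma'$ is weakly clean provided that (i) $f'$ is expanding, (ii) the Whitehead graph $\wh(f',v)$ is connected for every $v\in V\Gamma'$, and (iii) the transition matrix $A(f')$ is irreducible.

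First, Lemma~\ref{L:tt} already shows that $f_z\colon \Theta_z\to\Theta_z$ is an expanding irreducible train-track map (with respect to the linear structure $\Lambda$ on $\Theta_z$ provided by Corollary~\ref{C:linearStructure}). This gives us conditions (i) and (iii): namely, $f_z$ is a train-track map, it is expanding, and its transition matrix $A(f_z)$ is irreducible.

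Next, Corollary~\ref{C:connectedWhiteheadGraphs} establishes condition (ii): the Whitehead graph $\wh_{\Theta_z}(f_z,v)$ is connected for every vertex $v\in V\Theta_z$. This is precisely where the hyperbolicity and full irreducibility hypothesis on $\fee$ is being used, via Corollary~\ref{C:clean} applied to $f$ to ensure all Whitehead graphs of $f$ on $\Gamma$ are connected, which is then propagated to $\Theta_z$ through the semiflow structure on $X_f$ as carried out in Lemma~\ref{lem:z-modifiedWhiteheadGraphs} and Corollary~\ref{C:connectedWhiteheadGraphs}.

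Combining these three facts, we conclude directly from the definition that $f_z$ is a weakly clean train-track map. There is no hard step here: the corollary is a packaging statement, and all the substantive work has been done in Lemma~\ref{L:tt} and Corollary~\ref{C:connectedWhiteheadGraphs}.
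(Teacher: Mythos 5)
Your proof is correct and follows exactly the same route as the paper: invoke Lemma~\ref{L:tt} for the expanding irreducible train-track structure, invoke Corollary~\ref{C:connectedWhiteheadGraphs} for connectivity of the Whitehead graphs, and conclude from the definition. The extra remarks on where hyperbolicity and full irreducibility enter are accurate commentary, not a deviation.
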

\begin{proof}
We already know, by Corollary~\ref{C:linearStructure},  that $f_z\colon \Theta_z\to\Theta_z$ is an expanding train track map with an irreducible transition matrix $A(f_z)$.
Corollary~\ref{C:connectedWhiteheadGraphs} implies that the Whitehead graphs $\wh_{\Theta_z}(\f_z,v)$ are connected for all $v\in V\Theta_z$. Thus $f_z$ is a weakly clean train-track map.
\end{proof}

\begin{proof}[Proof of Theorem~\ref{T:new iwip main}]

Suppose first that $f\colon\Gamma\to\Gamma$ is a combinatorial expanding irreducible train-track map representing $\fee$, without assuming that $\fee$ is hyperbolic and fully irreducible.
Recall that $u\in \A$ is a primitive integral element, that $z\in Z^1(X_f;\R)$ is a positive $1$--cocycle representing the cohomology class $u$ and that $\f_z \colon \Theta_z\to\Theta_z$ induces the monodromy $(\f_z)_* = \fee_u \in \Out(\ker(u)) = \Out(\pi_1(\Theta_z))$.
Lemma~\ref{L:1returntt} shows that $f_z$ is a topological graph-map and thus $f_z$ is a topological representative of the automorphism $\fee_u$ of $\ker(u)=\pi_1(\Theta_z)$.
Corollary~\ref{C:linearStructure} implies that there exists a linear structure $\Lambda$ on $\Theta_z$ so that $f_z\colon\Theta_z\to\Theta_z$ is an expanding irreducible train-track map representing $\fee_u$. This verifies part (1) of Theorem~\ref{T:new iwip main}. 

Suppose now that $\fee$ is additionally assumed to be hyperbolic and fully irreducible, in which case $G_\fee$ is word-hyperbolic and $f$ is a clean train-track representing $\fee$. Since $G_{\fee_u} \cong G_\fee$ is word-hyperbolic, we also know that $\fee_u$ is a hyperbolic automorphism. By Corollary~\ref{C:wc} we additionally have $\f_z$ is a weakly clean train track map.   Hence $\fee_u$ is fully irreducible by Theorem~\ref{T:clean}.
\end{proof}

\begin{remark}\label{R:geometric-iwip}
The full irreducibility conclusion for $\fee_u$ of part (2) in
Theorem~\ref{T:new iwip main}  does not necessarily hold under the
weaker assumption in part (2) that $\fee$ only be fully irreducible
(i.e.~without the assumption that $\fee$ be hyperbolic). Recall that,
as proved in \cite{BH92}, a fully irreducible $\fee\in \Out(F_N)$ is
non-hyperbolic if and only if $\fee$ comes from a pseudo-Anosov
homeomorphism $f$ of a once-punctured connected surface $S$ without
boundary with $F_N=\pi_1(S)$. In this case the mapping torus $M_f$ of
$f$ is a hyperbolic $3$--manifold fibering over the circle with fiber
$S$ and $G_\fee=\pi_1(M)$.  If $M$ admits a different fibration over
$\mathbb{S}^1$ with fiber $Y$, then $M_f$ also decomposes as the mapping torus of a homeomorphism $f'$ of $Y$ and this  $f'$ is again pseudo-Anosov. However, in general, there is no reason to 
expect that the new fiber $Y$ will have exactly one puncture, even for a fibration of $M$ coming from a class $u\in H^1(G_\fee,\R)$  which is projectively close to the class $u_0$ associated with the original fibration. If such $Y$ has at least two punctures, then the element of $\Out(\pi_1(Y))$ corresponding to $f'$ is not fully irreducible, since the punctures of $Y$ are permuted by $f'$ and the peripheral curves around the punctures are primitive elements of $\pi_1(Y)$.
\end{remark}

\section{Dynamics and entropy}

In this section, we establish Theorem~\ref{T:new entropy main} from the Introduction:
\begin{theorem:new entropy main}
Let $\fee \in \Out(F_N)$ (where $N\ge 2)$ be an element represented by an expanding irreducible train track map $f \colon \Gamma \to \Gamma$. Let $X_f$ be a folded mapping torus, $\flow$ the associated semiflow, and $\A_{X_f} \subset H^1(X_\f;\R)$ the positive cone of $X_\f$.  Then there exists a continuous, homogeneous of degree $-1$ function $\mathfrak H \colon \A_{X_f} \to \R$
such that for every primitive integral $u \in \A_{X_f}$
\[ \log(\lambda(\fee_u)) = \mathfrak H(u).\]
Moreover, $1/\mathfrak H$ is concave, and hence $\mathfrak H$ is convex.
\end{theorem:new entropy main}

Our proof closely follows Fried's proof of the analogous statement for mapping tori of pseudo-Anosov homeomorphisms (which applies in a more general setting); see \cite{FriedD}.  To carry out the arguments here, we work with the natural extension, as we now explain.

\subsection{The natural extension of the first return map}

Fix any integral class $u \in \A$ represented by a positive cocycle $z$ and let $\fib_z\colon X = X_\f \to \mathbb{S}^1$ be an associated fibration.  Recall that $\Theta_z$ is the preimage of a point under $\fib_z$ which we assume (as we may) is $0 \in \mathbb{S}^1 = \R/\Z$, so that $\Theta_z = \fib_z^{-1}(0)$.  Additionally, $\f_z\colon \Theta_z \to \Theta_z$ denotes the first return map of $\psi$.
The failure of $\f_z\colon \Theta_z \to \Theta_z$ to be a homeomorphism can be rectified in a certain sense by the construction of the {\em natural extension}.  This is the space
\[ \Theta_z' = \{ \underline{x} = ( \ldots, x_{-1},x_0,x_1,\ldots ) \mid x_i \in \Theta_z \mbox{ such that } x_i  = \f_z(x_{i-1}) \mbox{ for all } i \}. \]
equipped with the product topology, together with the {\em shift map} $\f_z'\colon \Theta_z' \to \Theta_z'$ defined by
\[ (f_z'(\underline{x}))_i = x_{i+1} \]
for all $i \in \Z$.  That is, the $i^{th}$ coordinate of $\f_z'(\underline{x})$ is the $(i+1)^{st}$ coordinate of $\underline{x}$.  There is a continuous surjective semi-conjugacy $\pi_z\colon \Theta_z' \to \Theta_z$ given by
\[ \pi_z(\ldots , x_{-1}, x_0, x_1, \ldots) = x_0. \]
That is, $\pi_z \circ \f_z' = \f_z \circ \pi_z$ ($\pi_z$ is a semi-conjugacy of $\f_z'$ onto $\f_z$). This fact is immediate from the definition of $\Theta_z'$ and $\f_z'$.
Since $\pi_z$ is surjective, the topological entropies satisfies the inequality
\[ h(\f_z') \geq h(\f_z).\]
However, the system $\f_z'\colon \Theta_z' \to \Theta_z'$ is the ``smallest'' homeomorphism which admits a semiconjugacy to $\f_z\colon \Theta_z \to \Theta_z$, in the sense that any other homeomorphism which can be semiconjugated onto $\f_z \colon \Theta_z \to \Theta_z$ factors through $\pi_z$ via a surjection to $\Theta_z'$.  The key fact for us is that topological entropy is actually preserved in this case, see \cite[Section 6.8]{Down}:

\begin{lemma} \label{L:entropy preserved}
The topological entropy of the extension satisfies
\[ h(\f_z') = h(\f_z).\]
\end{lemma}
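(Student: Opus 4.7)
The plan is to establish both inequalities $h(\f_z') \ge h(\f_z)$ and $h(\f_z') \le h(\f_z)$ directly from Bowen's definition of topological entropy via $(n,\epsilon)$--separated sets, making the cited fact from \cite{Down} self-contained in this setting.

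First I would note that $\pi_z \colon \Theta_z' \to \Theta_z$ is a continuous surjection satisfying $\pi_z \circ \f_z' = \f_z \circ \pi_z$. Since topological entropy is monotone under continuous factor maps (each $(n,\epsilon)$--separated set in $\Theta_z$ lifts through $\pi_z$ to a separated set of the same cardinality in $\Theta_z'$ for an appropriate $\delta$ depending on the modulus of continuity of $\pi_z$), this immediately gives $h(\f_z') \ge h(\f_z)$.

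The substantive direction is the reverse inequality. The plan is to equip $\Theta_z$ with a compatible metric $d$ of diameter at most $1$ and metrize the natural extension $\Theta_z' \subset \prod_{i \in \Z} \Theta_z$ via
\[ d'(\underline{x}, \underline{y}) = \sum_{i \in \Z} 2^{-|i|}\, d(x_i, y_i). \]
Given $\epsilon > 0$, I would choose $N$ large enough that $\sum_{|i|>N} 2^{-|i|} < \epsilon/4$, so whenever $d'(\underline{x}, \underline{y}) \ge \epsilon$ there exists some index $|i| \le N$ with $d(x_i, y_i) \ge \epsilon/4$. Given an $(n,\epsilon)$--separated set $E \subset \Theta_z'$ for $\f_z'$, any two distinct $\underline{x}, \underline{y} \in E$ therefore have some $0 \le k < n$ and $|i| \le N$ with $d(x_{i+k}, y_{i+k}) \ge \epsilon/4$; in particular, there is some index $j$ with $-N \le j \le n+N-1$ where $x_j \ne y_j$ and $d(x_j, y_j) \ge \epsilon/4$.

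The key observation, which I expect to be the crucial step, is that the map $\Phi\colon E \to \Theta_z$ sending $\underline{x} \mapsto x_{-N}$ is injective on $E$: since $x_i = \f_z^{i+N}(x_{-N})$ for every $i \ge -N$, two sequences agreeing at coordinate $-N$ must agree at every coordinate $\ge -N$, contradicting the existence of the separating index $j \ge -N$ above. Moreover, for such distinct sequences, setting $j' = j + N \in [0, n + 2N - 1]$, we obtain
\[ d\bigl(\f_z^{j'}(\Phi(\underline{x})),\, \f_z^{j'}(\Phi(\underline{y}))\bigr) = d(x_j, y_j) \ge \epsilon/4, \]
so $\Phi(E)$ is $(n+2N, \epsilon/4)$--separated for $\f_z$. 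Hence $|E| \le s(n+2N, \epsilon/4; \f_z)$, where $s(\,\cdot\,;\,\cdot\,)$ denotes the maximum cardinality of a separated set. Taking logarithms, dividing by $n$, and letting $n \to \infty$ yields
\[ \limsup_{n\to\infty} \tfrac{1}{n}\log s(n,\epsilon;\f_z') \;\le\; \limsup_{n\to\infty} \tfrac{1}{n}\log s(n+2N,\epsilon/4;\f_z) \;\le\; h(\f_z), \]
and letting $\epsilon \to 0$ gives $h(\f_z') \le h(\f_z)$, completing the proof.
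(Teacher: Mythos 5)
Your proof is correct. The paper gives no argument here -- it simply cites \cite[Section 6.8]{Down} for the standard fact that a natural extension preserves topological entropy -- so there is no ``paper proof'' to compare against, and what you have written is a clean, self-contained version of the usual Bowen-style argument.

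One small observation that slightly simplifies the easy direction: with your chosen metric $d'$ the projection $\pi_z\colon \underline{x}\mapsto x_0$ is $1$--Lipschitz (since the $i=0$ term of $d'$ is $d(x_0,y_0)$ with weight $2^{-0}=1$), so lifting an $(n,\epsilon)$--separated set $E\subset\Theta_z$ through $\pi_z$ produces an $(n,\epsilon)$--separated set in $\Theta_z'$ with the \emph{same} $\epsilon$; no modulus-of-continuity $\delta$ is needed. The interesting direction -- the injectivity of $\underline{x}\mapsto x_{-N}$ on an $(n,\epsilon)$--separated set $E$, forced by the semi-group relation $x_i=\f_z^{i+N}(x_{-N})$ for $i\ge -N$ together with the existence of a separating coordinate $j\ge -N$ -- is exactly the key point, and you have it right: it shows $\Phi(E)$ is $(n+2N,\epsilon/4)$--separated for $\f_z$, the additive shift by $2N$ washes out in the $\frac{1}{n}\log$ limit, and $h(\f_z')\le h(\f_z)$ follows. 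Your chosen constants ($\sum_{|i|>N}2^{-|i|}<\epsilon/4$ together with $\sum_{|i|\le N}2^{-|i|}<3$, $\operatorname{diam}\le 1$) do indeed guarantee a coordinate $|i|\le N$ with $d(x_i,y_i)\ge\epsilon/4$, so the bookkeeping checks out.
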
 

\subsection{A natural extension of the semiflow}

We can make a similar construction for the semiflow $\flow_s\colon X \to X$.
Say that a map $\gamma\colon \R \to X$ is a {\em biinfinite flow line} for $\flow$ if 
\[ \flow_s(\gamma(t)) = \gamma(t+s) \]
for all $t \in \R$ and $s \geq 0$.  Let $X'$ denote the space of all biinfinite flow lines, equipped with the topology of uniform convergence on compact sets:
\[ X' = \{ \gamma \mid \gamma \mbox{ is a biinfinite flow line } \}. \]
This comes equipped with a flow $\Flow$ defined by $(\Flow_s(\gamma))(t) = \gamma(t + s)$ for all $t, s \in \R$.
There is a map $\Pi\colon X' \to X$ given by $\Pi(\gamma) = \gamma(0)$ which is a continuous surjection (since there is a biinfinite flow line through every point).
Again from the definitions, we see that for all $s \geq 0$ we have
\[ \flow_s \circ \Pi = \Pi \circ \Flow_s\]
which is to say, $\Pi$ semiconjugates $\Flow$ to $\flow$.  This is also clearly the minimal flow with this property.

By composing $\fib_z$ with $\Pi$ we obtain a map
\[ \fib_z' = \fib_z \circ \Pi\colon X' \to \mathbb{S}^1.\]
We put $\Theta_z'':= (\fib_z')^{-1}(0)$ and denote the first return map of $\Flow$ to this section by $\f_z''\colon \Theta_z'' \to \Theta_z''$.  As the next lemma shows, $\f_z''$  is nothing but the natural extension of $\f_z\colon \Theta_z \to \Theta_z$ already constructed above.
\begin{lemma}
There is a homeomorphism $F\colon \Theta_z'' \to \Theta_z'$ that conjugates $\f_z''$ to $\f_z'$.
\end{lemma}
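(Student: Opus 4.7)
The plan is to define $F\colon \Theta_z''\to \Theta_z'$ by enumerating the intersections of each biinfinite flow line with the section $\Theta_z$. Let $\tau\colon \Theta_z\to\R_{>0}$ denote the first return time of $\flow$ to $\Theta_z$, so that $f_z(x)=\flow_{\tau(x)}(x)$. Since $\flow^z_1=f_z$ and $\flow^z$ is obtained from $\flow$ by a continuous strictly-positive time change dictated by $\fib_z$, the function $\tau$ is continuous on the compact graph $\Theta_z$ and satisfies $\tau_{\min}\le\tau\le\tau_{\max}$ for some positive constants. For $\gamma\in\Theta_z''$ the set $C_\gamma=\{s\in\R:\gamma(s)\in\Theta_z\}$ is then closed, discrete and unbounded in both directions, so I can enumerate it as $\cdots<t_{-1}(\gamma)<t_0(\gamma)=0<t_1(\gamma)<\cdots$ and set
\[F(\gamma):=(\ldots,\gamma(t_{-1}(\gamma)),\,\gamma(t_0(\gamma)),\,\gamma(t_1(\gamma)),\ldots).\]
Since $t_{i+1}(\gamma)-t_i(\gamma)=\tau(\gamma(t_i(\gamma)))$, the compatibility condition $\gamma(t_{i+1}(\gamma))=f_z(\gamma(t_i(\gamma)))$ holds, so $F(\gamma)\in\Theta_z'$. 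Equivariance $F\circ f_z''=f_z'\circ F$ is then immediate: $f_z''(\gamma)$ is the shift $s\mapsto\gamma(s+t_1(\gamma))$, whose crossing times satisfy $t_i(f_z''(\gamma))=t_{i+1}(\gamma)-t_1(\gamma)$, so $F(f_z''(\gamma))_i=\gamma(t_{i+1}(\gamma))=f_z'(F(\gamma))_i$.

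To construct the inverse $G\colon \Theta_z'\to\Theta_z''$, given $\underline{x}=(x_i)\in\Theta_z'$ I would set $T_0=0$, $T_{i+1}=T_i+\tau(x_i)$, $T_{i-1}=T_i-\tau(x_{i-1})$, and define
\[G(\underline{x})(s):=\flow_{s-T_i}(x_i)\quad\text{for }T_i\le s\le T_{i+1}.\]
Consistency at $s=T_{i+1}$ follows from $\flow_{\tau(x_i)}(x_i)=f_z(x_i)=x_{i+1}$, and $T_i\to\pm\infty$ since $\tau\ge\tau_{\min}$, so $G(\underline{x})$ is a genuine biinfinite flow line with $G(\underline{x})(0)=x_0\in\Theta_z$. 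The identities $F\circ G=\mathrm{id}$ and $G\circ F=\mathrm{id}$ are then immediate from the constructions.

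Finally I would verify that $F$ and $G$ are continuous. For $F$, if $\gamma_n\to\gamma$ uniformly on compact subsets, then continuity of $\tau$ and joint continuity of evaluation allow me to propagate the convergence $t_i(\gamma_n)\to t_i(\gamma)$ inductively from $i=0$ in both directions, yielding $\gamma_n(t_i(\gamma_n))\to\gamma(t_i(\gamma))$ coordinatewise. For $G$, coordinatewise convergence $\underline{x}^{(n)}\to\underline{x}$ gives $T_i^{(n)}\to T_i$ for each $i$ via continuity of $\tau$, and uniform convergence $G(\underline{x}^{(n)})\to G(\underline{x})$ on each bounded interval then follows from continuity of $\flow$, since only finitely many segments are involved. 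The main technical point is continuity of $\tau$ on the possibly branched graph $\Theta_z$, which reduces to showing that the reparametrization relating $\flow$ and $\flow^z$ is continuous and positive; this holds because $\fib_z$ is a submersion on the interior of each trapezoid and is a local diffeomorphism on each semiflow line by Lemma~\ref{L:constructing f_z}(3).
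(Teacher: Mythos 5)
Your map $F$ is identical to the paper's, and your explicit inverse $G$ is what the paper sketches when proving surjectivity; the substantive difference is in how the homeomorphism conclusion is reached. The paper proves continuity of $F$ only and then invokes compactness of $X'$ (stated there with ``one can show that $X'$ is also compact'') together with Hausdorffness of $\Theta_z'$. You instead verify continuity of $G$ directly, reducing it to continuity of the first return time $\tau$ and joint continuity of $\flow$. That is a perfectly good alternative route and arguably more self-contained: it avoids having to justify compactness of the space of biinfinite flow lines, at the cost of having to establish continuity of $\tau$, which your reduction to Lemma~\ref{L:constructing f_z}(3) handles correctly --- the key point is that $\fib_z$ composed with any flow line lifts to a strictly increasing local diffeomorphism of $\R$, so crossings of $\Theta_z$ are transversal and the return time cannot jump.

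One small imprecision in your continuity argument for $F$: you say continuity of $\tau$ lets you ``propagate the convergence $t_i(\gamma_n)\to t_i(\gamma)$ inductively from $i=0$ in both directions.'' The forward induction runs exactly as you describe, via $t_{i+1}=t_i+\tau(\gamma(t_i))$. But $\tau$ is a \emph{forward} return time, and there is no analogous recursion producing $t_{-1}(\gamma_n)$ from already-known data --- to evaluate $\tau(\gamma_n(t_{-1}(\gamma_n)))$ you would already need $t_{-1}(\gamma_n)$. For the backward crossings you instead need the transversality argument directly: the lifts $\widetilde{\fib_z\circ\gamma_n}\colon\R\to\R$ are strictly increasing, their integer levels are the crossing times, and they converge uniformly on compacts to the lift of $\fib_z\circ\gamma$, forcing the crossing times to converge. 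The paper glosses over the same point and the conclusion is certainly correct, but the mechanism is not quite the one you named.
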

\begin{proof}
First observe that $\gamma \in X'$ is in $\Theta''_z$ if and only if $\gamma(0) \in \Theta_z$.  Such a biinfinite flow line $\gamma\colon \R \to X$ meets $\Theta_z$ at infinitely many points in both forward and backward time.  More precisely, there is an increasing, biinfinite, proper sequence of times $\{t_j\}_{j \in \Z}$ with $t_0 = 0$ so that
\begin{equation} \label{E:flowlinesequence} \gamma(t) \in \Theta_z \Leftrightarrow t = t_j \mbox{ for some } j.
\end{equation}
Since $\f_z$ is the first return map of $\flow$ to $\Theta_z$, it follows that $\f_z(\gamma(t_j)) = \gamma(t_{j+1})$ for all $j$. 
Therefore, for any $\gamma \in \Theta_z''$, we may define
\[ F(\gamma) = (\ldots,\gamma(t_{-1}),\gamma(t_0),\gamma(t_1),\ldots)\]
(that is, the $i^{th}$ coordinate of $F(\gamma)$ is $\gamma(t_i)$), which is an element of $\Theta_z'$.

To see that $F$ is injective, we claim that the sequence
\[ (\ldots,\gamma(t_{-1}),\gamma(t_0),\gamma(t_1),\ldots)\]
determines $\gamma$.  This is because, for any $t > s$, $\gamma(s)$ determines $\gamma(t)$ by the formula $\gamma(t) = \psi_{t-s}(\gamma(s))$.  As $j \to -\infty$, $t_j \to - \infty$, and so for any $t \in \R$, $\gamma(t)$ is determined by some $\gamma(t_j)$ with $t_j < t$.  Therefore, $F$ is injective.  Similarly, given any sequence $\underline{x} \in \Theta_z'$, we can use this reasoning to construct $\gamma \in \Theta_z''$ with $F(\gamma) = \underline{x}$.  

Suppose now that $\gamma_n \to \gamma$ is a sequence of flow lines converging uniformly on compact sets with $\gamma(0),\gamma_n(0) \in \Theta_z$ for all $n$ and $\gamma_n(0) \to \gamma(0)$.  Let $\{t_j\}_{j \in \Z}$ denote the sequence for $\gamma$ as above, and similarly, $\{t_{n,j} \}_{j \in \Z}$ the sequence for $\gamma_n$.  The uniform convergence on compact sets implies that the points of intersection of $\gamma_n$ with $\Theta_z$ are converging uniformly on compact sets to the intersection points of $\gamma$ with $\Theta_z$, which is to say, $\gamma_n(t_{n,j}) \to \gamma(t_j)$ for all $j$ as $n \to \infty$.  Therefore $F$ is continuous.  Because $X$ is compact, one can show that $X'$ is also compact, hence so is $\Theta_z''$.  Since $\Theta_z'$ is Hausdorff, the continuous map $F\colon \Theta_z'' \to \Theta_z'$ is a homeomorphism.

For any $\gamma \in \Theta_z''$ and any $j$, we have $\f_z(\gamma(t_j)) = \gamma(t_{j+1})$, and therefore $F$ conjugates $\f_z''$ to the shift map $\f_z'$.
\end{proof}

Because of this lemma, we will ignore the distinction between $\f_z''\colon \Theta_z'' \to \Theta_z''$ and $\f_z'\colon \Theta_z' \to \Theta_z'$, and just refer to $\Theta_z'$ as the section of $\Flow$ and $\f_z' \colon \Theta_z' \to \Theta_z'$ the first return map of $\Flow$.

\subsection{The variational principal and Abramov's Theorem}

The topological entropy of $\f_z'$ is approximated from below by metric entropies $h_\nu(\f_z')$ for invariant probability measures $\nu$ on $\Theta_z'$, according to the {\em variational principle}; see \cite[Section 6.8]{Down}.

\begin{theorem} \label{T:variational principle} For any integral class $u \in \A$ represented by a positive cocycle $z$ we have
\[ h(\f_z') = \sup_\nu h_\nu(\f_z') \]
where $\nu$ ranges over all $\f_z'$--invariant Borel probability measures on $\Theta_z'$.
\end{theorem}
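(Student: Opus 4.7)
The plan is to verify that the hypotheses of the classical variational principle apply to our setup and then invoke it directly. The classical variational principle (as in \cite[Section 6.8]{Down}, or any standard reference such as Walters' book) states that for any homeomorphism $T \colon Y \to Y$ of a compact metric space $Y$, one has $h(T) = \sup_\mu h_\mu(T)$ where $\mu$ ranges over $T$--invariant Borel probability measures on $Y$. So the only work required is to verify that $\f_z' \colon \Theta_z' \to \Theta_z'$ fits this framework.

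First I would observe that $\Theta_z$ is a compact metric space: it is a finite topological graph (by Theorem~\ref{T:folded basics}) with any induced path metric. The natural extension $\Theta_z'$ is by definition the inverse limit
\[ \Theta_z' = \varprojlim \left( \Theta_z \xleftarrow{\f_z} \Theta_z \xleftarrow{\f_z} \Theta_z \xleftarrow{\f_z} \cdots \right), \]
taken inside the product $\prod_{i \in \Z} \Theta_z$ with the product topology. Since each factor $\Theta_z$ is compact metric and $\Theta_z'$ is a closed subset of this countable product, $\Theta_z'$ is itself a compact metric space. The shift map $\f_z'$ is easily seen to be continuous (it is the restriction of the obvious shift on $\prod_{i \in \Z} \Theta_z$) and to admit a continuous inverse (the shift in the opposite direction), hence is a homeomorphism of the compact metric space $\Theta_z'$.

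Having established that $\f_z' \colon \Theta_z' \to \Theta_z'$ is a homeomorphism of a compact metric space, the variational principle applies verbatim and yields the stated equality $h(\f_z') = \sup_\nu h_\nu(\f_z')$. I do not foresee any real obstacle here, as there is no novelty: the statement is entirely a restatement of the classical principle for the specific system $(\Theta_z', \f_z')$, and indeed the role of Theorem~\ref{T:variational principle} in the paper is simply to record this fact in the notation of the paper so that it can be combined with Lemma~\ref{L:entropy preserved} (which identifies $h(\f_z')$ with $h(\f_z)$) and the forthcoming analysis of $\f_z'$--invariant measures via Abramov's theorem to compute $\log \lambda(\fee_u) = h(\f_z) = \mathfrak{H}(u)$.
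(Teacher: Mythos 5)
Your proposal is correct and matches the paper's approach: the paper simply cites the classical variational principle from Downarowicz~\cite[Section 6.8]{Down} without further argument, implicitly relying on the standard facts you spell out (that $\Theta_z'$ is a compact metrizable space, being a closed subset of a countable product of the compact metric graph $\Theta_z$, and that $\f_z'$ is a homeomorphism). Your write-up just makes the verification of the hypotheses explicit, which is what a reader would supply.
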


The key to proving continuity (and the other properties) of the entropy is a result of Abramov which relates entropy of $\f_z'$ to that of the flow $\Flow$.  To make sense of this, we have to relate $\f_z$--invariant measures on $\Theta_z'$ to $\Flow$--invariant measures on $X'$.  In fact, there is a bijection between these two spaces of measures defined as follows.

Given a $\Flow$--invariant probability measure $\mu$ on $X'$, we can construct a measure $\bar \mu_0$ on any section $\Theta_z'$ by the formula
\begin{equation} \label{E:hat mu from mu} \bar \mu_0(A) = \frac{1}{t}
  \mu( \{ \flow_s(x) \mid x \in A, 0\le s\le t \})
\end{equation}
for any Borel measurable set $A \subset \Theta_z'$.  This may not be a probability measure, and indeed the total measure $\bar \mu_0(\Theta_z')$ is an important quantity.  The $\f_z'$--invariant probability measure on $\Theta_z$ is the normalization
\[ \bar \mu = \frac{1}{\bar \mu_0(\Theta_z')} \bar \mu_0.\]

There is a flow-equivariant local homeomorphism
\[ \Theta_z' \times \R \to X' \]
given by $(\underline{x},s) \mapsto \Flow_s(\underline{x})$ (where the flow on the domain is just the natural translation action on the second coordinate).
The product measure $\bar \mu_0 \times m$, where $m$ is Lebesgue measure on $\R$, is flow-invariant and pushes forward to a flow invariant measure.  From the construction of $\bar \mu_0$, it follows that $\mu$ is the push forward of $\bar \mu_0 \times m$.  As an abuse of notation, we will just write $\mu = \bar \mu_0 \times m$.
This can be used to describe the inverse construction.

Abramov's Theorem relating the entropy of a flow to the first return map of a section (see \cite{Abramov,DGS}) takes the following form in our situation (see also \cite{FriedD}). 
\begin{theorem} \label{T:Abramov}
For any integral class $u \in \A$ represented by a positive cocycle $z$ and any $\Flow$--invariant probability measure $\mu$ on $X$, let $\bar \mu_0$ and $\bar \mu$ be defined as above.
Then
\[ h_{\bar \mu}(\f_z') = \frac{h_\mu(\Flow)}{\bar \mu_0(\Theta_z')}.\]
\end{theorem}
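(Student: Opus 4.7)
The plan is to reduce Theorem~\ref{T:Abramov} to a direct application of Abramov's classical entropy formula for flows with measurable cross-sections (see \cite{Abramov,DGS}), after verifying that our setting fits the standard hypotheses and after pinning down the precise relationship between the constant $\bar\mu_0(\Theta_z')$ and the mean first-return time.

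First I would check that $(\Flow, X', \Theta_z', \f_z', \mu)$ is an admissible cross-section setup. The space $X'$ is a compact metric space (it is the natural extension of the compact space $X_\f$ under the semiflow), $\Flow$ is a continuous $\R$-action, and $\Theta_z' = (\fib_z')^{-1}(0)$ is a closed Borel subset. The key transversality input is supplied by Lemma~\ref{L:constructing f_z}(3): the restriction of $\fib_z$ to every semiflow line is a local diffeomorphism, so after lifting to $X'$ the same holds for $\fib_z' = \fib_z\circ \Pi$ along every $\Flow$-orbit. Consequently the first-return map $\f_z'\colon\Theta_z'\to\Theta_z'$ and the return-time function $r\colon\Theta_z'\to\R_+$ are well defined and Borel measurable, and $r$ is bounded between two positive constants (since $\fib_z'$ maps orbits to $\mathbb{S}^1$ at strictly positive, bounded speed). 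In particular $r\in L^\infty(\bar\mu)$, and Abramov's integrability hypothesis is automatic.

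Next I would compute $\bar\mu_0(\Theta_z')$ in terms of $r$. Using the flow-equivariant local homeomorphism $\Theta_z'\times\R\to X'$ given by $(\underline x,s)\mapsto \Flow_s(\underline x)$, the set
\[ D = \{\Flow_s(\underline x) : \underline x\in\Theta_z',\ 0\leq s< r(\underline x)\} \]
is a fundamental domain for $\Flow$. Under the identification $\mu = \bar\mu_0\times m$ from the paragraph preceding the theorem, Fubini yields
\[ 1 \;=\; \mu(D) \;=\; \int_{\Theta_z'} r(\underline x)\,d\bar\mu_0(\underline x). \]
Setting $c=\bar\mu_0(\Theta_z')$ and $\bar\mu = c^{-1}\bar\mu_0$, we conclude $\int_{\Theta_z'} r\,d\bar\mu = c^{-1} = 1/\bar\mu_0(\Theta_z')$.

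Finally I would invoke the classical Abramov formula, which in its standard form asserts
\[ h_{\bar\mu}(\f_z') \;=\; h_\mu(\Flow)\cdot \int_{\Theta_z'} r\,d\bar\mu \]
whenever the cross-section data is as above and the return time is $\bar\mu$-integrable. Substituting our computation of $\int r\,d\bar\mu$ gives the desired identity $h_{\bar\mu}(\f_z')=h_\mu(\Flow)/\bar\mu_0(\Theta_z')$. The main technical obstacle is simply confirming that the inverse-limit space $X'$ and the Borel section $\Theta_z'$ satisfy the regularity needed for the classical Abramov theorem to apply; this is handled entirely by the boundedness and bounded-away-from-zero property of $r$ established from Lemma~\ref{L:constructing f_z}, so no new ergodic-theoretic machinery is required beyond citing \cite{Abramov} or \cite{DGS}.
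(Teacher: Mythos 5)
Your proposal is correct and takes the same route as the paper, which states this result as a direct translation of Abramov's classical formula (citing \cite{Abramov,DGS}) without supplying further argument. Your verification that the return time is bounded away from $0$ and $\infty$, and the Fubini computation $\int_{\Theta_z'} r\,d\bar\mu_0 = \mu(X') = 1$, hence $\int_{\Theta_z'} r\,d\bar\mu = 1/\bar\mu_0(\Theta_z')$, is exactly the bookkeeping needed to pass from the classical statement $h_{\bar\mu}(\f_z') = h_\mu(\Flow)\int r\,d\bar\mu$ to the form asserted in the theorem.
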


As an immediate corollary of these two results, we have the following useful expression for the reciprocal of the topological entropy.
\begin{corollary} \label{C:variation Abramov}
For any integral class $u \in \A$ represented by a positive cocycle $z$, we have
\[ \frac{1}{h(\f_z')} = \inf_{\mu} \frac{\bar \mu_0(\Theta_z')}{h_\mu(\Flow)}\]
where the infimum is over all $\Flow$--invariant Borel probability measures $\mu$ on $X'$, and $\bar \mu_0$ is constructed from $\mu$ as above.
\end{corollary}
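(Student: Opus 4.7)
The plan is to obtain Corollary \ref{C:variation Abramov} by directly chaining Lemma \ref{L:entropy preserved}, the variational principle (Theorem \ref{T:variational principle}), the bijection between $\Flow$--invariant and $\f_z'$--invariant probability measures, and Abramov's Theorem (Theorem \ref{T:Abramov}), and then inverting. The argument is essentially formal once the right objects are in place, so the work is in making sure that all hypotheses are legitimately satisfied.

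First, I would combine the variational principle with Abramov's formula. Starting from
\[ h(\f_z') \;=\; \sup_\nu h_\nu(\f_z'), \]
where $\nu$ ranges over $\f_z'$--invariant Borel probability measures on $\Theta_z'$, I would invoke the bijection $\mu \longleftrightarrow \bar\mu$ between $\Flow$--invariant Borel probability measures $\mu$ on $X'$ and $\f_z'$--invariant Borel probability measures $\bar\mu$ on $\Theta_z'$ described in the paragraph containing \eqref{E:hat mu from mu}, to rewrite the supremum as ranging over $\mu$:
\[ h(\f_z') \;=\; \sup_\mu h_{\bar\mu}(\f_z'). \]
Theorem \ref{T:Abramov} then gives $h_{\bar\mu}(\f_z')=h_\mu(\Flow)/\bar\mu_0(\Theta_z')$, and substituting yields
\[ h(\f_z') \;=\; \sup_\mu \frac{h_\mu(\Flow)}{\bar\mu_0(\Theta_z')}. \]

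Second, I would pass to reciprocals. Since $\fee$ is represented by the expanding irreducible train track map $\f$, Theorem \ref{T:new iwip main} shows that $\f_z$ is also an expanding irreducible train track map, so Proposition \ref{prop:h(f)} gives $h(\f_z)=\log\lambda(\f_z)>0$; combined with Lemma \ref{L:entropy preserved} this yields $h(\f_z')=h(\f_z)>0$. With the supremum strictly positive, the elementary identity $1/\sup_\mu F(\mu) = \inf_\mu 1/F(\mu)$ (with the convention $1/0=+\infty$, which only enhances the infimum) converts the formula into
\[ \frac{1}{h(\f_z')} \;=\; \inf_\mu \frac{\bar\mu_0(\Theta_z')}{h_\mu(\Flow)}, \]
which is the claimed identity.

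The main thing to verify carefully is the legitimacy of the $\mu \leftrightarrow \bar\mu$ passage in the variational principle: specifically, that every $\f_z'$--invariant Borel probability measure on $\Theta_z'$ really does arise as $\bar\mu$ for some $\Flow$--invariant Borel probability measure $\mu$ on $X'$, and that $0<\bar\mu_0(\Theta_z')<\infty$ so Abramov's formula has content. This uses the fact that $\Theta_z'$ is a global cross section for $\Flow$ with a uniformly bounded first return time (a consequence of Theorem \ref{T:folded basics} together with compactness of $X$ and $X'$): the suspension description $\mu = \bar\mu_0 \times m$ quoted in the text realizes the inverse map $\bar\mu \mapsto \mu$, and boundedness of the return time guarantees that $\mu$ is finite with $\bar\mu_0(\Theta_z')$ finite and positive. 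I do not expect a serious obstacle here since these facts are all either invoked in the surrounding text or follow from the compact, uniformly-transverse cross section structure of $\Theta_z'$ in $X'$; the argument is otherwise a one-line algebraic manipulation of the two theorems stated just above.
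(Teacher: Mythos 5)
Your proof is correct and follows the same route as the paper, which presents the corollary as an immediate consequence of the variational principle (Theorem \ref{T:variational principle}) and Abramov's Theorem (Theorem \ref{T:Abramov}) via the measure bijection $\mu \leftrightarrow \bar\mu$ and inversion of the resulting supremum. Your additional care about positivity of $h(\f_z')$ and surjectivity of $\bar\mu \mapsto \mu$ fills in details the paper leaves implicit (and addresses later, in the proof of Theorem \ref{T:new entropy main}), but does not constitute a different argument.
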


\subsection{The measure $\bar \mu_0$ and linearity}

Given a $\Flow$--invariant probability measure $\mu$ on $X'$, we now have a function defined on the integral classes $u \in \A$ which we denote
\[ \mu_*(u) = \bar \mu_0(\Theta_{z_u}')\]
where $z_u$ is a positive cocycle representing $u$.   If $z_u'$ is another positive cocycle representing $u$ with fiber $\Theta_{z_u'}'$, then we can flow $\Theta_{z_u}'$ homeomorphically onto $\Theta_{z_u'}'$, and hence $\bar \mu_0(\Theta_{z_u'}') = \bar \mu_0(\Theta_{z_u}')$ is independent of the choice of $z_u$ (and fibration $\fib_{z_u}$).

Our goal is now to prove the following.
\begin{lemma} \label{L:mu* linear}
If $\mu$ is a $\Flow$--invariant probability measure on $X'$, then $\mu_*$ is the restriction of a linear function (of the same name):
\[ \mu_*\colon H^1(X; \mathbb R) \to \mathbb R.\]
\end{lemma}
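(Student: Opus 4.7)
The plan is to represent $\mu_\ast$ on positive integral classes by an explicit integral formula that is manifestly linear in the cocycle, and then extend linearly. Concretely, for each positive integral cocycle $z$ representing $u\in\A$ I will construct an ``infinitesimal winding rate'' function $\alpha_z\colon X_\f\to\R_{>0}$ satisfying
\[
\bar\mu_0(\Theta_z')\;=\;\int_{X'}\alpha_z\circ\Pi\,d\mu,
\]
and then show the right-hand side is additive in $z$ and descends to a well-defined linear functional on $H^1(X_\f;\R)$.

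To define $\alpha_z$ I would use the structure of the associated fibration from Lemma~\ref{L:constructing f_z}. Part~(3) of that lemma guarantees that $\fib_z$ is a local diffeomorphism on every semiflow line, so setting
\[
\alpha_z(x)\;:=\;\left.\tfrac{d}{ds}\right|_{s=0^+}\!\widetilde{\fib_z}(\flow_s x),
\]
using any local lift $\widetilde{\fib_z}$ of $\fib_z$ to $\R$, produces a positive, bounded function on $X_\f$. Applying Fubini's theorem to the standard decomposition of the $\Flow$-invariant probability measure $\mu$ along the section $\Theta_z'$ with first-return time $\tau_z\colon\Theta_z'\to\R_{>0}$ for $\Flow$ then yields
\[
\int_{X'}\alpha_z\circ\Pi\,d\mu\;=\;\int_{\Theta_z'}\!\int_0^{\tau_z(\underline x)}\!\!\alpha_z\big(\Pi(\Flow_s\underline x)\big)\,ds\,d\bar\mu_0(\underline x)\;=\;\bar\mu_0(\Theta_z'),
\]
since by the fundamental theorem of calculus the inner integral equals $\widetilde{\fib_z}(\Pi\Flow_{\tau_z(\underline x)}\underline x) - \widetilde{\fib_z}(\Pi\underline x) = 1$---one full excursion of $\Flow$ from $\Theta_z'$ back to itself winds exactly once around $\mathbb{S}^1$.

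Additivity is then obtained by taking the fibration for $z_1 + z_2$ to be the pointwise sum $\fib_{z_1} + \fib_{z_2}\colon X_\f \to \mathbb{S}^1$, which still satisfies the conclusions of Lemma~\ref{L:constructing f_z} since positivity is preserved under addition; compatible lifts add, giving $\alpha_{z_1+z_2} = \alpha_{z_1} + \alpha_{z_2}$ pointwise. This requires first verifying that $\int\alpha_z\circ\Pi\,d\mu$ is independent of the choice of fibration representing $z$: two choices $\fib_z,\fib_z'$ differ by a continuous map $X_\f \to \mathbb{S}^1$ whose lift $g\colon X_\f \to \R$ yields $\alpha_z - \alpha_z' = V(g)$ along flowlines, where $V$ denotes the semiflow vector field, and $\int V(g)\circ\Pi\,d\mu = 0$ by $\Flow$-invariance of $\mu$. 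The same argument, applied with $g$ arising from cohomologous cocycles, shows $\int\alpha_z\circ\Pi\,d\mu$ depends only on the class $[z]\in H^1(X_\f;\R)$. Since $\A$ is an open cone, integral classes in $\A$ rationally span $H^1(X_\f;\R)$, so this linear function on positive integral classes extends uniquely to the required $\R$-linear map $\mu_\ast\colon H^1(X_\f;\R)\to\R$.

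The principal technical obstacle is the non-smooth cellular nature of $X_\f$, which complicates both the differentiability of $\alpha_z$ and the flow-invariance calculation $\int V(g)\circ\Pi\,d\mu = 0$. These difficulties are addressed via the explicit factorization $\fib_z = \pi\circ h_\M \circ \fib_\M$ on the local model cover from Lemma~\ref{L:constructing f_z}(2), together with the smooth trapezoid charts from Proposition~\ref{P:trapezoid simple flow}: these reduce every pointwise identity to one on the full-measure open subset of $X_\f$ where $\fib_z$ and $\flow$ are genuinely smooth, whence the invariance identity $\int V(g) \circ \Pi\, d\mu = \lim_{s\to 0^+} \tfrac{1}{s}\int [g\circ\Pi\circ\Flow_s - g\circ\Pi]\,d\mu = 0$ follows directly from $\Flow$-invariance.
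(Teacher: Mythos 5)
Your argument is correct, and it amounts to an ``infinitesimal'' unwinding of the paper's measure-theoretic proof. Where the paper reparametrizes the flow as $\Flow^z$ and compares measures directly---defining $\mu_z = \bar\mu_0 \times m_z$, observing $\mu_z(X') = \bar\mu_0(\Theta_z')$ via the bijection $\Theta_z'\times[0,1)\to X'$, and then deducing additivity from $\mu_{z+w}=\mu_z+\mu_w$---you instead extract the Radon--Nikodym density $\alpha_z\circ\Pi = d\mu_z/d\mu$ (the ``winding rate'') and reach the same total-mass identity $\bar\mu_0(\Theta_z')=\int\alpha_z\circ\Pi\,d\mu$ via Fubini and the fundamental theorem of calculus; additivity then follows from $\alpha_{z_1+z_2}=\alpha_{z_1}+\alpha_{z_2}$. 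The shared enabling observation, present in both proofs, is that the associated fibrations can be chosen so that $\fib_{z_1+z_2}=\fib_{z_1}+\fib_{z_2}$. Your route is more explicit (the formula $\mu_*(u)=\int\alpha_{z_u}\circ\Pi\,d\mu$ could be useful elsewhere) but requires additional care that the paper avoids entirely: you must justify that $\alpha_z$ is defined $\mu$-a.e.\ despite the cellular structure of $X_\f$, that $\widetilde{\fib_z}$ is absolutely continuous along flowlines so the inner FTC integral is legitimate, and---most notably---that the limit and integral may be interchanged in your final computation $\int V(g)\circ\Pi\,d\mu = \lim_{s\to 0^+}\tfrac{1}{s}\int[g\circ\Pi\circ\Flow_s-g\circ\Pi]\,d\mu$; this needs a uniform Lipschitz bound on $g$ along flowlines (which does hold, by compactness and the local-model factorization, but should be stated). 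Your well-definedness argument via $\int V(g)\,d\mu=0$ is also genuinely different from the paper's, which simply flows one section homeomorphically onto the other; the cocycle-style computation you give is arguably more robust, since it does not rely on constructing such a flow-conjugacy between sections that may be far apart.
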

\begin{proof}
Given an integral class $u \in \A$ represented by a positive cocycle $z= z_u$, we can reparameterize the flow as $\Flow^z$, so that for the projection of the flow lines to $\mathbb{S}^1 = \R/\Z$ by $\fib_z'$ are local isometries.  The bijections
\[  \begin{array}{ccc} \{ \Flow\mbox{--invariant measures on } X'  \} & \leftrightarrow & \{ \f_z'\mbox{--invariant measures on } \Theta_z' \} \\
 & \leftrightarrow  & \{ \Flow^z\mbox{--invariant measures on } X'  \}   \end{array} \]
will allow us to turn the $\Flow$--invariant measure $\mu$ on $X'$ into a $\Flow^z$--invariant measure $\mu^z$ on $X'$, given by
\[ \mu_z = \bar \mu_0 \times m_z \]
where $m_z$ is the Lebesgue measure on $\R$, and the subscript $z$ is only there to remind us that it is induced by the reparameterized flow $\Flow^z$.  

The measure $\mu_z$ is {\em not} a probability measure.  Indeed, the parameterization is chosen so that the map
\[ \Theta_z' \times [0,1) \to X'\]
given by
\[ (\underline{x},s) \mapsto \Flow^z_s(\underline{x}) \]
is 1-1 and onto.  It follows that
\[ \mu_z(X') = \bar \mu_0(\Theta_z').\]
What is important about this formula is that the left hand side does not involve the section $\Theta_z'$.  Moreover, we could have used a different section $\Theta_w'$, and as long as we use the flow $\Flow^z$, we would produce the same measure $\mu_z$.

Now suppose that $u,v \in \A$ are two integral classes represented by the positive cocycles $z$ and $w$, respectively.  Then $u+v$ is represented by $z+w$, which is also positive.  A useful observation is that because  $\fib_z$ and $\fib_w$ are maps to the {\em additive group} $\mathbb S^1 = \R/\Z$, we can add them to produce a map $\fib_z + \fib_w$.  Each of $\fib_z$ and $\fib_w$ satisfy Lemma~\ref{L:constructing f_z}, and without loss of generality, the finite cover $\mathbb M$ by local models in the statement is the same for both $\fib_z$ and $\fib_w$.  Indeed, we may always pass to a refinement of the cover without affecting the validity of the lemma, and we will freely do so without further mention.  In addition, by adding a constant in $\mathbb S^1$ to each of $\fib_z$ and $\fib_w$, we may assume that the point $x_0 \in \mathbb S^1$ is the same in both, and is the identity $0 \in \mathbb S^1$.

Setting $\fib_{z+w} = \fib_z + \fib_w$, it is straightforward to see that this map satisfies properties (1), (2), and (3) of Lemma~\ref{L:constructing f_z} for the cocycle $z+w$ representing the class $u+v$.  It may not satisfy property (4), however.  To address this issue, we proceed as follows.  Note that $\Xi = \fib_{z+w}^{-1}(0) \cap X_\f^{(1)}$ is the finite set of points $\xi \in X_\f^{(1)}$ with the property that $\fib_z(\xi) = -\fib_w(\xi)$.  Then $\Xi = \Xi_0 \cup \Xi_1$ where $\fib_z(\xi) = 0 = \fib_w(\xi)$ for $\xi \in \Xi_0$ and $\fib_z(\xi) \neq 0 \neq \fib_w(\xi)$ for $\xi \in \Xi_1$.  Consequently, $\Xi_0 \subset \Theta_z \cap \Theta_w$ and $\Xi_1 \cap \Theta_z= \emptyset$. After modifying $\fib_z$ by an arbitrarily small homotopy if necessary (which is the identity on $\Theta_z$), we may assume that $\Xi_1 \subset X_\f^{(1)} \setminus X_\f^{(0)}$ and that this set flows into the union of vertical $1$--cells.  Since $\Xi_0$ already has this property (being contained in $\Theta_z \cap \Theta_w$), it follows that $\fib_{z+w}$ satisfies all properties of Lemma~\ref{L:constructing f_z}.

The map $\fib_{z+w} = \fib_z + \fib_w$ determines a corresponding map
\[ \fib_{z+w}' = \fib_z' + \fib_w' \colon X' \to \mathbb S^1.\]
The measure $\mu_z$ and $\mu_w$ are defined by taking the product $\bar \mu_0$ with the Lebesgue measures $m_z$ and $m_w$, induced by the reparameterized flows $\Flow^z$ and $\Flow^w$, respectively.  Since these flows are parameterized so that $\fib_z'$ and $\fib_w'$ restricted to flow lines are local isometries, it follows that $\mu_{z+w}$ is defined by taking the product $\bar \mu_0$ with $m_z + m_w$.  Consequently, we have
\[ \mu_{z+w} = \mu_z + \mu_w.\]
Hence
\[ \mu_*(u+v) = \mu_{z+w}(X') = \mu_z(X') + \mu_w(X') = \mu_*(u) + \mu_*(v)\]
and $\mu_*$ is linear on the integral points of $\A$. It follows that $\mu_*$ uniquely extends to a linear function on $H^1(X;\R)$.
\end{proof}

\subsection{Completing the proof}

Combining the previous facts, we give the
\begin{proof}[Proof of Theorem \ref{T:new entropy main}]

If $u\in \A$ is a primitive integral element and $z_u$ is a positive 1-cocycle,
then, by Corollary \ref{C:all h.e.} the map $f_{z_u}\colon \Theta_{z_u}\to\Theta_{z_u}$ is a homotopy equivalence taking vertices to vertices. By
Corollary \ref{C:perturbed semidirect product} we also know that $\Theta_{z_u}$ is connected
and furthermore, it has no valence-1 vertices (see the proof of Theorem \ref{T:folded basics}).  By Lemma~\ref{L:1returntt} the map
$f_{z_u}\colon \Theta_{z_u}\to\Theta_{z_u}$ is a regular topological graph-map.
Corollary~\ref{C:perturbed semidirect product} now implies that
$f_{z_u}$ is a regular topological representative of the monodromy
$\fee_u\in \Out(\ker(u))$. By Proposition~\ref{prop:h(f)} we have
$h(\f_{z_u})=\log\lambda(\f_{z_u})$ where $\lambda(\f_{z_u})$ is the
spectral radius of the transition matrix $A(\f_{z_u})$. We have also
seen in the proof of Theorem~\ref{T:new iwip main} that for an appropriate choice of a linear
structure on $\Theta_{z_u}$ the map
$f_{z_u}\colon \Theta_{z_u}\to\Theta_{z_u}$ is an expanding irreducible train-track
map representing $\fee_u$. Therefore
$\lambda(\f_{z_u})=\lambda(\fee_u)$ is the growth rate of $\fee_u$ and
thus $h(\f_{z_u})=\log\lambda(\f_{z_u})=\log
\lambda(\fee_u)=h(\fee_u)$, the algebraic entropy of $\fee_u$. Since
$\fee_u$ is fully irreducible, we have $h(\f_{z_u})=h(\fee_u)>0$.

Define a function $W\colon \mathcal A \to \mathbb R$ by
\[ W(u):= \inf_{\mu} \frac{\mu_*(z_u)}{h_\mu(\Flow)}\]
where $u\in \A$, where $z_u$ is a positive cocycle representing $u$, and the infimum is over all Borel $\Flow$--invariant probability measures $\mu$ on $X'$. 
By Lemma \ref{L:entropy preserved} we have $W(u) = 1/h(\f_{z_u}') =
1/h(\f_{z_u})$ for any primitive integral class $u \in \mathcal A$.
The linearity of $\mu_*$ implies that the function $W$ is homogeneous
of degree $1$. By Corollary \ref{C:variation Abramov}, we see that $W$ is the infimum of a collection of linear functions, and is therefore concave and hence continuous.
For any primitive integral $u\in A$ we have $0<h(\f_{z_u})<\infty$ and
hence $0< W(u)=1/h(\f_{z_u})<\infty$. Since the primitive integral
classes are projectively dense in $\A$, concavity and homogeneity of
$W$ imply that $0<W(u)<\infty$ for all $u\in \A$. Therefore setting
$\mathfrak H(u):=1/W(u)$ for all $u\in \A$ gives us a continuous
homogeneous of degree $-1$
function $\mathfrak H\colon \A\to (0,\infty)$. By
construction, for every primitive integral $u\in \A$ we have
$\mathfrak H(u)=\log\lambda(\f_{z_u})=\log
\lambda(\fee_u)=h(\fee_u)$, as required.
\end{proof}

As mentioned in the introduction, we have the following analogue from the $3$--manifold setting of McMullen's~\cite[Section 10]{Mc} which complements the results of Algom-Kfir and Rafi \cite{AR}.
\begin{corollary} \label{C:McMullen analogue}
Let $\fee \in \Out(F_N)$ (where $N\ge 2)$ be an element represented by an expanding irreducible train track map $f \colon \Gamma \to \Gamma$, and let $X_f$, $\A_{X_f}$, and $\mathfrak H$ be as in Theorem \ref{T:new entropy main}. The function
\[ D \colon \A \to \R_+\]
given by $D(u) = \mathfrak H(u) u(\epsilon)$ is continuous and constant on rays.  In particular, for any compact subset $K \subset \A$, $D$ is bounded on the cone over $K$, 
\[ C_K = \{ c \tau \mid c \in \R_+, \, \tau \in K \}.\]

Consequently, if $dim(H^1(G;\R)) \geq 2$, then $G \cong F_{N_k} \rtimes_{\fee_k} \Z$ for some sequence of automorphisms $\fee_k \in \Out(F_{N_k})$, with $N_k\to \infty$, represented by expanding irreducible train track maps, and there are constants $0 < c_1 < c_2 < \infty$ so that
\[ c_1 \leq N_k \log \lambda(\fee_k) \leq c_2.\]
If $\fee$ is fully irreducible and hyperbolic so are all $\fee_k$.
\end{corollary}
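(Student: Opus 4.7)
The plan is to first verify the structural properties of $D$ directly from Theorem~\ref{T:new entropy main} and Theorem~\ref{T:folded basics}, and then use the dimension hypothesis to extract the required sequence of primitive integral classes.

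For the first assertion, I would note that $u\mapsto u(\epsilon)$ is a linear functional on $H^1(X_\f;\R)$, hence continuous and homogeneous of degree $1$, while $\mathfrak H\colon \A_{X_\f}\to(0,\infty)$ is continuous and homogeneous of degree $-1$ by Theorem~\ref{T:new entropy main}. Their product $D$ is therefore continuous and homogeneous of degree $0$, i.e.\ constant along every ray through the origin. Given any compact $K\subset\A_{X_\f}$, continuity bounds $D$ on $K$, and constancy on rays then bounds it throughout $C_K$.

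Next I would analyze the sign of $u(\epsilon)$. For any primitive integral $u\in\A_{X_\f}$, Theorem~\ref{T:folded basics}(2) gives $u(\epsilon)=1-\rk(\ker(u))$. Since $G\supset F_N$ with $N\geq 2$ cannot be virtually abelian, no splitting $G=\ker(u)\rtimes_{\fee_u}\Z$ can have free kernel of rank $\leq 1$, so $u(\epsilon)\leq -1$ at every primitive integral point of $\A_{X_\f}$. Because such points are dense in $\A_{X_\f}$ and $u(\epsilon)$ is linear, this forces $u(\epsilon)\leq 0$ throughout $\A_{X_\f}$; in particular $u_0(\epsilon)=1-N<0$. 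I would then choose a compact neighborhood $K$ of $u_0$ inside the open set $\A_{X_\f}\cap\{u(\epsilon)<0\}$. On $K$ the continuous positive function $|D|=-\mathfrak H\cdot u(\epsilon)$ attains bounds $0<c_1'\leq |D|\leq c_2'<\infty$, which persist on $C_K$ by homogeneity.

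It remains to produce infinitely many primitive integral classes $u_k\in C_K$ with $N_k:=\rk(\ker(u_k))\to\infty$. Here the hypothesis $\dim H^1(G;\R)\geq 2$ enters decisively: the open cone $C_K$ has dimension at least two, so rational directions are dense in it, and each rational direction carries a unique primitive integral lattice point on its ray; since lattice points cluster only finitely in bounded regions, infinitely many of these can be selected with $\|u_k\|\to\infty$. Compactness of $K$ combined with the strict negativity of $u(\epsilon)$ on $K$ yields a constant $c>0$ with $-u(\epsilon)\geq c\|u\|$ throughout $C_K$, so $N_k=1-u_k(\epsilon)\to\infty$. Theorem~\ref{T:folded basics} together with Theorem~\ref{T:new iwip main}(2) then yield the splittings $G\cong F_{N_k}\rtimes_{\fee_k}\Z$ in which $\fee_k:=\fee_{u_k}$ is represented by an expanding irreducible train track map, while Theorem~\ref{T:new entropy main} gives $\log\lambda(\fee_k)=\mathfrak H(u_k)$; combining these,
\[
(N_k-1)\log\lambda(\fee_k)\;=\;-u_k(\epsilon)\,\mathfrak H(u_k)\;=\;|D(u_k)|\;\in\;[c_1',c_2'].
\]
Since $N_k/(N_k-1)\to 1$, passing to a tail and adjusting constants gives the advertised bounds $c_1\leq N_k\log\lambda(\fee_k)\leq c_2$. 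The hyperbolic and fully irreducible case is then immediate from Theorem~\ref{T:new iwip main}(3). The step I expect to require the most care is the sign analysis securing the strict negativity of $u(\epsilon)$ on a compact $K$ (and hence a positive lower bound for $|D|$), together with the lattice-counting argument producing primitive integral points in $C_K$ with unbounded norm; the remaining assertions amount to a mechanical assembly of the cited theorems.
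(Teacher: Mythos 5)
Your proof is correct and follows essentially the same route as the paper's; the main difference is that you make explicit several points the paper leaves implicit. You carefully justify why $D$ is bounded \emph{away from zero} (not merely bounded) on $C_K$ by running the sign analysis $u(\epsilon)=1-\rk(\ker u)\le -1$ at primitive integral points and propagating strict negativity to all of $\A$ by linearity and openness, whereas the paper simply asserts boundedness of $D$ and proceeds; you spell out why the density/lattice argument forces $N_k\to\infty$ from the hypothesis $\dim H^1(G;\R)\ge 2$, which the paper compresses into ``take a sequence of primitive integral elements whose rays converge''; and you handle the $(N_k-1)$ versus $N_k$ discrepancy, which the paper silently absorbs into the constants. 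One small cosmetic remark: your own sign argument actually shows $u(\epsilon)<0$ on \emph{all} of $\A$ (since a nonconstant linear functional cannot vanish at an interior point of a region where it is nonpositive, and $u_0(\epsilon)=1-N<0$), so the intersection $\A\cap\{u(\epsilon)<0\}$ in which you place $K$ is really just $\A$; nothing breaks, it's just slightly roundabout. Everything else is a correct and somewhat more carefully documented version of the paper's argument.
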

\begin{proof}
Continuity follows from linearity of $u \mapsto u(\epsilon)$ and continuity of $\mathfrak H$.  The fact that $D$ is constant on rays comes from the canceling homogeneity degrees: $+1$ for $\epsilon$ and $-1$ for $\mathfrak H$.  The second statement follows from the fact that $D(K) = D(C_K)$, and $K$ is compact.  The existence of $\fee_k$ and constants $0 < c_1 < c_2 < \infty$ follows from the first statement and Theorems \ref{T:folded basics} and \ref{T:new iwip main} by taking any sequence $\{u_k\} \subset \A$ of primitive integral elements for which the corresponding rays $\R_+ u_k$ converge to a point in $\A$.  The last statement is a consequence of Theorem C, part (3).
\end{proof}

\begin{remark}\label{R:alternative}
It is possible to avoid the use of Theorem~\ref{T:topological graph to graph} in the proof of Theorem~\ref{T:new entropy main} and in Theorem~\ref{T:new iwip main} part (2). 

Namely, suppose $f\colon \Gamma\to\Gamma$ is a regular topological graph-map of a finite connected topological graph $\Gamma$ with an irreducible transition matrix $A(f)$.
It is straightforward to find a metric structure $\mathcal L$ on $\Gamma$ and a regular linear graph map $f'\colon  (\Gamma,\mathcal L)\to (\Gamma, \mathcal L)$ such that $f'$ is isotopic to $f$ and has ``the same combinatorics'' as $f$ and such that $f'$ is a local $\lambda(f)$--homothety on edges with respect to $d_\mathcal L$.
By ``the same combinatorics'' we mean that for any $e\in E\Gamma$ and $n\ge 1$ we have $f^n(e)=(f')^n(e)$ as combinatorial edge-paths. Thus $A(f)=A(f')$, $\lambda(f)=\lambda(f')$,  and a turn in $\Gamma$ is taken by $f$ if and only if it is taken by $f'$. Hence $\wh_\Gamma(f,v)=\wh_\Gamma(f',v)$ for any $v\in V\Gamma$.  We have $h(f)=h(f')=\log\lambda(f)$ by Proposition~\ref{prop:h(f)}.
Also, if $f^n$ is locally injective on edges for all $n\ge 1$ then $(f')^n$ is locally injective on edges for all $n\ge 1$; that is, $f'$ is a train-track map. 

In the proof of Theorem~\ref{T:new iwip main}, we can first verify that $f_z^n\colon \Theta_z\to\Theta_z$ is locally injective on edges for all $n\ge 1$ and that $A(f_z)$ is irreducible with $\lambda(f_z)>1$, and that all Whitehead graphs of all the vertices of $\Theta_z$ with respect to $f_z$ are connected.  We can then pass to the modified isotopic linear graph-map $f_z'\colon \Theta_z\to\Theta_z$ constructed as above. Then it will follow that $f_z'$ is a (weakly) clean train-track map representing the same outer automorphism $\fee_u$ of $\pi_1(\Theta_z)=\ker(u)$ as $f_z$. Hence $\fee_u$ is fully irreducible by Theorem~\ref{T:clean}, yielding Theorem~\ref{T:new iwip main}. Moreover, since $h(f)=h(f')=\log\lambda(f)=h(\fee_u)$, the proof of Theorem~\ref{T:new entropy main} would go through without appealing to  Theorem~\ref{T:topological graph to graph} as well. We included the proof of Theorem~\ref{T:topological graph to graph} since it appears to be of independent interest and may be useful in the further analysis of the properties of the monodromies $\fee_u$.  Moreover, part (1) of Theorem~\ref{T:new iwip main}, which does require the use of Theorem~\ref{T:topological graph to graph}, provides a stronger analogy with the motivating results of Fried for $3$--manifolds.
\end{remark}




\appendix

\section{Entropy and Expansion} \label{App:Entropy_expansion}

\subsection{Entropy}

The following statement about computing topological entropy of
topological graph-maps appears to be a known ``folklore'' fact for the
setting of linear graph-maps with an irreducible transition matrix,
although we were unable to find an explicit reference for it in the
literature. In that context, this fact is a straightforward corollary
of Theorem~7.1 in the paper~\cite{Bowen70} of Bowen (using the
``canonical eigenmetric'' on the graph provided by Corolary~\ref{cor:eigenmetric} below) which allows one to compute the topological entropy by counting $n$--periodic points of $f$ as $n\to\infty$. The version we provide below is more general and applies to topological graph-maps that may be the identity map on some small non-degenerate subintervals of some edges (thus $f$ could have continuumly many fixed points, in which cases Bowen's result is not applicable) and also which do not necessarily have irreducible transition matrix (so that a ``uniformly expanding'' metric on the graph as required by Bowen's theorem is not necessarily available even in the linear-graph context).

\begin{proposition}\label{prop:h(f)}
Let $\Gamma$ be a finite connected topological graph with at least one edge and let $f\colon\Gamma\to\Gamma$ be regular topological graph-map.  Let $h(f)$ be the topological entropy of $f$. 

Then $h(f)=\log \lambda(f)$ (recall that $\lambda(f)$ is the spectral radius of $A(f)$).
\end{proposition}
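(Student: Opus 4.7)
The plan is to establish the two inequalities $h(f)\leq \log\lambda(f)$ and $h(f)\geq \log\lambda(f)$ separately, in both cases translating the metric dynamics of $f$ into combinatorial counts controlled by powers of the transition matrix $A(f)$.

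For the upper bound, I would equip $\Gamma$ with a length metric $d$ in which every edge has length $1$. For a point $x\in\Gamma$ and $n\geq 1$, call a tuple $(\bar e_0,\ldots,\bar e_{n-1})$ of closed edges with $f^i(x)\in\bar e_i$ an \emph{edge itinerary of length $n$} for $x$. If $x,y\in\Gamma$ share some length-$n$ edge itinerary, then $d(f^i(x),f^i(y))\leq 1$ for all $i<n$, so any $(n,\epsilon)$-separated subset of $\Gamma$ with $\epsilon<1$ has cardinality bounded by a constant (coming from the finite bound on how many closed edges meet a single point) times the number of length-$n$ itineraries realized in $\Gamma$. Because $f$ is regular, a realized itinerary $(e_{i_0},\ldots,e_{i_{n-1}})$ must satisfy that $e_{i_{j+1}}^{\pm 1}$ occurs in the reduced combinatorial edge-path $f(e_{i_j})$; hence the number of realized itineraries is bounded above by a constant multiple of $\sum_{i,j}(A(f)^{n-1})_{ij}$. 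Standard Perron--Frobenius theory for nonnegative matrices shows this quantity has exponential growth rate $\lambda(f)^n$, yielding $h(f)\leq\log\lambda(f)$.

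For the lower bound, one may assume $\lambda(f)>1$, otherwise the bound $h(f)\geq 0=\log\lambda(f)$ is automatic. Choose an irreducible principal submatrix $A_C$ of $A(f)$ of maximal spectral radius, so $\lambda(A_C)=\lambda(f)$, let $E_C\subseteq E\Gamma$ denote the corresponding set of edges, and put $\Gamma_C=\bigcup_{e'\in E_C}\overline{e'}$. After replacing $f$ by a suitable iterate (which multiplies both $h(f)$ and $\log\lambda(f)$ by the same positive integer) one may further assume $A_C$ is primitive. Fix $e\in E_C$. Regularity of $f$ implies that for each $n\geq 1$ the preimage $(f^n)^{-1}(V\Gamma)\cap e$ subdivides $e$ into finitely many pairwise disjoint open subarcs, each mapping homeomorphically under $f^n$ onto some open edge of $\Gamma$; the number of subarcs mapping onto $e$ is exactly $(A(f)^n)_{e,e}$. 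A block-triangular form analysis of $A(f)$ shows that $(A(f)^n)_{e,e}=(A_C^n)_{e,e}$ (since any path in the transition digraph from $e$ back to $e$ cannot leave the strongly connected component $C$), and this grows at rate $\lambda(f)^n$ by primitivity of $A_C$. Choosing the midpoint of each such subarc produces a set $X_n\subset e$ of cardinality $(A_C^n)_{e,e}$ whose points follow distinct edge-itineraries through $\Gamma$. Equipping $\Gamma_C$ with its canonical eigenmetric (edge lengths equal to the entries of the Perron right eigenvector of $A_C$) and extending arbitrarily to $\Gamma\setminus\Gamma_C$, one verifies that distinct points of $X_n$ are separated at some time $j<n$ by a positive distance bounded below independent of $n$, so $X_n$ is $(n,\epsilon)$-separated for a uniform $\epsilon>0$. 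This yields $h(f)\geq \log\lambda(f)$.

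The main obstacle is the lower bound in the case where the dominant block $A_C$ does not correspond to a forward-invariant subgraph: edges in $E_C$ may under $f$ traverse edges outside $E_C$, so one cannot simply restrict $f$ to $\Gamma_C$ and apply Bowen's Theorem~7.1 to the restriction as a black box. The strategy above avoids this by counting only return subintervals to a single edge $e\in E_C$, whose proliferation is controlled exactly by the diagonal entry $(A(f)^n)_{e,e}=(A_C^n)_{e,e}$ of the full matrix, no matter what happens in off-diagonal blocks. A secondary subtlety is that a general topological graph-map may be the identity on nondegenerate subarcs---producing continuumly many fixed points and precluding a direct periodic-point count---but this does not obstruct the construction since we only count $(n,\epsilon)$-separated sets.
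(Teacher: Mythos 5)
Your upper bound argument is not correct as written, and the error is substantive rather than cosmetic. You claim that any $(n,\epsilon)$-separated set with $\epsilon<1$ has cardinality at most a constant times the number of realized closed-edge itineraries. But sharing a closed-edge itinerary only guarantees $d(f^i(x),f^i(y))\leq 1$ for $i<n$, which rules out $(n,\epsilon)$-separation only when $\epsilon\geq 1$; for $\epsilon<1$ (which is the regime that matters, since $h(f)=\lim_{\epsilon\to 0}\limsup_n \tfrac1n\log s(n,\epsilon)$) two points with the same itinerary can still be separated at many times. The circle doubling map is a clean counterexample: $\Gamma$ is a single loop $e$ with $f(e)=ee$, so there is exactly one length-$n$ closed-edge itinerary for every $n$, yet $s(n,\epsilon)\asymp 2^n$ for small fixed $\epsilon$ and $h(f)=\log 2$. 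The itinerary partition $\{\bar e\colon e\in E\Gamma\}$ is simply not a generator, so counting it does not bound entropy from above.

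The lower bound has a gap of a different flavor. First, the assertion that the midpoints in $X_n$ ``follow distinct edge-itineraries'' is false: in the same doubling-map example, all $2^n$ level-$n$ midpoints of $e$ have the identical itinerary $(e,\ldots,e)$. More seriously, the phrase ``one verifies that distinct points of $X_n$ are separated at some time $j<n$ by a positive distance bounded below independent of $n$'' hides exactly the hard part, and the tool you invoke does not deliver it. Assigning edge lengths by the Perron eigenvector of $A_C$ does not make $f$ a local $\lambda$-homothety; producing a metric in which $f$ expands uniformly requires the full eigenmetric construction (Theorem~\ref{T:topological graph to graph}), which needs $f$ to be expanding on all scales. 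As the paper notes just before the proposition, a regular topological graph-map may be the identity on a nondegenerate subarc of an edge (possibly inside $\Gamma_C$), in which case no uniformly expanding metric exists on that edge, and points in that subarc can shadow each other for arbitrarily long. This is precisely why the paper does not use a Bowen-style separated-set count and instead invokes Lemma~3.2 of~\cite{AMM00}, which computes $h(f)$ as $\lim_n\tfrac1n\log\#\mathcal A_f^n$ for the monotone partition $\mathcal A_f^n$ by $f^n$-preimages of vertices and open $1$--cells, a count that reduces directly to the entries of $A(f)^n$ without needing any expansion or periodic-point estimate. Your strategy is essentially Bowen's argument, which the paper explicitly flags as inapplicable in this generality; the remedy would be either to restrict to the expanding case (and lose generality), or to replace the separated-set count with a cover-based or measure-theoretic argument (Misiurewicz--Szlenk style), as~\cite{AMM00} does.
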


\begin{proof}

This statement follows, for example, by a direct application of Lemma~3.2 of~\cite{AMM00}.  In the notations of \cite{AMM00}, for any $x\in \Gamma$ the set $f^{-1}(x)$ is finite and we have $K_f(x)=\{x\}$.  With $A=V\Gamma$, the triple $(\Gamma,A,f)$  satisfies the condition from \cite{AMM00} of being a ``monotone representative of an action''. In the notations of \cite{AMM00}, the partition $\mathcal A_f$ of $\Gamma$ consists of all the vertices and open $1$-cells of $\Gamma$ and the partition $\mathcal A_f^n$ of $\Gamma$ consists of all the $f^n$-preimages of vertices and of all the connected components of the $f^n$-preimages of open 1-cells. Thus, up to an additive constant independent of $n$, the cardinality $\#(\mathcal A_f^n)$ of the partition $\mathcal A_f^n$ is equal to $\sum_{e\in E_+\Gamma} |f^n(e)|$ (where $E_+\Gamma$ comes from any choice  an orientation on $\Gamma$). 
In other words, $\#(\mathcal A_f^n)$ is equal, up to a finite additive
error independent of $n$, to the sum of all the entries of the matrix $(A(f))^n$. 

By  Lemma~3.2 of~\cite{AMM00} we have
\[
h(f)=\lim_{n\to\infty} \frac{1}{n} \log  \#\mathcal (A_f^n). \tag{$\ast$}
\]

The assumptions on $f$ imply that $\lambda(f)\ge 1$.  If $\lambda(f)=1$ then the sum of all the entries of $(A(f))^n$ grows polynomially in $n$ and equation $(\ast)$ yields $h(f)=0$. Thus we have $h(f)=0=\log 1 =\log \lambda(f)$, as required.

If $\lambda>1$, then there is a constant $c\ge 1$ independent of $n$ such that for every $n\ge 1$ we have $\frac{1}{c} \lambda(f)^n \le     \#\mathcal (A_f^n)\le c\lambda(f)^n$. Then by  $(\ast)$ we again have $h(f)=\log\lambda(f)$, as required.
\end{proof}

\subsection{Expansion and promotion of topological graph-maps}\label{sect:expansion}

Given a topological graph-map $\f\colon\Gamma \to \Gamma$, we say that $\f$ is {\em expanding on all scales} if for every $x \in \Gamma$, every neighborhood $U$ of $x$, and every edge $e$ of $\Gamma$, there exists a positive integer $m$ and an open interval $W \subset U$ so that $\f^m$ maps $W$ homeomorphically onto $e$.  Observe that as an immediate consequence, $\f(e)$ is always a nondegenerate combinatorial edge-path (it cannot be constant on any open set).

Let us first observe that for graph maps, expanding on all scales is a rather mild condition that can be deduced from the combinatorial data.

\begin{lemma}\label{L:linear maps expanding on all scales}
Let $f\colon \Gamma\to \Gamma$ be a graph map of a finite (linear) graph $\Gamma$. Suppose that $f$ is expanding and that $A(f)$ is irreducible. Then $f$ is expanding on all scales.
\end{lemma}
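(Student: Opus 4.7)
The plan is to reduce, via a short preliminary, to the case where $U$ is a small open subinterval $U' \subset e_0$ of a single edge, and then to attack the statement in two stages: a refinement stage showing that the $f^n$--subdivisions of $e_0$ become arbitrarily fine in the metric sense (so some piece lies inside $U'$), followed by a routing stage that uses the irreducibility of $A(f)$ to map such a piece onto the prescribed target edge $e$.

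The preliminary reduction is immediate: if $x$ lies in the interior of some edge one shrinks $U$ to a subinterval of that edge, and if $x$ is a vertex one passes to an open subinterval of any adjacent edge lying inside $U$.

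For the refinement stage, I fix any metric structure $\mathcal L$ on $\Gamma$ compatible with the given linear structure. Since $f$ is expanding and $\Gamma$ has finitely many edges, there exists $N \geq 1$ with $|f^N(\epsilon)| \geq 2$ for every edge $\epsilon$. For each such $\epsilon$, the PL map $f^N|_\epsilon$ subdivides $\epsilon$ into at least two affine subintervals whose relative $\mathcal L$--lengths sum to $1$; taking the maximum of all these relative lengths over all edges and all $f^N$--subdivision pieces produces a universal constant $\rho^* < 1$. The key observation is that the $f^{kN}$--subdivision of $e_0$ refines the $f^{(k-1)N}$--subdivision as follows: each piece $P'$ of the $f^{(k-1)N}$--subdivision maps affinely via $f^{(k-1)N}$ onto some edge $\epsilon$, and the pieces of the $f^{kN}$--subdivision lying inside $P'$ are precisely the pullbacks through this affine restriction of the $f^N$--subdivision pieces of $\epsilon$. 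Since affine maps preserve length ratios, an induction on $k$ yields
\[ \mathcal L(P) \leq (\rho^*)^k \, \mathcal L(e_0)\]
for every piece $P$ of the $f^{kN}$--subdivision of $e_0$. Once $k$ is large enough that $(\rho^*)^k \mathcal L(e_0) < \mathcal L(U')/2$, a pigeonhole argument (using that these pieces tile $e_0$) produces a piece $W_1 \subset U'$ on which $f^{kN}$ restricts to a homeomorphism onto some edge $e'$.

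For the routing stage, irreducibility of $A(f)$ furnishes $k' \geq 1$ with $(A(f)^{k'})_{e', e} > 0$, meaning that $e$ or $e^{-1}$ appears as an oriented edge of the combinatorial path $f^{k'}(e')$. Accordingly, the $f^{k'}$--subdivision of $e'$ contains some subinterval $W_2$ that $f^{k'}$ maps homeomorphically onto $e$ (as a topological edge). Pulling $W_2$ back through the homeomorphism $f^{kN}|_{W_1} \colon W_1 \to e'$ yields an open interval $W \subset W_1 \subset U$ for which $f^{kN + k'}|_W$ is a homeomorphism onto $e$, so setting $m = kN + k'$ completes the proof. The main obstacle is establishing the uniform contraction estimate in the refinement stage: one must carefully trace how each piece of the $f^{kN}$--subdivision of $e_0$ arises as the pullback through an affine restriction of $f^{(k-1)N}$ of a piece of the $f^N$--subdivision of an intermediate edge, and exploit the ratio-preserving property of these affine restrictions so that the single bound $\rho^* < 1$ governs all edges uniformly.
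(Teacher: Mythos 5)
Your proof is correct and follows essentially the same approach as the paper's: both use the affine structure of the PL graph map to obtain a universal contraction factor for the subdivision lengths (the paper tracks one $f$-step at a time and passes to $f^M$ later, whereas you work directly with blocks of $N$ iterations, which is a cosmetic reorganization), deduce that the $f^{kN}$--subdivision pieces shrink exponentially, find one inside $U'$, and then use irreducibility of $A(f)$ to route onto the target edge $e$. The only minor difference is that you fix an auxiliary metric structure $\mathcal L$ and argue with absolute $\mathcal L$--lengths, while the paper argues purely with subdivision ratios in a fixed chart; these are equivalent since the affine transition maps between charts (and the affine restrictions of $f^{(k-1)N}$ to subdivision pieces) preserve relative length, which is exactly the observation you invoke.
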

\begin{proof}
The graph $\Gamma$ comes equipped with a linear structure $\Lambda$. For each edge $e$ and any chart $(e_\alpha,j_\alpha,[a_\alpha,b_\alpha])$ with $e_\alpha=e$, the composition $f\circ j_\alpha\colon [a_\alpha,b_\alpha]\to\Gamma$ is a PL edge-path. Thus if $e_1\dots e_n$ is the combinatorial edge-path $f(e)$ then there is a partition $a_\alpha=c_0 < c_1\dots < c_n = b_\alpha$ of $[a_\alpha,b_\alpha]$ so that $f\circ j_\alpha$ restricts to a homeomorphism of $(c_{i-1},c_i)$ onto $e_i$ for each $i=1,\dotsc,n$. We then define the \emph{subdivision ratios} of the edge $e$ to be the numbers $\rho^e_i = \frac{c_i-c_{i-1}}{b_\alpha-a_\alpha}$ for $i=1,\dotsc,n$. These numbers all satisfy $0 < \rho^e_i < 1$ unless $n=1$. Notice that since the transition functions between charts are affine, these numbers do not depend on the choice of the chart $j_\alpha\colon[a_\alpha,b_\alpha]\to \Gamma$. Since $\Gamma$ has finitely many edges, there are only finitely many subdivision ratios, and we let $\rho$ denote the maximal ratio that is strictly less than $1$.

For an edge $e_0\in E\Gamma$ we will now analyze what happens to $e_0$ under iteration by $f$. By the maximality condition for linear structures, $\Lambda$ contains a chart of the form $(e_0,j,[0,1])$. For $k\geq 0$ the set $j^{-1}\circ f^{-k}(V\Gamma)$ partitions $[0,1]$ into subintervals, each of which is mapped homeomorphically onto an edge of $\Gamma$ by $f^k$. Let $\mathcal{S}_k$ denote the set of intervals in this subdivision. Suppose that an interval $I\in \mathcal{S}_k$ is mapped homeomorphically onto $e\in E\Gamma$ by $f^k$. Then $I$ becomes further subdivided in $\mathcal{S}_{k+1}$ into subintervals of lengths $\len(I)\rho^e_{1},\dotsc,\len(I)\rho^e_{|f(e)|}$. In particular, if  $|f(e)|\geq 2$, then each of these subintervals has length at most $\len(I)\rho$.

Let $l_k$ denote the maximal length of any subinterval in $\mathcal{S}_k$. Notice that, since intervals are only ever subdivided (i.e., $f^{-k}(V\Gamma)\subset f^{-k-1}(V\Gamma)$), $l_k$ is nonincreasing. We claim that $l_k$ in fact tends to $0$ as $k\to \infty$. Indeed, since $f$ is expanding, there exits $M\geq 1$ so that $|f^M(e)|\geq 2$ for every $e\in E\Gamma$ (notice that $|f^n(e)|$ is nondecreasing function of $n$). It follows that every interval in $\mathcal{S}_k$ becomes subdivided in $\mathcal{S}_{k+M}$. Therefore $l_{k+M}\leq l_{k}\rho$ for all $k\geq 0$ and we conclude $l_k\to 0$.

We now prove that $f$ is expanding on all scales. Given $x\in \Gamma$ and a neighborhood $U$ of $x$, we may find an open interval $U'\subset U$ that is contained in some edge $e_0$ of $\Gamma$. The above discussion shows that for some sufficiently large $k$, $U'$ contains an interval $W'\subset U'$ that is mapped homeomorphically over some edge $e_1$. Let $e\in E\Gamma$ be any edge. Since $A(f)$ is irreducible, there exists a power $n$ so that the combinatorial edge-path $f^n(e_1)$ crosses $e$ or $e^{-1}$. Therefore, there is a further subinterval $W\subset W'$ so that $f^{k+n}(W)$ is mapped homeomorphically onto $e$.
\end{proof}

While a topological graph map is always homotopic to a graph map with
respect to a linear structure, the next theorem says that if $f$ is expanding on all scales, then $f$ is already linear with respect to some linear structure.

Recall that if $\Gamma$ is a topological graph and $e\in E\Gamma$ then $\overline e$ denotes the closure of $e$ in $\Gamma$.

\begin{theorem} \label{T:topological graph to graph}
Suppose $\f\colon\Gamma \to \Gamma$ is a regular topological graph-map that is expanding on all scales.  Choose an orientation on $\Gamma$ and an ordering $E_+\Gamma = \{e_1,\ldots,e_n\}$. 

Then the topological graph-map $\f$ is expanding, the matrix $A(\f)$ is irreducible with the Perron-Frobenius eigenvalue  $\lambda=\lambda(f)>1$, and  there exists a unique volume-1 metric structure $\mathcal L$ on $\Gamma$ with the following properties:

\begin{enumerate}
\item $\f \colon \Gamma \to \Gamma$ is linear with respect to $\mathcal L$;
\item $\f$ is a local $\lambda$--homothety, meaning that the norm of the derivative of $\f$ at every point of $\Gamma \setminus \f^{-1} (V\Gamma)$ is $\lambda$; and
\item For every $i=1,\dots, n$ the $d_\mathcal L$--length of $e_i$ is equal to $v_i$ where ${\bf v}=(v_1,\dots, v_n)^T$ is the left Perron-Frobenius eigenvector (i.e.~for the left action of $A(\f)$ on column vectors) with $\sum_{i=1}^n v_i =1$.
\end{enumerate}
\end{theorem}
\begin{proof}
We assume for simplicity that $\Gamma$ has no loop-edges.  It will be clear from the proof how to adjust the argument to cover the case where some loop-edges are present.

We will first establish the existence of a metric $d_\mathcal L$ satisfying conditions (1) and (2) of the theorem.
The assumptions on $\f$ imply that $A(\f)$ is irreducible.  Indeed, given any two edges $e_i,e_j\in E_+\Gamma$, the edge $e_j$ is a neighborhood of any point $x \in e_j$, and as such there exists $m$ and an open interval $W$ in $e_j$ so that $\f^m$ maps $W$ homeomorphically onto $e_i$.   Therefore, there is at least one occurrence of $e_i$ or $e_i^{-1}$ in $\f^m(e_j)$, and hence $(A(\f)^m)_{ij} \neq 0$.  

Clearly, $A(\f)$ is not a permutation matrix and thus the topological graph-map $f$ is expanding.

Now let $\Gamma_0 = \Gamma$ and for $k \geq 1$ define $\Gamma_k$ to be the graph obtained by subdividing $\Gamma_{k-1}$ along $\f^{-1}(V\Gamma_{k-1})$.  By construction $V\Gamma_{k-1} \subset V\Gamma_k$ for all $k \geq 0$.  Furthermore, since $\f(V\Gamma) \subset V\Gamma$, it follows that $V\Gamma_k = (\f^k)^{-1}(V\Gamma)$.  Given any edge $e_{k,0} \in E\Gamma_k$ for any $k \geq 0$, the subdivision makes $e_{k,0}$ into a combinatorial edge-path $\gamma_{e_{k,0}} = e_{k+1,1} e_{k+1,2} \ldots e_{k+1,r}$ in $\Gamma_{k+1}$.  Also, for every $k \geq 1$, we have $\f(e_{k,0})$ is an edge $e_{k-1,0} \in E\Gamma_{k-1}$.

\begin{claim}
The set
\[ \mathcal V =  \bigcup_{k =0}^\infty V\Gamma_k \]
is dense in $\Gamma$.
\end{claim}
\begin{proof}[Proof of claim.]
Given any $x \in \Gamma$ and any neighborhood $U$, we need only prove that $V\Gamma_k \cap U \neq \emptyset$ for $k$ sufficiently large.  Let $e$ be any edge of $\Gamma$ and let $m$ and $W \subset U$ be as in the definition of $\f$ being expanding on all scales.  Without loss of generality, we can assume that $\overline W \subset U$ (by replacing $U$ with a strictly smaller neighborhood before finding $m$ and $W$).  Then $\f^m(\overline W) = \overline e$, and hence $(\f^m)^{-1}(o(e) \cup t(e)) \subset U$.  Therefore $(\f^m)^{-1}(V\Gamma) = V\Gamma_m$ nontrivially intersects $U$, and we set $k = m$.
\end{proof}

\begin{claim}
There exists a sequence of length functions $\{L_k\colon E \Gamma_k \to \R_+\}_{k =0}^\infty$ with the following two properties.
\begin{enumerate}
\item For every edge $e_{k,0} \in E\Gamma_k$, with $k \geq 0$, if $\gamma_{e_{k,0}} = e_{k+1,1} e_{k+1,2} \ldots e_{k+1,r}$ is the combinatorial edge-path representing $e_{k,0}$ in $E\Gamma_{k+1}$, then
\[ L_k(e_{k,0}) = \sum_{\ell = 1}^r L_{k+1}(e_{k+1,\ell}).\]
\item For every edge $e_{k,0} \in E\Gamma_k$,  with $k \geq 1$, if we write $\f(e_{k,0}) = e_{k-1,0}$, then
\[ \lambda L_k(e_{k,0}) = L_{k-1}(e_{k-1,0}).\]
\end{enumerate}
\end{claim}
\begin{proof}[Proof of Claim.]
To construct this sequence of length functions, we define $L_0$ first to be given by $L(e_i) = L(e_i^{-1}) = v_i$, where ${\bf v} = (v_1,\ldots,v_n)$ is the Perron-Frobenius eigenvector for $\lambda$.  Then for $k \geq 1$ define $L_k$ in terms of $L_{k-1}$ inductively by the second condition.  Equivalently, we note that for every $k \geq 1$ and every edge $e_{k,0} \in E\Gamma_k$, we have that $\f^k$ maps $e_{k,0}$ homeomorphically onto some edge $e_i \in E\Gamma$ and so its $L_k$--length is $\lambda^{-k} v_i$.

It remains to verify that the first property holds.  Fix $e_{k,0}$ and we let $i$ be as above so that $\f^k$ maps $e_{k,0}$ homeomorphically onto $e_i$.  The next iteration of $\f$ maps $e_i$ over each edge $e_j$ exactly $a_{ij}$ times and we have
\begin{equation} \label{E:PF2length} \lambda v_i = \sum_{i=1}^n a_{ij} v_j.
\end{equation}
On the other hand, writing
\[ e_{k,0} = e_{k+1,1} e_{k+1,2} \ldots e_{k+1,r}\]
and letting $j(\ell)$ be such that $\f^{k+1}$ maps $e_{k+1,\ell}$ homeomorphically onto $e_{j(\ell)}$, it follows that
\[ \f(e_i) = e_{j(1)} e_{j(2)} \ldots e_{j(r)}.\]
and so comparing with \eqref{E:PF2length} we have
\[ \lambda v_i = \sum_{\ell = 1}^r v_{j(\ell)}.\]
The $L_{k+1}$--length of $e_{k+1,\ell}$ is $\lambda^{-(k+1)} v_{j(\ell)}$ and therefore we have
\[ L_k(e_{k,0}) = \lambda^{-k} v_i = \lambda^{-k} \left( \lambda^{-1} \sum_{\ell = 1}^r v_{j(\ell)} \right) = \lambda^{-(k+1)} \sum_{\ell = 1}^r v_{j(\ell)} = \sum_{\ell = 1}^r L_{k+1}(e_{k+1,\ell}).\qedhere\]
\end{proof}
We now prove that there exists a metric structure with the required properties.  As a consequence of the second condition, we note that the $L_k$--length of any edge of $E\Gamma_k$ is at most $\lambda^{-k}$ times the maximum $L_0$--length of the edges in $E\Gamma_0 = E\Gamma$.  In particular, the maximum $L_k$--length of any edge in $E\Gamma_k$ tends to zero as $k \to \infty$.

Now suppose that $e \in E_+\Gamma$ and that $x,y \in \mathcal V \cap \overline{e}$ are any two points.  We define a ``distance in $e$''  $d_{\mathcal V,e}(x,y)$ between $x$ and $y$ to be the sum of the $L_k$--length of the edges between $x$ and $y$ inside $e$, provided $x,y \in V\Gamma_k$ (which is valid for all sufficiently large $k$, depending on $x$ and $y$).  By the first condition, this definition of $d_{\mathcal V,e}(x,y)$ is independent of $k$ and hence we get  a metric $d_{\mathcal V,e}$ on $\mathcal V \cap \overline{e}$.  Moreover, by construction, with respect to the ordering on $\mathcal V\cap \overline{e}$ induced by the orientation on $e$, if $x < y < z$ are three points in $\mathcal V \cap \overline{e}$, then
\[ d_{\mathcal V,e}(x,z) = d_{\mathcal V,e}(x,y) + d_{\mathcal V,e}(y,z). \]
Because the $L_k$--lengths of edges in $E\Gamma_k$ tend to zero and because $\mathcal V \cap e$ is dense in $\overline{e}$, we can extend $d_{\mathcal V,e}$ to a geodesic metric $d_e$ on $\overline{e}$ (inducing the given topology): for an arbitrary pair of points $x',y'$ in $\overline e$, we can approximate $d_e(x',y')$ from above (respectively, below) by $d_{\mathcal V,e}$--distances between points in $\mathcal V$ limiting to $x',y'$ from outside (respectively, inside) the arc between $x'$ and $y'$.   For $x,y\in  \mathcal V \cap \overline{e}$ we will have $d_e(x,y)=d_{\mathcal V,e}(x,y)$.

The collection of metrics $d_e$ on edges of $\Gamma$ determines a geodesic metric $d$ on $\Gamma$ consistent with the original topology on $\Gamma$.
By construction of $d_e$ and using local injectivity of  $f$ on edges,  it follows that for any edge $e\in E\Gamma$ and any sufficiently close points $x,y\in \mathcal V$  we have $d(f(x),f(y))=\lambda d(x,y)$.  Since $\mathcal V$ is dense in $e$,  it follows that $f$ is locally a $\lambda$--homothety on edges with respect to $d$, as required. This completes the proof of the existence of a metric structure $\mathcal L$ satisfying conditions (1) and (2) of the theorem.

To see that such $\mathcal L$ is unique note that condition (2) of the theorem uniquely determines the $d_\mathcal L$-lengths of the edges of $\Gamma$.
Proceeding inductively on $k$, we see that conditions (1) and (2) then uniquely determines the $d_\mathcal L$-lengths of all edges of $\Gamma_k$ for all $k\ge 0$.  For $k=0$ we have $\Gamma_0=\Gamma$ and the statement holds.
If $k\ge 1$ and $e\in E\Gamma_k$ then $e'=f(e)$ is an edge of $\Gamma_{k-1}$, so that, by the inductive hypothesis,  the $d_\mathcal L$ length of $e'$ is already determined. Then by condition (1) it follows that the $d_\mathcal L$-length of $e$ is equal to $1/\lambda$ times the $d_\mathcal L$ length of $e'$, which completes the inductive step.

It thus follows that conditions (1) and (2) of the theorem uniquely define the $d_\mathcal L$ length along an edge of $\Gamma$ between any two points of $\mathcal V$ belonging to that edge. Since the set $\mathcal V$ is dense in $\Gamma$, the uniqueness of $\mathcal L$ follows.
\end{proof}

\begin{remark}
This proof follows closely the strategy for constructing the stable foliation for a pseudo-Anosov in \cite[Expos\'e 9.5]{FLP}.
\end{remark}

Lemma~\ref{L:linear maps expanding on all scales} and Theorem~\ref{T:topological graph to graph} immediately imply:

\begin{corollary}\label{cor:eigenmetric}
Let $\Gamma$ be a finite (linear) graph and let $f\colon\Gamma\to\Gamma$ be a regular graph-map such that $f$ is expanding and $A(f)$ is irreducible.

Then there exists a (possibly different) linear structure $\Lambda$ and compatible volume-$1$ metric structure $\mathcal L \subset \Lambda$ on the underlying topological graph $\Gamma$ such that, with respect to $\mathcal L$, the map $f\colon \Gamma\to \Gamma$ is still linear and the restriction of $f$ to any edge is affine with constant derivative $\lambda(f)$. (Thus \emph{locally} on every edge, $f$ is a $\lambda(f)$--homothety with respect to $d_\mathcal L$).
\end{corollary}

We respectively call the objects $\Lambda$, $\mathcal L$, and $d_{\mathcal L}$ provided by Corollary~\ref{cor:eigenmetric} (or Theorem \ref{T:topological graph to graph}) the \emph{canonical linear structure}, \emph{canonical metric structure}, and \emph{canonical eigenmetric} associated to the given map $f\colon \Gamma\to\Gamma$.
These canonical structures lead to the following rigidity statement.  First we say that two regular topological graph maps $\f_1,\f_2 \colon \Gamma \to \Gamma$ are {\em combinatorially equivalent} if for every $e \in E\Gamma$, we have $\f_1(e) = \f_2(e)$ as combinatorial edge-paths.

\begin{theorem}\label{T:comb_equiv_are_conj}
Suppose $\Lambda_1,\Lambda_2$ are linear structures on a graph $\Gamma$ and suppose that for each $i=1,2$, $\f_i \colon \Gamma \to \Gamma$ is an expanding $\Lambda_i$--linear graph map.  If $\f_1$ and $\f_2$ are combinatorially equivalent and $A(f_1)=A(f_2)$ is irreducible, then $\f_1$ and $\f_2$ are conjugate by a homeomorphism $h \colon \Gamma \to \Gamma$ homotopic to the identity: 
\[ h \f_1 h^{-1} = \f_2.\]
\end{theorem}
\begin{proof}[Proof sketch] Let $d_i$ be the canonical eigenmetric associated to $f_i$, for $i=1,2$. If we write $d_i^e$ for the induced path metric on the interior of $e\in E\Gamma$, then the homeomorphism $h\colon \Gamma\to \Gamma$ is the unique map that fixes $V\Gamma$ and restricts to an isometry $(e,d_1^e)\to (e,d_2^e)$ for each edge $e\in E\Gamma$.
\end{proof}

\begin{corollary}\label{cor:find_combinatorial_metric}
Let $f\colon \Gamma\to\Gamma$ be a regular topological graph-map that is expanding on all scales. Then there exits a metric structure $\mathcal L$ on $\Gamma$ with respect to which $f$ is a combinatorial graph-map. 
\end{corollary}
\begin{proof}
First equip $\Gamma$ with the linear structure $\Lambda_1$ provided by Theorem~\ref{T:topological graph to graph}, so that $f_1 = f\colon \Gamma\to\Gamma$ is an expanding irreducible (linear) graph-map. Next choose any metric structure $\mathcal L_2$ (with associated linear structure $\Lambda_2$) for which each edge has length $1$. As indicated in Remark~\ref{rem:make_combinatorial}, we may then build a combinatorially equivalent (linear) graph-map $f_2\colon \Gamma\to\Gamma$ that is homotopic to $f$ rel $V\Gamma$ and such that the subdivision of each edge along $f_2^{-1}(V\Gamma)$ is into equal length parts with respect to $\mathcal L_2$. Hence $f_2\colon \Gamma\to\Gamma$ is a combinatorial graph map with respect to $\mathcal L_2$.

Applying Theorem~\ref{T:comb_equiv_are_conj} to the maps $f_1,f_2\colon \Gamma\to\Gamma$ with associated linear structures $\Lambda_1,\Lambda_2$, we obtain a homeomorphism $h\colon \Gamma\to\Gamma$ such that $hf_1 = f_2 h$. Pulling back $\mathcal L_2$ via $h$ now gives a metric structure $\mathcal L$ on $\Gamma$ with respect to which $f = f_1$ is a combinatorial graph map.
\end{proof}

\section{Clean train-tracks and full irreducibility}\label{sect:clean}

In this appendix  we obtain a strengthened and easier-to-use version of a criterion from \cite{K12} for checking the full irreducibility of hyperbolic automorphisms of $\Out(F_N)$.

Recall that according to our Definition~\ref{D:train-track}, for a train-track map $\f\colon\Gamma\to\Gamma$ we allow the graph $\Gamma$ to have valence-2 vertices but no valence-1 vertices. However we do require that for any vertex $v$ of valence $2$ and any $n\ge 1$ the map $\f^n$ be locally injective on some small neighborhood of $v$ in $\Gamma$. This condition is automatically satisfied if we only require $\f^n$ to be an immersion on all edges for all $n$ and  also require the Whitehead graphs of all valence-2 vertices to be connected.
Recall from Definition~\ref{D:reg-exp-graph-map} that a train-track map $\f\colon\Gamma\to\Gamma$ is \emph{expanding} if for every edge $e$ of $\Gamma$ the simplicial length $|\f^n(e)|$ tends to $\infty$ as $n\to\infty$. 

Recall also from Definition~\ref{defn:clean_track} that an expanding train-track map $f\colon \Gamma\to \Gamma$ whose Whitehead graphs $\wh(\f,v)$ are connected for all $v\in V\Gamma$ is \emph{clean} if there exits $m>0$ such that $A(f^m)>0$ and is \emph{weakly clean} if $A(f)$ is irreducible. By definition, a clean train-track is weakly clean; here we show that the converse is also true.

\begin{remark}\label{rem:b1}
Note that if $\Delta$ is a finite connected subgraph of a graph $\Gamma$ then the inclusion $\iota\colon \Delta\to \Gamma$ induces an injective homomorphism $\iota_\ast\colon H_1(\Delta; \mathbb R)\to H_1(\Gamma; \mathbb R)$. For this reason if, in this situation, $\f\colon\Gamma\to \Gamma$ is a graph-map which is homotopy equivalence, then $b_1(\Delta)\le b_1(f(\Delta))$. 
\end{remark}

\begin{proposition}\label{prop:clean}
Let $\f\colon\Gamma\to \Gamma$ be a  weakly clean train-track. Then $\f$ is clean.
\end{proposition}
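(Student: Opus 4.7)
My plan is to upgrade the irreducibility of $A(f)$ to primitivity (i.e.\ aperiodicity in the Perron--Frobenius sense), since for an irreducible nonnegative integer matrix, being primitive is equivalent to the existence of some $m$ with $A^m>0$. Combined with the identity $A(f^m)=A(f)^m$ from \S\ref{S:graph maps}, this immediately yields cleanness. So I would argue by contradiction: suppose the irreducible matrix $A(f)$ has period $d\geq 2$. By the Perron--Frobenius structure theorem for periodic irreducible matrices, the set $E_+\Gamma$ of topological edges then partitions into nonempty cyclic classes $C_0,\dotsc,C_{d-1}$ with the property that for every $e\in C_r$, every topological edge appearing in the combinatorial path $f(e)$ lies in $C_{(r+1)\bmod d}$. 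Iterating, every topological edge occurring in $f^n(e)$ belongs to the single class $C_{(r+n)\bmod d}$.

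The crucial observation is that Whitehead adjacencies at any vertex respect these cyclic classes. Indeed, if two oriented edges $e_1,e_2\in \link_\Gamma(v)$ are joined in $\wh(f,v)$, then by definition some edge-path $f^n(e)$ contains $e_1^{-1}e_2$ (or its inverse) as a subpath, so the topological edges underlying $e_1$ and $e_2$ both appear in $f^n(e)$ and therefore lie in a common class $C_i$. Consequently, at any vertex $v\in V\Gamma$ incident to topological edges from two distinct cyclic classes, the vertex set $\link_\Gamma(v)$ of $\wh(f,v)$ splits nontrivially by cyclic class, with no Whitehead edges crossing between different classes; this forces $\wh(f,v)$ to be disconnected, contradicting weak cleanness.

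It remains to locate such a ``bichromatic'' vertex $v$. For each $i$, let $\Gamma_i\subseteq\Gamma$ denote the subgraph spanned by the topological edges in $C_i$ together with their endpoints; each $\Gamma_i$ is nonempty because each $C_i$ is nonempty. Since $\Gamma$ has no valence-$1$ vertices, every vertex of $\Gamma$ is the endpoint of some edge, so the $\Gamma_i$ collectively cover $V\Gamma$. If the vertex sets $V\Gamma_i$ were pairwise disjoint then, as no edge of $\Gamma$ can cross between different $\Gamma_i$, the graph $\Gamma$ would decompose as a disjoint union of the $\Gamma_i$, violating connectedness of $\Gamma$ (note $d\geq 2$, so at least two $\Gamma_i$ exist). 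Thus some pair $\Gamma_i,\Gamma_j$ with $i\ne j$ shares a vertex, which serves as the required $v$ and completes the contradiction; hence $d=1$ and $f$ is clean.

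The main potential pitfall I foresee is careful bookkeeping between oriented and topological edges when invoking Perron--Frobenius. The transition matrix $A(f)$ is indexed by $E_+\Gamma$, but its entries count occurrences of $e_j^{\pm 1}$ in $f(e_i)$, so the cyclic classes are naturally a feature of the underlying topological edges. The Whitehead graph, by contrast, is defined on the oriented link $\link_\Gamma(v)$. Identifying each oriented edge with the cyclic class of its underlying topological edge yields the ``coloring'' on $V(\wh(f,v))$ that the argument exploits, and one must verify that this coloring is actually nontrivial at some vertex---which is precisely what the connectedness argument in the previous paragraph establishes.
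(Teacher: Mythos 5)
Your proof is correct, and it takes a genuinely different route from the paper's. Where the authors build, for each edge $e$, the nested subgraphs $\Delta_n\subseteq\Gamma$ spanned by $f^{mn}(e)$ and argue via first Betti number comparisons (Remark~\ref{rem:b1}), the Whitehead graphs, and a ``combinatorial eigenray'' that ultimately $\Delta(e)=\Gamma$, you go straight to the matrix-theoretic heart of the matter: primitivity of $A(f)$. You invoke the Perron--Frobenius cyclic decomposition of an irreducible matrix of period $d$ into classes $C_0,\dotsc,C_{d-1}$, observe that every turn taken by $f$ joins two (oriented) edges whose underlying topological edges lie in the same class --- since they appear in a common $f^n(e)$, and $f^n(e)$ lives entirely in $C_{(r+n)\bmod d}$ when $e\in C_r$ --- so that Whitehead adjacencies never cross class boundaries, and then locate a vertex incident to two classes using connectedness of $\Gamma$. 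This is shorter and conceptually cleaner; it also uses \emph{less}: you never appeal to the expanding hypothesis, so your argument actually shows that irreducible transition matrix plus connected Whitehead graphs (plus $\Gamma$ connected) already forces primitivity. The paper's argument uses ``expanding'' in an essential way to find a fixed point in the interior of $e_2$ and launch the eigenray; your argument avoids all of that. The one small slip is the line ``Since $\Gamma$ has no valence-$1$ vertices, every vertex of $\Gamma$ is the endpoint of some edge'': absence of valence-$1$ vertices does not by itself rule out isolated vertices, but the conclusion you want is immediate from $\Gamma$ being a connected graph with at least one edge (part of Definition~\ref{D:train-track}), so nothing is lost. One other thing worth making explicit, which you only gesture at in the pitfall paragraph: the transition matrix entry $a_{ij}$ counts occurrences of $e_j^{\pm 1}$ in $f(e_i)$, so the cyclic classes are naturally a partition of topological edges (equivalently $E_+\Gamma$), and the coloring you put on $\link_\Gamma(v)$ is obtained by assigning each oriented edge the class of its underlying topological edge; since $e$ and $e^{-1}$ get the same color, the coloring is well defined and the argument is sound.
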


\begin{proof}
Since $\f$ is expanding and $\Gamma$ is finite, for every oriented edge $e$ of $\Gamma$ there exists $n>0$ and an oriented edge $e'$ such that the path $\f^n(e)$ contains at least three occurrences of the edge $e'$ (with positive orientation). From irreducibility of $A(\f)$ it follows that there exists $j>0$ such that $\f^j(e')$ contains either $e$ or $e^{-1}$. Then $\f^{n+j}(e)$ contains at least 3 occurrences of $e$ or at least 3 occurrences of $e^{-1}$. Hence $\f^{2(n+j)}(e)$ contains at least 3 occurrences of $e$.  Put $k(e):=2(n+j)$ and then put $m$ to be the product of $k(e)$ taken over all oriented edges $e$ of $\Gamma$. Then for any oriented edge $e$ the path $\f^m(e)$ contains at least three positive occurrences of $e$ and the same is true for $\f^{mq}(e)$ for any integer $q\ge 1$.

Now let $e$ be an oriented edge of $\Gamma$ and for $n\ge 1$ put $\Delta_0=\{e\}$ and let $\Delta_n$ be the subgraph of $\Gamma$ given by the union of all the edges in the path $\f^{mn}(e)$. Then, by construction, $\Delta_n\subseteq \Delta_{n+1}$ for all $n\ge 0$ and we put $\Delta(e):=\cup_{n\ge 0} \Delta_n$.
Since $\Gamma$ is finite, and the graphs $\Delta_n$ are connected, $\Delta(e)$ is a finite connected subgraph of $\Gamma$ containing $e$ and such that $\Delta(e)=\Delta(e^{-1})$.
Also, by construction, we have $\f^m(\Delta(e))=\Delta(e)$.

{\bf Claim 1.} For any edge $e$ of $\Gamma$ and any $j>0$ the graphs $\Delta(e)$ and $\f^j(\Delta(e))$ are homotopy equivalent.
Since $\f\colon\Gamma\to\Gamma$ is a homotopy equivalence,  Remark~\ref{rem:b1} implies that $b_1(\Delta(e))\le b_1(\Delta')$ where $\Delta'=\f^j(\Gamma(e))$. We can find $k\ge 1$ such that $j<mk$. Then $\f^{mk-j}(\Delta')=\f^{mk}(\Delta(e))=\Delta(e)$ and hence, again by  Remark~\ref{rem:b1}, we have $b_1(\Delta')\le b_1(\Delta(e))$. Hence $b_1(\Delta')=b_1(\Delta(e))$, as required, and Claim~1 is established.

{\bf Claim 2.} For any two edges $e, e'$ of $\Gamma$ we have $b_1(\Delta(e))=b_1(\Delta(e'))$ (and hence the graphs $\Delta(e)$ and $\Delta(e')$ are homotopy equivalent).

Indeed, let $e,e'$ be arbitrary oriented edges. By irreducibility of $A(\f)$ there exists $j>0$ such that $\f^j(e)$ contains $e'$ or $(e')^{-1}$. 
Then for any $k\ge 0$ $\f^{mk} (\f^j \Delta(e)) = \f^{j+mk}(\Delta(e))=\f^j \big( \f^{mk}(\Delta(e)) \big)=\f^j(\Delta(e))$.  Hence $\f^{mk}(e')$ is contained in $\f^j(\Delta(e))$ for every $k\ge 1$ and therefore, by definition, $\Delta(e')\subseteq \f^j(\Delta(e))$. Hence $b_1(\Delta(e'))\le b_1(\f^j(\Delta(e)))=b_1(\Delta(e))$. A symmetric argument shows that $b_1(\Delta(e))\le b_1(\Delta(e'))$ and hence $b_1(\Delta(e))= b_1(\Delta(e'))$. Thus Claim~2 is verified.

{\bf Claim 3.} For any edge $e$ of $\Gamma$ we have $\Delta(e)=\Gamma$.

Indeed, suppose not, and let $e$ be such that $\Delta(e)\ne \Gamma$. Then there exists a vertex $v$ of $\Delta(e)$ such that $v$ is incident to an edge of $\Delta(e)$ and to an edge of $\Gamma - \Delta(e)$.

Since the Whitehead graph $\wh_\Gamma(\f,v)$ is connected, there exist oriented edges $e_1$ and $e_2$ with origin $v$ such that $e_1\subseteq \Delta(e)$, $e_2\not\subseteq \Delta(e)$ and such that the turn $e_1,e_2$ is taken by $\f$. Hence there exist an edge $e_3$ and $i>0$ such that $e_1^{-1}e_2$ is a subpath of $\f^i(e_3)$. Since $\f\colon\Gamma\to\Gamma$ is surjective, there exist $k\ge 1$ and an edge $e_4$ such that $km>i$ and such that $\f^{km-i}(e_4)$ contains $e_3$. Hence $\f^m(e_4)$ contains $e_1^{-1}e_2$ as a subpath. Therefore, by definition, we have $e_1 \subseteq \Delta(e_4)$ and $e_2\subseteq \Delta(e_4)$ and hence $\Delta(e_1)\subseteq \Delta(e_4)$ and $\Delta(e_2)\subseteq \Delta(e_4)$. 

Also, since $\f^m(\Delta(e))=\Delta(e)$ and since $e_1$ is an edge of $\Delta(e)$, we have $\Delta(e_1)\subseteq \Delta(e)$. In particular, $e_2$ is not contained in $\Delta(e_1)$.

Let $v'$ be the terminal vertex of $e_2$. Then $v'\not\in \Delta(e_1)$ since otherwise $\Xi:=e_2\cup \Delta(e_1)$ has the first Betti number bigger than that of $\Delta(e_1)$ which is impossible since $\Xi\subseteq \Delta(e_4)$ and $b_1(\Delta(e_1))=b_1(\Delta(e_4))$ by Claim~2. 

By the choice of $m$ the path $\f^m(e_2)$ contains at least 3 positive occurrences of $e_2$. Hence there exists a fixed point $x$ of $\f^m$ in the interior of $e_2$ coming from a positive occurrence of $e_2$ in $\f^m(e_2)$ which is neither initial nor terminal edge of $\f^m(e_2)$. Thus $\f^m$ preserves the orientation in a small neighborhood of $x$. Let $[x,v']$ be the segment of $e_2$ from $x$ to $v'$. Then $f^m([x,v'])=[x,v']\cup \beta$ where $\beta$ is a tight combinatorial edge-path of positive simplicial length with initial vertex $v'$. Thus for all $n\ge 0$ the path $\f^{mn}([x,v'])=[x,v']\cup \beta_n$ is a proper initial segment of the path $\f^{m(n+1)}([x,v'])=[x,v']\cup \beta_{n+1}$, where $\beta_n$ is a tight combinatorial edge-path starting at $v'$ with $|\beta_n|\to\infty$ as $n\to\infty$. Let $\rho$ be the path in $\Gamma$  obtained by taking the ``union'' of all $\f^{mn}([x,v'])$, that is, $\rho$ is the path starting at $x$ such that for all $n\ge 0$ $\f^{mn}([x,v'])$  is an initial segment of $\rho$. This $\rho$ is sometimes called a ``combinatorial eigenray'' of $\f$. By construction we have $\rho\subseteq \Delta(e_2)$.
The path $\rho$ has the form $\rho=[x,v']\rho'$ where $\rho'$ is a semi-infinite edge-path in $\Gamma$ starting at $v'$.

Then either $\rho'$ has a nonempty intersection with $\Delta(e_1)$ (in which case we look at the first time when $\rho'$ hits the graph $\Delta(e_1)$), or, if $\rho'$ and $\Delta(e_1)$ are disjoint, then there is an oriented edge that occurs in $\rho'$ at least twice and hence $\rho'$ contains a nontrivial immersed loop. In both cases we see that $\Xi_1=\Delta(e_1)\cup e_2 \cup \rho'$ has $b_1(\Xi_1)>b_1(\Delta(e_1))$. However, $\Xi_1\subseteq \Delta(e_4)$ and hence $b_1(\Delta(e_4))> b_1(\Delta(e_1))$, which contradicts Claim~2.

Thus Claim~3 is established and we know that $\Delta(e)=\Gamma$ for every edge $e$ of $\Gamma$. By definition of $\Delta(e)$ it follows that for every edge $e$ of $\Gamma$ there exists $n_e\ge 1$ such that $\f^{n_em}(e)$ passes through every topological edge of $\Gamma$. Since $\f\colon\Gamma\to\Gamma$ is surjective, it follows that for every $i\ge n_em$ the path $\f^i(e)$ also passes through every topological edge of $\Gamma$. Now put $M$ to be the maximum of $n_em$ taken over all edges $e$ of $\Gamma$. Then for any two oriented edges $e,e'$ of $\Gamma$ the path $\f^M(e)$ contains an occurrence of $e'$ or $(e')^{-1}$. This means that $A(\f^M)>0$, as required.
\end{proof}

Note that if $\fee\in \Out(F_N)$ is hyperbolic (i.e., atoroidal) and $\f\colon\Gamma\to\Gamma$ is a train-track representative of $\fee$ with $A(f)$ irreducible, then $\f$ is expanding (since otherwise $A(\f)$ is a permutation matrix and $\fee$ has finite order, which contradicts the hyperbolicity assumption on $\fee$). Moreover, it is proved in \cite{K12} that for $\fee\in \Out(F_N)$ hyperbolic, the element $\fee$ is fully irreducible if and only if some (equivalently, any) train-track representative of $\fee$ is clean. Thus combining Proposition~4.4 of \cite{K12} with the equivalence between the clean and weakly clean conditions (Proposition~\ref{prop:clean}),  we immediately obtain:

\begin{theorem}\label{T:clean} 
Let $N\ge 3$ and let $\fee\in \Out(F_N)$ be a hyperbolic element. Then the following conditions are equivalent:
\begin{enumerate}
\item The automorphism $\fee$ is fully irreducible.

\item If $\f\colon\Gamma\to\Gamma$ is a train-track representative of $\fee$ then $\f$ is clean.

\item There exists a clean train-track representative of $\fee$.

\item There exists a weakly clean train-track representative of $\fee$.

\end{enumerate}

\end{theorem}

Another useful corollary of Proposition~\ref{prop:clean} is that for hyperbolic elements of $\Out(F_N)$ the notions of being irreducible and fully irreducible coincide:

\begin{corollary}\label{C:hyp_iwip}
Let $N\ge 3$ and let $\fee\in \Out(F_N)$ be a hyperbolic element. Then $\fee$ is irreducible if and only if $\fee$ is fully irreducible.
\end{corollary}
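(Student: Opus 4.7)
The direction \emph{fully irreducible} $\Rightarrow$ \emph{irreducible} is immediate from the definitions, so the goal is the converse. My approach is to reduce to Corollary~\ref{C:clean} by showing that an irreducible train-track representative of a hyperbolic irreducible $\fee$ is weakly clean.

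Assume $\fee$ is hyperbolic and irreducible. By Bestvina--Handel \cite{BH92}, $\fee$ admits an irreducible train-track representative $f\colon \Gamma\to \Gamma$, so $A(f)$ is irreducible. Since $\fee$ is hyperbolic it has infinite order, which rules out $A(f)$ being a permutation matrix; thus $\lambda(f)>1$ and $f$ is expanding. To establish that $f$ is weakly clean and thereby conclude from Corollary~\ref{C:clean} that $\fee$ is fully irreducible, the only remaining condition to verify is the connectedness of each Whitehead graph $\wh_\Gamma(f, v)$.

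If $v$ has valence $2$, Lemma~\ref{L:tt-tame} provides a taken turn at $v$, which must then be the unique non-degenerate turn there, so $\wh_\Gamma(f, v)$ is a single edge and is connected. The substantive case is a vertex $v_0$ of valence $\ge 3$, which I would handle by contradiction: assume $\wh_\Gamma(f, v_0)$ is disconnected, so that $\link_\Gamma(v_0) = A\sqcup B$ is a nontrivial partition and no turn between $A$ and $B$ belongs to $\turns_\infty(f)$. I would then blow up $v_0$ by separating it into new vertices $v_A, v_B$ joined by an auxiliary edge $e_0$, attaching edges of $A$ at $v_A$ and those of $B$ at $v_B$, to form a graph $\Gamma'$. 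After replacing $f$ by an iterate fixing $v_0$, the absence of $A$--$B$ turns in any iterate of $f$ lets me lift this iterate to a graph-map $f'$ on $\Gamma'$ that collapses $e_0$; reading off loops in the $A$- and $B$-sides then yields a proper free factor decomposition of $F_N$ permuted by $\fee$, exhibiting reducibility of $\fee$ and producing the desired contradiction.

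The main obstacle in this plan is the final step of the blow-up argument: producing a free factor system invariant under $\fee$ itself, not merely under some power of $\fee$. The leverage for this should come from the irreducibility of $A(f)$, which forces the iterates of $f$ to visit every vertex and edge of $\Gamma$ transitively; hence the invariant local structure at $v_0$ propagates $\fee$--equivariantly across the whole graph, yielding a genuine $\fee$--invariant proper free factor system rather than one invariant only up to taking a power.
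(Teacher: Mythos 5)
Your reduction is exactly the one the paper uses: build an irreducible train-track representative via Bestvina--Handel, note that hyperbolicity rules out $A(f)$ being a permutation matrix so $f$ is expanding, handle valence-$2$ vertices via Lemma~\ref{L:tt-tame}, and argue that once all Whitehead graphs are connected, Corollary~\ref{C:clean} finishes. The divergence is in how you handle the substantive case (a disconnected Whitehead graph at a vertex of valence $\ge 3$). The paper simply cites Proposition~4.1 of \cite{K12} (appearing in implicit form in the proof of Proposition~4.5 of \cite{BH92}), which states precisely that an irreducible train-track map with a disconnected Whitehead graph represents a reducible outer automorphism. You instead attempt to reprove this from scratch by a blow-up argument.

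Your sketch of that argument has a genuine gap, which you yourself flag: after passing to a power of $f$ that fixes $v_0$, the blow-up produces a free factor system invariant under $\fee^k$, not under $\fee$, and invariance under a power shows only that $\fee$ is \emph{not fully irreducible}, which is not a contradiction to irreducibility of $\fee$ itself. The suggestion that irreducibility of $A(f)$ ``propagates the local structure $\fee$-equivariantly'' is not a proof; it is exactly the content that still needs to be established. The way this is actually handled in \cite{BH92} and \cite{K12} is different from what you propose: one does not pass to a power and does not blow up only at $v_0$. Instead one blows up \emph{every} vertex $v$ of $\Gamma$ simultaneously according to the connected-component decomposition of $\wh_\Gamma(f,v)$, and one shows the derivative map $D_f$ is compatible with these decompositions across the whole graph (sending the component structure at $v$ into that at $f(v)$), so that $f$ itself --- not merely a power --- lifts to the blown-up graph. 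Your intuition that ``irreducibility of $A(f)$ forces the iterates to visit everything'' is in the right spirit, since irreducibility is what makes the Whitehead graph data at each vertex cohere into a single $f$-equivariant structure; but the actual mechanism is the compatibility of $D_f$ with Whitehead components, not the power trick in your sketch. Since the paper invokes this as a known citation rather than reproving it, the cleanest fix for your proposal is to do the same.
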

\begin{proof}
Obviously, if $\fee$ is fully irreducible then $\fee$ is irreducible.
Thus let $\fee\in \Out(F_N)$ be hyperbolic and irreducible. Assume that $\fee$ is not fully irreducible. 
Since $\fee$ is irreducible, by a result of Bestvina--Handel~\cite{BH92} there exists a train-track representative $f\colon\Gamma\to\Gamma$ of $\fee$ with an irreducible transition matrix $A(f)$. Since $\fee$ is hyperbolic and thus has infinite order in $\Out(F_N)$, we know that $A(f)$ is not a permutation matrix. Hence $\lambda(f)>1$, so that $f$ is expanding.
By assumption $f$ is not fully irreducible and therefore, by Theorem~\ref{T:clean} above, the train-track $f$ is not weakly clean. This means that for some vertex $v$ of $\Gamma$ the Whitehead graph $\wh(f,v)$ is not connected.  Proposition~4.1 of \cite{K12} (which first appears, in implicit form, in the proof of Proposition~4.5 in \cite{BH92}) now implies that $\fee$ is reducible, contrary to our assumption that $\fee$ is irreducible.
Thus $\fee$ is fully irreducible, as required.
\end{proof}

Note that the conclusion of Corollary~\ref{C:hyp_iwip} fails without the assumption that $\fee$ is hyperbolic, as already observed by Bestvina and Handel in \cite{BH92}. For example, let $S$ be a compact connected hyperbolic surface with $m\ge 3$ boundary components, let $F_N=\pi_1(S)$ and let $\fee\in \Out(F_N)$ be induced by a pseudo-Anosov homeomorphism $g$ of $S$. The boundary components of $S$ represent primitive elements of $F_N$ and therefore such $\fee$ is not fully irreducible. However, if $g$ transitively permutes the boundary components of $S$, then $\fee$ is irreducible.

\bibliography{freebycyclic}{}
\bibliographystyle{alphanum}

\end{document}